\theoremstyle{plain}
\newtheorem{Thm}{Theorem}[section]
\theoremstyle{definition}
\newtheorem{Prop}[Thm]{Proposition}
\newtheorem{Lem}[Thm]{Lemma}
\newtheorem{Cor}[Thm]{Corollary}
\newtheorem{Def}[Thm]{Definition}
\newtheorem{Eg}[Thm]{Example}
\newtheorem*{Not1}{Notation}
\theoremstyle{remark}
\newtheorem{Rem}[Thm]{Remark}
\newcommand{\R}{\mathbb{R}}
\newcommand{\C}{\mathbb{C}}
\newcommand{\N}{\mathbb{N}}
\newcommand{\Z}{\mathbb{Z}}
\newcommand{\re}{\mathbb{\text{Re}}}
\newcommand{\supp}{\mathop{\text{sppt}}}
\newcommand{\esss}{\mathop{\text{ess sup}\,}}
\newcommand{\ep}{\epsilon}
\newcommand{\al}{\alpha}
\newcommand{\Dom}{\textsf{D}}
\newcommand{\Ran}{\textsf{R}}
\newcommand{\Nul}{\textsf{N}}
\newcommand{\sgn}{\mathop{\text{sgn}}}
\renewcommand{\d}{\mathrm{d}}
\newcommand{\comp}[1]{{}^{c}{#1}}
\newcommand{\Pf}{\mathcal{P}_{\!\!f}}
\newcommand{\svt}[1]{{\substack{\vspace{.025cm} \\ #1}}} \newcommand{\svtt}[1]{{\substack{\vspace{.008cm} \\ #1}}}
\newcommand{\lint}{\mathop{\lrcorner}}
\newcommand{\ext}{\mathop{\scriptstyle{\wedge}}}
\newcommand\ca{{\bf 1}}
\newcommand\cA{\mathcal{A}}
\newcommand\cC{\mathcal{C}}
\newcommand\cD{\mathcal{D}}
\newcommand\cM{\mathcal{M}}
\newcommand\cP{\mathcal{P}}
\newcommand\cQ{\mathcal{Q}}
\newcommand\cS{\mathcal{S}}
\numberwithin{equation}{section}
\begin{document}
\title[Local Hardy Spaces]{Local Hardy Spaces of Differential Forms on Riemannian Manifolds}
\author[Carbonaro]{Andrea Carbonaro}
\author[McIntosh]{Alan McIntosh}
\author[Morris]{Andrew J. Morris}

\address{Andrea Carbonaro \\University of Birmingham\\ School of Mathematics\\ Watson Building\\  Edgbaston\\ Birmingham B15 2TT\\ United Kingdom}
\email{A.Carbonaro@bham.ac.uk}

\address{Alan McIntosh \\ Centre for Mathematics and its Applications\\ Mathematical Sciences Institute\\ Australian National University\\ Canberra\\ ACT 0200\\ Australia}
\email{Alan.McIntosh@anu.edu.au}

\address{Andrew J. Morris \\ Centre for Mathematics and its Applications\\ Mathematical Sciences Institute\\ Australian National University\\ Canberra\\ ACT 0200\\ Australia}
\email{Andrew.Morris@anu.edu.au}

\subjclass[2000]{Primary: 42B30; Secondary: 58J05 47F05 47B44 47A60}
\keywords{Local Hardy spaces, Riemannian manifolds, differential forms, Hodge--Dirac operators, local Riesz transforms, off-diagonal estimates}

\date{18 April 2011}

\begin{abstract}
We define local Hardy spaces of differential forms $h^p_{\mathcal D}(\wedge T^*M)$ for all $p\in[1,\infty]$ that are adapted to a class of first order differential operators $\mathcal D$ on a complete Riemannian manifold $M$ with at most exponential volume growth. In particular, if $D$ is the Hodge--Dirac operator on $M$ and $\Delta=D^2$ is the Hodge--Laplacian, then the local geometric Riesz transform ${D(\Delta+aI)^{-{1}/{2}}}$ has a bounded extension to $h^p_D$ for all $p\in[1,\infty]$, provided that $a>0$ is large enough compared to the exponential growth of $M$. A characterisation of $h^1_{\mathcal D}$ in terms of local molecules is also obtained. These results can be viewed as the localisation of those for the Hardy spaces of differential forms $H^p_D(\wedge T^*M)$ introduced by Auscher, McIntosh and Russ.
\end{abstract}

\maketitle

\tableofcontents

\section{Introduction and Main Results}\label{SectionIntroduction}
The local Hardy space $h^1(\R^n)$ introduced by Goldberg in \cite{Goldberg01} is an intermediate space $H^1(\R^n)\subset h^1(\R^n) \subset L^1(\R^n)$. The Hardy space $H^1(\R^n)$ is suited to quasi-homogenous multipliers, and indeed the boundedness of the Riesz transforms $(R_ju)\, \hat{\ }(\xi)=i{\xi_j}{|\xi|^{-1}}\hat{u}(\xi)$ is built into its definition. The local Hardy space $h^1(\R^n)$, however, is suited to \textit{smooth} quasi-homogenous multipliers, and the boundedness of local Riesz transforms, such as $(r_ju)\, \hat{}(\xi) =i\xi_j(|\xi|^2+a)^{-1/2}\hat{u}(\xi)$ for $a>0$, is built into its definition.

Let $M$ denote a complete Riemannian manifold with geodesic distance $\rho$ and Riemannian measure $\mu$. We adopt the convention that such a manifold $M$ is smooth and connected. Let $L^2(\wedge T^*M)$ denote the Hilbert space of square-integrable differential forms on $M$. Let $d$ and $d^*$ denote the exterior derivative and its adjoint on $L^2(\wedge T^*M)$. The Hodge--Dirac operator is $D=d+d^*$ and the Hodge--Laplacian is $\Delta=D^2$. The geometric Riesz transform $D\Delta^{-{1}/{2}}$ is bounded on $L^2(\wedge T^*M)$, which led Auscher, McIntosh and Russ in \cite{AMcR} to construct Hardy spaces of differential forms $H^p_D(\wedge T^*M)$, or simply $H^p_D$, for all $p\in[1,\infty]$. Amongst other things, they show that the geometric Riesz transform is bounded on $H^p_D$ and that $H^1_D$ has a molecular characterisation.

The atomic characterisation of $H^1(\R^n)$, due to Coifman \cite{Coifman01} and Latter \cite{Latter01}, was used by Coifman and Weiss in \cite{CW2} to define a Hardy space of functions on a space of homogeneous type. A requirement in the definition of Hardy space atoms $a$ is that they satisfy the moment condition $\int a=0$. The approach taken in \cite{AMcR} is instead based on the connection between the tent spaces $T^{p}(\R^{n+1}_+)$ and $H^p(\R^n)$. This connection was first recognised by Coifman, Meyer and Stein who showed in Section 9B of \cite{CMS} that $H^p(\R^n)$ is isomorphic to a complemented subspace of $T^{p}(\R^{n+1}_+)$ for all $p\in[1,\infty]$. More precisely, there exist two bounded operators $P:H^p(\R^n)\rightarrow T^{p}(\R^{n+1}_+)$ and $\pi:T^{p}(\R^{n+1}_+)\rightarrow H^p(\R^n)$ such that $P\pi$ is a projection and $H^p(\R^n)$ is isomorphic to $P\pi(T^{p}(\R^{n+1}_+))$.

The definition of the tent space $T^1(\R^{n+1}_+)$ and its atoms, which are not required to satisfy a moment condition, admit natural generalisations to differential forms. Also, both $P$ and $\pi$ are convolution-type operators, which can be interpreted in terms of the functional calculus of $-i \d/\d x$. The idea in \cite{AMcR} was to define $H^p_D$ in terms of the tent space of differential forms $T^p(\wedge T^*M\times(0,\infty))$ and operators $\cQ$ and $\cS$, which are adapted to $D$ in the same way that $P$ and $\pi$ are adapted to~$-i \d/\d x$. The main requirement for the construction was that operators such as the projection $\cQ\cS$ be bounded on $T^p(\wedge T^*M\times(0,\infty))$. The authors of \cite{AMcR} prove this by using off-diagonal estimates for the resolvents of $D$ to establish uniform bounds on tent space atoms.

In this paper, we adapt the definition of $H^p_D$ to define local Hardy spaces of differential forms $h^p_D(\wedge T^*M)$, or simply $h^p_D$, for all $p\in[1,\infty]$. We first consider a general locally doubling metric measure space $X$, and define a local tent space $t^p(X\times(0,1])$ and a new function space $L^p_{\mathscr{Q}}(X)$, both of which have an atomic characterisation for $p=1$ and admit a natural generalisation to differential forms. One can show classically that $h^p(\R^n)$ is isomorphic to a complemented subspace of $t^p(\R^{n}\times(0,1])\oplus L^p_{\mathscr{Q}}(\R^n)$. Whilst square function characterisations for $h^p(\R^n)$ are certainly known, this characterisation appears to be new.

The atomic characterisation of $h^1(\R^n)$, due to Goldberg \cite{Goldberg01}, consists of two types of atoms. The first kind are supported on balls of radius less than one and satisfy a moment condition, whilst the second kind are supported on balls with radius larger than one and are not required to satisfy a moment condition. In our new characterisation, we can associate the first kind of atoms with elements of ${t^1(\R^{n}\times(0,1])}$ and the second kind with elements of $L^1_{\mathscr{Q}}(\R^n)$.

The definition of $H^p_D$ in \cite{AMcR} is limited to Riemannian manifolds that are doubling, which we define below using the following notation. Given $x\in M$ and $r>0$, let $B(x,r)$ denote the open geodesic ball in $M$ with centre $x$ and radius $r$, and let $V(x,r)$ denote the Riemannian measure $\mu(B(x,r))$.
\begin{Def}\label{DefD}
A complete Riemannian manifold $M$ is \textit{doubling} if there exists $A\geq1$ such that
\begin{equation}\tag{D}\label{D}
0<V(x,2r)\leq A V(x,r)<\infty
\end{equation}
for all $x\in X$ and $r>0$.
\end{Def}

The doubling condition is equivalent to the requirement that there exist $A\geq1$ and $\kappa\geq0$ such that
\begin{equation*}
0<V(x,\alpha r)\leq A\alpha^{\kappa} V(x,r)<\infty
\end{equation*}
for all $x\in X$, $r>0$ and $\alpha\geq1$. This condition is imposed to define $H^p_D$ because the Hardy space norm incorporates global geometry. The nature of the local Hardy space, however, allows us to define $h^p_D$ on manifolds that are only locally doubling. Specifically, we define $h^p_D$ on the following class of manifolds.
\begin{Def}\label{DefExpLD}
A complete Riemannian manifold $M$ is \textit{exponentially locally doubling} if there exist $A\geq1$ and $\kappa,\lambda\geq0$ such that
\begin{equation}\label{ELD}\tag{E$_{\kappa,\lambda}$}
0<V(x,\alpha r)\leq A\alpha^\kappa e^{\lambda(\alpha-1) r} V(x,r)<\infty
\end{equation}
for all $\alpha\geq1$, $r>0$ and $x\in M$. The constants $\kappa$ and $\lambda$ are referred to as the \textit{polynomial} and \textit{exponential growth parameters}, respectively.
\end{Def}

The class of doubling Riemannian manifolds includes $\R^n$ with the Euclidean distance and the standard Lebesgue measure, as well as Lie groups with polynomial volume growth; other examples are listed in \cite{AMcR}. The class of exponentially locally doubling Riemannian manifolds is larger and includes hyperbolic space (see Section 3.H.3 of \cite{GHL}), Lie groups with exponential volume growth (see Section II.4 of \cite{DtER}) and thus all Lie groups. More generally, by Gromov's variant of the Bishop comparison theorem (see \cite{BishopCrittenden64,Gromov81}), all Riemannian manifolds with Ricci curvature bounded from below are exponentially locally doubling. This includes Riemannian manifolds with bounded geometry, noncompact symmetric spaces and Damek--Ricci spaces.

Taylor recently defined local Hardy spaces of functions on Riemannian manifolds with bounded geometry in \cite{Taylor}. Hardy spaces of functions have also been defined on some nondoubling metric measure spaces in \cite{CaMaMe09,CaMaMe10}; extensions of that work are also in \cite{CarbonaroMauceriMeda2010,MauceriMedaVallarino2010(1), MauceriMedaVallarino2010(2)}. The theory developed in those papers applies to $\R^n$ with the Euclidean distance and the Gaussian measure, as well as to Riemannian manifolds on which the Ricci curvature is bounded from below and the Laplace--Beltrami operator has a spectral gap.

The Hardy spaces $H^p_D$ in \cite{AMcR} are defined using the holomorphic functional calculus of $D$ in $L^2(\wedge T^* M)$. In particular, the authors consider the class $H^\infty(S^o_\theta)$ of functions that are bounded and holomorphic on the open bisector $S^o_\theta$ of angle ${\theta\in(0,\pi/2)}$ centered at the origin in the complex plane. This is because the function $\sgn(\re(z))=z/\sqrt{z^2}$ maps $D$ to the geometric Riesz transform $D\Delta^{-{1}/{2}}$ under the $H^\infty(S^o_\theta)$ functional calculus. The local Hardy spaces, however, are suited to the local geometric Riesz transforms $D(\Delta+aI)^{-{1}/{2}}$ for $a>0$, so we consider the smaller class $H^\infty(S^o_{\theta,r})$ of functions that are bounded and holomorphic on $S^o_\theta\cup D^o_r$, where $D^o_r$ is the open disc of radius $r>0$ centered at the origin in the complex plane.

The space $h^1_D$ has a characterisation in terms of local molecules, which are defined in Section \ref{SubSection2ndmainresult}. This is the first main result of the paper.
\begin{Thm}\label{Thm: Intro.Molecules}
Let $\kappa,\lambda\geq0$ and suppose that $M$ is a complete Riemannian manifold satisfying \eqref{ELD}. If $N\in\N$, $N>\kappa/2$ and $q\geq\lambda$, then
$h^1_D=h^1_{D,\text{mol}(N,q)}$.
\end{Thm}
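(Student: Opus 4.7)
The strategy is to prove both inclusions $h^1_{D,\text{mol}(N,q)}\subseteq h^1_D$ and $h^1_D\subseteq h^1_{D,\text{mol}(N,q)}$, following the scheme used for $H^1_D$ in \cite{AMcR} but adapted to the local setting. The key ingredients will be: (i) the representation of $h^1_D$ as the image of $t^1(\wedge T^*M\times(0,1])\oplus L^1_{\mathscr{Q}}(\wedge T^*M)$ under a synthesis operator $\cS$ built from the $H^\infty(S^o_{\theta,r})$-calculus of $D$; (ii) the atomic decompositions of both summands in the exponentially locally doubling setting; and (iii) off-diagonal $L^2$-estimates, uniform for $t\in(0,1]$, for $\psi(tD)$ with $\psi\in H^\infty(S^o_{\theta,r})$, enjoying decay sufficient to defeat the exponential volume growth appearing in \eqref{ELD}.

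For $h^1_{D,\text{mol}(N,q)}\subseteq h^1_D$, I would take a local $(N,q)$-molecule $m$ associated with a ball $B=B(x_B,r_B)$ and construct a pair $F\in t^1\oplus L^1_{\mathscr{Q}}$ with $\cS F=m$ and $\|F\|\lesssim 1$, by inserting a Calder\'on reproducing formula for $D$ and dyadic-annularly decomposing the result around $B$. On each annulus $2^{k+1}B\setminus 2^kB$ the resulting piece is estimated by combining the off-diagonal decay of $\psi(tD)$ with the molecular pointwise bound on $m$; the hypothesis $q\geq\lambda$ absorbs the $e^{\lambda(\alpha-1)r}$ factor from \eqref{ELD}, while $N>\kappa/2$ makes the geometric series in $k$ against the polynomial factor $\alpha^{\kappa}$ converge. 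Subadditivity of $\|\cdot\|_{h^1_D}$ on finite sums and an $L^1$-density argument then extend the bound to the whole molecular space.

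For the reverse inclusion, I would write $f\in h^1_D$ as $f=\cS F$ with $F\in t^1\oplus L^1_{\mathscr{Q}}$, apply the atomic decompositions of both summands to obtain $F=\sum_j\lambda_j A_j$ with $\sum_j|\lambda_j|\lesssim\|F\|$, and verify that each $\cS A_j$ is a constant multiple of a local $(N,q)$-molecule. When $A_j$ is a $t^1$-atom supported in the Carleson tent over a ball $B$ with $r_B\leq 1$, the representation $\cS A_j=\int_0^1\psi(tD)A_j(\cdot,t)\,\d t/t$ inherits the cancellation of the $H^\infty(S^o_{\theta,r})$-calculus and produces a molecule of the first kind, with exponentially-weighted decay controlled as above. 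When $A_j$ is an $L^1_{\mathscr{Q}}$-atom supported in a ball of radius $1$, the corresponding contribution yields a molecule of the second kind, whose large-scale decay again comes from the same off-diagonal estimates, once more using $q\geq\lambda$ and $N>\kappa/2$ in the summation.

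I expect the main obstacle to be the uniform off-diagonal estimates for $\psi(tD)$ over $t\in(0,1]$ with decay rate strong enough to be matched against the exponential weight $e^{q\rho(\cdot,x_B)}$ appearing in the molecular definition, and the selection of $\psi\in H^\infty(S^o_{\theta,r})$ that simultaneously serves as an admissible symbol in the Calder\'on reproducing formula and produces, when required, molecules with a vanishing moment of order $N$. This is precisely where the restriction to the smaller class $H^\infty(S^o_{\theta,r})$, the scale cutoff at $t=1$ built into $t^1$, and the constraints $N>\kappa/2$ and $q\geq\lambda$ are all used in a coordinated way; once these analytic ingredients are in place, both inclusions follow by combining them with the atomic decompositions of $t^1$ and $L^1_{\mathscr{Q}}$.
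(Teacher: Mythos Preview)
Your proposal is correct and follows essentially the same route as the paper. The paper proves the general statement (Theorem~\ref{mainmoleculeresult}) by exactly the two inclusions you describe: for $h^1_\cD\subseteq h^1_{\cD,\text{mol}(N,q)}$ it writes $u=\tilde\cS^\cD_{\tilde\psi,\tilde\phi}(V,v)$, decomposes $(V,v)$ into $t^1$- and $L^1_{\mathscr Q}$-atoms, and shows via off-diagonal estimates (Lemmas~\ref{localmolecularlemma1} and~\ref{localmolecularlemma1.5}) that the synthesis of each atom is a molecule; for $h^1_{\cD,\text{mol}(N,q)}\subseteq h^1_\cD$ it estimates $\|\cQ^\cD_{\psi,\phi}a\|_{t^1\oplus L^1_{\mathscr Q}}$ uniformly over molecules $a$ (Lemmas~\ref{localmolecularlemma2} and~\ref{localmolecularlemma2.5}), which is precisely your ``construct $F$ with $\cS F=m$ via the reproducing formula'' step, and the conditions $N>\kappa/2$ and $q\geq\lambda$ enter exactly where you anticipate.
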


The following is the principal result of the paper.

\begin{Thm}\label{Thm: Intro.FuncCalc}
Let $\kappa,\lambda\geq0$ and suppose that $M$ is a complete Riemannian manifold satisfying \eqref{ELD}. Let $\theta\in(0,\pi/2)$ and $r>0$ such that $r\sin\theta>\lambda/2$. Then for all $f\in H^\infty(S_{\theta,r}^o)$, the operator $f(D)$ on $L^2$ has a bounded extension to $h^p_D$ such that
\[
\|f(D)u\|_{h^p_D}\lesssim \|f\|_\infty \|u\|_{h^p_D}
\]
for all $u\in h^p_D$ and $p\in[1,\infty]$.
\end{Thm}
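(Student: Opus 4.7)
The plan is to follow the operator-adapted strategy of Auscher, McIntosh and Russ \cite{AMcR}, now carried out in the local setting using the characterisation of $h^p_D$ as a complemented subspace of $t^p(M\times(0,1]) \oplus L^p_{\mathscr{Q}}$ via operators $\cS$ and $\cQ$ built from the $L^2$ functional calculus of $D$. Since $f(D)$ commutes with every bounded function of $D$ on $L^2$, the boundedness of $f(D)$ on $h^p_D$ reduces to the boundedness of a singular integral on the tent-space side with $D$-adapted kernel $\Pi_{t,s} := \psi(tD)\,f(D)\,\tilde\psi(sD)$, together with a companion statement on the $L^p_{\mathscr{Q}}$-component.

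Concretely, I would first fix an auxiliary $\psi \in H^\infty(S^o_{\theta',r'})$ for slightly enlarged parameters $\theta<\theta'<\pi/2$, decaying to prescribed high orders both at the origin and at infinity, so as to obtain a Calderón reproducing formula on $L^2$ of the form $u = c\int_0^1 \tilde\psi(tD)\psi(tD)u\,\frac{dt}{t} + \eta(D)u$, with $\eta$ a smooth cutoff near the origin encoding the $L^p_{\mathscr{Q}}$-contribution. Commutation with $f(D)$ on $L^2$ then gives, for $u\in h^p_D\cap L^2$,
\[
\cQ_\psi(f(D)u)(x,t)\;=\;\int_0^1 \Pi_{t,s}\bigl[\cQ_\psi u(\cdot,s)\bigr](x)\,\frac{ds}{s}\;+\;(\text{local term involving }\eta(D)),
\]
so the theorem is reduced to tent-space bounds for $\Pi$ and to a simpler statement for the large-scale piece on $L^p_{\mathscr{Q}}$.

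The core analytic task is to establish off-diagonal $L^2$ bounds on $\Pi_{t,s}$ of the form
\[
\|\ca_E\,\Pi_{t,s}\,\ca_F\|_{L^2\to L^2}\;\lesssim\;\|f\|_\infty\,\min(s/t,t/s)^N\,\exp\!\bigl(-c\,\rho(E,F)/\max(s,t)\bigr),\qquad s,t\in(0,1],
\]
with a decay constant $c$ strictly larger than $\lambda$. The scaling factor $\min(s/t,t/s)^N$ would follow from the combined vanishing of $\psi$ and $\tilde\psi$ at $0$ and $\infty$ together with a Cauchy integral representation of $f(D)$, while the spatial decay would follow from off-diagonal resolvent estimates for $D$ on $M$. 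The hypothesis $r\sin\theta>\lambda/2$ is used precisely here: it guarantees that the Cauchy contour for $f\in H^\infty(S^o_{\theta,r})$ can be chosen so that the resulting decay constant $c$ dominates the exponential volume growth $\lambda$ from \eqref{ELD}, and thus the off-diagonal factor is locally integrable against the volume measure.

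The main obstacle is converting these pointwise-in-$(t,s)$ estimates into genuine tent-space boundedness for the integral operator with kernel $\Pi_{t,s}$ on $t^p(M\times(0,1])$, and to the analogous boundedness on $L^p_{\mathscr{Q}}$. For $p=2$ this is a Schur estimate in $\log(t/s)$; for $p=1$ I would check the mapping property on $t^1$-atoms and on generators of $L^1_{\mathscr{Q}}$, verifying through Theorem \ref{Thm: Intro.Molecules} that $f(D)$ sends atoms to sums of local molecules with summable coefficients, where the spatial decay in $\Pi_{t,s}$ together with the polynomial bound $\kappa$ from \eqref{ELD} controls the molecular tails. The remaining $p\in(1,\infty)$ then follow by real interpolation with the $L^2$ case, and $p=\infty$ by duality with the $h^1_D$ endpoint.
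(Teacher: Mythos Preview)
Your strategy is essentially that of the paper: reduce $f(D)$ on $h^p_D$ to the boundedness of $\cQ_{\psi,\phi}\,f(D)\,\cS_{\tilde\psi,\tilde\phi}$ on $t^p\oplus L^p_{\mathscr Q}$, prove this at $p=1$ by testing on atoms using off-diagonal bounds, get $p=2$ for free from the $L^2$ functional calculus, and fill in the rest by interpolation and duality. That is exactly the architecture of Theorems~\ref{locMainEst} and~\ref{mainresult}.

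There is, however, one substantive inaccuracy in your off-diagonal input. You state
\[
\|\ca_E\,\Pi_{t,s}\,\ca_F\|\lesssim \|f\|_\infty\,\min(s/t,t/s)^N\,\exp\bigl(-c\,\rho(E,F)/\max(s,t)\bigr),
\]
i.e.\ Davies--Gaffney decay scaled by $\max(s,t)$. In the present local setting this does not hold and is not what is used. The Cauchy contour for $f\in H^\infty(S^o_{\theta,r})$ contains a piece on $\partial D_{\tilde r}$ where $|z|\approx r$ is fixed, so the resolvent decay there is $\exp(-a\,r\sin\theta\,\rho(E,F))$ independently of $t$ and $s$; see Lemmas~\ref{ODexp0} and~\ref{ODexpHvNP} and Proposition~\ref{ODexp5}. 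The correct estimate therefore has a \emph{fixed} exponential rate $a\,r\sin\theta$ (any $a<1$), combined with the polynomial factors $\min(s/t,t/s)^\sigma\langle \max(s,t)/\rho(E,F)\rangle^M$. The hypothesis $r\sin\theta>\lambda/2$ is used exactly so that this fixed rate beats the exponential volume growth; in the atom estimates (Lemmas~\ref{LemQ11}--\ref{LemQ22}) one needs $2a\,r\sin\theta>\lambda$, not $c>\lambda$. If you attempt to prove the scaled version you will be blocked by the disc part of the contour; once you replace it with the fixed-rate estimate, your argument goes through.

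A smaller point of divergence: for $p=1$ you propose to invoke Theorem~\ref{Thm: Intro.Molecules} to see that $f(D)$ carries atoms to sums of molecules. The paper does not take this detour; it proves directly (Lemmas~\ref{LemQ11}--\ref{LemQ22}) that $\cQ_{\psi,\phi}f(D)\cS_{\tilde\psi,\tilde\phi}$ maps $t^1$-atoms and $L^1_{\mathscr Q}$-atoms to bounded elements of $t^1\oplus L^1_{\mathscr Q}$, which is both cleaner and avoids any apparent circularity (the molecular characterisation itself rests on the same main estimate, Theorem~\ref{locMainEst}). Also, the paper uses complex interpolation (Theorem~\ref{Thm: Interpolation}), not real interpolation.
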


There is then the following corollary for the local geometric Riesz transforms.
\begin{Cor}\label{Cor: Intro.Riesz}
Let $\kappa,\lambda\geq0$ and suppose that $M$ is a complete Riemannian manifold satisfying \eqref{ELD}. If $a>\lambda^2/4$, then the local geometric Riesz transform ${D(\Delta+aI)^{-1/2}}$ has a bounded extension to $h^p_D$ for all $p\in[1,\infty]$.
\end{Cor}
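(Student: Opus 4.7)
The plan is to realise the local Riesz transform as $f(D)$ for a specific bounded holomorphic function $f$ on a set of the form $S^o_{\theta,r}$, and then invoke Theorem \ref{Thm: Intro.FuncCalc}. The natural candidate is
\[
f(z) = \frac{z}{(z^2+a)^{1/2}},
\]
where $(z^2+a)^{1/2}$ is the branch of the square root cut along the two half-lines $\{\pm it : t \geq \sqrt{a}\}$, normalised by $(0+a)^{1/2}=\sqrt{a}$. Provided $f \in H^\infty(S^o_{\theta,r})$ for suitable $\theta, r$, the composition/product rules of the bounded $H^\infty$ functional calculus of $D$ on $L^2$ give $f(D) = D(\Delta+aI)^{-1/2}$, and Theorem \ref{Thm: Intro.FuncCalc} then delivers the desired $h^p_D$-boundedness.

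The first step is to choose $r$ and $\theta$. The hypothesis $a > \lambda^2/4$ gives $\sqrt{a} > \lambda/2$, so I can fix any $r$ with $\lambda/2 < r < \sqrt{a}$, and then pick $\theta \in (0,\pi/2)$ large enough that $\sin\theta > \lambda/(2r)$; this ensures $r \sin\theta > \lambda/2$, as required in Theorem \ref{Thm: Intro.FuncCalc}. The second step is to check that $f$ is holomorphic and bounded on $S^o_{\theta,r} = S^o_\theta \cup D^o_r$. Since the branch points $\pm i\sqrt{a}$ lie on the imaginary axis at distance $\sqrt{a} > r$ from the origin, they avoid both $D^o_r$ (being outside the disc) and $S^o_\theta$ (being on the imaginary axis, which meets $S^o_\theta$ only at $0$); the chosen branch cuts lie entirely on the imaginary axis outside $\overline{D^o_r}$, so they too avoid $S^o_{\theta,r}$. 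Hence $f$ is holomorphic on $S^o_{\theta,r}$. For boundedness, on $D^o_r$ the function is continuous with $|z^2+a|$ bounded below by a positive constant; on $S^o_\theta$, one has $|f(z)| \to 1$ as $|z|\to\infty$ and $|f(z)|$ stays bounded on the complement of any neighbourhood of $\pm i\sqrt{a}$. Combining these gives $f \in H^\infty(S^o_{\theta,r})$.

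The third step is the identification $f(D) = D(\Delta+aI)^{-1/2}$. Since $a>0$, the operator $\Delta + aI = D^2 + aI$ is positive and invertible on $L^2$, so its square root is defined by the usual functional calculus and agrees with $g(D)$ for $g(z) = (z^2+a)^{1/2}$; multiplicativity of the functional calculus then identifies $z \cdot g(z)^{-1}$ with $D(\Delta+aI)^{-1/2}$.

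The final step is to apply Theorem \ref{Thm: Intro.FuncCalc} with the $f$ and $(\theta,r)$ just chosen, which yields a bounded extension of $D(\Delta+aI)^{-1/2}$ to $h^p_D$ for every $p\in[1,\infty]$, with operator norm controlled by $\|f\|_\infty$. There is no real obstacle here: all the work has been done in Theorem \ref{Thm: Intro.FuncCalc}, and the only mild subtlety is verifying that the geometric constraint $a>\lambda^2/4$ is exactly what is needed to fit a disc of radius $r>\lambda/2$ inside the holomorphy domain of $(z^2+a)^{-1/2}$ while still meeting the hypothesis $r\sin\theta > \lambda/2$ of the main theorem.
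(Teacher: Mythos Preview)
Your proposal is correct and follows essentially the same route as the paper: the paper also defines $f(z)=z(z^2+a)^{-1/2}$, chooses $\theta\in(0,\pi/2)$ and $r\in(\lambda/(2\sin\theta),\sqrt{a})$ so that $r\sin\theta>\lambda/2$, and then applies the main functional calculus theorem. The only cosmetic difference is that the paper fixes $\theta$ first and then $r$, whereas you fix $r$ first and then $\theta$; the resulting constraints are equivalent, and your additional verification that $f$ is holomorphic and bounded on $S^o_{\theta,r}$ simply makes explicit what the paper leaves to the reader.
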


The theory in this paper actually applies to a large class of first order differential operators, which we introduce in Section \ref{SectionODE}. These operators are denoted by $\cD$. Theorems \ref{Thm: Intro.Molecules} and~\ref{Thm: Intro.FuncCalc} follow from the more general results in Theorems \ref{mainmoleculeresult} and~\ref{mainresult} by setting $\cD=D$, where $D$ will always denote the Hodge--Dirac operator. The case of the Hodge--Dirac operator is considered in Example~\ref{Eg: HodgeDirac}.

Taylor proved in \cite{Taylor89} that on a Riemannian manifold with bounded geometry, where $\Delta_0$ denotes the Hodge--Laplacian on functions, a sufficient condition for the operator $f(\sqrt{\Delta_0})$ to be bounded on $L^p$ for all $p\in(1,\infty)$ is that $f$ be holomorphic and satisfy inhomogeneous Mihlin boundary conditions on an open strip of width $W\geq \lambda/2$ in the complex plane, where $\lambda\geq0$ is such that $\eqref{ELD}$ holds. This result was improved by Mauceri, Meda and Vallarino in \cite{MauceriMedaVallarino2009}, and then by Taylor in \cite{Taylor2009}. The need for $f$ to be holomorphic on a strip was originally noted by Clerc and Stein in the setting of noncompact symmetric spaces in \cite{ClercStein74}, and that work was extended by others in \cite{AnkerLohoue86, CheegerGromovTaylor82, StantonTomas78}.

In this paper, we do not assume bounded geometry. Theorem~\ref{Thm: Intro.FuncCalc} represents the beginning of the development of an approach to the theory discussed above based on first order operators. The connection between $h^p_\cD$ and $L^p$ is investigated in the sequel, although note that $h^2_\cD$ is defined so that it can be identified with $L^2$. Also, Taylor's result suggests that the bound $r\sin\theta> \lambda/2$ in Theorem~\ref{Thm: Intro.FuncCalc} may be the best possible, since $r\sin\theta$ is the width of the largest open strip contained in $S^o_{\theta,r}$.

For all $x,y\in\R$, we adopt the convention whereby $x\lesssim y$ means that there exists a constant $c\geq1$, which may only depend on constants specified in the relevant preceding hypotheses, such that $x\leq cy$. To emphasize that the constant $c$ depends on a specific parameter $p$, we write $x \lesssim_p y$. Also, we write $x\eqsim y$ to mean that $x\lesssim y \lesssim x$. For all normed spaces $X$ and $Y$, we write $X \subseteq Y$ to mean both the set theoretical inclusion and the topological inclusion, whereby $\|x\|_Y \lesssim \|x\|_X$ for all $x\in X$. Also, we write $X = Y$ to mean that $X$ and $Y$ are equal as sets and that they have equivalent norms.

The structure of the paper is as follows: In Section 2 we develop local analogues of some basic tools from harmonic analysis in the context of a locally doubling metric measure space $X$. The local tent spaces $t^p(X\times(0,1])$ and the new spaces $L^p_{\mathscr{Q}}(X)$ are introduced and shown to have atomic characterisations for $p=1$ in Sections \ref{SectionLocalTent} and \ref{SectionNewFunctionSpaces}, respectively. We also obtain duality and interpolation results for these spaces. Next, we introduce a general class of first order differential operators, which includes the Hodge--Dirac operator. We denote these operators by $\cD$ and prove exponential off-diagonal estimates for their resolvents in Section \ref{SectionODE}. These are used to prove the main technical estimate in Section \ref{SectionTheMainEstimate}, which allows us to define the local Hardy spaces of differential forms $h^p_{\cD}(\wedge T^*M)$ in Section \ref{Sectionh^1defintion}. We also obtain duality and interpolation results for the local Hardy spaces. Finally, Theorems \ref{Thm: Intro.Molecules} and \ref{Thm: Intro.FuncCalc} follow from Theorems \ref{mainmoleculeresult} and \ref{mainresult} in the case of the Hodge--Dirac operator.

\section{Localisation}\label{SectionLocalisation}
The subsequent two sections do not require a differentiable structure. To distinguish these results, it is convenient to let $X$ denote a metric measure space with metric $\rho$ and Borel measure $\mu$.

\begin{Not1}
A ball in $X$ will always refer to an open metric ball. Given $x\in X$ and $r>0$, let $B(x,r)$ denote the ball in $X$ with centre $x$ and radius $r$, and let $V(x,r)$ denote the measure $\mu(B(x,r))$. Given $\alpha,r>0$ and a ball $B$ of radius $r$, let $\alpha B$ denote the ball with the same centre as $B$ and radius $\alpha r$.
\end{Not1}

The results here and in the next section hold if we assume the following local doubling condition.

\begin{Def}\label{DefLD} A metric measure space $X$ is \textit{locally doubling} if for each $r>0$, the function $x\mapsto V(x,r)$ is continuous on $X$, and if for each $b>0$, there exists $A_b\geq1$ such that
\begin{equation}\tag{D$_\text{loc}$}\label{LD}
0<V(x,2r)\leq A_b V(x,r)<\infty
\end{equation}
for all $x\in X$ and $r\in(0,b]$.
\end{Def}

\begin{Rem} The continuity of $x\mapsto V(x,r)$ is assured on a complete Riemannian manifold and in most applications, but in general only lower semicontinuity is guaranteed. We require this condition because it implies that the volumes of open balls and closed balls are identical (see also Remark~\ref{Alocmsblty}). \end{Rem}

If $\sup_b A_b<\infty$, then \eqref{LD} is equivalent to \eqref{D} in Definition~\ref{DefD}. In fact, the doubling condition was introduced by Coifman and Weiss in \cite{CW} to define a space of homogeneous type. The results here are a localised version of that work. We begin by proving the following useful consequence of local doubling.

\begin{Prop}\label{PropkappaLD}
If $X$ is locally doubling, then for each $b>0$ there exists $\kappa_b\geq 0$ such that
\[V(x,\alpha r)\leq A_b\alpha^{\kappa_b} V(x,r)\]
for all $x\in X$, $r\in(0,b]$ and $\alpha\in[1,2b/r]$.
\end{Prop}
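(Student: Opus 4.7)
The plan is to iterate the one-step local doubling condition \eqref{LD} the appropriate number of times. Fix $x\in X$, $r\in(0,b]$ and $\alpha\in[1,2b/r]$. The case $\alpha=1$ is immediate from $A_b\geq 1$, so I would assume $\alpha>1$ and let $k\in\N$ be the unique integer with $2^{k-1}<\alpha\leq 2^k$. Monotonicity of $V(x,\cdot)$ then gives $V(x,\alpha r)\leq V(x,2^k r)$, and iterating \eqref{LD} produces
\[
V(x,2^k r)\leq A\, V(x,2^{k-1} r)\leq \cdots \leq A^{k}\, V(x,r),
\]
provided the same doubling constant $A$ may be used at every step.

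The key point, and the only subtlety, is that each application of \eqref{LD} above occurs at a radius of the form $2^{j-1}r$ with $1\leq j\leq k$, and the largest of these radii is $2^{k-1}r$, which satisfies $2^{k-1}r<\alpha r\leq 2b$. Therefore every invocation of \eqref{LD} happens in the range $(0,2b]$, so I would take $A:=A_{2b}$ as the uniform doubling constant; the constraint $\alpha\leq 2b/r$ is built into the proposition precisely so that this works.

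To finish, I would convert the exponential bound $A^k$ into the polynomial factor $\alpha^{\kappa_b}$ using $k-1<\log_2\alpha$:
\[
A^{k}\;<\;A\cdot A^{\log_2 \alpha}\;=\;A\cdot\alpha^{\log_2 A}.
\]
Setting $\kappa_b:=\log_2 A_{2b}$, and noting that $A_{2b}$ dominates $A_b$ (so it can be absorbed into the constant $A_b$ in the statement under the usual convention that $b\mapsto A_b$ is taken nondecreasing), yields $V(x,\alpha r)\leq A_b\alpha^{\kappa_b} V(x,r)$, as required. I do not expect any real obstacle: the result is a direct algebraic consequence of the doubling inequality, and the constraint $\alpha r\leq 2b$ is exactly what makes a single doubling constant applicable throughout the iteration.
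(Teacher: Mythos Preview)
Your strategy is exactly the paper's: choose the integer $N$ with $2^{N-1}<\alpha\le 2^N$ and iterate \eqref{LD} $N$ times, then convert $A^N$ into $A\cdot\alpha^{\log_2 A}$. There is, however, a small but genuine flaw in the last step. You obtain the constant $A_{2b}$ out front, and then write that ``$A_{2b}$ dominates $A_b$, so it can be absorbed into the constant $A_b$''. That inequality points the wrong way: $A_{2b}\ge A_b$, so you cannot bound $A_{2b}\alpha^{\log_2 A_{2b}}$ by $A_b\alpha^{\kappa_b}$ in general. What you have actually proved is the estimate with $A_{2b}$ in place of $A_b$, which is weaker than the stated proposition (where $A_b$ is the specific constant from Definition~\ref{DefLD}).

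The paper avoids this by iterating \emph{downward} from $\alpha r$ rather than first overshooting to $2^N r$ and iterating upward from $r$. With $N=\lceil\log_2\alpha\rceil$ one applies \eqref{LD} successively to the radii $\alpha r/2,\alpha r/4,\dots,\alpha r/2^N$; since $\alpha r\le 2b$, every one of these radii lies in $(0,b]$, so the constant $A_b$ itself applies at each step. This gives $V(x,\alpha r)\le A_b^N V(x,\alpha r/2^N)\le A_b^N V(x,r)$ (the last inequality by monotonicity, as $\alpha/2^N\le1$), and then $A_b^N\le A_b\alpha^{\log_2 A_b}$ exactly as you computed. The fix is a one-line change to your iteration, but it is needed to recover the stated constant.
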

\begin{proof} Let $N=\lceil\log_2 \alpha\rceil$, which is the smallest integer not less than $\log_2 \alpha$, so that $2^{N-1}<\alpha\leq2^N$ and $B(x,\tfrac{\alpha}{2^N}r) \subseteq B(x,r)$. Application of the \eqref{LD} inequality $N$~times reveals that
\[V(x,\alpha r) \leq A_b^N V(x,\tfrac{\alpha}{2^N}r) \leq A_b\alpha^{\kappa_b} V(x,r),\]
where $\kappa_b=\log_2 A_b$.
\end{proof}

We introduce the local property of homogeneity, which is the local analog of the property of homogeneity from \cite{CW}, and show that it holds on a locally doubling space. This property allows us to apply harmonic analysis locally on $X$.
\begin{Def}
A metric space $(X,\rho)$ has the \textit{local property of homogeneity} if for each $b>0$ there exists $N_b\in\N$ such that for all $x\in X$ and $r\in(0,b]$, the ball $B(x,r)$ contains at most $N_b$ points $(x_j)_{j=1,...,N_b}$ satisfying $\rho(x_j,x_k)\geq{r}/{2}$ for all $j\neq k$.
\end{Def}

\begin{Rem} \label{CW1.1loc}
The local property of homogeneity is equivalent to the requirement that if $b>0$, then for all $x\in X$, $r\in(0,b]$ and $n\in\N$, the ball $B(x,r)$ contains at most $N_b^n$ points $(x_j)_{j=1,...,N_b^n}$ satisfying $\rho(x_j,x_k)\geq{r}/{2^n}$ for $j\neq k$. The proof of this is similar to that of Lemma~1.1 in Chapter III of \cite{CW}. This property is more suited to applications. It can be used, for instance, to prove the next proposition.
\end{Rem}

\begin{Prop} \label{DoublingHomogloc}
If $X$ is a locally doubling metric measure space, then it has the local property of homogeneity.
\end{Prop}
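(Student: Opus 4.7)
The plan is to use a standard volume-packing argument, with the local doubling condition playing the role that global doubling plays in the classical case from \cite{CW}. Given $b>0$, $x\in X$, $r\in(0,b]$, and a collection of points $x_1,\dots,x_N\in B(x,r)$ with $\rho(x_j,x_k)\geq r/2$ for $j\neq k$, I want to bound $N$ by a constant depending only on $b$.

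First I would separate the points via small disjoint balls: the triangle inequality shows that the balls $B(x_j,r/4)$ are pairwise disjoint (if $y$ lay in two of them, the centres would be within $r/2$), and the containment $B(x_j,r/4)\subseteq B(x,2r)$ is immediate since $\rho(x,x_j)<r$. Additivity of $\mu$ then gives $\sum_{j=1}^N V(x_j,r/4)\leq V(x,2r)$. Next I would turn this into a lower bound on each $V(x_j,r/4)$ by comparing it to $V(x,2r)$ from the other direction: since $\rho(x,x_j)<r$, we have $B(x,2r)\subseteq B(x_j,3r)$, so $V(x,2r)\leq V(x_j,3r)$. Applying Proposition~\ref{PropkappaLD} with parameter $3b$ (to ensure the restriction $\alpha\leq 2b'/r'$ is satisfied when $r'=r/4$ and $\alpha=12$ for all $r\in(0,b]$), one obtains $V(x_j,3r)\leq A_{3b}\,12^{\kappa_{3b}}V(x_j,r/4)$.

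Combining the two inequalities yields $N\cdot V(x,2r)/(A_{3b}\,12^{\kappa_{3b}})\leq V(x,2r)$, and hence $N\leq A_{3b}\,12^{\kappa_{3b}}$, which serves as the required constant $N_b$. The only real care-point is to invoke Proposition~\ref{PropkappaLD} at the right scale $b'$ so that the dilation factor $\alpha=12$ lies in the admissible range $[1,2b'/r']$ uniformly for $r\in(0,b]$; otherwise the proof is essentially the same packing argument as in the doubling case, and no genuine obstacle arises once the scale parameters are lined up correctly.
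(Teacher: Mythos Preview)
Your proof is correct and is precisely the standard volume-packing argument that the paper invokes by citing the Remark on page~67 of Chapter~III in \cite{CW}; the only adjustment is restricting to scales $r\in(0,b]$ and using Proposition~\ref{PropkappaLD} in place of global doubling, exactly as you have done. There is nothing to add.
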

\begin{proof}
This follows the proof of the Remark on page~67 in Chapter III of~ \cite{CW}.
\end{proof}

Following the scheme of \cite{CW}, we use the local property of homogeneity to prove local covering lemmas. The next two proofs are adapted from \cite{Aimar}, which treats the global case.

\begin{Prop}(Vitali-Wiener type covering lemma.) \label{Aimar2.11}
Let $X$ be a metric space with the local property of homogeneity. Let $\mathscr{B}$ be a collection of balls in $X$. If there is a finite upper bound on the radii of the balls in $\mathscr{B}$, then there exists a sequence $(B_j)_j$ of pairwise disjoint balls in $\mathscr{B}$ with the property that each $B\in\mathscr{B}$ is contained in some $4B_j$.
\end{Prop}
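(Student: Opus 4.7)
The approach is the classical Vitali stratification by radii, following~\cite{Aimar}, but with the layer ratio tuned so that the enlargement constant comes out to exactly $4$. Let $R:=\sup\{r_B:B\in\mathscr{B}\}$, where $r_B$ denotes the radius of $B$; this is finite by hypothesis. Partition $\mathscr{B}$ into geometric strata
\[
\mathscr{B}_k := \bigl\{B\in\mathscr{B} : R(2/3)^k < r_B \leq R(2/3)^{k-1}\bigr\},\qquad k=1,2,3,\dots,
\]
so that successive strata have radii within a factor of $3/2$ of one another; this is the critical numerology that produces the factor~$4$ rather than the $5$ of the standard dyadic-ratio argument.

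Next, select greedily stratum by stratum: let $\mathscr{F}_1\subseteq\mathscr{B}_1$ be a maximal pairwise disjoint subfamily (by Zorn's lemma), and having chosen $\mathscr{F}_1,\ldots,\mathscr{F}_{k-1}$, let $\mathscr{F}_k\subseteq\mathscr{B}_k$ be maximal among subfamilies whose members are pairwise disjoint and also disjoint from every ball in $\bigcup_{i<k}\mathscr{F}_i$. Enumerate $(B_j)_j$ as the elements of $\bigcup_k\mathscr{F}_k$.

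To verify the covering property, fix $B\in\mathscr{B}_k$ with centre $x$ and radius $r$. By maximality at step $k$, $B$ must meet some $B_j\in\mathscr{F}_i$ with $i\leq k$, having centre $x_j$ and radius $r_j > R(2/3)^i \geq R(2/3)^k \geq (2/3)r$, so that $r < (3/2)r_j$. For any $y\in B$, the triangle inequality then gives
\[
\rho(y,x_j) \leq \rho(y,x)+\rho(x,x_j) < r+(r+r_j) < 3r_j + r_j = 4r_j,
\]
whence $B\subseteq 4B_j$, as required.

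The main obstacle is to confirm that $\bigcup_k\mathscr{F}_k$ is countable, since Zorn's lemma by itself yields only an indexed set. This is precisely where the local property of homogeneity is essential: fixing any $x_0\in X$ one has $X=\bigcup_{n\in\N} B(x_0,n)$ since $\rho$ is finite-valued, and inside each $B(x_0,n)$ the centres of balls in $\mathscr{F}_k$ are pairwise $2R(2/3)^k$-separated, so Remark~\ref{CW1.1loc} (applied with $b=n$ and a suitable power of $2$) caps their number by a finite constant depending on $n$ and $k$. A countable union over $n$ and $k$ then exhibits $\bigcup_k\mathscr{F}_k$ as a sequence.
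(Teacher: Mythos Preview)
Your proof is essentially the same as the paper's: it too stratifies $\mathscr{B}$ by the ratio $\delta=2/3$, selects maximal disjoint subfamilies stratum by stratum, derives the factor $4$ from $2/\delta+1=4$, and invokes the local property of homogeneity for countability via an exhaustion by large balls. One minor slip: in a general metric space, disjointness of two balls of radii $>R(2/3)^k$ only yields centre separation $>R(2/3)^k$, not $2R(2/3)^k$ (from $c_1\notin B(c_2,r_2)$ one gets $\rho(c_1,c_2)\geq r_2$, but not $\geq r_1+r_2$); the paper uses $\delta^k R$, and this weaker bound still suffices for Remark~\ref{CW1.1loc}.
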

\begin{proof}
Fix $R>0$ such that the radii $r(B)\leq R$ for all $B\in\mathscr{B}$. Let $\delta\in(0,1)$ to be fixed later, and for each $k\in\N$ define
\[\mathscr{B}_k = \{B\in\mathscr{B}\ |\ \delta^kR < r(B)\leq \delta^{k-1}R\}.\]
Proceeding recursively for $k=1,2,\dots$, choose a maximal subset $\tilde{\mathscr{B}}_k$ of pairwise disjoint balls in $\mathscr{B}_k$ according to the following requirements:
\begin{enumerate}
\item $\tilde{\mathscr{B}}_k \subseteq \mathscr{B}_k$;
\item If $B,B'\in \bigcup_{j=1}^k \tilde{\mathscr{B}}_j$ and $B\neq B'$, then $B\cap B' = \emptyset$;
\item If $B\in\mathscr{B}_k\setminus\tilde{\mathscr{B}}_k$, then there exists $B'\in\bigcup_{j=1}^k\tilde{\mathscr{B}}_j$ such that $B\cap B'\neq\emptyset$.
\end{enumerate}
To show that each $\tilde{\mathscr{B}}_k$ is countable, choose $B_0\in\tilde{\mathscr{B}}_k$ and write
\[\tilde{\mathscr{B}}_k = \textstyle{\bigcup_{n\in\N}} \{B\in\tilde{\mathscr{B}}_k\ |\ B\subseteq nB_0\}.\]
For each $n\in\N$, the centres of all of the balls in $\{B\in\tilde{\mathscr{B}}_k\ |\ B\subseteq~ nB_0\}$ are separated by at least a distance of $\delta^k R$ and contained in a ball of radius $nR$, so countability follows by the local property of homogeneity.
Therefore, the collection $\tilde{\mathscr{B}}=\bigcup_{k} \tilde{\mathscr{B}_k}$ is a sequence $(B_j)_j$ of pairwise disjoint balls in $\mathscr{B}$.

To complete the proof, let $B\in\mathscr{B}\setminus\tilde{\mathscr{B}}$. For some $k\in\N$, we have $B\in \mathscr{B}_k\setminus\tilde{\mathscr{B}}_k$ and there exists $B'\in\bigcup_{j=1}^k\tilde{\mathscr{B}}_j$ such that $B\cap B'\neq\emptyset$. In particular, we have $B'\in\tilde{\mathscr{B}}_{k'}$ for some $k'\leq k$, so if $x'$ denotes the centre of $B'$, then
\[
\rho(y,x') \leq 2r(B)+r(B') \leq 2\delta^{k'-1}R + r(B') \leq (2/\delta+1)r(B')
\]
for all $y\in B$. If we set $\delta=2/3$, then $B\subseteq 4B'$ and the proof is complete.
\end{proof}

\begin{Prop}(Whitney type covering lemma.) \label{Aimar2.13 + MS2.16}
Let $X$ be a metric space with the local property of homogeneity. Let $O$ be a nonempty proper open subset of $X$ and let $\comp{O} = X \setminus O$. For each $h>0$, there exists a sequence of pairwise disjoint balls $(B_j)_j$ with centre $x_j\in X$ and radius
\[r_j = \tfrac{1}{8}\min(\rho(x_j,\comp{O}),h)\]
such that, if $\tilde B_j = 4B_j$, then $O=\bigcup_j \tilde{B}_j$ and the following bounded intersection property is satisfied:
\[\sup_{j}\sharp\big(\{k\ |\ \tilde B_j\cap \tilde B_k \neq \emptyset\}\big) < \infty.\]
Furthermore, there exists a sequence $(\phi_j)_j$ of nonnegative functions supported in $\tilde B_j$ such that $\inf_{x\in B_j} \phi_j(x)>0$ and $\sum_j \phi_j = \ca_O$, where $\ca_O$ denotes the characteristic function of $O$.
\end{Prop}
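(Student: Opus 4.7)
The plan is to apply the Vitali--Wiener covering lemma (Proposition~\ref{Aimar2.11}) to a natural family of Whitney-type balls and then verify, in order, the covering property, the bounded intersection, and the existence of a subordinate partition of unity.

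For each $x\in O$ set $r(x)=\tfrac{1}{8}\min(\rho(x,\comp{O}),h)>0$, and consider the family $\mathscr{B}=\{B(x,r(x)):x\in O\}$. Its radii are uniformly bounded by $h/8$, so Proposition~\ref{Aimar2.11} yields a pairwise disjoint subsequence $(B_j)_j=(B(x_j,r_j))_j$ such that every member of $\mathscr{B}$ is contained in some $\tilde B_j=4B_j$. Since $x\in B(x,r(x))$, this gives $O\subseteq\bigcup_j\tilde B_j$; conversely, if $y\in\tilde B_j$ then $\rho(y,x_j)<4r_j\leq\tfrac12\rho(x_j,\comp{O})$, which forces $\rho(y,\comp{O})>0$, so $y\in O$ and the reverse inclusion holds.

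For the bounded intersection, I would first show that intersecting Whitney balls have comparable radii: $r_j\leq 3r_k$ whenever $\tilde B_j\cap\tilde B_k\neq\emptyset$. Writing $d_j=\rho(x_j,\comp{O})$ and using $\rho(x_j,x_k)\leq 4r_j+4r_k$ together with $8r_j\leq d_j$, the case $d_k\leq h$ (in which $d_k=8r_k$) follows from $d_j\leq 4r_j+4r_k+d_k=4r_j+12r_k$, while the case $d_k>h$ is immediate since $r_j\leq h/8=r_k$; by symmetry $r_k\leq 3r_j$. Granted this comparability, any $x_k$ with $\tilde B_j\cap\tilde B_k\neq\emptyset$ satisfies $\rho(x_j,x_k)\leq 16r_j\leq 2h$, and distinct such centres are separated by $r_k+r_{k'}\geq 2r_j/3$ because the $B_j$ are disjoint. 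Choosing $n=5$ so that $16r_j/2^n\leq 2r_j/3$ and invoking the iterated local property of homogeneity (Remark~\ref{CW1.1loc}) at scale $b=2h$ then bounds $\#\{k:\tilde B_j\cap\tilde B_k\neq\emptyset\}$ by $N_{2h}^5$, uniformly in $j$.

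For the partition of unity I would use the Lipschitz bumps $\psi_j(x)=\max\bigl(0,4r_j-\rho(x,x_j)\bigr)$, which vanish off $\tilde B_j$ and satisfy $\psi_j\geq 3r_j$ on $B_j$, and set $\phi_j=\psi_j/\sum_k\psi_k$ on $O$, extended by zero on $\comp{O}$. Bounded intersection makes the sum locally finite, and the covering property makes it strictly positive on $O$, so $\sum_j\phi_j=\ca_{O}$ pointwise; combining $\psi_j\geq 3r_j$ on $B_j$ with the upper bound $\sum_k\psi_k\leq 12Mr_j$ on $B_j$, where $M$ is the overlap constant from the previous paragraph, yields $\phi_j\geq 1/(4M)$ on $B_j$. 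I expect the main obstacle to be the radius-comparability step: the two-argument minimum in the definition of $r(x)$ blocks any one-line argument and forces the case split above, and the overlap bound, the lower bound on $\phi_j$, and the local finiteness of the sum all rest on this comparison.
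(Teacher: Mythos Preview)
Your proof follows essentially the same route as the paper's: apply Proposition~\ref{Aimar2.11} to the Whitney family, verify the covering, establish radius comparability by a case split, invoke local homogeneity for bounded overlap, and build the partition of unity by normalising bump functions. Your comparability argument is actually tidier than the paper's (you obtain $\tfrac13 r_j\le r_k\le 3r_j$ versus their $\tfrac17 r_j\le r_k\le 7r_j$), and your Lipschitz bumps work just as well as the paper's indicator bumps $\psi_j(x)=\eta(\rho(x,x_j)/4r_j)$.

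One small slip: from disjointness of $B_k$ and $B_{k'}$ in a general metric space you only get $\rho(x_k,x_{k'})\ge\max(r_k,r_{k'})$, not $r_k+r_{k'}$ (think of the discrete metric). This still gives separation $\ge r_j/3$, so the local-homogeneity count goes through after adjusting $n$; the argument is otherwise sound.
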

\begin{proof}
Let $\mathscr{B}$ denote the collection of all balls with centre $x\in O$ and radius $r=\frac{1}{8} \min(\rho(x,\comp{O}),h)$. Proposition~\ref{Aimar2.11} gives a sequence $(B_j)_j=(B(x_j,r_j))_j$ of pairwise disjoint balls from $\mathscr{B}$ such that $O\subseteq\bigcup_j \tilde{B}_j$, and since $4r_j<\rho(x_j,\comp{O})$, we actually have $O=\bigcup_j \tilde{B}_j$.

We note some facts to help prove that $(\tilde{B}_j)_j$ has the bounded intersection property. First, if $x\in \tilde{B}_j$, then
\begin{equation}\label{eq: bddint1}
\rho(x,\comp{O}) \geq \rho(x_j,\comp{O})-\rho(x_j,x)
\geq 8r_j- 4r_j
= 4 r_j.
\end{equation}
Second, given $c>0$, if $x\in \tilde B_j$ and $\rho(x_j,\comp{O}) \leq c r_j$, then
\begin{equation}\label{eq: bddint2}
\rho(x,\comp{O}) \leq \rho(x,x_j)+\rho(x_j,\comp{O})
\leq (4+c)r_j.
\end{equation}

Now suppose that $\tilde B_j\cap \tilde B_k \neq \emptyset$. This implies that
\begin{equation}\label{G}
\rho(x_j,x_k)
\leq 4(r_j+r_k)
\leq h.
\end{equation}
Consider two cases: (1) If $\rho(x_j,\comp{O}) > 2h$, then by \eqref{G} we have \[\rho(x_k,\comp{O}) \geq \rho(x_j,\comp{O})-\rho(x_j,x_k)>h,\] so
$r_k=h/8=r_j$ and $B_k \subseteq 9B_j$; (2)
If $\rho(x_j,\comp{O}) \leq 2h$, then by \eqref{G} we have
\[\rho(x_k,\comp{O}) \leq \rho(x_k,x_j)+\rho(x_j,\comp{O}) \leq 3h,\]
which implies that $\rho(x_k,\comp{O}) \leq 24r_k$, since either $\rho(x_k,\comp{O})=8r_k$ or $h=8r_k$. In this case, if $x \in \tilde B_j\cap \tilde B_k$, then by \eqref{eq: bddint1} and \eqref{eq: bddint2} with $c=24$ we obtain
\[
4 r_j \leq \rho(x,\comp{O}) \leq 28 r_j
\quad\text{and}\quad
4 r_k \leq \rho(x,\comp{O}) \leq 28 r_k,
\]
so $(1/7)r_j \leq r_k \leq 7r_j$ and $B_k \subseteq 39B_j$.

The above shows that for each $j\in\N$, the centres of all balls $\tilde B_k$ satisfying $\tilde B_j\cap \tilde B_k \neq \emptyset$ are separated by at least a distance of $(1/7)r_j$ and contained in a ball of radius $39r_j\leq5h$. The bounded intersection property then follows from the local property of homogeneity.

To construct the sequence of functions $(\phi_j)_j$, let $\eta$ be the function equal to 1 on [0,1) and 0 on $[1,\infty)$. For each $j\in\N$, define
\[
\psi_j(x)= \eta\left(\frac{\rho(x,x_j)}{4r_j}\right)
\]
for all $x\in X$. These are nonnegative functions supported in $\tilde B_j$. We also have $1\leq \sum_j \psi_j(x) < \infty$ for all $x\in O$, since $O=\bigcup_j \tilde{B}_j$ and the bounded intersection property is satisfied. The required functions are then defined for each $j\in\N$ by
\[\phi_j(x)= \begin{cases}
\psi_j(x)/\sum_k\psi_ k(x), &\text{if}\quad x\in O; \\
0, &\text{if}\quad x\in \comp{O}.
\end{cases}\]
\end{proof}

We now prove a general version of the fundamental theorem for the (centered) \textit{local maximal operator} $\mathcal{M}_{\text{loc}}$ defined for all measurable functions $f$ on $X$ by
\[
\mathcal{M}_\text{loc}f(x)=\sup_{r\in(0,1]} \frac{1}{V(x,r)}\int_{B(x,r)} |f(y)|\ \d\mu(y)
\]
for all $x\in X$.

\begin{Prop}\label{CW2.1loc}
Let $X$ be a locally doubling metric measure space. If $f$ is a measurable function on $X$, then $\mathcal{M}_{\text{loc}}f$ is lower semicontinuous, and thus measurable, and the following hold:
\begin{enumerate}\setlength{\itemsep}{3pt}
\item If $\alpha>0$, then
$\mu\left(\{x\in X\ |\ \mathcal{M}_\text{loc}f(x)>\alpha\}\right) \lesssim {\|f\|_1}/{\alpha}$
for all $f\in L^1(X)$;
\item If $1<p\leq\infty$, then
$\|\mathcal{M}_\text{loc}f\|_p \lesssim_p \|f\|_p$
for all $f\in L^p(X)$.
\end{enumerate}

\begin{proof}
The lower semicontinuity of $\cM_{\text{loc}}f$ is guaranteed by Fatou's Lemma and the continuity of the mapping $x\mapsto V(x,r)$ from Definition~\ref{DefLD}.

To prove (1), let $f\in L^1(X)$ and set $E_\alpha=\{x\in X\ |\ {\mathcal{M}}_\text{loc} f(x)>\alpha\}$ for each $\alpha>0$. If $x\in E_\alpha$, then there exists $r_x\in(0,1]$ such that
\[\frac{1}{V(x,r_x)}\int_{B(x,r_x)} |f(y)|\ \d\mu(y)>\alpha.\]
By Proposition~\ref{Aimar2.11}, the collection $\mathscr{B}=(B(x,r_x))_{x\in E_\alpha}$ contains a subsequence $(B_j)_j$ of pairwise disjoint balls such that, if $\tilde B_j=4B_j$, then $(\tilde{B}_j)_j$ cover $E_\alpha$. Therefore, by \eqref{LD} we have
\begin{align*}
\int_X |f(y)|\ \d\mu(y) \geq \sum_j \int_{B_j}|f(y)|\ \d\mu(y)
> \alpha \sum_j \mu(B_j)
\gtrsim \alpha \mu(E_\alpha).
\end{align*}

The proof of (2) is then standard (see, for instance, Section I.1.5 of \cite{St2}).
\end{proof}
\end{Prop}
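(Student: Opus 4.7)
The plan is to handle the three claims in sequence, following the broad outline from \cite{CW} and \cite{St2} while tracking how the doubling constants depend on scale.

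\emph{Lower semicontinuity.} For each fixed $r\in(0,1]$, I would show that the averaging functional $A_r(x):=V(x,r)^{-1}\int_{B(x,r)}|f|\,\d\mu$ is lower semicontinuous. The continuity of $x\mapsto V(x,r)$ built into Definition~\ref{DefLD} handles the denominator, so it suffices to check lower semicontinuity of the numerator. If $x_n\to x$ in $X$ and $y\in B(x,r)$, then $\rho(y,x_n)\leq\rho(y,x)+\rho(x,x_n)<r$ for all large $n$, so $\liminf_n \mathbf{1}_{B(x_n,r)}(y)\geq \mathbf{1}_{B(x,r)}(y)$ pointwise; Fatou's lemma then gives $\liminf_n\int_{B(x_n,r)}|f|\,\d\mu\geq\int_{B(x,r)}|f|\,\d\mu$. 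Since $\mathcal{M}_{\text{loc}}f$ is the pointwise supremum over $r\in(0,1]$ of the functions $A_r$, it is itself lower semicontinuous, hence Borel measurable.

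\emph{The weak $(1,1)$ bound.} I would follow the Vitali argument already sketched in the excerpt. For $\alpha>0$ and $x\in E_\alpha:=\{\mathcal{M}_{\text{loc}}f>\alpha\}$, pick $r_x\in(0,1]$ with $V(x,r_x)^{-1}\int_{B(x,r_x)}|f|\,\d\mu>\alpha$. Because the family $\mathscr{B}=\{B(x,r_x):x\in E_\alpha\}$ has radii uniformly bounded by $1$, Proposition~\ref{Aimar2.11} extracts a pairwise disjoint subsequence $(B_j)_j=(B(x_j,r_j))_j$ with $E_\alpha\subseteq\bigcup_j 4B_j$. Since each $r_j\leq1$, Proposition~\ref{PropkappaLD} applied with $b=2$ and scaling factor $\alpha=4$ (note $4\leq 2b/r_j$) yields $\mu(4B_j)\leq A_{2}\,4^{\kappa_{2}}\,\mu(B_j)$ with a constant independent of $j$. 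Summing,
\[
\mu(E_\alpha)\leq\sum_j\mu(4B_j)\lesssim\sum_j\mu(B_j)<\frac{1}{\alpha}\sum_j\int_{B_j}|f|\,\d\mu\leq\frac{\|f\|_1}{\alpha},
\]
where the final step uses disjointness of the $B_j$.

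\emph{The strong $(p,p)$ bounds.} The case $p=\infty$ is immediate from the definition, since every average of $|f|$ is bounded by $\|f\|_\infty$. For $1<p<\infty$, I would invoke the Marcinkiewicz interpolation theorem applied to the sublinear operator $\mathcal{M}_{\text{loc}}$, with the weak $(1,1)$ bound just proved at one endpoint and the $L^\infty$ bound at the other.

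\emph{Anticipated obstacle.} There is no genuine obstacle here, since the theorem is a direct localisation of a classical result. The only subtlety is that, in a merely locally doubling space, the doubling constants degenerate as the scale grows; the argument is saved by the a priori restriction $r_j\leq 1$, which confines every doubling we need to the single range controlled by $A_2$.
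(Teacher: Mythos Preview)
Your proof is correct and follows essentially the same route as the paper: Fatou plus continuity of $x\mapsto V(x,r)$ for lower semicontinuity, the Vitali-type covering Proposition~\ref{Aimar2.11} combined with local doubling for the weak $(1,1)$ bound, and Marcinkiewicz interpolation for the strong bounds. The paper invokes \eqref{LD} directly rather than the derived Proposition~\ref{PropkappaLD}, but that is merely a cosmetic difference in how the same doubling constant is packaged.
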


We conclude this section by proving that a locally doubling space is exponentially locally doubling, as in Definition~\ref{DefExpLD}, if and only if it satisfies a certain additional condition on volume growth. Whilst we do not make explicit use of this equivalence, it shows why \eqref{ELD} is often a more useful assumption than \eqref{LD}. In particular, it allows us to obtain the atomic characterisation of the space $L^1_{\mathscr{Q}}(X)$ in Section~\ref{SectionNewFunctionSpaces}.

\begin{Prop}\label{Prop: ELDequivDgloDloc}
Let $X$ be a locally doubling metric measure space. Then $X$ is exponentially locally doubling if and only if there exist $A_0\geq1$ and $b_0,\delta>0$ such that
\begin{equation}\tag{D$_{\text{glo}}$}\label{U}
V(x,r+\delta)\leq A_0 V(x,r)
\end{equation}
for all $r\geq b_0$ and $x\in X$.
\end{Prop}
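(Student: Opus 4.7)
The plan is to prove both implications separately. The forward direction, that \eqref{ELD} implies \eqref{U}, is immediate: given any choice of $b_0,\delta>0$, for $r\geq b_0$ I would set $\alpha=1+\delta/r\geq 1$ so that $\alpha r = r+\delta$. Applying \eqref{ELD} and bounding $(1+\delta/r)^\kappa\leq(1+\delta/b_0)^\kappa$ while $e^{\lambda(\alpha-1)r}=e^{\lambda\delta}$ yields \eqref{U} with $A_0=A(1+\delta/b_0)^\kappa e^{\lambda\delta}$.

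For the converse, I assume \eqref{LD} together with \eqref{U}, fix $x\in X$, $\alpha\geq 1$, $r>0$, and split into three cases according to the positions of $r$ and $\alpha r$ relative to $b_0$. In the case $\alpha r\leq b_0$, Proposition~\ref{PropkappaLD} applied with $b=b_0$ (which is applicable since $\alpha\leq b_0/r\leq 2b_0/r$) directly gives $V(x,\alpha r)\leq A_{b_0}\alpha^{\kappa_{b_0}}V(x,r)$, which is stronger than required. In the case $r<b_0\leq\alpha r$, I would first apply Proposition~\ref{PropkappaLD} to obtain $V(x,b_0)\leq A_{b_0}(b_0/r)^{\kappa_{b_0}}V(x,r)$, then use $b_0/r\leq\alpha$ to replace $(b_0/r)^{\kappa_{b_0}}$ by $\alpha^{\kappa_{b_0}}$. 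In the case $r\geq b_0$, no local step is needed and I pass directly from $V(x,r)$ up to $V(x,\alpha r)$ using \eqref{U}.

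The global step, common to the last two cases, is an iteration of \eqref{U}: for $s\geq b_0$ and $t\geq s$, set $n=\lceil(t-s)/\delta\rceil$ so that $t\leq s+n\delta$. Applying \eqref{U} $n$ times and using monotonicity of $V(x,\cdot)$ to absorb the leftover yields $V(x,t)\leq A_0^n V(x,s)\leq A_0\,e^{(\log A_0/\delta)(t-s)}V(x,s)$, since $n\leq 1+(t-s)/\delta$. In the second case I take $s=b_0$ and $t=\alpha r$, and note that $t-s=\alpha r-b_0\leq(\alpha-1)r$ precisely because $r<b_0$; in the third case I take $s=r$ and $t=\alpha r$ so that $t-s=(\alpha-1)r$ exactly. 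Combining the three cases gives \eqref{ELD} with $A=A_0 A_{b_0}$, $\kappa=\kappa_{b_0}$, and $\lambda=(\log A_0)/\delta$.

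The main obstacle is really just bookkeeping in the case analysis: one must make sure that the exponential factor $e^{\lambda(\alpha-1)r}$, which involves the \emph{product} $(\alpha-1)r$ rather than a single additive increment, dominates the actual radial growth $t-s$ produced by the iteration. This rests on the elementary inequality $\alpha r-b_0\leq(\alpha-1)r$ valid for $r\leq b_0$. The remaining bookkeeping issues, namely positivity and finiteness of $V(x,\alpha r)$, cause no difficulty: positivity is immediate from monotonicity and \eqref{LD}, while finiteness at small scales given by \eqref{LD} propagates to all scales through the finite chain of estimates, each step of which preserves finiteness.
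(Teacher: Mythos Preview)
Your proof is correct and follows essentially the same approach as the paper's: the forward direction is identical, and for the converse both arguments combine Proposition~\ref{PropkappaLD} for the local regime with an iteration of \eqref{U} for the global regime, arriving at the same constants $\kappa=\kappa_{b_0}$ and $\lambda=(\log A_0)/\delta$. The only difference is cosmetic: you split cases at the threshold $b_0$ (according to where $r$ and $\alpha r$ sit relative to $b_0$), whereas the paper splits the small-$r$ case according to whether $\alpha\leq 2b_0/r$ or $\alpha>2b_0/r$, passing through $2b_0$ rather than $b_0$ in the mixed case; both organizations yield the same estimate.
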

\begin{proof}
If $X$ satisfies \eqref{ELD}, then for any $b_0>0$ and $\delta>0$, we have
\[V(x,r+\delta) = V(x,(1+\delta/r)r) \leq A(1+\delta/b_0)^{\kappa} e^{\lambda \delta} V(x,r)\]
for all $r\geq b_0$ and $x\in X$.

To prove the converse, suppose $X$ satisfies \eqref{U} and let $\alpha>1$. Consider three cases:

If $r>b_0$, choose $N\in\N$ so that $\alpha r-N\delta\in(r,r+\delta]$. Application of the \eqref{U} inequality $N+1$ times reveals that
\begin{equation}\label{eq: case1}
V(x,\alpha r) \leq A_0^{N+1} V(x,r) \leq A_0e^{\lambda(\alpha-1)r}V(x,r),
\end{equation}
where $\lambda=(\log A_0)/\delta$;

If $r\in(0,b_0]$ and $\alpha\in(1,2b_0/r]$, then Proposition~\ref{PropkappaLD} implies that
\begin{equation}\label{eq: case2}
V(x,\alpha r)\leq A_{b_0}\alpha^{\kappa_{b_0}} V(x,r);
\end{equation}

If $r\in(0,b_0]$ and $\alpha>2b_0/r$, then we obtain
\begin{align*}
V(x,\alpha r) &= V(x,(\alpha r/2b_0)2b_0) \\
&\leq A_0 e^{\lambda(\alpha r/2b_0-1)2b_0}V(x,2b_0) \\
&\leq A_0 e^{\lambda(\alpha-1)r}V(x,(2b_0/r) r) \\
&\leq A_0A_{b_0}\alpha^{\kappa_{b_0}}e^{\lambda(\alpha -1)r}V(x,r),
\end{align*}
where we used \eqref{eq: case1} to obtain the first inequality and \eqref{eq: case2} to obtain the final inequality.

\noindent These show that $X$ satisfies \eqref{ELD} with $\kappa=\kappa_{b_0}$ and $\lambda=(\log A_0)/\delta$.
\end{proof}

\section{Local Tent Spaces $t^{p}(X\times(0,1])$}\label{SectionLocalTent}
We introduce the local tent spaces $t^{p}(X\times(0,1])$, or simply $t^p$, for all ${p\in[1,\infty]}$ in the context of a locally doubling metric measure space $X$. Note that functions on $X\times(0,1]$ are assumed to be complex-valued. There is also the following notation.

\begin{Not1}
The cone of aperture $\alpha>0$ and height 1 with vertex at $x\in X$ is
\[
\Gamma_\alpha^1(x) = \{(y,t)\in X\times (0,1]\ |\ \rho(x,y)< \alpha t \}.
\]
Let $\Gamma^1(x)=\Gamma_1^1(x)$. For any closed set $F\subseteq X$ and any open set $O\subseteq X$, define
\[
{R}_\alpha^1(F) = \bigcup_{x\in F} \Gamma_\alpha^1(x)
\quad{\text{ and }}\quad
T^1_\alpha(O) = (X\times(0,1])\setminus R^1_\alpha(\comp{O}),
\]
where $\comp{O} = X \setminus O$. Let $T^1(O)=T^1_1(O)$ and call it the \textit{truncated tent over} $O$. Note that
\[
T^1_\alpha(O) = \{(y,t)\in X\times (0,1]\ |\ \rho(y,\comp{O})\geq \alpha t\}.
\]
For any ball $B$ in $X$ of radius $r(B)>0$, the \textit{truncated Carleson box over} $B$ is \[
C^1(B)=B\times(0,\min\{r(B),1\}].
\]
Finally, if $E$ is a measurable subset of $X\times(0,1]$, then $\ca_E$ denotes the characteristic function of $E$ on $X\times(0,1]$.
\end{Not1}

The local Lusin operator $\mathcal{A}_{\text{loc}}$ and its dual $\mathcal{C}_{\text{loc}}$ are defined for any measurable function $f$ on $X\times~(0,1]$ as follows:
\begin{align*}
\mathcal{A}_{\text{loc}} f(x) &= \bigg(\iint_{\Gamma^1(x)} |f(y,t)|^2\ \frac{\d\mu(y)}{V(x,t)} \frac{\d t}{t}\bigg)^{\frac{1}{2}};\\
\mathcal{C}_{\text{loc}} f(x) &= \sup_{B \in \mathscr{B}_2(x)}\bigg(\frac{1}{\mu(B)}\iint_{T^1(B)}|f(y,t)|^2\ \d\mu(y)\frac{\d t}{t}\bigg)^{\frac{1}{2}}
\end{align*}
for all $x\in X$, where $\mathscr{B}_2(x)$ denotes the set of all balls $B$ in $X$ of radius $r(B)\leq2$ such that $x\in B$.
We now define the local tent spaces.

\begin{Def} Let $X$ be a locally doubling metric measure space. For each $p\in[1,\infty)$, the \textit{local tent space} $t^{p}(X\times(0,1])$ consists of all measurable functions $f$ on $X\times~(0,1]$ with
\[\|f\|_{t^{p}}=\|\mathcal{A}_{\text{loc}}f\|_p < \infty.\]
The \textit{local tent space} $t^{\infty}(X\times(0,1])$ consists of all measurable functions $f$ on $X\times~(0,1]$ with
\[\|f\|_{t^{\infty}}=\|\mathcal{C}_{\text{loc}}f\|_\infty < \infty.\]
\end{Def}

\begin{Rem}\label{Alocmsblty}
Recall that in Definition~\ref{DefLD} we required the continuity of the mapping $x\mapsto V(x,r)$ for each $r>0$. This implies that the volumes of open balls and closed balls are identical, which ensures that $\mathcal{A}_{\text{loc}}f$ and $\mathcal{C}_{\text{loc}}f$ are lower semicontinuous and thus measurable.
\end{Rem}

The local tent spaces are Banach spaces under the usual identification of functions that are equal almost everywhere. This follows as in the global case in \cite{CMS}. In particular, completeness holds by dominated convergence upon noting that for each compact set $K\subseteq X\times(0,1]$ and each $p\in[1,\infty]$, we have
\begin{equation}\label{(1.3)}
\|\ca_Kf\|_{t^p} \lesssim_{K,p} \left(\iint_K |f(y,t)|^2\ \d\mu(y)\d t\right)^{\frac{1}{2}} \lesssim_{K,p} \|f\|_{t^p}
\end{equation}
for all measurable functions $f$ on $X\times(0,1]$.

Let $L^2_\bullet(X\times(0,1])$, or simply $L^2_\bullet$, denote the Hilbert space of all measurable functions $f$ on $X\times(0,1]$ with
\[\|f\|_{L^2_\bullet} = \left(\iint |f(y,t)|^2\ \d\mu(y)\frac{\d t}{t}\right)^{\frac{1}{2}} <\infty.\]
We have $t^2 = L^2_\bullet$, since if $(y,t)\in\Gamma^1(x)$, then $t\leq1$,
$B(y,t)\subseteq B(x,2t)$ and $B(x,t)\subseteq B(y,2t)$, so by \eqref{LD} we obtain
\[
\|f\|_{t^{2}}^2 \eqsim \iiint \ca_{\Gamma^1(x)}(y,t)\ |f(y,t)|^2\ \frac{\d\mu(y)}{V(y,t)}\frac{\d t}{t}\d\mu(x)
=\|f\|_{L_{\bullet}^2}^2.
\]

These observations lead us to the following density result, which is crucial to the extension procedure in Section \ref{SectionTheMainEstimate}.
\begin{Prop}\label{Prop: tpt2denseintp}
Let $X$ be a locally doubling metric measure space. For all $p\in[1,\infty)$ and $q\in[1,\infty]$, the set $t^p\cap t^q$ is dense in $t^p$.
\end{Prop}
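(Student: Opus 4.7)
The plan is to approximate each $f\in t^p$ by truncations $f_n\in t^p\cap t^q$ obtained by simultaneously cutting off $f$ in the spatial variable, in the temporal variable near $0$, and in its absolute value. Fix a basepoint $x_0\in X$ and, for each $n\in\N$, set $K_n=B(x_0,n)\times[1/n,1]$ and define
\[
f_n=f\cdot\ca_{K_n}\cdot\ca_{\{|f|\leq n\}}.
\]
Local doubling yields $V(x_0,n)<\infty$, and $f\in t^p$ forces $f$ to be finite almost everywhere (otherwise $\mathcal{A}_\text{loc}f$ would be infinite on a set of positive measure). Consequently the sets $K_n\cap\{|f|\leq n\}$ exhaust $X\times(0,1]$ up to a null set, so $f_n\to f$ pointwise with $|f_n|\leq|f|$.

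Convergence $\|f-f_n\|_{t^p}\to 0$ will follow by two applications of dominated convergence. Inside the cone integral defining $\mathcal{A}_\text{loc}(f-f_n)(x)$, the integrand $|f-f_n|^2$ is dominated by $|f|^2$ and tends to $0$ pointwise, so $\mathcal{A}_\text{loc}(f-f_n)(x)\to 0$ for each $x\in X$. The pointwise bound $\mathcal{A}_\text{loc}(f-f_n)\leq\mathcal{A}_\text{loc}f\in L^p(X)$ then permits a second application of dominated convergence in $L^p$, giving $\|f-f_n\|_{t^p}\to 0$.

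It remains to check that $f_n\in t^q$ for each $n$. For $q\in[1,\infty)$, the localisation estimate \eqref{(1.3)} applies with $K=K_n$, yielding
\[
\|f_n\|_{t^q}\lesssim_{K_n,q}\left(\iint_{K_n}|f_n|^2\,\d\mu\,\d t\right)^{1/2}\leq n\,V(x_0,n)^{1/2}<\infty.
\]
For $q=\infty$, one estimates directly: for any ball $B\subseteq X$ with $r(B)\leq 2$, using $t\geq 1/n$ on $\text{supp}(f_n)$ together with $T^1(B)\subseteq B\times(0,1]$,
\[
\frac{1}{\mu(B)}\iint_{T^1(B)}|f_n|^2\,\d\mu\,\frac{\d t}{t}\leq\frac{n^3}{\mu(B)}\iint_{B\times(0,1]}\d\mu\,\d t=n^3,
\]
so $\|f_n\|_{t^\infty}\leq n^{3/2}<\infty$. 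The only delicate point is the invocation of \eqref{(1.3)}: it is stated for compact $K$, whereas $K_n$ is merely a bounded product set; I expect this not to be a real obstruction, since the proof of \eqref{(1.3)} relies only on $K_n$ having finite measure in the $X$-factor and its $t$-coordinate being bounded below by a positive constant, both of which hold here.
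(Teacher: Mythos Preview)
Your proof is correct and follows essentially the same approach as the paper: spatial and temporal truncation of $f$, then dominated convergence together with an appeal to \eqref{(1.3)}. The paper's version is marginally leaner---it omits your extra cut $\ca_{\{|f|\leq n\}}$ and invokes \eqref{(1.3)} uniformly for all $q\in[1,\infty]$ rather than handling $q=\infty$ by a direct Carleson estimate---and your observation that \eqref{(1.3)} is only stated for compact $K$ applies equally to the paper's own use of it (and, as you say, is harmless since only finite spatial measure and a positive lower $t$-bound are actually needed).
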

\begin{proof}
Let $f\in t^p$ and $p\in[1,\infty)$. Fix a ball $B$ in $X$ and define
\[
f_k= \ca_{kB\times[{1}/{k},1]}f
\]
for each $k\in\N$. The functions $f_k$ belong to $t^p\cap t^q$ for all $q\in[1,\infty]$ by \eqref{(1.3)}, and $\lim_{k\rightarrow\infty} \|f-f_k\|_{t^p} = 0$ by dominated convergence.
\end{proof}

We characterise $t^{1}$ in terms of the following atoms.

\begin{Def}\label{DefTentAtom} Let $X$ be a locally doubling metric measure space. A $t^{1}$-\textit{atom} is a measurable function $a$ on $X\times(0,1]$ supported in the truncated tent $T^{1}(B)$ over a ball $B$ in $X$ of radius $r(B)\leq 2$ with
\[\|a\|_{L^2_\bullet}=\left(\iint_{T^1(B)} |a(y,t)|^2\ \d\mu(y)\frac{\d t}{t}\right)^{\frac{1}{2}}\leq\mu(B)^{-\frac{1}{2}}.\]
\end{Def}

If $a$ is a $t^{1}$-atom corresponding to a ball $B$ as above, then the Cauchy--Schwarz inequality implies that $a\in t^{1}\cap t^2$ with
$\|a\|_{t^{2}} \lesssim \|a\|_{L^2_\bullet} \leq \mu(B)^{-1/2}$
and
\begin{equation}\label{eq: uniformtentatoms}
\|a\|_{t^{1}} \leq \mu(B)^{\frac{1}{2}}\|a\|_{t^{2}} \lesssim 1.
\end{equation}
\begin{Rem}\label{RemLargeTentAtoms}
If $(\lambda_j)_j$ is a sequence in $\ell^1$ and $(a_j)_j$ is a sequence of $t^{1}$-atoms, then \eqref{eq: uniformtentatoms} implies that $\sum_j\lambda_ja_j$ converges in $t^{1}$ with $\|\sum_j\lambda_ja_j\|_{t^{1}} \lesssim \|(\lambda_j)_j\|_{\ell^1}$. Note that this did not require the condition $r(B)\leq 2$ in Definition~\ref{DefTentAtom}.
\end{Rem}

The atomic characterisation of $t^{1}$ asserts the converse of the above remark. This is the content of the following theorem.

\begin{Thm}\label{maintentatomic}
Let $X$ be a locally doubling metric measure space. If $f\in t^1$, then there exist a sequence $(\lambda_j)_j$ in $\ell^1$ and a sequence $(a_j)_j$ of $t^1$-atoms such that $\sum_j\lambda_ja_j$ converges to $f$ in $t^1$ and almost everywhere in $X\times(0,1]$. Moreover, we have
\[
\|f\|_{t^{1}} \eqsim \inf \{\|(\lambda_j)_j\|_{\ell^1} : f=\textstyle{\sum_j}\lambda_ja_j\}.
\]
Also, if $p\in(1,\infty)$ and $f\in t^1\cap t^p$, then $\sum_j\lambda_ja_j$ converges to $f$ in $t^p$ as well.
\end{Thm}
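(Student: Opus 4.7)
The plan is to adapt the classical Coifman--Meyer--Stein atomic decomposition of tent spaces to the truncated setting. Given $f \in t^1$, I would first form the level sets $O_k = \{x \in X : \mathcal{A}_{\text{loc}} f(x) > 2^k\}$ for $k \in \Z$, which are open by Remark~\ref{Alocmsblty} and satisfy $\mu(O_k) \leq 2^{-k}\|f\|_{t^1}$, and their expansions $O_k^* = \{x \in X : \mathcal{M}_{\text{loc}} \ca_{O_k}(x) > 1/2\}$, which are open with $\mu(O_k^*) \lesssim \mu(O_k)$ by Proposition~\ref{CW2.1loc}. For each $k$ with $O_k^* \neq X$, Proposition~\ref{Aimar2.13 + MS2.16} applied with $h = 1$ provides pairwise disjoint balls $B_{k,j} = B(x_{k,j}, r_{k,j})$ with $r_{k,j} \leq 1/8$ whose $4$-dilates $\tilde B_{k,j}$ cover $O_k^*$ with bounded overlap, together with a subordinate partition of unity $\phi_{k,j}$ summing to $\ca_{O_k^*}$. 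The exceptional case $O_k^* = X$, which can only occur when $\mu(X) < \infty$, is handled separately by partitioning $X$ into balls of radius at most $2$ via Proposition~\ref{Aimar2.11}.

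Setting $E_k = T^1(O_k^*) \setminus T^1(O_{k+1}^*)$, the sets $E_k$ are pairwise disjoint and cover the support of $f$ modulo a null set: indeed, if $(y,t)$ lies outside every $T^1(O_k^*)$, then $B(y,t) \cap \comp{O_k^*} \neq \emptyset$ for all $k$, and a Fubini argument using $t^2 = L^2_\bullet$ (recorded just before Proposition~\ref{Prop: tpt2denseintp}) forces $f(y,t) = 0$ for almost every such $(y,t)$. This yields the decomposition
\[
f = \sum_{k,j} \lambda_{k,j}\, a_{k,j}, \qquad \lambda_{k,j}\, a_{k,j}(y,t) = \phi_{k,j}(y)\, \ca_{E_k}(y,t)\, f(y,t),
\]
with $\lambda_{k,j}$ chosen so that $a_{k,j}$ is a $t^1$-atom. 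Since $\phi_{k,j}$ is supported in $\tilde B_{k,j}$ with $r(\tilde B_{k,j}) \leq 1/2$ and $t \leq 1$, the function $a_{k,j}$ is supported in $T^1(\hat B_{k,j})$ where $\hat B_{k,j} = B(x_{k,j}, 4r_{k,j} + 1)$ has radius at most $3/2 \leq 2$, as required by Definition~\ref{DefTentAtom}.

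The pivotal estimate is
\[
\iint_{T^1(\hat B_{k,j}) \cap E_k} |f(y,t)|^2\, \d\mu(y) \frac{\d t}{t} \lesssim \int_{\hat B_{k,j} \cap \comp{O_{k+1}}} |\mathcal{A}_{\text{loc}} f(x)|^2\, \d\mu(x) \lesssim 2^{2k} \mu(\tilde B_{k,j}),
\]
obtained by Fubini and the doubling equivalence $V(x,t) \eqsim V(y,t)$ when $\rho(x,y) < t$. The key point is that for $(y,t) \in E_k$ one can exhibit a point $z \in B(y,t) \cap \comp{O_{k+1}^*}$, and the $1/2$ threshold in the definition of $O_{k+1}^*$ then forces $\mu(B(y,2t) \cap \comp{O_{k+1}}) \gtrsim V(y,t)$, producing enough cones $\Gamma^1(x)$ with $\mathcal{A}_{\text{loc}} f(x) \leq 2^{k+1}$ to bound the left-hand side. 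Normalising yields $\lambda_{k,j} \lesssim 2^k \mu(\tilde B_{k,j})$, and by bounded overlap together with the layer-cake identity
\[
\sum_{k,j} |\lambda_{k,j}| \lesssim \sum_k 2^k \mu(O_k^*) \lesssim \sum_k 2^k \mu(O_k) \lesssim \|\mathcal{A}_{\text{loc}} f\|_1 = \|f\|_{t^1}.
\]
Convergence in $t^1$ then follows from Remark~\ref{RemLargeTentAtoms}, while almost everywhere convergence is immediate since each $(y,t)$ belongs to a single $E_k$ modulo null sets, and to boundedly many $\tilde B_{k,j}$ at fixed $k$. For $f \in t^1 \cap t^p$ with $p \in (1,\infty)$, the partial sums over $|k| \leq N$ equal $\ca_{\bigcup_{|k|\leq N} E_k} f$ and are pointwise dominated by $|f|$, so their area functions converge in $L^p$ by dominated convergence, giving convergence in $t^p$. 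The reverse inequality $\|f\|_{t^1} \lesssim \inf \sum_j |\lambda_j|$ over admissible decompositions is immediate from Remark~\ref{RemLargeTentAtoms}.

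The main obstacle I anticipate is the pivotal $L^2_\bullet$ estimate, which requires simultaneously exploiting that $\hat B_{k,j}$ is ``deep inside'' $O_k^*$ (so the tent $T^1(\hat B_{k,j})$ has a nontrivial cone interpretation) and that a positive density portion of the doubled Whitney ball intersects $\comp{O_{k+1}}$ (so that the atom's $L^2_\bullet$ norm is controlled by $2^{k+1}$). Calibrating the dilation constants against the $1/2$ threshold, while also keeping the atom balls of radius $\leq 2$ in the face of the truncation $t \leq 1$, is the delicate bookkeeping; the boundary case $O_k^* = X$ requires the separate Vitali-type argument via Proposition~\ref{Aimar2.11}.
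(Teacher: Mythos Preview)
Your overall strategy matches the paper's (which follows Coifman--Meyer--Stein via Russ), but two of your specific calibrations do not close.

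\textbf{Aperture.} With $E_k = T^1(O_k^*)\setminus T^1(O_{k+1}^*)$ at aperture~$1$, the condition $(y,t)\notin T^1(O_{k+1}^*)$ only produces $z\in\comp{O_{k+1}^*}$ with $\rho(y,z)<t$, and your density argument then controls $\mu\big(B(y,2t)\cap\comp{O_{k+1}}\big)$. But the Fubini step needs $x\in B(y,t)$ so that $(y,t)\in\Gamma^1(x)$; with $x\in B(y,2t)$ you are implicitly using aperture-$2$ cones and hence a different area function. The paper avoids this mismatch by working with tents $T^1_{1-\eta}(O_k^*)$ of aperture $1-\eta<1$ and choosing the density threshold $\gamma$ accordingly (this is exactly the content of Lemma~\ref{CMSlemma2+Russlemma2.1loc}): for $(y,t)\notin T^1_{1-\eta}(O_{k+1}^*)$ one gets $z$ with $\rho(y,z)<(1-\eta)t$, so $B(z,\eta t)\subseteq B(y,t)$ and the density yields $\mu(B(y,t)\cap\comp{O_{k+1}})\gtrsim V(y,t)$ with no aperture inflation.

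\textbf{Support ball.} Your atom ball $\hat B_{k,j}=B(x_{k,j},4r_{k,j}+1)$ does not scale with the Whitney ball: when $r_{k,j}$ is small, $\mu(\hat B_{k,j})/\mu(\tilde B_{k,j})$ is unbounded, so the normalisation $\lambda_{k,j}=\mu(\hat B_{k,j})^{1/2}\|\cdot\|_{L^2_\bullet}$ together with your pivotal bound $\|\cdot\|_{L^2_\bullet}^2\lesssim 2^{2k}\mu(\tilde B_{k,j})$ only gives $\lambda_{k,j}\lesssim 2^k\big(\mu(\hat B_{k,j})\mu(\tilde B_{k,j})\big)^{1/2}$, and the sum $\sum_j$ cannot be controlled by $\mu(O_k^*)$ under \eqref{LD} alone. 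The fix is that the support is in fact contained in a \emph{fixed dilate} of the Whitney ball: since $(y,t)\in T^1_{1-\eta}(O_k^*)$ and $y\in\tilde B_{k,j}$, either $8r_{k,j}=\rho(x_{k,j},\comp{O_k^*})$ forces $t\leq 12r_{k,j}/(1-\eta)$, or $8r_{k,j}=h$ so $t\leq 1\lesssim r_{k,j}$. The paper exploits this to take the atom ball as $\alpha B_j^k$ for an absolute constant $\alpha$ (with $\eta,h,\alpha$ chosen so that $\alpha r_j^k\leq 2$), whence $\mu(\alpha B_j^k)\lesssim\mu(B_j^k)$ by \eqref{LD} and the summation goes through. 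Your separate treatment of $O_k^*=X$ is also unnecessary once this is in place.
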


The proof of Theorem~\ref{maintentatomic} is deferred to Appendix~\ref{AppendixA} so that it does not disrupt the main flow of ideas. It is also possible to characterise $t^1$ in terms of atoms supported in truncated Carleson boxes.

\begin{Def}\label{DefCarlesonTentAtom} Let $X$ be a locally doubling metric measure space. A $t^1\!\textit{-Carleson}$ \textit{atom} is a measurable function $a$ on $X\times(0,1]$ supported in the truncated Carleson box $C^1(B)$ over a ball $B$ in $X$ of radius $r(B)>0$ with $\|a\|_{L^2_\bullet}\leq\mu(B)^{-1/2}$.
\end{Def}

It is immediate that Theorem~\ref{maintentatomic} holds with $t^1$-Carleson atoms in place of $t^1$-atoms. As explained in Remark~\ref{RemLargeTentAtoms}, the converse of Theorem~\ref{maintentatomic} does not require the upper bound $r(B)\leq 2$ on the radii of the supports of $t^1$-atoms. This may not be the case for $t^1$-Carleson atoms on a locally doubling metric measure space. In the following proposition, however, we show that this is the case on an exponentially locally doubling metric measure space. We will need this to prove the molecular characterisation of $h^1_\cD$ in Lemma~\ref{localmolecularlemma2}. This is the first indication that \eqref{ELD} is more suited to our purposes than \eqref{LD}.

\begin{Prop} \label{Prop: CarlesonBoxTentAtoms}
Let $X$ be an exponentially locally doubling metric measure space. If $(\lambda_j)_j$ is a sequence in $\ell^1$ and $(a_j)_j$ is a sequence of $t^{1}$-Carleson atoms, then $\sum_j\lambda_ja_j$ converges in $t^{1}$ with
$
\textstyle{\|\sum_j\lambda_ja_j\|_{t^{1}} \lesssim \|(\lambda_j)_j\|_{\ell^1}}.
$
\end{Prop}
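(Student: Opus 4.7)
The aim is to establish the uniform bound $\|a\|_{t^1}\lesssim 1$ for every $t^1$-Carleson atom $a$, with implicit constant depending only on $A,\kappa,\lambda$ from \eqref{ELD}. Once this is in hand, $\sum_j\lambda_j a_j$ converges absolutely in the Banach space $t^1$, with norm dominated by $\|(\lambda_j)_j\|_{\ell^1}$.

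First I would locate the support of $\mathcal{A}_\text{loc} a$. Suppose $a$ is supported in $C^1(B)=B\times(0,h]$, where $B=B(x_B,r(B))$ and $h=\min\{r(B),1\}$. If $\mathcal{A}_\text{loc} a(x)\neq 0$, then there exists $(y,t)\in\Gamma^1(x)\cap C^1(B)$, which forces $\rho(x,x_B)<t+r(B)\leq h+r(B)$; hence $\mathcal{A}_\text{loc} a$ vanishes outside $B^{\#}:=B(x_B,r(B)+h)$. Cauchy-Schwarz, combined with the equivalence of $t^2$ and $L^2_\bullet$, then gives
\[
\|a\|_{t^1}\leq\mu(B^{\#})^{1/2}\|\mathcal{A}_\text{loc} a\|_{2}=\mu(B^{\#})^{1/2}\|a\|_{t^2}\lesssim\mu(B^{\#})^{1/2}\|a\|_{L^2_\bullet}\leq\bigl(\mu(B^{\#})/\mu(B)\bigr)^{1/2}.
\]

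It remains to show $\mu(B^{\#})\lesssim\mu(B)$ uniformly in $r(B)$, and this is where \eqref{ELD} enters essentially. I split into two cases. If $r(B)\leq 1$ then $h=r(B)$ and $B^{\#}\subseteq 2B$; applying \eqref{ELD} with $\alpha=2$ yields $\mu(2B)\leq A\cdot 2^\kappa e^{\lambda r(B)}\mu(B)\leq A\cdot 2^\kappa e^\lambda\mu(B)$. If $r(B)>1$ then $h=1$, so $B^{\#}=B(x_B,(1+1/r(B))r(B))$, and applying \eqref{ELD} with $\alpha=1+1/r(B)\in(1,2]$ gives $\mu(B^{\#})\leq A(1+1/r(B))^\kappa e^{\lambda\cdot(1/r(B))\cdot r(B)}\mu(B)\leq A\cdot 2^\kappa e^\lambda\mu(B)$. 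In either case $\mu(B^{\#})/\mu(B)\lesssim 1$.

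The main obstacle is the large-radius case, and this is precisely where \eqref{ELD} is indispensable. A naive use of $B^{\#}\subseteq 2B$ would produce the factor $e^{\lambda r(B)}$ in \eqref{ELD}, which is unbounded as $r(B)\to\infty$. The saving is that the truncation at height $1$ in $C^1(B)$ caps the excess radius by $1$, allowing \eqref{ELD} to be applied with $\alpha$ close to $1$ so that $(\alpha-1)r(B)\leq h\leq 1$ stays bounded. Under only the local doubling condition \eqref{LD}, no such uniform bound would be available, in line with the remark preceding the statement.
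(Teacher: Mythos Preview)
Your proof is correct and in fact more direct than the paper's. The paper treats the case $r(B)\leq 1$ by embedding $C^1(B)\subset T^1(2B)$ to reduce to a genuine $t^1$-atom, but for $r(B)>1$ it proceeds by covering $B$ with balls of radius $1/4$ using Proposition~\ref{Aimar2.11}, verifying a bounded intersection property, and decomposing $a$ as $\sum_j\lambda_j a_j$ with each $a_j$ a $t^1$-atom associated to a ball of radius $1$; the $\ell^1$ bound on $(\lambda_j)$ then comes from Cauchy--Schwarz together with $\mu\bigl((1+\tfrac{1}{4r(B)})B\bigr)\lesssim\mu(B)$ via \eqref{ELD}. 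Your argument bypasses the covering entirely by observing that $\mathcal{A}_{\text{loc}}a$ is supported in $B^{\#}=B(x_B,r(B)+h)$ with $h=\min\{r(B),1\}$, and then applying Cauchy--Schwarz globally. Both proofs ultimately hinge on the same use of \eqref{ELD} with $\alpha-1\eqsim 1/r(B)$ so that $(\alpha-1)r(B)$ remains bounded, but your route is shorter and avoids the combinatorics of the covering. The paper's decomposition, on the other hand, makes the atomic structure explicit, which is in keeping with the spirit of the surrounding section.
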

\begin{proof}
It is enough to show that $\sup \|a\|_{t^1} \lesssim 1$, where the supremum is taken over all $a$ that are $t^1$-Carleson atoms.

Let $a$ be a $t^1$-Carleson atom supported on a ball $B$ in $X$ of radius $r(B)>0$ with $\|a\|_{L^2_\bullet}\leq\mu(B)^{-1/2}$. First suppose that $r(B)\leq1$. It follows by \eqref{LD} that $\mu(2B)\leq c\mu(B)$ for some $c>0$ that does not depend on $B$. Also, we have $C^1(B)\subset T^1(2B)$ and the radius $r(2B)\leq2$. This implies that $a/\sqrt{c}$ is a $t^1$-atom and the result follows by \eqref{eq: uniformtentatoms}.

Now suppose that $r(B)>1$. Let $\mathscr{B}$ be the collection of all balls centered in $B$ with radius equal to $1/4$. Proposition~\ref{Aimar2.11} gives a sequence $(B_j)_j$ of pairwise disjoint balls from $\mathscr{B}$ such that $B\subseteq \bigcup_j \tilde B_j$, where $\tilde B_j=4B_j$. We also have the following bounded intersection property:
\[\sup_{j}\sharp\big(\{k\ |\ \tilde B_j\cap \tilde B_k \neq \emptyset\}\big) < \infty.\]
This follows from the local property of homogeneity, and in particular Remark~\ref{CW1.1loc}, since for each $j\in\N$, the centres of all balls $\tilde B_k$ satisfying $\tilde B_j\cap \tilde B_k \neq \emptyset$ are separated by at least a distance of $1/4$ and contained in $2\tilde B_j$. Therefore, the following are well defined for each $j\in\N$:
\[\tilde a_j = \frac{a\ca_{C^1(\tilde B_j)}}{\sum_k\ca_{C^1(\tilde B_k)}};
\quad a_j=\frac{\tilde{a}_j}{\mu(\tilde B_j)^{\frac{1}{2}}\|\tilde{a}_j\|_{L^2_\bullet}};
\quad \lambda_j= \mu(\tilde B_j)^{\frac{1}{2}} \|\tilde{a}_j\|_{L^2_\bullet}.\]
Also, we have $C^1(B)=B\times(0,1]\subseteq\bigcup_j C^1(\tilde B_j)$, since the radius $r(\tilde B_j)=1$. We can then write $a=\sum_j \lambda_j a_j$, where each $a_j$ is a $t^1$-atom by the previous paragraph. Therefore, we have
\[
\|a\|_{t^1}^2
\lesssim \Big(\sum_j |\lambda_j|\Big)^2
\leq \Big(\sum_j \mu(\tilde B_j)\Big)
\Big(\sum_j \|\tilde a_j\|_2^2\Big)  \\
\lesssim \mu\Big(\bigcup_j B_j\Big) \|a\|_2^2,
\]
where we used \eqref{LD} in the final inequality to obtain $\mu(\tilde B_j)\lesssim \mu(B_j)$. Each $B_j$ is contained in
$(1+\frac{1}{4r(B)})B$, so by \eqref{ELD} we obtain
\[
\|a\|_{t^1}^2
\lesssim \mu((1+\tfrac{1}{4r(B)})B)\mu(B)^{-1}
\lesssim 1,
\]
which completes the proof.
\end{proof}

The following duality and interpolation results for the local tent spaces follow as in the global case.

\begin{Thm}\label{Thm: tp2propertiesDuality}
Let $X$ be a locally doubling metric measure space. If $p\in[1,\infty)$ and $1/p+1/p'=1$, then the mapping
\[
g\mapsto \langle f,g\rangle_{L^2_\bullet}=\iint f(x,t)\overline{{g}(x,t)}\ \d\mu(x)\frac{\d t}{t}
\]
for all $f\in t^{p}$ and $g\in t^{p'}$, is an isomorphism from $t^{p'}$ onto the dual space $(t^p)^*$.
\end{Thm}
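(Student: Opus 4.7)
The plan is to adapt the global tent space duality argument of Coifman, Meyer and Stein (Theorem~1 in \cite{CMS}) to this truncated and locally doubling setting. There are two steps: showing that the pairing is bounded, $|\langle f,g\rangle_{L^2_\bullet}|\lesssim \|f\|_{t^p}\|g\|_{t^{p'}}$, and showing surjectivity of the induced map $t^{p'}\to(t^p)^*$.

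For boundedness in the range $p\in(1,\infty)$, I would insert the identity $1=V(y,t)^{-1}\int_{B(y,t)}d\mu(x)$ into $\langle f,g\rangle_{L^2_\bullet}$ and apply Fubini to obtain
$$\langle f,g\rangle_{L^2_\bullet}=\int_X\iint_{\Gamma^1(x)}f(y,t)\overline{g(y,t)}\,\frac{d\mu(y)}{V(y,t)}\frac{dt}{t}\,d\mu(x),$$
valid since $(y,t)\in\Gamma^1(x)$ forces $\rho(x,y)<t\leq 1$. Cauchy--Schwarz in $(y,t)$, combined with the equivalence $V(y,t)\eqsim V(x,t)$ on $\Gamma^1(x)$ that follows from \eqref{LD}, then yields the pointwise bound $|\langle f,g\rangle_{L^2_\bullet}|\lesssim \int_X\mathcal{A}_{\text{loc}}f(x)\,\mathcal{A}_{\text{loc}}g(x)\,d\mu(x)$, and Hölder finishes. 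For $p=1$, I would invoke Theorem~\ref{maintentatomic} to decompose $f=\sum_j\lambda_j a_j$ in $t^1$ and estimate each $|\langle a_j,g\rangle_{L^2_\bullet}|$ by Cauchy--Schwarz on the truncated tent, using $\|a_j\|_{L^2_\bullet}\leq\mu(B_j)^{-1/2}$ together with $\iint_{T^1(B_j)}|g|^2\,d\mu(y)\,dt/t\leq\mu(B_j)\|g\|_{t^\infty}^2$.

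For surjectivity, let $\ell\in(t^p)^*$. Inequality \eqref{(1.3)} shows that $L^2(K,d\mu\,dt/t)\hookrightarrow t^p$ continuously for every compact $K\subseteq X\times(0,1]$, so Riesz representation on each $L^2(K,d\mu\,dt/t)$ produces $g_K$, and the uniqueness of Riesz representatives on overlaps patches these into a single measurable function $g$ on $X\times(0,1]$ satisfying $\ell(f)=\langle f,g\rangle_{L^2_\bullet}$ for every compactly supported $f\in L^2_\bullet$. By Proposition~\ref{Prop: tpt2denseintp} this class is dense in $t^p$, so it suffices to establish $\|g\|_{t^{p'}}\lesssim\|\ell\|$.

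The norm estimate is the main obstacle, and is where the $p=1$ and $1<p<\infty$ cases diverge. For $p=1$ it is short: given a ball $B$ with $r(B)\leq 2$, any $h$ in the unit ball of $L^2(T^1(B),d\mu\,dt/t)$ produces a $t^1$-atom $\mu(B)^{-1/2}h$ per Definition~\ref{DefTentAtom}, whence $|\langle h,g\rangle_{L^2_\bullet}|\leq\mu(B)^{1/2}\|\ell\|$; dualising in $L^2(T^1(B),d\mu\,dt/t)$ and taking the supremum over $B\in\mathscr{B}_2(x)$ gives $\mathcal{C}_{\text{loc}}g(x)\lesssim\|\ell\|$ for a.e.\ $x$. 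For $p\in(1,\infty)$ I would use a compact exhaustion $\{K_N\}$ and set $g_N=g\,\ca_{K_N}\in L^2_\bullet$; by the standard duality test function construction, for each $\phi\geq 0$ with $\|\phi\|_p\leq 1$ one builds $f_{N,\phi}$ (of the form $\overline{g_N}\,\Phi$, with $\Phi(y,t)$ obtained by averaging $\phi$ over $B(y,t)$ as in the boundedness step) such that $\langle f_{N,\phi},g_N\rangle_{L^2_\bullet}$ is comparable to $\int\phi\,\mathcal{A}_{\text{loc}}g_N^2\,d\mu$ while $\|f_{N,\phi}\|_{t^p}\lesssim\|\mathcal{A}_{\text{loc}}g_N\|_{p'}$ by Hölder. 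Combining with $|\langle f_{N,\phi},g_N\rangle_{L^2_\bullet}|=|\ell(f_{N,\phi})|\leq\|\ell\|\,\|f_{N,\phi}\|_{t^p}$ and taking the supremum over $\phi$ yields $\|\mathcal{A}_{\text{loc}}g_N\|_{p'}\lesssim\|\ell\|$ uniformly in $N$, after which monotone convergence produces $\|g\|_{t^{p'}}\lesssim\|\ell\|$ and the proof is complete.
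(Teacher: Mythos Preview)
Your overall approach matches the paper's, which simply defers to Theorems~1 and~2 of \cite{CMS} adapted to the truncated, locally doubling setting via Theorem~\ref{maintentatomic}; the boundedness arguments and the $p=1$ surjectivity are correct as you have them.

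There is, however, a gap in your surjectivity sketch for $p\in(1,\infty)$. With $f_{N,\phi}=\overline{g_N}\,\Phi$ and $\Phi(y,t)$ the average of $\phi$ over $B(y,t)$, the pointwise bound $\mathcal{A}_{\text{loc}}f_{N,\phi}\lesssim(\mathcal{M}_{\text{loc}}\phi)\,\mathcal{A}_{\text{loc}}g_N$ is correct, but H\"older with a generic $\phi$ satisfying $\|\phi\|_p\leq1$ does not give $\|f_{N,\phi}\|_{t^p}\lesssim\|\mathcal{A}_{\text{loc}}g_N\|_{p'}$: to isolate the $p'$-norm you must use the exponent $a$ with $1/p=1/a+1/p'$, and Proposition~\ref{CW2.1loc} only controls $\|\mathcal{M}_{\text{loc}}\phi\|_a$ by $\|\phi\|_a$, not by $\|\phi\|_p$. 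Moreover, the supremum of $\int\phi\,(\mathcal{A}_{\text{loc}}g_N)^2\,d\mu$ over $\|\phi\|_p\leq1$ is $\|\mathcal{A}_{\text{loc}}g_N\|_{2p'}^2$, not $\|\mathcal{A}_{\text{loc}}g_N\|_{p'}$, so the two estimates do not combine as claimed. The fix is to take $\phi$ \emph{specifically} as $\phi=(\mathcal{A}_{\text{loc}}g_N)^{p'-2}$: then the pairing produces $\|g_N\|_{t^{p'}}^{p'}$ directly, and the H\"older step closes with exponent $a=p'/(p'-2)$, yielding $\|f_{N,\phi}\|_{t^p}\lesssim\|g_N\|_{t^{p'}}^{p'-1}$. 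This works cleanly for $1<p\leq2$; for $p>2$ (where $p'-2<0$) one must first truncate $\mathcal{A}_{\text{loc}}g_N$ away from zero and then pass to the limit, as in \cite{CMS}.
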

\begin{proof}
For $p=1$ and $p'=\infty$, the proof is closely related to the atomic characterisation in Theorem~\ref{maintentatomic} and follows the proof of Theorem~1 in \cite{CMS}. The remaining cases follow the proof of Theorem~2 in \cite{CMS}.
\end{proof}

\begin{Thm}\label{Thm: tp2propertiesInterp}
Let $X$ be a locally doubling metric measure space. If $\theta\in(0,1)$ and $1\leq p_0 < p_1\leq \infty$, then
\[
[t^{p_0},t^{p_1}]_\theta= t^{p_\theta},
\]
where $1/p_\theta=(1-\theta)/p_0+\theta/p_1$ and $[\cdot,\cdot]_\theta$ denotes complex interpolation.
\end{Thm}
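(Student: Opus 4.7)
My plan is to adapt the complex interpolation argument of Coifman--Meyer--Stein \cite{CMS} to the local setting, using as inputs the duality $(t^p)^* = t^{p'}$ of Theorem~\ref{Thm: tp2propertiesDuality}, the atomic characterisation of Theorem~\ref{maintentatomic}, the identification $t^2 = L^2_\bullet$ observed just before Proposition~\ref{Prop: tpt2denseintp}, and the density statement of Proposition~\ref{Prop: tpt2denseintp} itself. Together these replicate in the local truncated setting exactly the ingredients that \cite{CMS} uses in the global case.

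For the range $1 < p_0 < p_1 \le \infty$, I would realise $t^p$ as a retract (and hence a complemented subspace) of a Hilbert-valued $L^p$-space. With $H_x := L^2\bigl(\Gamma^1(x);\, \d\mu(y)\,\d t/(V(x,t)\, t)\bigr)$ and $(Jf)(x) := f|_{\Gamma^1(x)}$, the map $J$ is an isometric embedding of $t^p$ into $L^p(X;H)$, since $\|(Jf)(x)\|_{H_x} = \mathcal{A}_{\text{loc}} f(x)$ and hence $\|Jf\|_{L^p(X;H)} = \|f\|_{t^p}$. A bounded left inverse $\pi : L^p(X;H) \to t^p$ is constructed by a dualising cone-average as in \cite{CMS}, making $t^p$ a retract of $L^p(X;H)$ uniformly in $p$. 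Since complex interpolation of Hilbert-valued $L^p$-spaces (Stein--Weiss) gives $[L^{p_0}(X;H), L^{p_1}(X;H)]_\theta = L^{p_\theta}(X;H)$, Calder\'on's retract principle then yields $[t^{p_0}, t^{p_1}]_\theta = t^{p_\theta}$ throughout this range.

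For the endpoint case $p_0 = 1$, I would combine the previous step with the atomic decomposition. Given $f \in t^1 \cap t^{p_\theta}$ with atomic expansion $f = \sum_j \lambda_j a_j$ converging simultaneously in $t^1$ and $t^{p_\theta}$ (the last assertion of Theorem~\ref{maintentatomic}), one constructs a Stein-type analytic family $F(z)$ on the strip $\{0 \le \mathrm{Re}\, z \le 1\}$ of the form $F(z) = \sum_j |\lambda_j|^{\beta(z)} \mathrm{sgn}(\lambda_j)\, a_j$, with exponent $\beta(z)$ chosen so that $F(\theta) = f$ and the boundary values satisfy $\|F(it)\|_{t^1} \lesssim \|f\|_{t^{p_\theta}}$ and $\|F(1+it)\|_{t^{p_1}} \lesssim \|f\|_{t^{p_\theta}}$. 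The uniform bound $\|a\|_{t^1} \lesssim 1$ from \eqref{eq: uniformtentatoms} is what drives the endpoint estimates. This realises $f$ in the Calder\'on interpolation space $[t^1, t^{p_1}]_\theta$ with the desired norm control, giving the inclusion $t^{p_\theta} \subseteq [t^1, t^{p_1}]_\theta$; the reverse inclusion is immediate from boundedness of the identity at both endpoints.

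The main obstacle is the endpoint $p_0 = 1$, where Calder\'on's complex method with an $L^1$-endpoint is always the most delicate part; in particular the analytic family must be compatible with the truncation constraint $r(B)\le 2$ on $t^1$-atoms from Definition~\ref{DefTentAtom}. Since the family $F(z)$ varies only the atomic coefficients and not the atoms themselves, this constraint is preserved throughout the strip, so the local truncation introduces no essential difficulty beyond what arises in \cite{CMS}. The reduction of the case $p_1 = \infty$ is then handled either directly by the retract argument, or by invoking Theorem~\ref{Thm: tp2propertiesDuality} and interpolating the dual scale.
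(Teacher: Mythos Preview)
Your approach is essentially the one the paper takes: it simply cites Bernal \cite{Bernal}, whose proof in turn adapts the Coifman--Meyer--Stein strategy you outline (retract onto a Hilbert-valued $L^p$ scale for $1<p<\infty$, and an atomic analytic-family construction at the $p_0=1$ endpoint).

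One genuine correction, though: your retract argument does \emph{not} cover $p_1=\infty$ as claimed. The embedding $J$ satisfies $\|Jf\|_{L^\infty(X;H)}=\|\mathcal{A}_{\text{loc}}f\|_\infty$, whereas $t^\infty$ is normed by $\|\mathcal{C}_{\text{loc}}f\|_\infty$; the Carleson functional $\mathcal{C}_{\text{loc}}$ is an averaged quantity and is not controlled by (nor does it control) $\sup_x \mathcal{A}_{\text{loc}}f(x)$. So $J$ is not a bounded map from $t^\infty$ into $L^\infty(X;H)$, and the retract identification fails at that endpoint. The duality route you mention at the end is the correct fix, and is indeed how both \cite{CMS} and \cite{Bernal} handle $p_1=\infty$; you should commit to it rather than offer the retract argument as an alternative. (A minor related point: your fibres $H_x$ vary with $x$; replace $V(x,t)$ by $V(y,t)$ using \eqref{LD} to obtain a fixed target Hilbert space, as is standard.)
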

\begin{proof}
The interpolation space $[t^{p_0},t^{p_1}]_\theta$ is well-defined because
\[
t^p(X\times(0,1]) \subseteq L_{\text{loc}}^2(X\times(0,1])
\]
for all $p\in[1,\infty]$ by \eqref{(1.3)}. This allows us to construct the Banach space ${t^{p_0}+t^{p_1}}$, which is the smallest ambient space in which $t^{p_0}$ and $t^{p_1}$ are continuously embedded. The proof then follows that of Theorem~3 and Proposition~1 in \cite{Bernal}.
\end{proof}

We conclude this section by dealing with a technicality involving the space $t^\infty$. In contrast with Proposition~\ref{Prop: tpt2denseintp}, the set $t^\infty\cap t^2$ may not be dense in $t^\infty$ when $X$ is not compact. Therefore, define $\tilde t^\infty$ to be the closure of $t^1\cap t^\infty$ in $t^\infty$, so we have both the density of $\tilde{t}^\infty\cap t^2$ in $\tilde{t}^\infty$ and the interpolation result in the following corollary.

\begin{Cor}\label{Cor: tp2Interp}
Let $X$ be a locally doubling metric measure space. If $\theta\in(0,1)$ and $1\leq p < \infty$, then
\[
[t^p,\tilde t^\infty]_\theta= t^{p_\theta},
\]
where $1/p_\theta=(1-\theta)/p$ and $[\cdot,\cdot]_\theta$ denotes complex interpolation. Also, the set $\tilde t^\infty\cap t^q$ is dense in $\tilde t^\infty$ for all $q\in[1,\infty]$, and $t^2$ is dense in $t^1+\tilde t^\infty$.
\end{Cor}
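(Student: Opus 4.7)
The plan is to handle the two density statements first, since they feed into the interpolation argument, and then establish the interpolation identity by combining Theorem~\ref{Thm: tp2propertiesInterp} with a density-completeness argument.

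First I would show that $\tilde t^\infty\cap t^q$ is dense in $\tilde t^\infty$ for every $q\in[1,\infty]$. The case $q=\infty$ is trivial, and for $q\in[1,\infty)$ Theorem~\ref{Thm: tp2propertiesInterp} applied to the couple $(t^1,t^\infty)$ gives the continuous embedding $t^1\cap t^\infty\hookrightarrow t^q$. Since $t^1\cap t^\infty$ is dense in $\tilde t^\infty$ by the definition of the latter, the larger set $\tilde t^\infty\cap t^q$ is dense in $\tilde t^\infty$. Next, for $t^2$ dense in $t^1+\tilde t^\infty$, given $f=g+h$ with $g\in t^1$ and $h\in\tilde t^\infty$, Proposition~\ref{Prop: tpt2denseintp} provides $g_n\in t^1\cap t^2$ with $g_n\to g$ in $t^1$, while the definition of $\tilde t^\infty$ provides $h_n\in t^1\cap t^\infty$ with $h_n\to h$ in $\tilde t^\infty$; the previous embedding forces $h_n\in t^2$, so $g_n+h_n\in t^2$ converges to $f$ in $t^1+\tilde t^\infty$.

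For the interpolation identity, the inclusion $[t^p,\tilde t^\infty]_\theta\subseteq t^{p_\theta}$ is immediate from the continuous embedding $\tilde t^\infty\hookrightarrow t^\infty$, monotonicity of complex interpolation, and Theorem~\ref{Thm: tp2propertiesInterp}. For the reverse inclusion I would work on the dense subset $t^{p_\theta}\cap t^1\cap t^\infty$ of $t^{p_\theta}$ (dense by Proposition~\ref{Prop: tpt2denseintp}) and revisit the analytic family $F:\overline{S}\to t^p+t^\infty$ used in Bernal's proof of Theorem~\ref{Thm: tp2propertiesInterp}. The goal is to check that, for $f$ in this dense subset, $F$ can be chosen with boundary values $F(1+it)\in t^1\cap t^\infty\subseteq\tilde t^\infty$ for a.e.\ $t\in\R$, rather than only in $t^\infty$. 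This would give $\|f\|_{[t^p,\tilde t^\infty]_\theta}\lesssim\|f\|_{t^{p_\theta}}$ on the dense subset, and completeness of $[t^p,\tilde t^\infty]_\theta$ inside the Banach space $t^p+\tilde t^\infty$ would then propagate the inequality to all of $t^{p_\theta}$.

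The hard part will be exactly this refinement of Bernal's construction. The key observation I would rely on is that for $f\in t^1\cap t^\infty$, the building blocks appearing in the analytic family (essentially truncated tent atoms and their power-type modifications) are compactly supported in $X\times(0,1]$, and by \eqref{(1.3)} any compactly supported measurable function on $X\times(0,1]$ lies automatically in $t^1\cap t^\infty$, hence in $\tilde t^\infty$. This is precisely what motivates the introduction of $\tilde t^\infty$ in place of $t^\infty$.
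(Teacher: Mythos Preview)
Your density arguments are essentially the same as the paper's. The interpolation argument, however, takes a genuinely different and more laborious route than the paper. The paper never touches Bernal's construction: for $p=1$ it invokes a standard abstract fact (Theorem~1.9.3(g) of Triebel) that replacing $t^\infty$ by the closure $\tilde t^\infty$ of $t^1\cap t^\infty$ does not change the complex interpolation space, giving $[t^1,\tilde t^\infty]_\theta=[t^1,t^\infty]_\theta=t^{1/(1-\theta)}$ directly from Theorem~\ref{Thm: tp2propertiesInterp}; for $p\in(1,\infty)$ it then applies the reiteration theorem for complex interpolation to reduce to the $p=1$ case. Two black-box interpolation theorems replace the entire ``hard part'' you identify.

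Your approach is plausible in spirit, but the hard part is not actually carried out: you assert that Bernal's analytic family can be modified to have boundary values in $t^1\cap t^\infty$ based on a compact-support heuristic via~\eqref{(1.3)}, but this requires going into the details of the atomic decomposition and the specific form of the holomorphic family, and you have not verified it. Even granting that step, the extension-by-density at the end needs the compatibility of the inclusion $t^{p_\theta}\hookrightarrow t^p+\tilde t^\infty$ with the limit, which is fine but should be said. The paper's route via Triebel plus reiteration sidesteps all of this with no loss.
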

\begin{proof}
If $\theta\in(0,1)$, then by a standard property of complex interpolation, as in Theorem~1.9.3(g) of \cite{T}, and Theorem~\ref{Thm: tp2propertiesInterp}, we have
\[
[t^1,\tilde t^\infty]_\theta = [t^1,t^\infty]_{\theta} = t^{1/(1-\theta)}.
\]
If $p\in(1,\infty)$, then by the standard reiteration theorem for complex interpolation, as in Theorem~1.7 in Chapter IV of \cite{KPS}, we have
\[
[t^p,\tilde t^\infty]_\theta = [t^1,\tilde t^\infty]_{(1-\theta)(1-1/p)+\theta} = t^{p_\theta},
\]
where the density properties required to apply the reiteration theorem are guaranteed by Proposition~\ref{Prop: tpt2denseintp}.

Finally, the interpolation in Theorem~\ref{Thm: tp2propertiesInterp} implies that $t^1\cap t^\infty \subseteq t^q$ for all $q\in[1,\infty]$. Therefore, the density of $t^1\cap t^\infty$ in $\tilde t^\infty$ implies that $\tilde t^\infty\cap t^q$ is dense in $\tilde t^\infty$ for all $q\in[1,\infty]$. The density of $t^1\cap t^2$ in $t^1$ from Proposition~\ref{Prop: tpt2denseintp} then implies that $t^2$ is dense in $t^1+\tilde t^\infty$.
\end{proof}

\section{Some New Function Spaces $L^p_{\mathscr{Q}}(X)$}\label{SectionNewFunctionSpaces}
We introduce some new function spaces $L^p_{\mathscr{Q}}(X)$, or simply $L^p_{\mathscr{Q}}$, for all $p\in[1,\infty]$ in the context of a locally doubling metric measure space $X$. Note that functions on $X$ are assumed to be complex-valued. We begin with the following abstraction of the unit cube structure in $\R^n$.
\begin{Def}\label{Def: UnitCubeStructure}
Let $X$ be a metric measure space. A \textit{unit cube structure} on $X$ is a countable collection $\mathscr{Q}=(Q_j)_j$ of pairwise disjoint measurable sets that cover $X$, for which there exists $\delta\in(0,1]$ and a sequence of balls $(B_j)_j$ in $X$ of radius equal to 1 such that
\[\delta B_j \subseteq Q_j\subseteq B_j.\]
The sets in $\mathscr{Q}$ are called \textit{unit cubes}.
\end{Def}

A unit cube structure exists on a locally doubling space.
\begin{Lem}
If $X$ is a locally doubling metric measure space, then it has a unit cube structure.
\end{Lem}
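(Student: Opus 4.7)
My plan is to construct the cubes as a Voronoi-type partition associated to a well-separated net of points, with the net obtained from the local Vitali--Wiener covering lemma. Specifically, I will apply Proposition~\ref{Aimar2.11} to the collection $\mathscr{B}$ of all open balls in $X$ of radius $1/4$; this returns a countable pairwise disjoint subfamily $(B(x_j,1/4))_j$ whose four-fold dilates $B_j:=B(x_j,1)$ cover $X$. The disjointness of the quarter-balls forces the quantitative separation $\rho(x_j,x_k)\geq 1/2$ for all $j\neq k$, which will drive the rest of the argument.

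Given this net, after fixing an enumeration of the indices I define the closed Voronoi cells
\[
\tilde Q_j=\{x\in X:\rho(x,x_j)\leq\rho(x,x_k)\text{ for all }k\}
\]
and set $Q_j=\tilde Q_j\setminus\bigcup_{k<j}\tilde Q_k$. Each $\tilde Q_j$ is Borel since the maps $\rho(\cdot,x_k)$ are continuous, so the $Q_j$ are Borel and pairwise disjoint by construction. To see $X=\bigcup_j Q_j$, fix $x\in X$: the covering by the $B_j$ forces the set $I(x)=\{k:\rho(x,x_k)<1\}$ to be nonempty, and the local property of homogeneity (Proposition~\ref{DoublingHomogloc}) applied to $B(x,1)$, combined with the $1/2$-separation of the $x_k$, forces $I(x)$ to be finite. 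Hence the infimum $\inf_k\rho(x,x_k)$ is actually a minimum, and the smallest index $k_0$ achieving it satisfies $x\in\tilde Q_{k_0}\setminus\bigcup_{k<k_0}\tilde Q_k=Q_{k_0}$.

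The last step is to verify $\delta B_j\subseteq Q_j\subseteq B_j$ with $\delta=1/4$. The inclusion $Q_j\subseteq B_j$ follows from the previous paragraph: if $x\in Q_j\subseteq\tilde Q_j$, then $\rho(x,x_j)\leq\rho(x,x_k)<1$ for any $k\in I(x)$. Conversely, if $\rho(x,x_j)<1/4$ and $k\neq j$, the reverse triangle inequality gives $\rho(x,x_k)\geq\rho(x_j,x_k)-\rho(x,x_j)>\tfrac12-\tfrac14=\tfrac14>\rho(x,x_j)$, which places $x$ in $\tilde Q_j$ and outside every $\tilde Q_k$ for $k\neq j$, hence in $Q_j$. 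The proof is essentially bookkeeping; the only obstacle is the strict-inequality step in the lower inclusion, and it is resolved by the specific choice of radius $1/4$ at the Vitali--Wiener stage, since the resulting $1/2$-separation then strictly exceeds twice this radius.
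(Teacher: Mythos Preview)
Your Voronoi-partition approach is sound, but there is a slip in the separation bound: disjointness of the open balls $B(x_j,1/4)$ in a general metric space only yields $\rho(x_j,x_k)\geq 1/4$ (from $x_j\notin B(x_k,1/4)$), not $\geq 1/2$; the stronger bound would require a midpoint-type property that an arbitrary metric space need not have. With the correct bound the argument still goes through: the local property of homogeneity in the form of Remark~\ref{CW1.1loc} (with $n=2$) still gives finiteness of $I(x)$, and the reverse-triangle-inequality step in the lower inclusion yields $B(x_j,1/8)\subseteq Q_j$, so you obtain a unit cube structure with $\delta=1/8$ rather than $1/4$. Nothing essential is lost.

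The paper's proof starts from the same Vitali--Wiener net but, instead of Voronoi cells, uses a greedy ordered carving,
\[
Q_j = 4B_j \cap \comp{\Big(\bigcup_{k<j}Q_k\Big)} \cap \comp{\Big(\bigcup_{k>j} B_k\Big)},
\]
i.e., from $4B_j$ one removes the previously constructed cubes and all later quarter-balls. This sidesteps the separation issue entirely (disjointness of the $B_k$ directly gives $B_j\subseteq Q_j$, hence $\delta=1/4$) and needs no finiteness argument or appeal to the local property of homogeneity. Your Voronoi construction is more geometric and produces cells that are canonical up to tie-breaking on boundaries, whereas the paper's construction is shorter and more elementary but depends on the chosen enumeration.
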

\begin{proof}
The cubes are constructed in the same way that general dyadic cubes are constructed in Section I.3.2 of \cite{St3}. Let $\mathscr{B}$ be the collection of all balls in $X$ with radius equal to $1/4$. Proposition~\ref{Aimar2.11} gives a sequence $(B_j)_j$ of pairwise disjoint balls from $\mathscr{B}$ such that $X=\bigcup_j 4B_j$. The unit cubes $Q_j$ are then defined recursively for each $j\in\N$ by
\[Q_j = 4B_j \cap \comp{\Big(\bigcup_{k<j}Q_k\Big)} \cap \comp{\Big(\bigcup_{k>j} B_k\Big)}.\]
We have $\delta=1/4$ in this unit cube structure.
\end{proof}

In the proof above we could instead use the dyadic cubes constructed by Christ in \cite{Christ}. In any case, this brings us to the definition of $L^1_{\mathscr{Q}}(X)$.

\begin{Def} \label{LpQdef}
Let $X$ be a locally doubling metric measure space. Let ${\mathscr{Q}=(Q_j)_j}$ be a unit cube structure on $X$. For each $p\in[1,\infty)$, the space $L^p_{\mathscr{Q}}(X)$ consists of all measurable functions $f$ on $X$ with
\[
\|f\|_{L^p_{\mathscr{Q}}}= \Big(\sum_{Q_j\in\mathscr{Q}}\big(\mu(Q_j)^{\frac{1}{p}-\frac{1}{2}}\|\ca_{Q_j}f\|_2\big)^p\Big)^{\frac{1}{p}} <\infty.
\]
The space $L^\infty_\mathscr{Q}(X)$ consists of all measurable functions $f$ on $X$ with
\[
\|f\|_{L^\infty_\mathscr{Q}}
=\sup_{Q_j\in \mathscr Q} \mu(Q_j)^{-\frac{1}{2}}\|\ca_{Q_j}f\|_2<\infty.
\]
\end{Def}

These are Banach spaces under the usual identification of functions that are equal almost everywhere. The space $L^2_\mathscr{Q}(X)$ is exactly the Hilbert space $L^2(X)$. More generally, completeness holds because for each compact set $K\subseteq X$ and each ${p\in[1,\infty]}$, we have
\begin{equation}\label{(1.4)}
\|\ca_Kf\|_{L^p_{\mathscr Q}}\lesssim_{K,p} \|\ca_K f\|_2 \lesssim_{K,p} \|f\|_{L^p_{\mathscr Q}}
\end{equation}
for all measurable functions $f$ on $X$.

We will see that the $L^p_\mathscr{Q}$ spaces are independent of the unit cube structure $\mathscr Q$ used in their definition. First, however, we consider their relationship with the $L^p$~spaces.

\begin{Prop}\label{Prop: LpQL2denseinLp}
Let $X$ be a locally doubling metric measure space. The following hold:
\begin{enumerate}
\item $L^p_{\mathscr{Q}}\cap L^q_{\mathscr{Q}}$ is dense in $L^p_{\mathscr{Q}}$ for all $p\in[1,\infty)$ and $q\in[1,\infty]$;
\item $L^p_{\mathscr Q} \subseteq L^p$ for all $p\in[1,2]$;
\item $L^p \subseteq L^p_{\mathscr Q}$ for all $p\in[2,\infty]$.
\end{enumerate}
\end{Prop}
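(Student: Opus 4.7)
The plan is to treat all three parts by applying Hölder's inequality cube by cube, which is the natural move given that $\|\cdot\|_{L^p_\mathscr{Q}}$ is a mixed $\ell^p$-$L^2$ norm along the decomposition $X = \bigsqcup_j Q_j$. Each unit cube sits inside a ball of radius one, so local doubling ensures $\mu(Q_j)<\infty$; this is what makes all the elementary Hölder estimates below quantitative.

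For part (2), fix $p\in[1,2]$ and apply Hölder on each $Q_j$ with exponents $2/p$ and $2/(2-p)$ to the pair $|f|^p$ and $1$ to obtain
\[
\int_{Q_j}|f|^p\,\d\mu \;\leq\; \mu(Q_j)^{1-p/2}\,\|\ca_{Q_j}f\|_2^p .
\]
Recognising $\mu(Q_j)^{1-p/2}=(\mu(Q_j)^{1/p-1/2})^p$ and summing in $j$ yields $\|f\|_p^p\leq \|f\|_{L^p_\mathscr{Q}}^p$, with implicit constant $1$. For part (3) with $p\in[2,\infty)$, I would reverse the roles and apply Hölder to $\int_{Q_j}|f|^2$ with exponents $p/2$ and $p/(p-2)$, which gives $\|\ca_{Q_j}f\|_2\leq \mu(Q_j)^{1/2-1/p}\|\ca_{Q_j}f\|_p$; multiplying by $\mu(Q_j)^{1/p-1/2}$, raising to the $p$-th power, and summing over $j$ delivers $\|f\|_{L^p_\mathscr{Q}}\leq\|f\|_p$. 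The endpoint $p=\infty$ is handled separately and is immediate from $\|\ca_{Q_j}f\|_2\leq \|f\|_\infty\,\mu(Q_j)^{1/2}$.

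Part (1) will be handled by truncation. Given $f\in L^p_\mathscr{Q}$ with $p\in[1,\infty)$, enumerate the cubes as $(Q_j)_{j\in\N}$ and set $f_N=\ca_{Q_1\cup\dots\cup Q_N}f$. Then $\|f-f_N\|_{L^p_\mathscr{Q}}^p$ is exactly the tail $\sum_{j>N}(\mu(Q_j)^{1/p-1/2}\|\ca_{Q_j}f\|_2)^p$ of a convergent series of non-negative terms, so it tends to zero as $N\to\infty$. On the other hand, $f_N$ is supported in the finite union $Q_1\cup\dots\cup Q_N$, and the factor $\|\ca_{Q_j}f\|_2$ is necessarily finite for every $j$ (otherwise already $\|f\|_{L^p_\mathscr{Q}}=\infty$). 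Hence $\|f_N\|_{L^q_\mathscr{Q}}$ is a finite sum of finite quantities for every $q\in[1,\infty]$, which places $f_N\in L^p_\mathscr{Q}\cap L^q_\mathscr{Q}$.

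There is no real obstacle here: the proposition is a concrete comparison whose only ingredients are Hölder, the finite-measure property of unit cubes, and a tail-truncation argument for density. The only small nuisance is remembering to deal with $p=\infty$ in part (3) by hand, since the Hölder manipulation is set up for finite exponents.
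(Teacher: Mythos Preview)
Your proof is correct and matches the paper's argument essentially line for line: parts (2) and (3) are the same cube-by-cube H\"older computations (with the $p=\infty$ case handled separately), and part (1) is the same truncation-plus-tail argument. The only cosmetic difference is that the paper truncates using an increasing family of balls $kB$ rather than finite unions of cubes, which forces it to invoke the auxiliary estimate \eqref{(1.4)} to see that each truncation lies in $L^q_{\mathscr Q}$; your cube-based truncation makes this membership immediate from the definition.
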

\begin{proof}
Let $p\in[1,\infty)$ and $f\in L^p_{\mathscr Q}$. Fix a ball $B$ in $X$ of radius $r(B)\geq1$ and define $f_k=\ca_{kB}f$ for each $k\in\N$. The functions $f_k$ belong to $L^p_{\mathscr{Q}}\cap L^q_{\mathscr{Q}}$ for all $q\in[1,\infty]$ by \eqref{(1.4)},
and
\[
\lim_{k\rightarrow\infty}\|f-f_k\|_{L^p_{\mathscr{Q}}}^p = \lim_{k\rightarrow\infty} \sum_{Q_j\cap \comp{(kB)} \neq \emptyset} \big(\mu(Q_j)^{\frac{1}{p}-\frac{1}{2}}\|\ca_{Q_j}f\|_2\big)^p = 0
\]
because $f\in L^p_{\mathscr{Q}}$, which proves (1).

We use H\"{o}lder's inequality to prove (2) and (3). If $p\in[1,2]$, then
\begin{align*}
\|f\|_p^p
= \sum_{Q_j\in\mathscr{Q}}\|\ca_{Q_j}f^p\|_1
\leq \sum_{Q_j\in\mathscr{Q}} \big(\mu(Q_j)^{\frac{1}{p}-\frac{1}{2}}\|\ca_{Q_j}f\|_2\big)^p
=\|f\|_{L^p_{\mathscr{Q}}}^p
\end{align*}
for all $f\in L^p_{\mathscr Q}$, which proves (2). If $p\in[2,\infty)$, then
\[
\|f\|_{L^p_{\mathscr{Q}}}^p
= \sum_{Q_j\in\mathscr{Q}} \big( \mu(Q_j)^{\frac{1}{p}-\frac{1}{2}}\|\ca_{Q_j}f^2\|_1^{\frac{1}{2}}\big)^p
\leq \sum_{Q_j\in\mathscr Q} \|\ca_{Q_j}f\|_p^p
=\|f\|_p^p
\]
for all $f\in L^p$, whilst
\begin{align*}
\|f\|_{L^\infty_{\mathscr{Q}}}
= \sup_{Q_j\in \mathscr Q} \mu(Q_j)^{-\frac{1}{2}}\|\ca_{Q_j}f^2\|_1^{\frac{1}{2}}
\leq \sup_{Q_j\in \mathscr Q} \|\ca_{Q_j}f^2\|_{\infty}^{\frac{1}{2}}
=\|f\|_\infty
\end{align*}
for all $f\in L^\infty$, which proves (3).
\end{proof}

Now we turn to the atomic characterisation of $L^1_{\mathscr Q}$.
\begin{Def}\label{L1QatomDef}
Let $X$ be a locally doubling metric measure space. An $L^1_{\mathscr{Q}}$-\textit{atom} is a measurable function $a$ on $X$ supported on a ball $B$ in $X$ of radius $r(B)\geq1$ with $\|a\|_2\leq\mu(B)^{-{1}/{2}}$.
\end{Def}

If $a$ is an $L^1_\mathscr{Q}$-atom, then $a$ belongs to $L^1_{\mathscr Q}\cap L^2$ with $\|a\|_1\lesssim 1$. If $X$ is exponentially locally doubling, then it is shown in the following theorem that $\|a\|_{L^1_\mathscr{Q}} \lesssim 1$. This allows us to prove that $L^1_\mathscr{Q}$ is precisely the subspace of $L^1$ in which functions have an atomic characterisation consisting purely of atoms supported on balls with large radii. The effectiveness of \eqref{ELD} in the proof of the first part of the following theorem can be understood in terms of its equivalence with the condition  \eqref{U} from Proposition~\ref{Prop: ELDequivDgloDloc}.

\begin{Thm}\label{mainLQatomic}
Let $X$ be an exponentially locally doubling metric measure space. The following hold:

\begin{enumerate}\item If $(\lambda_j)_j$ is a sequence in $\ell^1$ and $(a_j)_j$ is a sequence of $L^1_{\mathscr{Q}}$-atoms, then $\sum_j\lambda_ja_j$ converges in $L^1_{\mathscr Q}$ with $\|\sum_j\lambda_ja_j\|_{L^1_{\mathscr Q}} \lesssim \|(\lambda_j)_j\|_{\ell^1}$;

\item If $f\in L^1_{\mathscr{Q}}$, then there exist a sequence $(\lambda_j)_j$ in $\ell^1$ and a sequence $(a_j)_j$ of $L^1_{\mathscr{Q}}$-atoms such that $\sum_j\lambda_ja_j$ converges to $f$ in $L^1_{\mathscr Q}$ and almost everywhere in $X$. Moreover, we have
{\begin{center}
$\|f\|_{L^1_{\mathscr{Q}}} \eqsim \inf \{\|(\lambda_j)_j\|_{\ell^1} : f=\textstyle{\sum_j}\lambda_ja_j\}.$
\end{center}}
\noindent Also, if $p\in(1,\infty)$ and $f\in L^1_\mathscr{Q} \cap L^p_\mathscr{Q}$, then $\sum_j\lambda_ja_j$ converges to $f$ in $L^p_{\mathscr Q}$ as well.
\end{enumerate}
\end{Thm}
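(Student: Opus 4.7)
The plan is to prove part (1) first by showing that $L^1_{\mathscr Q}$-atoms are uniformly bounded in $L^1_{\mathscr Q}$, and then to prove part (2) by the obvious decomposition over the unit cubes.

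For part (1), it suffices to establish $\|a\|_{L^1_{\mathscr Q}} \lesssim 1$ for every $L^1_{\mathscr Q}$-atom $a$; then $\|\sum_j \lambda_j a_j\|_{L^1_{\mathscr Q}} \leq \sum_j |\lambda_j| \|a_j\|_{L^1_{\mathscr Q}} \lesssim \|(\lambda_j)_j\|_{\ell^1}$ by the triangle inequality, which also yields convergence in $L^1_{\mathscr Q}$. So fix $a$ supported in a ball $B = B(x_B, r)$ with $r \geq 1$ and $\|a\|_2 \leq \mu(B)^{-1/2}$. Only cubes $Q_j$ meeting $B$ contribute, and each such $Q_j$ is contained in the ball $B_j$ of radius 1, hence in $B(x_B, r+2)$. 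By the Cauchy--Schwarz inequality and pairwise disjointness,
\[
\|a\|_{L^1_{\mathscr Q}} \;=\; \sum_{Q_j \cap B \neq \emptyset} \mu(Q_j)^{1/2}\|\ca_{Q_j}a\|_2 \;\leq\; \Bigl(\sum_{Q_j \cap B \neq \emptyset}\mu(Q_j)\Bigr)^{1/2}\|a\|_2 \;\leq\; \mu(B(x_B,r+2))^{1/2}\mu(B)^{-1/2}.
\]
The point where \eqref{ELD} is essential is the final bound: applying \eqref{ELD} with $\alpha = 1 + 2/r \in [1,3]$ (since $r \geq 1$) gives $\mu(B(x_B, r+2)) \leq A\,3^\kappa e^{2\lambda}\mu(B)$, independently of $r$. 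This is exactly the content of \eqref{U} from Proposition~\ref{Prop: ELDequivDgloDloc}, which would fail under mere local doubling. This gives $\|a\|_{L^1_{\mathscr Q}} \lesssim 1$ and completes part (1).

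For part (2), given $f \in L^1_{\mathscr Q}$, let $f_j = \ca_{Q_j} f$ and, whenever $f_j \neq 0$, set
\[
\lambda_j = \mu(B_j)^{1/2}\|f_j\|_2 \quad\text{and}\quad a_j = \lambda_j^{-1} f_j,
\]
where $B_j$ is the radius-$1$ ball with $\delta B_j \subseteq Q_j \subseteq B_j$ from Definition~\ref{Def: UnitCubeStructure}; set $\lambda_j = 0$ and discard $a_j$ otherwise. Each $a_j$ is supported in $B_j$ (radius $\geq 1$) with $\|a_j\|_2 = \mu(B_j)^{-1/2}$, so it is an $L^1_{\mathscr Q}$-atom. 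Local doubling applied with $\alpha=1/\delta$ on the radius $\delta$ gives $\mu(B_j) \lesssim \mu(\delta B_j) \leq \mu(Q_j)$, hence
\[
\sum_j |\lambda_j| \;=\; \sum_j \mu(B_j)^{1/2}\|\ca_{Q_j}f\|_2 \;\lesssim\; \sum_j \mu(Q_j)^{1/2}\|\ca_{Q_j}f\|_2 \;=\; \|f\|_{L^1_{\mathscr Q}}.
\]
Since $f = \sum_j \ca_{Q_j} f = \sum_j \lambda_j a_j$ pointwise (each $x\in X$ lies in exactly one $Q_j$), almost everywhere convergence is immediate. Convergence in $L^1_{\mathscr Q}$ of the partial sums follows from the fact that $\|f - \sum_{j\leq N} \lambda_j a_j\|_{L^1_{\mathscr Q}} = \sum_{j > N} \mu(Q_j)^{1/2}\|\ca_{Q_j} f\|_2$ is the tail of the convergent series defining $\|f\|_{L^1_{\mathscr Q}}$; the same computation in $L^p_{\mathscr Q}$ with the exponent $1/p - 1/2$ in place of $1/2$ gives convergence in $L^p_{\mathscr Q}$ when $f \in L^1_{\mathscr Q} \cap L^p_{\mathscr Q}$. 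The equivalence $\|f\|_{L^1_{\mathscr Q}} \eqsim \inf\|(\lambda_j)_j\|_{\ell^1}$ combines this construction with part (1).

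The main obstacle is the uniform atom bound in part (1): the support ball of an $L^1_{\mathscr Q}$-atom may have arbitrarily large radius, so one cannot appeal to Proposition~\ref{PropkappaLD}, and the estimate genuinely depends on controlling the ratio $\mu(B(x_B, r+2))/\mu(B(x_B, r))$ uniformly in $r$. This is precisely where \eqref{ELD} (equivalently \eqref{U}) is indispensable, as emphasised in the paragraph preceding the theorem.
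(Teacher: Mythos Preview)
Your proof is correct and follows essentially the same approach as the paper: the uniform atom bound in part~(1) via Cauchy--Schwarz over the cubes meeting $B$ together with \eqref{ELD} applied to $(1+2/r(B))B$, and in part~(2) the canonical decomposition $f=\sum_j \ca_{Q_j}f$ over the unit cube structure. The only cosmetic difference is that the paper normalizes with $\mu(Q_j)^{1/2}$ (so each $a_j$ is a constant multiple of an atom and $\|(\lambda_j)\|_{\ell^1}=\|f\|_{L^1_{\mathscr Q}}$ exactly), whereas you normalize with $\mu(B_j)^{1/2}$ (so each $a_j$ is an exact atom and $\|(\lambda_j)\|_{\ell^1}\lesssim\|f\|_{L^1_{\mathscr Q}}$).
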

\begin{proof}
To prove (1), it is enough to show that $\sup\{\|a\|_{L^1_{\mathscr{Q}}} : \text{$a$ is an $L^1_\mathscr{Q}$-atom}\} \lesssim 1$. Let $a$ be an $L^1_{\mathscr{Q}}\text{-atom}$ supported on a ball $B$ of radius $r(B)~\geq~1$. Let
\[\mathscr{Q}_B=\{Q_j\in\mathscr{Q} :Q_j\cap B \neq \emptyset\}.\]
For each $Q_j\in\mathscr{Q}_B$, there exists a ball $B_j$ in $X$ of radius equal to 1 such that
\[\delta B_j \subseteq {Q}_j\subseteq B_j,\]
where $\delta$ is the constant associated with $\mathscr{Q}$ in Definition~\ref{Def: UnitCubeStructure}. The Cauchy--Schwarz inequality and the properties of the unit cube structure imply that
\begin{align*}
\|a\|_{L^1_{\mathscr{Q}}}^2
 \leq \|a\|_2^2 \sum_{Q_j\in\mathscr{Q}_B} \mu(Q_j) = \|a\|_2^2\ \mu\Big(\bigcup_{Q_j\in\mathscr{Q}_B} Q_j\Big)
\leq \mu(B)^{-1} \mu((1+\tfrac{2}{r(B)})B).
\end{align*}
The lower bound on $r(B)$ and \eqref{ELD} then imply that $\|a\|_{L^1_{\mathscr{Q}}}\lesssim 1$, where the constant in $\lesssim$ does not depend on $a$.

To prove (2), let $f\in L^1_{\mathscr{Q}}$. We can write $f(x)=\sum_{Q_j\in{\mathscr{Q}}}\lambda_ja_j(x)$ for almost every $x\in X$, where
\[
a_j(x)= \frac{\ca_{Q_j} f(x)}{\mu(Q_j)^{\frac{1}{2}}\|\ca_{Q_j}f\|_2}
\quad\text{and}\quad
\lambda_j=\mu(Q_j)^{\frac{1}{2}}\|\ca_{Q_j}f\|_2.
\]
Given that $f\in L^1_\mathscr Q$, this series also converges to $f$ in $L^1_\mathscr{Q}$. The same reasoning shows that if ${f\in L^1_\mathscr{Q}\cap L^p_\mathscr{Q}}$ for some $p\in(1,\infty)$, then the series also converges to $f$ in $L^p_\mathscr{Q}$.
Also, each $a_j$ is supported in $Q_j \subseteq B_j$, so by \eqref{LD} we obtain
\[\|a_j\|_2 \leq \mu(Q_j)^{-\frac{1}{2}} \leq  \mu(\delta B_j)^{-\frac{1}{2}} \lesssim \mu(B_j)^{-\frac{1}{2}}.\]
Therefore, each $a_j$ is a constant multiple of an $L^1_{\mathscr{Q}}$-atom, and this constant does not depend on $f$ or $Q_j$. The result then follows since
$\|(\lambda_j)_j\|_{\ell^1}=\|f\|_{L^1_\mathscr{Q}}$.
\end{proof}

\begin{Rem}\label{RemSmallL1QAtoms}
The proof of the second part of Theorem~\ref{mainLQatomic} actually shows that a function in $L^1_{\mathscr{Q}}$ has a characterisation in terms of $L^1_{\mathscr{Q}}$-atoms supported on balls of radius \textit{equal} to 1.
\end{Rem}

The definition of $L^1_\mathscr{Q}$-atoms does not require a unit cube structure. Therefore, the atomic characterisation of $L^1_{\mathscr Q}$ shows that, up to an equivalence of norms, $L^1_{\mathscr Q}$ is independent of the unit cube structure $\mathscr Q$ used in its definition. The atomic characterisation of $L^1_\mathscr{Q}$ is also related to the following duality.

\begin{Thm}\label{Thm: LpQpropertiesDuality}
Let $X$ be an exponentially locally doubling metric measure space. If $p\in[1,\infty)$ and $1/p+1/p'=1$, then the mapping
\[
g\mapsto\langle f,g\rangle_{L^2}=\int f(x)\overline{g(x)}\ \d\mu(x)
\]
for all $f\in L^{p}_\mathscr{Q}$ and $g\in L^{p'}_{\mathscr{Q}}$, is an isometric isomorphism from $L^{p'}_\mathscr{Q}$ onto the dual space $(L^p_{\mathscr{Q}})^*$.
\end{Thm}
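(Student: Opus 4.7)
The approach is to recognise $L^p_\mathscr{Q}$ as the $\ell^p$-direct sum of the Hilbert spaces $L^2(Q_j)$, each rescaled by the weight $\mu(Q_j)^{1/p-1/2}$, and then invoke the classical duality $(\ell^p(H_j))^*=\ell^{p'}(H_j^*)$ realised via the $L^2$ inner product. For the upper bound on the pairing, I would apply Cauchy--Schwarz on each unit cube followed by H\"older's inequality on the resulting weighted sum:
\[
|\langle f,g\rangle_{L^2}| \le \sum_j \|\ca_{Q_j}f\|_2\,\|\ca_{Q_j}g\|_2
= \sum_j\bigl(\mu(Q_j)^{\frac1p-\frac12}\|\ca_{Q_j}f\|_2\bigr)\bigl(\mu(Q_j)^{\frac1{p'}-\frac12}\|\ca_{Q_j}g\|_2\bigr),
\]
which is dominated by $\|f\|_{L^p_\mathscr{Q}}\|g\|_{L^{p'}_\mathscr{Q}}$ since $1/p+1/p'=1$. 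Hence $g\mapsto\Lambda_g=\langle\cdot,g\rangle_{L^2}$ defines a contraction $L^{p'}_\mathscr{Q}\to(L^p_\mathscr{Q})^*$, and injectivity follows by testing against $\ca_{Q_j}\bar g$.

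For surjectivity with a matching lower bound on $\|g\|_{L^{p'}_\mathscr{Q}}$, I would take $\Lambda\in(L^p_\mathscr{Q})^*$. The inclusion $L^2(Q_j)\hookrightarrow L^p_\mathscr{Q}$ by extension by zero has operator norm $\mu(Q_j)^{1/p-1/2}$, so $\Lambda|_{L^2(Q_j)}$ is a bounded linear functional, and the Riesz representation theorem on the Hilbert space $L^2(Q_j)$ yields a unique $g_j\in L^2(Q_j)$ with $\Lambda(f)=\int f\overline{g_j}\,\d\mu$ for every $f\in L^2$ supported in $Q_j$. Setting $g=\sum_j g_j$ gives a well-defined measurable function since the $Q_j$ are pairwise disjoint and cover $X$. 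To estimate $\|g\|_{L^{p'}_\mathscr{Q}}$, I would test $\Lambda$ on $f=\sum_{j\in F} a_j\,\mu(Q_j)^{\frac12-\frac1p} f_j$, where $F$ is a finite index set, $a_j\ge 0$, and $f_j=\overline{g_j}/\|g_j\|_2$ is the $L^2(Q_j)$-extremiser (with $f_j=0$ when $g_j=0$), so that $\langle f_j,g_j\rangle_{L^2}=\|g_j\|_2$. A direct computation, using $1/p+1/p'=1$, gives $\|f\|_{L^p_\mathscr{Q}}=\|(a_j)\|_{\ell^p(F)}$ and $\Lambda(f)=\sum_{j\in F} a_j\beta_j$, where $\beta_j=\mu(Q_j)^{\frac1{p'}-\frac12}\|g_j\|_2$. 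Optimising over $(a_j)$ of unit $\ell^p(F)$-norm and then over $F$ yields $\|(\beta_j)\|_{\ell^{p'}}\le\|\Lambda\|$, which is exactly $\|g\|_{L^{p'}_\mathscr{Q}}\le\|\Lambda\|$; the endpoint case $p=1$, $p'=\infty$ is handled by taking $F=\{j\}$ and passing to the supremum.

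Finally, to identify $\Lambda$ with $\Lambda_g$ on all of $L^p_\mathscr{Q}$, I would invoke the density of functions supported in only finitely many $Q_j$'s, which is established in the proof of Proposition~\ref{Prop: LpQL2denseinLp}(1); on such functions the two functionals agree by linearity and the defining property of the $g_j$'s. Combining this with the first paragraph forces $\|\Lambda\|=\|g\|_{L^{p'}_\mathscr{Q}}$, so the map is an isometric isomorphism. The key technical point is the construction of the near-extremising $f$: the precise weight $\mu(Q_j)^{1/2-1/p}$ and the Hilbert-space extremiser $f_j=\overline{g_j}/\|g_j\|_2$ are both essential to convert the classical $\ell^p$--$\ell^{p'}$ duality into matching constants for the $L^p_\mathscr{Q}$--$L^{p'}_\mathscr{Q}$ pairing, and the case $p=1$ requires the separate but parallel supremum argument.
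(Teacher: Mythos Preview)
Your proof is correct and follows essentially the same approach as the paper: both recognise $L^p_\mathscr{Q}$ as the weighted $\ell^p$-direct sum of the Hilbert spaces $L^2(Q_j)$ and deduce the duality from the standard $\ell^p$--$\ell^{p'}$ pairing. The only difference is presentational: the paper packages the surjectivity step by introducing the weighted sequence spaces $\ell^p(w_q)$ and invoking their standard duality as a black box, whereas you carry out the Riesz-representation and extremiser argument by hand on each cube---the content is identical.
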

\begin{proof}
Let $p\in[1,\infty)$. If $f\in L^p_{\mathscr Q}$ and $g\in L^{p'}_{\mathscr Q}$, then H\"{o}lder's inequality gives
\begin{align*}
|\langle f, g\rangle_{L^2}|
&\leq \sum_{Q_j\in\mathscr{Q}} |\langle \ca_{Q_j}f, \ca_{Q_j} g\rangle_{L^2}| \\
&\leq \sum_{Q_j\in\mathscr{Q}} \|\ca_{Q_j}f\|_2  \|\ca_{Q_j}g\|_2 \,\mu(Q_j)^{\frac{1}{p}-\frac{1}{2}}\mu(Q_j)^{\frac{1}{2}-\frac{1}{p}}\\
&\leq \|f\|_{L^p_{\mathscr Q}} \|g\|_{L^{p'}_{\mathscr Q}}.
\end{align*}

To prove the converse, given $p$ and $q\in[1,\infty)$, let
$
w_q(Q_j)=\mu(Q_j)^{1-q/2}
$
for all $Q_j\in\mathscr Q$, and define $\ell^p(w_q)$ to be the space of all sequences $\xi=\{\xi_{Q_j}\}_{Q_j\in\mathscr Q}$ with $\xi_{Q_j}\in L^2(Q_j)$ and
\[
\|\xi\|_{\ell^p(w_q)}=\Big(\sum_{Q_j\in\mathscr{Q}}\|\ca_{Q_j}\xi_{Q_j}\|_2^p\, w_q(Q_j)\Big)^{\frac{1}{p}} <\infty.
\]
Let $T\in(L^p_{\mathscr{Q}})^*$ and define $\tilde{T}\in(\ell^p(w_p))^*$ by
\[
\tilde{T}(\xi)=T\Big(\sum_{Q_j\in\mathscr Q} \ca_{Q_j}\xi_{Q_j}\Big)
\]
for all $\xi\in\ell^p(w_p)$. It is immediate that $\|\tilde T\|\leq\|T\|$, and by the standard duality there exists $\eta \in \ell^{p'}(w_p)$ such that $\|\eta\|_{\ell^{p'}(w_p)}\leq\|\tilde T\|$ and
\[
\tilde{T}(\xi) = \sum_{Q_j\in\mathscr Q} \langle\ca_{Q_j}\xi_{Q_j}, \ca_{Q_j}  \eta_{Q_j}\rangle_{L^2}\, w_p(Q_j)
\]
for all $\xi \in \ell^p(w_p)$. Therefore, we have
\[
T(f) = \tilde T(\{\ca_{Q_j}f\}_{Q_j\in\mathscr Q})
= \sum_{Q_j\in\mathscr Q} \langle f, \ca_{Q_j}\eta_{Q_j}\rangle_{L^2}\, w_p(Q_j)
= \langle f, g\rangle_{L^2}
\]
for all $f\in L^p_{\mathscr Q}$, where $g=\sum_{Q_j\in\mathscr Q} \ca_{Q_j}\eta_{Q_j} w_p(Q_j)$. Now consider two cases: (1) If $p\in(1,\infty)$, then
\begin{align*}
\|g\|_{L^{p'}_{\mathscr Q}}
= \Big(\sum_{Q_j\in\mathscr Q} \mu(Q_j)^{1-\frac{p'}{2}}\|\eta_{Q_j} \mu(Q_j)^{1-\frac{p}{2}}\|_2^{p'}\Big)^{\frac{1}{p'}}
=\|\eta\|_{\ell^{p'}(w_p)}
\leq \|T\|;
\end{align*}
(2) If $p=1$, then
\begin{align*}
\|g\|_{L^\infty_{\mathscr Q}}
&= \sup_{Q_j\in \mathscr Q} \mu(Q_j)^{-\frac{1}{2}}\|\ca_{Q_j}g\|_2 \\
&= \sup_{Q_j\in \mathscr Q} \mu(Q_j)^{-\frac{1}{2}}\sup_{\substack{\|f\|_2=1,\\ \supp f\subseteq Q_j}}|\langle f, g\rangle_{L^2}| \\
&= \sup_{Q_j\in \mathscr Q}\sup_{\substack{\|f\|_2=1,\\ \supp f\subseteq Q_j}} \mu(Q_j)^{-\frac{1}{2}}|T(f)|\\
&\leq \sup_{Q_j\in \mathscr Q}\sup_{\substack{\|f\|_2=1,\\ \supp f\subseteq Q_j}} \mu(Q_j)^{-\frac{1}{2}} \|T\|\|f\|_{L^1_{\mathscr Q}}\\
&= \|T\|,
\end{align*}
which completes the proof.
\end{proof}

The duality between $L^1_\mathscr{Q}$ and $L^\infty_\mathscr{Q}$ shows that, up to an equivalence of norms, $L^\infty_{\mathscr Q}$ is independent of the unit cube structure $\mathscr Q$ used in its definition. This is made explicit by the following corollary.

\begin{Cor}
Let $X$ be an exponentially locally doubling metric measure space. Let $\mathscr{B}^1$ denote the set of all balls $B$ in $X$ of radius $r(B)\geq1$. Then
\[
\|f\|_{L^\infty_\mathscr{Q}}
\eqsim \sup_{B\in \mathscr{B}^1} \mu(B)^{-\frac{1}{2}}\|\ca_Bf\|_2
\]
for all $f\in L^\infty_{\mathscr Q}$.
\end{Cor}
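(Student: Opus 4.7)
The plan is to prove the two inequalities separately. The estimate $\|f\|_{L^\infty_\mathscr{Q}}\lesssim \sup_{B\in\mathscr{B}^1}\mu(B)^{-1/2}\|\ca_B f\|_2$ follows from a direct geometric comparison between unit cubes and unit balls, while the reverse inequality is most naturally obtained by combining the duality $(L^1_\mathscr{Q})^* = L^\infty_\mathscr{Q}$ from Theorem~\ref{Thm: LpQpropertiesDuality} with the atomic characterisation of $L^1_\mathscr{Q}$ established in Theorem~\ref{mainLQatomic}.

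For the first direction, each cube $Q_j\in\mathscr{Q}$ satisfies $\delta B_j\subseteq Q_j\subseteq B_j$ for some ball $B_j$ of radius one, so in particular $B_j\in\mathscr{B}^1$. Applying Proposition~\ref{PropkappaLD} with $b=1$ and $\alpha=1/\delta$ gives $\mu(B_j)\lesssim \mu(\delta B_j)\le \mu(Q_j)$, and monotonicity gives $\|\ca_{Q_j}f\|_2\le \|\ca_{B_j}f\|_2$. Therefore
\[
\mu(Q_j)^{-1/2}\|\ca_{Q_j}f\|_2 \lesssim \mu(B_j)^{-1/2}\|\ca_{B_j}f\|_2 \le \sup_{B\in\mathscr{B}^1}\mu(B)^{-1/2}\|\ca_B f\|_2,
\]
and taking the supremum over $Q_j\in\mathscr{Q}$ yields the desired bound.

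For the reverse direction, fix $B\in\mathscr{B}^1$ and assume $\|\ca_B f\|_2>0$ (otherwise there is nothing to show). Set
\[
g_B = \frac{\ca_B\,\overline{f}}{\mu(B)^{1/2}\|\ca_B f\|_2}.
\]
Then $g_B$ is supported in $B$ with $r(B)\ge 1$ and satisfies $\|g_B\|_2\le \mu(B)^{-1/2}$, so it is an $L^1_\mathscr{Q}$-atom in the sense of Definition~\ref{L1QatomDef}. Theorem~\ref{mainLQatomic}(1) thus gives $\|g_B\|_{L^1_\mathscr{Q}}\lesssim 1$; this is the only step that genuinely requires \eqref{ELD} rather than \eqref{LD}. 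The isometric duality of Theorem~\ref{Thm: LpQpropertiesDuality} then yields
\[
\mu(B)^{-1/2}\|\ca_B f\|_2 = \langle f,g_B\rangle_{L^2} \le \|f\|_{L^\infty_\mathscr{Q}}\|g_B\|_{L^1_\mathscr{Q}} \lesssim \|f\|_{L^\infty_\mathscr{Q}},
\]
and taking the supremum over $B\in\mathscr{B}^1$ completes the proof. I do not anticipate any serious technical obstacle, since the required uniform passages between atoms, unit cubes, and balls of radius at least one have already been prepared in Sections~\ref{SectionLocalisation} and~\ref{SectionNewFunctionSpaces}.
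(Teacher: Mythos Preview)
Your proof is correct and follows essentially the same route as the paper. Both directions match the paper's argument: the comparison between unit cubes and their enclosing unit balls for the first inequality, and for the second the observation that any $L^2$ function supported in a ball of radius at least one is (up to normalization) an $L^1_\mathscr{Q}$-atom with uniformly bounded $L^1_\mathscr{Q}$ norm, after which duality with $L^\infty_\mathscr{Q}$ finishes the job. The only cosmetic difference is that the paper phrases the second step via a supremum over all $g$ with $\|g\|_2=1$ supported in $B$ and re-derives the atom bound inline, whereas you pick the explicit extremizer $g_B$ and invoke Theorem~\ref{mainLQatomic}(1) directly; these are equivalent.
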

\begin{proof}
Let $f\in L^\infty_{\mathscr Q}$. Given $Q\in \mathscr Q$, let $B$ be a ball in $X$ of radius $r(B)=1$ such that $\delta B\subseteq Q\subseteq B$, where $\delta$ is the constant associated with $\mathscr{Q}$ in Definition~\ref{Def: UnitCubeStructure}. It follows by $\eqref{LD}$ that $\mu(B)\lesssim \mu(\delta B)$, where the constant in $\lesssim$ does not depend on $Q$. Therefore, we have
\[
\mu(Q)^{-\frac{1}{2}}\|\ca_Q f\|_2  \lesssim \mu(B)^{-\frac{1}{2}}\|\ca_Bf\|_2
\]
for all $Q\in\mathscr Q$, which implies that
\[
\|f\|_{L^\infty_\mathscr{Q}}
\lesssim \sup_{B\in \mathscr{B}^1} \mu(B)^{-\frac{1}{2}}\|\ca_Bf\|_2.
\]
To show the converse, suppose that $g\in L^2$ is supported in a ball $B\in \mathscr{B}^1$ with radius $r(B)\geq 1$. As in the first part of the proof of Proposition~\ref{mainLQatomic}, we find that
\begin{align*}
\|g\|_{L^1_{\mathscr{Q}}}^2 \leq \|g\|_2^2\, \mu((1+\tfrac{2}{r(B)})B)\lesssim \|g\|_2^2\, \mu(B),
\end{align*}
where the second inequality, which follows from \eqref{ELD} since $r(B)\geq 1$, does not depend on $g$ or $B$. Using this and Theorem~\ref{Thm: LpQpropertiesDuality}, we obtain
\begin{align*}
\sup_{B\in \mathscr{B}^1} \mu(B)^{-\frac{1}{2}}\|\ca_Bf\|_2
&= \sup_{B\in \mathscr{B}^1} \mu(B)^{-\frac{1}{2}}\sup_{\substack{\|g\|_2=1,\\ \supp f\subseteq B}}|\langle g, f\rangle_{L^2}| \\
&\leq \sup_{B\in \mathscr{B}^1} \sup_{\substack{\|g\|_2=1,\\ \supp f\subseteq B}} \mu(B)^{-\frac{1}{2}}\|g\|_{L^1_\mathscr{Q}}\|f\|_{L^\infty_\mathscr{Q}} \\
&\lesssim \|f\|_{L^\infty_\mathscr{Q}},
\end{align*}
which completes the proof.
\end{proof}

Given that $L^1_{\mathscr Q}$ and $L^\infty_{\mathscr Q}$ are independent of the choice of $\mathscr Q$, the following interpolation result shows that, up to an equivalence of norms, the $L^p_\mathscr{Q}$ spaces for all $p\in(1,\infty)$ are independent of the unit cube structure $\mathscr Q$ used in their definition.

\begin{Thm}\label{Thm: LpQpropertiesInterp}
Let $X$ be an exponentially locally doubling metric measure space. If $\theta\in(0,1)$ and $1\leq p_0 < p_1\leq \infty$, then
\[
[L^{p_0}_{\mathscr{Q}},L^{p_1}_{\mathscr{Q}}]_\theta= L^{p_\theta}_{\mathscr{Q}}
\]
isometrically, where $1/p_\theta=(1-\theta)/p_0+\theta/p_1$ and $[\cdot,\cdot]_\theta$ denotes complex interpolation.
\end{Thm}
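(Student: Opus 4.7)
The plan is to identify $L^p_\mathscr{Q}$ isometrically with a weighted $\ell^p$-direct sum of the Hilbert spaces $L^2(Q_j)$, and then reduce the claim to the standard complex interpolation theorem for such direct sums.

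For each $p\in[1,\infty]$, set $w_p(Q_j)=\mu(Q_j)^{1-p/2}$ and let $E^p$ denote the Banach space of sequences $\{f_j\}_{Q_j\in\mathscr{Q}}$ with $f_j\in L^2(Q_j)$ and
\[
\|\{f_j\}\|_{E^p}=\Big(\sum_{Q_j\in\mathscr{Q}}w_p(Q_j)\,\|f_j\|_2^p\Big)^{1/p}
\]
(and the obvious $\sup$-version for $p=\infty$). Directly from Definition~\ref{LpQdef}, the map $\Phi:f\mapsto(\ca_{Q_j}f)_j$ is an isometric isomorphism $L^p_\mathscr{Q}\to E^p$, so it suffices to prove that $[E^{p_0},E^{p_1}]_\theta=E^{p_\theta}$ isometrically.

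The key ingredient is the Stein--Weiss weight compatibility
\[
w_{p_\theta}(Q_j)^{1/p_\theta}=w_{p_0}(Q_j)^{(1-\theta)/p_0}\,w_{p_1}(Q_j)^{\theta/p_1},
\]
which is just the elementary computation $\mu(Q_j)^{1/p_\theta-1/2}=\mu(Q_j)^{(1-\theta)(1/p_0-1/2)+\theta(1/p_1-1/2)}$. Since each summand $L^2(Q_j)$ is a Hilbert space, one has $[L^2(Q_j),L^2(Q_j)]_\theta=L^2(Q_j)$ isometrically in a trivial fashion. Combining these two facts with the standard complex interpolation theorem for weighted $\ell^p$-direct sums of Banach spaces yields $[E^{p_0},E^{p_1}]_\theta=E^{p_\theta}$ isometrically whenever $1\leq p_0<p_1<\infty$; pulling back along $\Phi$ gives the theorem in this range.

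The principal obstacle is the case $p_1=\infty$, where $E^{p_0}\cap E^\infty$ need not be dense in $E^\infty$ and the direct form of the interpolation theorem is unavailable. This is handled by duality, in the same spirit as the proof of Corollary~\ref{Cor: tp2Interp} for tent spaces: Theorem~\ref{Thm: LpQpropertiesDuality} provides the isometric identification $(L^1_\mathscr{Q})^*=L^\infty_\mathscr{Q}$, while Proposition~\ref{Prop: LpQL2denseinLp} supplies the density of $L^1_\mathscr{Q}\cap L^q_\mathscr{Q}$ in $L^1_\mathscr{Q}$ needed to apply the duality theorem for complex interpolation and then the reiteration theorem (Theorem~1.7 in Chapter~IV of \cite{KPS}). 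Combined with the already-established range $p_0,p_1<\infty$, this covers the case $p_1=\infty$ and completes the proof.
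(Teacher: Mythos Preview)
Your identification of $L^p_\mathscr{Q}$ with the weighted vector-valued space $E^p=\ell^p(w_p;L^2(Q_j))$ and the Stein--Weiss compatibility check match the paper's proof exactly; for $p_1<\infty$ the two arguments are the same.

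The difference is at $p_1=\infty$, and here your proposal has a gap. The paper does \emph{not} switch to a duality/reiteration detour: it observes that the same reference (Triebel, Theorem~1.18.1, with the isometric refinement in Remark~1 and the endpoint case in Remark~2) already yields $[\ell^{p_0}(w_{p_0}),\ell^{\infty}(w_\infty)]_\theta=\ell^{p_\theta}(w_{p_\theta})$ isometrically, with $w_\infty(Q_j)=\mu(Q_j)^{-1/2}$ satisfying $w_{p_0}^{(1-\theta)/p_0}w_\infty^{\theta}=w_{p_\theta}^{1/p_\theta}$. So the $p_1=\infty$ case is handled by the very same machinery as the finite case, with no extra argument.

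Your detour does not close as written. First, the analogy with Corollary~\ref{Cor: tp2Interp} is backwards: that corollary deduces results for $\tilde t^\infty$ \emph{from} Theorem~\ref{Thm: tp2propertiesInterp}, which already includes the endpoint $p_1=\infty$; it does not supply the endpoint. Second, the duality theorem for complex interpolation (e.g.\ Bergh--L\"ofstr\"om, Corollary~4.5.2) requires $X_0\cap X_1$ dense in both endpoints and one endpoint reflexive. Applied to the predual couple $(L^{p_0'}_\mathscr{Q},L^1_\mathscr{Q})$ this gives $[L^{p_0}_\mathscr{Q},L^\infty_\mathscr{Q}]_\theta=L^{p_\theta}_\mathscr{Q}$ only for $p_0>1$; for $p_0=1$ neither $L^1_\mathscr{Q}$ nor $L^\infty_\mathscr{Q}$ is reflexive and $L^1_\mathscr{Q}\cap L^\infty_\mathscr{Q}$ need not be dense in $L^\infty_\mathscr{Q}$. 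Third, the standard reiteration theorem you cite (KPS, Chapter~IV, Theorem~1.7) has the form $[[A,B]_{\theta_0},[A,B]_{\theta_1}]_\eta=[A,B]_{(1-\eta)\theta_0+\eta\theta_1}$: it lets you pass from the outer couple to inner ones, not the reverse. Knowing $[L^1_\mathscr{Q},L^q_\mathscr{Q}]_\alpha$ and $[L^{q'}_\mathscr{Q},L^\infty_\mathscr{Q}]_\beta$ does not, via that theorem, produce $[L^1_\mathscr{Q},L^\infty_\mathscr{Q}]_\theta$; that would require a Wolff-type reiteration, which you have not invoked and which in any case would jeopardise the isometric equality. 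The clean fix is simply to note, as the paper does, that the weighted $\ell^p$ interpolation theorem already covers $p_1=\infty$.
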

\begin{proof}
The interpolation space $[L^{p_0}_{\mathscr Q},L^{p_1}_{\mathscr Q}]_\theta$ is well-defined because
\[
L^p_{\mathscr Q}(X) \subseteq L_{\text{loc}}^2(X)
\]
for all $p\in[1,\infty]$ by \eqref{(1.4)}. This allows us to construct the Banach space ${L^{p_0}_{\mathscr Q}+L^{p_1}_{\mathscr Q}}$, which is the smallest ambient space in which $L^{p_0}_{\mathscr Q}$ and $L^{p_1}_{\mathscr Q}$ are continuously embedded.

The space $\ell^p(w_p)$ was defined for all $p\in[1,\infty)$ in the proof of Theorem~\ref{Thm: LpQpropertiesDuality}. Likewise, let
\[
w_\infty(Q_j)= \mu(Q_j)^{-\frac{1}{2}}
\]
for all $Q_j\in\mathscr Q$, and define $\ell^\infty(w_\infty)$ to be the space of all sequences $\xi=\{\xi_{Q_j}\}_{Q_j\in\mathscr Q}$ with $\xi_{Q_j}\in L^2(Q_j)$ and
\[
\|\xi\|_{\ell^\infty(w_\infty)}=\sup_{Q_j\in\mathscr{Q}}\|\ca_{Q_j}\xi_{Q_j}\|_2\, w_\infty(Q_j) <\infty.
\]

If $1\leq p_0 < p_1 <\infty$, then
$
w_{p_0}^{(1-\theta)/{p_0}}w_{p_1}^{\theta/{p_1}} = w_{p_\theta}^{1/p_\theta},
$
whilst if $p_1=\infty$, then $w_{p_0}^{(1-\theta)/{p_0}}w_{\infty}^{\theta} = w_{p_\theta}^{1/p_\theta}$. Therefore, by the interpolation of vector-valued $\ell^p$ spaces, as in Theorem~1.18.1 of \cite{T}, and the interpolation of weighted $L^2$ spaces, as  in Theorem~5.5.3 of \cite{BL}, we obtain
\[
[\ell^{p_0}(w_{p_0}),\ell^{p_1}(w_{p_1})]_\theta
= \ell^{p_\theta}(w_{p_\theta})
\]
isometrically. Note that the isometric equivalence is proved in Remark~1 of Section 1.18.1 of \cite{T}, and the proof for $p_1=\infty$ is given in Remark~2 of the same reference.

Define the operators $R$ and $S$ by
\[
R\xi= \sum_{Q_j\in\mathscr{Q}} \ca_{Q_j} \xi_{Q_j}
\quad\text{and}\quad
S f= \{\ca_{Q_j}f\}_{Q_j\in \mathscr Q}
\]
for all sequences $\xi=\{\xi_{Q_j}\}_{Q_j\in\mathscr Q}$ with $\xi_{Q_j}\in L^2(Q_j)$, and all measurable functions $f$ on $X$. If $p\in[1,\infty]$, then the restricted operators
\[
R:\ell^p(w_p)\rightarrow~ L^p_{\mathscr{Q}}
\quad\text{and}\quad
S:L^p_{\mathscr{Q}}\rightarrow\ell^p(w_p)
\]
are bounded with operator norms equal to 1. Moreover, we have $RS=I$ on $L^p_{\mathscr{Q}}$ and $R(\ell^p(w_p))=L^p_{\mathscr{Q}}$. The operator $R$ is a retraction and $S$ is its coretraction. It follows by Theorem~1.2.4 of \cite{T}, which concerns the interpolation of spaces related by a retraction, that $S$ is an isometric isomorphism from
$
[L^{p_0}_\mathscr{Q},L^{p_1}_\mathscr{Q}]_\theta
$
onto
\[
SR([\ell^{p_0}(w_{p_0}),\ell^{p_1}(w_{p_1})]_\theta)
=SR(\ell^{p_\theta}(w_{p_\theta}))
=S(L^{p_\theta}_{\mathscr{Q}})
\]
in $\ell^{p_\theta}(w_{p_\theta})$. Therefore, we have $[L^{p_0}_\mathscr{Q},L^{p_1}_\mathscr{Q}]_\theta = L^{p_\theta}_\mathscr{Q}$ isometrically.
\end{proof}

We conclude this section by defining $\tilde L^\infty_{\mathscr Q}$ to be the closure of $L^1_{\mathscr Q}\cap L^\infty_{\mathscr Q}$ in $L^\infty_{\mathscr Q}$, and noting the following corollary.

\begin{Cor}\label{Cor: LpQInterp}
Let $X$ be an exponentially locally doubling metric measure space. If $\theta\in(0,1)$ and $1\leq p < \infty$, then
\[
[L^p_{\mathscr Q},\tilde L^\infty_{\mathscr Q}]_\theta= L^{p_\theta}_{\mathscr Q}
\]
isometrically, where $1/p_\theta=(1-\theta)/p$ and $[\cdot,\cdot]_\theta$ denotes complex interpolation. Also, the set $\tilde L^\infty_{\mathscr Q}\cap L^q_{\mathscr Q}$ is dense in $\tilde L^\infty_{\mathscr Q}$ for all $q\in[1,\infty]$, and $L^2$ is dense in $L^1_{\mathscr Q}+\tilde L^\infty_{\mathscr Q}$.
\end{Cor}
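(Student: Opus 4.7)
The plan is to mirror the proof of Corollary~\ref{Cor: tp2Interp} almost verbatim, substituting Theorem~\ref{Thm: LpQpropertiesInterp} for Theorem~\ref{Thm: tp2propertiesInterp} and Proposition~\ref{Prop: LpQL2denseinLp}(1) for Proposition~\ref{Prop: tpt2denseintp}. The embedding $L^p_{\mathscr Q}(X)\subseteq L^2_{\mathrm{loc}}(X)$ from \eqref{(1.4)} ensures that all of the Banach couples below sit inside a common ambient space, so the complex interpolation brackets are well defined; and since Theorem~\ref{Thm: LpQpropertiesInterp} yields isometric equivalences, the isometric nature of the final identification is preserved throughout.

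First, for the base case $p=1$, I would apply the standard property of complex interpolation in Theorem~1.9.3(g) of \cite{T}. By definition, $\tilde L^\infty_{\mathscr Q}$ is the closure of $L^1_{\mathscr Q}\cap L^\infty_{\mathscr Q}$ in $L^\infty_{\mathscr Q}$, while Proposition~\ref{Prop: LpQL2denseinLp}(1) shows that the closure of $L^1_{\mathscr Q}\cap L^\infty_{\mathscr Q}$ in $L^1_{\mathscr Q}$ is all of $L^1_{\mathscr Q}$. Consequently,
\[
[L^1_{\mathscr Q},\tilde L^\infty_{\mathscr Q}]_\theta
= [L^1_{\mathscr Q}, L^\infty_{\mathscr Q}]_\theta
= L^{1/(1-\theta)}_{\mathscr Q}
\]
isometrically, where the second equality is Theorem~\ref{Thm: LpQpropertiesInterp}. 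For $p\in(1,\infty)$, I would then invoke the reiteration theorem for complex interpolation (Theorem~1.7 in Chapter~IV of \cite{KPS}) to obtain
\[
[L^p_{\mathscr Q},\tilde L^\infty_{\mathscr Q}]_\theta
= [L^1_{\mathscr Q},\tilde L^\infty_{\mathscr Q}]_{(1-\theta)(1-1/p)+\theta}
= L^{p_\theta}_{\mathscr Q},
\]
where the density hypotheses required to invoke reiteration are supplied by Proposition~\ref{Prop: LpQL2denseinLp}(1), exactly as in the tent-space case.

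For the remaining density assertions, Theorem~\ref{Thm: LpQpropertiesInterp} in particular gives the inclusion $L^1_{\mathscr Q}\cap L^\infty_{\mathscr Q}\subseteq L^q_{\mathscr Q}$ for every $q\in[1,\infty]$, so the definition of $\tilde L^\infty_{\mathscr Q}$ as the closure of $L^1_{\mathscr Q}\cap L^\infty_{\mathscr Q}$ in $L^\infty_{\mathscr Q}$ immediately yields that $\tilde L^\infty_{\mathscr Q}\cap L^q_{\mathscr Q}$ is dense in $\tilde L^\infty_{\mathscr Q}$ for each such $q$. Taking $q=2$, combining with the density of $L^1_{\mathscr Q}\cap L^2_{\mathscr Q}$ in $L^1_{\mathscr Q}$ from Proposition~\ref{Prop: LpQL2denseinLp}(1), and recalling that $L^2_{\mathscr Q}=L^2$, gives the density of $L^2$ in $L^1_{\mathscr Q}+\tilde L^\infty_{\mathscr Q}$.

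The main obstacle is essentially bookkeeping: one must confirm that the density hypotheses needed to apply Triebel's Theorem~1.9.3(g) and the Kre\u{\i}n--Petunin--Semenov reiteration theorem are genuinely met in the nonreflexive endpoint $L^\infty_{\mathscr Q}$, and that the isometric character of Theorem~\ref{Thm: LpQpropertiesInterp} is not lost upon passing through these general results. No new analytic idea beyond those already used in Corollary~\ref{Cor: tp2Interp} should be required, since the interpolation toolkit applies abstractly to any compatible Banach couple.
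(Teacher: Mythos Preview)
Your proposal is correct and follows essentially the same approach as the paper, which explicitly states that the proof of Corollary~\ref{Cor: LpQInterp} ``follows that of Corollary~\ref{Cor: tp2Interp} by using Proposition~\ref{Prop: LpQL2denseinLp}(1) and Theorem~\ref{Thm: LpQpropertiesInterp}.'' Your step-by-step substitution of Theorem~1.9.3(g) of \cite{T}, the reiteration theorem from \cite{KPS}, and the density arguments is exactly the intended transcription of the proof of Corollary~\ref{Cor: tp2Interp}.
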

\begin{proof}
The proof follows that of Corollary \ref{Cor: tp2Interp} by using Proposition~\ref{Prop: LpQL2denseinLp}(1) and Theorem~\ref{Thm: LpQpropertiesInterp}.
\end{proof}

\section{Exponential Off-Diagonal Estimates}\label{SectionODE}
We return to the setting of a complete Riemannian manifold $M$ and derive the off-diagonal estimates required to define and characterise our local Hardy spaces. To consider differential forms on $M$, let us first dispense with some technicalities.

For each $k=0,...,\dim M$ and $x\in M$, let $\wedge^k T^*_xM$ denote the $k\text{th}$ exterior power of the cotangent space $T^*_xM$. Let $\wedge^k T^*M$ denote the bundle over $M$ whose fibre at $x$ is $\wedge^k T^*_xM$, and let $\wedge T^*M=\oplus_{k=0}^{\dim M}\wedge^k T^*M$. A \textit{differential form} is a section of $\wedge T^*M$. For each $p\in [1,\infty]$, let $L^p(\wedge T^*M)$ denote the Banach space of all measurable differential forms $u$ with
\[
\|u\|_{L^p(\wedge T^*M)}=
\begin{cases}
    \big(\int_M |u(x)|_{\wedge T^*_xM}^p\ \d\mu(x)\big)^{\frac{1}{p}}, &{\rm if}\quad p\in[1,\infty); \\
    \esss_{x\in M} |u(x)|_{\wedge T^*_xM}, &{\rm if}\quad p=\infty,
  \end{cases}
\]
where $|\cdot|_{\wedge T^*_xM}$ is the norm associated with the inner-product $\langle\cdot,\cdot\rangle_{\wedge T^*_xM}$ given by the bundle metric on $\wedge T^*M$ at $x$.

Our main technical tool will be holomorphic functional calculus, which requires the following definition. A comprehensive introduction to this topic can be found in Chapter VII of~\cite{DunfordSchwartz(volI)1958}.

\begin{Def}\label{Def: sectors}
Given $0\leq\mu<\theta<\pi/2$, define the closed and open bisectors in the complex plane as follows:
\begin{align*}
S_\mu &= \{z\in\C : |\arg z|\leq\mu \text{ or } |\pi-\arg z|\leq\mu\};\\
S_\theta^o &= \{z\in\C\setminus\{0\} : |\arg z|<\theta \text{ or } |\pi-\arg z|<\theta\}.
\end{align*}
Given $r\geq0$, define the closed and open discs as follows:
\begin{align*}
D_r &= \{z\in\C : |z|\leq r\}; \\
D_r^o &= \{z\in\C : |z|< r\}.
\end{align*}
These are denoted together by $S_{\mu,r}=S_\mu \cup D_r$ and $S_{\theta,r}^o=S_\theta^o \cup D_r^o$. Note that $D_0=\{0\}$, $S_{\mu,0}=S_\mu$, $D_0^o=\emptyset$ and $S_{\theta,0}^o=S_\theta^o$. A holomorphic function on $S_{\theta,r}^o$ is called \textit{nondegenerate} if it is not identically zero on $S_{\theta,r}^o$ and, when $r=0$, is not identically zero on either component of $S_\theta^o$.

Let $H^\infty(S_{\theta,r}^o)$ denote the algebra of bounded holomorphic functions on $S_{\theta,r}^o$. Given $f\in H^\infty(S_{\theta,r}^o)$ and $t\in(0,1]$, define $f^*\in H^\infty(S_{\theta,r}^o)$ and $f_t\in H^\infty(S_{\theta,r/t}^o)$ as follows:
\begin{align*}
f^*(z)&=\overline{f(\bar z)}\ \ \text{ for all }\ z\in S_{\theta,r}^o;
\\
f_t(z)&=f(tz)\ \text{ for all }\ z\in S_{\theta,r/t}^o.
\end{align*}
Given $\alpha,\beta>0$, define the following sets:
\begin{align*}
\Psi_\alpha^\beta(S_{\theta,r}^o) &= \{\psi \in H^\infty(S_{\theta,r}^o) :
|\psi(z)| \lesssim \min\{|z|^{\alpha},|z|^{-\beta}\}\};\\
\Theta^\beta(S_{\theta,r}^o) &= \{\phi \in H^\infty(S_{\theta,r}^o) :
|\phi(z)| \lesssim |z|^{-\beta}\}.
\end{align*}
Let $\Psi_\alpha(S_{\theta,r}^o)=\bigcup_{\beta>0}\Psi_\alpha^\beta(S_{\theta,r}^o)$, $\Psi^\beta(S_{\theta,r}^o)=\bigcup_{\alpha>0}\Psi_\alpha^\beta(S_{\theta,r}^o)$, $\Psi(S_{\theta,r}^o)=\bigcup_{\beta>0}\Psi^\beta(S_{\theta,r}^o)$ and $\Theta(S_{\theta,r}^o)=\bigcup_{\beta>0}\Theta^\beta(S_{\theta,r}^o)$.
\end{Def}

There is also the following notation.

\begin{Not1}
Given a linear operator $T$ on $L^2(\wedge T^*M)$, let $\Dom(T)$, $\Ran(T)$, $\Nul(T)$ and $\|T\|$ denote its domain, range, null space and operator norm, respectively. The \textit{resolvent set} $\rho(T)$ is the set of all $z\in\C$ for which $z I-T$ has a bounded inverse with domain equal to $L^2(\wedge T^*M)$. The \textit{resolvent} $R_T(z)$ is defined by
\[
R_T(z)=(z I - T)^{-1}
\]
for all $z\in\rho(T)$. The \textit{spectrum} $\sigma(T)$ is the complement of the resolvent set in the extended complex plane.

Given a bounded measurable scalar-valued function $\eta$ on $M$, let $\eta I$ denote the operator on $L^2(\wedge T^*M)$ of pointwise multiplication by $\eta$. Square brackets $[\cdot,\cdot]$ denote the commutator operator.
\end{Not1}

In the remainder of the paper, we consider a closed and densely-defined operator $\mathcal{D}:\Dom(\cD)\subseteq L^2(\wedge T^*M) \rightarrow L^2(\wedge T^*M)$ satisfying the following hypotheses:

\newcounter{Hyp}
\begin{list}
{(H\arabic{Hyp})}
{\setlength{\leftmargin}{.9cm}
\setlength{\rightmargin}{0cm}
\setlength{\topsep}{10pt}
\setlength{\itemsep}{3pt}
\usecounter{Hyp}}
\item\label{H1} There exists $\omega\in[0,\pi/2)$ and $R\geq0$ such that $\mathcal{D}$ is of $\textit{type }S_{\omega,R}$. This is defined to mean that $\sigma(\mathcal{D}) \subseteq S_{\omega,R}$ and that for each $\theta\in(\omega,\pi/2)$ and $r>R$, the constant \[C_{\theta,r} := \sup \{|z| \|R_\cD(z)\| : z\in \C\setminus S_{\theta,r}\}\] satisfies $0< C_{\theta,r} < \infty.$ Given $\phi\in\Theta(S^o_{\theta, r})$, this property allows us to define the bounded operator $\phi(\cD)$ on $L^2(\wedge T^*M)$ by
\[
\phi(\cD)u = \frac{1}{2\pi i} \int_{+\partial S^o_{\tilde\theta,\tilde r}} \phi(z) R_\cD(z)u\ \d z
\]
for all $u\in L^2(\wedge T^*M)$, where $\tilde\theta\in(\omega,\theta)$, $\tilde r\in(R,r)$ and $+\partial S^o_{\tilde\theta,\tilde r}$ denotes the boundary of $S^o_{\tilde\theta,\tilde r}$ oriented anticlockwise.

\item\label{H2} For all $\theta\in(\omega,\pi/2)$ and $r>R$, the operator $\mathcal{D}$ has a \textit{bounded $H^\infty(S_{\theta,r}^o)$ functional calculus in $L^2(\wedge T^*M)$}. This is defined to mean that for each ${\theta \in (\omega,\pi/2)}$ and $r>R$, there exists $c>0$ such that
\[
\|\phi(\cD)\| \leq c \|\phi\|_\infty
\]
for all $\phi\in \Theta(S^o_{\theta, r})$. Given $f\in H^\infty(S^o_{\theta, r})$, this property allows us to define the bounded operator $f(\cD)$ on $L^2(\wedge T^*M)$ by $f(\cD)u = \lim_{n\rightarrow\infty} (f\phi_{1/n})(\cD)u$ for all $u\in L^2(\wedge T^*M)$, where $\phi\in\Theta(S^o_{\theta, r})$ such that $\phi_{1/n}(z):=\phi(z/n)$ converges to 1 uniformly on compact subsets of $S^o_{\theta,r}$. The mapping $f~\mapsto~f(\mathcal{D})$ is an algebra homomorphism from $H^\infty(S_{\theta,r}^o)$ into the algebra of bounded linear operators on $L^2(\wedge T^*M)$ such that
    \[\|f(\mathcal{D})\|\leq c \|f\|_\infty\]
for all $f\in H^\infty(S_{\theta,r}^o)$.

\item\label{H3} The operator $\mathcal{D}$ is a first-order differential operator in the following sense. There exists $C_\cD >0$ such that for all smooth compactly-supported scalar-valued functions $\eta\in C_c^\infty(M)$, the domain $\Dom(\cD)\subseteq \Dom(\cD\circ \eta I)$ and the commutator $[\cD,\eta I]$ is a pointwise multiplication operator such that
\[
|[\mathcal{D},\eta I]u(x)|_{\wedge T^*_xM}
\leq C_\cD  |d\eta(x)|_{T^*_xM}|u(x)|_{\wedge T^*_xM}
\]
for all $u\in \Dom(\cD)$ and almost all $x\in M$, where $d$ is the exterior derivative.
\end{list}

The hypothesis (H1) in the case $R=0$ is precisely the condition that $\cD$ is of $\text{type }S_{\omega}$ (or $\omega$-sectorial). The theory of type $S_{\omega}$ operators is well-understood and can be found in, for instance, \cite{Haase2006,Kato}. In particular, given $0\leq\omega<\theta<\pi/2$, $\psi\in\Psi(S^o_\theta)$ and an operator $T$ of type $S_{\omega}$, it is proved in \cite{ADMc,McIntosh1986} that $T$ has a bounded $H^\infty(S^o_\theta)$ functional calculus if and only if the quadratic estimate
\[
\int_0^\infty \|\psi_t(T)u\|_2^2\ \frac{\d t}{t} \eqsim \|u\|^2
\]
holds for all $u\in\overline{\Ran(T)}$. The theory of type $S_{\omega,R}$ operators in the case $R\geq0$ is contained in \cite{Morris2010(1)}. In particular, given an operator $\cD$ satisfying (H1), the main result of that paper shows that (H2) is equivalent to the requirement that $\cD$ satisfies local quadratic estimates, which we will introduce after Proposition~\ref{sq=I}.

Note that, as a means of generalizing this theory to other contexts, one could replace the space $C_c^\infty(M)$ in (H3) with the space of bounded scalar-valued Lipschitz functions $\text{Lip}(M)$. This stronger condition is still satisfied by the Hodge--Dirac operator, as in Example~\ref{Eg: HodgeDirac} below, and it obviates the need to construct smooth approximations in the proof of Lemma~\ref{ODexp0}. Moreover, all of the results in this paper hold under this condition.

\begin{Eg}\label{Eg: HodgeDirac}
The Hodge--Dirac operator $D=d+d^*$ defined on the space of smooth compactly supported differential forms $C^\infty_c(\wedge T^*M)$ is essentially self-adjoint (see Theorem~1.17 and Example~1.7 in \cite{GromovLawson1983}). Therefore, its unique self-adjoint extension, also denoted by $D$, immediately satisfies (H1-2) with $\omega=0$, $R=0$ and $C_{\theta,r}=1/\sin\theta$ for all $\theta\in(0,\pi/2)$ and $r>0$. It also satisfies (H3), since it is a first-order differential operator, and $C_D =1$, since for all $\eta\in C_c^\infty(M)$ we have
\[
|[D,\eta I]u(x)|_{\wedge T^*_xM}
= |d\eta(x)\ext u(x) - d\eta(x)\lint u(x)|_{\wedge T^*_xM}
= |d\eta(x)|_{T^*_xM}|u(x)|_{\wedge T^*_xM}
\]
for all $u\in C^\infty_c(\wedge T^*M)$ and almost all $x\in M$, where $\ext$ and $\lint$ denote the exterior and (left) interior products on $\wedge T_x^*M$, respectively. Note that the second equality above holds because $d\eta(x)\lint$ is an antiderivation on $\wedge T^*_xM$, which implies that
\[
d\eta\lint (d\eta\ext u) = |d\eta|_{T^*M}^2 u - d\eta \ext(d\eta\lint u)
\]
pointwise almost everywhere on $M$.
\end{Eg}

Off-diagonal estimates, otherwise known as Davies--Gaffney estimates, provide a measure of the decay associated with the action of an operator. Their use as a substitute for pointwise kernel bounds is becoming abundant in the literature. In particular, they are an essential tool used to prove the Kato Conjecture in \cite{AHLMT} and the related results in \cite{AKMc}.  The theory of off-diagonal estimates has also been developed in its own right in \cite{AuscherMartell2007}. The following notation is suited to these estimates.
\begin{Not1}
For all $x\geq0$, let $\langle x\rangle=\min\{1,x\}$. For all closed subsets $E,F\subseteq M$, let $\rho(E,F)=\inf_{x\in E,\,y\in F}\rho(x,y).$
\end{Not1}

We prove off-diagonal estimates for the resolvents $R_\cD(z)$ and then deduce estimates for more general functions of $\cD$ by using holomorphic functional calculus. The following proof utilizes the higher-commutator technique from Section 2 of \cite{McIntoshNahmod2000}. Note that we could instead apply the technique for establishing off-diagonal estimates from \cite{AuscherAxelssonMcIntosh2010,AHLMT}.

\begin{Lem}\label{ODexp0} Let $0\leq\omega<\theta<\pi/2$ and $0\leq R<r$ and suppose that $\cD$ is a closed operator on $L^2(\wedge T^*M)$ of type $S_{\omega,R}$ satisfying (H1) and (H3) with constants $C_{\theta,r}>0$ and $C_\cD>0$. For each $a\in(0,1)$ and $b\geq0$, there exists $c>0$ such that
\[\|\ca_ER_\cD(z)\ca_F\| \leq c\,\frac{C_{\theta,r}}{|z|}\left\langle\frac{1}{\rho(E,F)|z|}\right\rangle^b \exp\left(-a\frac{\rho(E,F)|z|}{C_\cD C_{\theta,r}} \right)\]
for all $z\in \C\setminus S_{\theta,r}$ and closed subsets $E$ and $F$ of $M$.
\end{Lem}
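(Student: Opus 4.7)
The approach is the gauge transformation / higher commutator method: conjugate $R_\cD(z)$ by $e^{\xi\eta}$ for a Lipschitz weight $\eta$ adapted to the pair $(E,F)$ and a parameter $\xi>0$, and then invert via a Neumann series.

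\emph{Step 1 (Choice and smoothing of $\eta$).} Set $\eta(x)=\min\{\rho(x,E),\rho(E,F)\}$, so that $\eta$ is $1$-Lipschitz, vanishes on $E$, and equals $\rho(E,F)$ on $F$. By mollification I produce approximants $\eta_n\in C_c^\infty(M)$ with $\|d\eta_n\|_\infty\leq 1+1/n$, converging uniformly to $\eta$ on a sufficiently large compact neighborhood of $E\cup F$, and with $|\eta_n|\to 0$ uniformly on $E$. Since $e^{\xi\eta_n}-1\in C_c^\infty(M)$, hypothesis (H3) applied via $[\cD,e^{\xi\eta_n}I]=[\cD,(e^{\xi\eta_n}-1)I]$ together with the chain rule gives the pointwise bound
\[
|[\cD,e^{\xi\eta_n}I]u(x)|\leq C_\cD\,\xi\, e^{\xi\eta_n(x)}|d\eta_n(x)|\,|u(x)|.
\]
Setting $B_n:=e^{-\xi\eta_n}[\cD,e^{\xi\eta_n}I]$, this yields the operator-norm bound $\|B_n\|\leq C_\cD\xi(1+1/n)$.

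\emph{Step 2 (Neumann inversion).} A direct algebraic computation shows $e^{-\xi\eta_n}(zI-\cD)e^{\xi\eta_n}=zI-\cD-B_n$ on $\Dom(\cD)$. Fix $a'\in(a,1)$ and choose $\xi=a'|z|/(C_\cD C_{\theta,r})$. Combining the previous bound with the resolvent estimate $\|R_\cD(z)\|\leq C_{\theta,r}/|z|$ from (H1), we obtain $\|B_nR_\cD(z)\|\leq a'(1+1/n)<1$ for all large $n$, so the Neumann series converges and
\[
\|e^{-\xi\eta_n}R_\cD(z)e^{\xi\eta_n}\|\leq \frac{C_{\theta,r}}{(1-a')|z|}.
\]

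\emph{Step 3 (Cutoffs and limit).} The trivial identity $\ca_E R_\cD(z)\ca_F=\ca_E e^{\xi\eta_n}\cdot e^{-\xi\eta_n}R_\cD(z)e^{\xi\eta_n}\cdot e^{-\xi\eta_n}\ca_F$ combined with $\eta|_E=0$ and $\eta|_F=\rho(E,F)$, together with the uniform convergence $\eta_n\to\eta$ on $E$ and $F$ and the uniform bound from Step 2, yields on letting $n\to\infty$
\[
\|\ca_E R_\cD(z)\ca_F\|\leq \frac{C_{\theta,r}}{(1-a')|z|}\exp\!\left(-\frac{a'\rho(E,F)|z|}{C_\cD C_{\theta,r}}\right).
\]

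\emph{Step 4 (Inserting the polynomial factor).} To recover the factor $\langle 1/(\rho(E,F)|z|)\rangle^b$, write $a'=a+\delta$ with $\delta\in(0,1-a)$ and split
\[
e^{-a'\rho(E,F)|z|/(C_\cD C_{\theta,r})}=e^{-a\rho(E,F)|z|/(C_\cD C_{\theta,r})}\cdot e^{-\delta\rho(E,F)|z|/(C_\cD C_{\theta,r})}.
\]
The boundedness of $s\mapsto s^b e^{-\delta s/(C_\cD C_{\theta,r})}$ on $[1,\infty)$ controls the second factor by a constant times $(\rho(E,F)|z|)^{-b}$ when $\rho(E,F)|z|\geq 1$, while for $\rho(E,F)|z|<1$ the bound is trivial since $\langle\cdot\rangle^b\leq 1$. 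This produces the desired decay factor and completes the estimate.

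\emph{Main obstacle.} The principal technical difficulty lies in Step 1: one must build $\eta_n\in C_c^\infty(M)$ that are simultaneously (i) uniformly close to the Lipschitz $\eta$ on $E\cup F$, (ii) satisfy $\|d\eta_n\|_\infty\to 1$, and (iii) have compact support so that (H3) applies to $e^{\xi\eta_n}-1$. Controlling these three requirements together is precisely the smoothing step the Remark after (H3) notes can be bypassed by strengthening (H3) to hold for all bounded Lipschitz scalar functions, which the Hodge--Dirac operator of Example \ref{Eg: HodgeDirac} satisfies.
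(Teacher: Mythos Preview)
Your proof is correct but takes a genuinely different route from the paper. You use the Davies exponential-weight (gauge-transformation) method: conjugate the resolvent by $e^{\xi\eta}$, bound the commutator $B_n=e^{-\xi\eta_n}[\cD,e^{\xi\eta_n}I]$ via (H3), and invert $zI-\cD-B_n$ by a Neumann series to obtain exponential decay in a single stroke. The paper instead applies the higher-commutator technique of McIntosh--Nahmod: with a bump $\eta\in C_c^\infty(M)$ equal to $1$ on $E$ and $0$ near $F$ and $\|d\eta\|_\infty\leq(1+\ep)/\rho(E,F)$, it proves by induction the identity
\[
\overbrace{[\eta I,\dots[\eta I,}^{n} R_\cD(z)]\dots] = (-1)^n n!\,R_\cD(z)\big([\cD,\eta I]R_\cD(z)\big)^n,
\]
deduces $\|\ca_E R_\cD(z)\ca_F\|\leq n!\,(C_{\theta,r}/|z|)\big((1+\ep)C_\cD C_{\theta,r}/(\rho(E,F)|z|)\big)^n$, and then optimizes over $n$ using $e^{\delta x}\leq(1-\delta)^{-1}\sup_n x^n/n!$. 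Your approach is more direct in that the exponential appears immediately without an optimization step; the paper's approach avoids the exponential weights and the attendant domain bookkeeping, at the price of a combinatorial induction. Both proofs face the same smoothing issue you flag (and both tacitly reduce to bounded $E,F$ by approximation before invoking (H3) on $C_c^\infty$ multipliers). The paper in fact remarks, just before this lemma, that one could alternatively apply exactly the perturbation technique you use.
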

\begin{proof}
Let $E$ and $F$ be closed subsets of $M$ with $\rho(E,F)>0$. For each $\ep>0$, there exists $\eta:M\rightarrow[0,1]$ in $C_c^\infty(M)$ such that
\[
\eta(x)=\begin{cases}
    1, &{\rm if}\quad x\in E; \\
    0, &{\rm if}\quad \rho(x,E)\geq\rho(E,F)
  \end{cases}
\]
and $\|d\eta\|_{\infty} = \sup_{x\in M} |d\eta(x)|_{T^*_xM} \leq (1+\ep)/{\rho(E,F)}$. The function $\eta$ can be constructed from smooth approximations of the Lipschitz function $f$ defined by
\[
f(x)=\begin{cases}
    1-\rho(x,E)/\rho(E,F), &{\rm if}\quad \rho(x,E)< \rho(E,F); \\
    0, &{\rm if}\quad \rho(x,E)\geq\rho(E,F)
  \end{cases}
\]
for all $x\in M$. Note that $f$ is Lipschitz because the geodesic distance $\rho$ is Lipschitz on a Riemannian manifold. For further details see, for instance, \cite{AzagraFerreraLopezRangel2007}.

Fix $a\in(0,1)$, $\delta\in(a,1)$ and $\ep=\frac{\delta-a}{a+1}$. It suffices to show that
\begin{equation}\label{eq: factorial.to.exp}
\|\ca_ER_\cD(z)\ca_F\| \leq \inf_{n\in\N_0}  n! \frac{C_{\theta,r}}{|z|} \left(\frac{(1+\ep)C_\cD  C_{\theta,r}}{\rho(E,F)|z|}\right)^{n},
\end{equation}
where $\N_0=\N\cup\{0\}$. For $b=0$, the result follows from \eqref{eq: factorial.to.exp} because $\delta/(1+\ep)\geq a$ and $e^{\delta x} = \sum_{n\in\N_0} (\delta x)^n/n! \leq \frac{1}{1-\delta} \sup_{n\in\N_0} x^n/n!$ for all $x>0$. For each $b>0$, the result follows from \eqref{eq: factorial.to.exp} because $(\delta-\ep)/(1+\ep)\geq a$ and $e^{-\delta x} \lesssim x^{-b} e^{-(\delta-\ep)x}$ for all $x>0$.

We make repeated use, without reference, of the following easily verified identities for operators $A,B$ and $C$:
\[[A,BC]=[A,B]C+B[A,C]; \quad [A,B^{-1}]=B^{-1}[B,A]B^{-1}\]
on the largest domains for which both sides are defined.

First, we show by induction that
\begin{equation}\label{indeqn1}
[\eta I, ([\cD,\eta I]R_\cD(z))^n]=-n([\cD,\eta I]R_\cD(z))^{n+1}
\end{equation}
for all $n\in\N$. The commutator $[\cD,\eta I]$ is a pointwise multiplication operator by hypothesis (H3). This implies that $[\eta I,[\cD,\eta I]]=0$, so $\eqref{indeqn1}$ holds for $n=1$. If $\eqref{indeqn1}$ holds for some $k\in\N$, then
\begin{align*}
[\eta I,([\cD,&\eta I]R_\cD(z))^{k+1}]\\
&= [\eta I,[\cD,\eta I]R_\cD(z)]([\cD,\eta I]R_\cD(z))^{k} + [\cD,\eta I]R_\cD(z)[\eta I,([\cD,\eta I]R_\cD(z))^{k}] \\
&= [\cD,\eta I][\eta I,R_\cD(z)]([\cD,\eta I]R_\cD(z))^{k} - k ([\cD,\eta I] R_\cD(z))^{k+2}\\
&= -[\cD,\eta I]R_\cD(z)[\cD,\eta I]R_\cD(z)([\cD,\eta I]R_\cD(z))^{k} -k ([\cD,\eta I] R_\cD(z))^{k+2} \\
&= -(k+1)([\cD,\eta I]R_\cD(z))^{k+2},
\end{align*}
so \eqref{indeqn1} holds for all $n\in\N$. Next, we show by induction that
\begin{equation}\label{indeqn2}
\overbrace{[\eta I,...[\eta I,}^{n} R_\cD(z)]...] = (-1)^nn!R_\cD(z)([\cD,\eta I]R_\cD(z))^n
\end{equation}
for all $n\in\N$. This is immediate for $n=1$. If $\eqref{indeqn2}$ holds for some $k\in\N$, then by \eqref{indeqn1} we have
\begin{align*}
\overbrace{[\eta I,...[\eta I,}^{k+1} R&_\cD(z)]...]\\
&=(-1)^kk![\eta I, R_\cD(z)([\cD,\eta I]R_\cD(z))^k] \\
&=(-1)^kk!\{[\eta I,R_\cD(z)]([\cD,\eta I]R_\cD(z))^k + R_\cD(z)[\eta I,([\cD,\eta I]R_\cD(z))^k]\} \\
&=(-1)^{k}k!\{-R_\cD(z)([\cD,\eta I]R_\cD(z))^{k+1} - kR_\cD(z)([\cD,\eta I]R_\cD(z))^{k+1} \} \\
&=(-1)^{k+1}(k+1)!R_\cD(z)([\cD,\eta I]R_\cD(z))^{k+1},
\end{align*}
so \eqref{indeqn2} holds for all $n\in \N$. Using \eqref{indeqn2} with hypotheses (H1) and (H3), we obtain
\begin{align*}
\|\ca_ER_\cD(z)\ca_F\| &\leq \|(\eta I)^n R_\cD(z)\ca_F\|\\
&= \|(\eta I)^{n-1}[\eta I, R_\cD(z)]\ca_F\| \\
&= \|\overbrace{[\eta I,...[\eta I,}^{n} R_\cD(z)]...]\ca_F\| \\
&\leq n!\|R_\cD(z)([\cD,\eta I]R_\cD(z))^n\| \\
&\leq n! (C_\cD \|d\eta\|_\infty)^n \|R_\cD(z)\|^{n+1} \\
&\leq n!\frac{C_{\theta,r}}{|z|}\left(\frac{(1+\ep)C_\cD  C_{\theta,r}}{\rho(E,F)|z|}\right)^n
\end{align*}
for all $n\in\N_0$, which proves \eqref{eq: factorial.to.exp}.
\end{proof}

The following proof was inspired by the proof of Lemma~7.3 in \cite{HytonenvanNeervenPortal2008}.

\begin{Lem} \label{ODexpHvNP}
Let $0\leq\omega<\theta<\pi/2$ and $0\leq R<r$ and suppose that $\cD$ is an operator satisfying the assumptions of Lemma~\ref{ODexp0}. Let $M\geq0$ and $\delta>0$. For each $\psi\in\Psi_{M+\delta}^{\delta}(S_{\theta,r}^o)$, $\phi\in\Theta^\delta(S_{\theta,r}^o)$ and $a\in(0,1)$, there exists $c>0$ such that the following hold:
\begin{enumerate}
\setlength{\itemsep}{5pt}
\item $\displaystyle \|\ca_E(f\psi_t)(\mathcal{D})\ca_F\| \leq c \|f\|_\infty\left\langle\frac{t}{\rho(E,F)}\right\rangle^{M} \exp\left(-a\frac{r}{C_\cD C_{\theta,r}}\rho(E,F)\right)$;
\item $\displaystyle \|\ca_E(f\phi_{\phantom{t}})(\mathcal{D})\ca_F\| \leq c \|f\|_\infty\exp\left(-a\frac {r}{C_\cD C_{\theta,r}}\rho(E,F)\right),$
\end{enumerate}
for all $t\in(0,1]$, $f\in H^\infty(S_{\theta,r}^o)$ and closed subsets $E$ and $F$ of $M$.
\begin{proof}
For all $\tilde\theta\in(\omega,\theta)$ and $\tilde r\in(R,r)$, let $+\partial S^o_{\tilde\theta,\tilde r}$ denote the boundary of $S^o_{\tilde\theta,\tilde r}$ oriented anticlockwise, and divide this into $\gamma_{\tilde r}= +\partial S^o_{\tilde\theta,\tilde r} \cap D_{\tilde r}$ and $\gamma_{\tilde\theta}=+\partial S^o_{\tilde\theta,\tilde r} \cap S_{\tilde\theta}$. Using the Cauchy integral formula from (H1), we have
\begin{align*}
\ca_E(f\psi_t)(\mathcal{D})\ca_F&=\frac{1}{2\pi i}\left(\int_{\gamma_{\tilde r}}+\int_{\gamma_{\tilde\theta}}\right) f(z)\psi_t(z) \ca_E R_\cD(z)\ca_F\ \d z =I_1+I_2
\end{align*}
for all $\tilde\theta\in(\omega,\theta)$ and $\tilde r\in(R,r)$. It follows by Lemma~\ref{ODexp0} that for each $a\in(0,1)$ and $b\geq0$, we have
\begin{align*}
\|I_1\| &\lesssim C_{\tilde\theta,\tilde r}\|f\|_\infty\int_{\gamma_{\tilde r}}\min\{|tz|^{M+\delta},|tz|^{-\delta}\} \left\langle\frac{1}{\rho(E,F)|z|}\right\rangle^M e^{-a\rho(E,F)|z|/C_\cD C_{\tilde\theta,\tilde r}}\ \frac{|\d z|}{|z|}\\
&\lesssim_{r,R} C_{\tilde\theta,\tilde r}\|f\|_\infty \langle{t}/{\rho(E,F)}\rangle^{M} e^{-a\rho(E,F)\tilde r/C_\cD C_{\tilde\theta,\tilde r}}
\end{align*}
and
\begin{align*}
\|I_2\| \lesssim C_{\tilde\theta,\tilde r}\|f\|_\infty \left(\int^{\tilde r/t}_{\tilde r} \right.&\frac{|tz|^{M+\delta}}{(\rho(E,F) |z|)^{b}} e^{-a\rho(E,F)|z|/C_\cD C_{\tilde\theta,\tilde r}}\ \frac{\d|z|}{|z|} \\
&\left.\hspace{1.6cm} + \int^{\infty}_{\tilde r/t}\frac{|tz|^{-\delta}}{(\rho(E,F) |z|)^b} e^{-a\rho(E,F)|z|/C_\cD C_{\tilde\theta,\tilde r}}\ \frac{\d|z|}{|z|}\right)
\end{align*}
for all $\tilde\theta\in(\omega,\theta)$ and $\tilde r\in(R,r)$. Setting $b=0$ shows that
\[
\|I_2\| \lesssim_{r,R} C_{\tilde\theta,\tilde r} \|f\|_\infty e^{-a\rho(E,F)\tilde r/C_\cD C_{\tilde \theta,\tilde r}},
\]
and setting $b=M$ shows that
\[
\|I_2\| \lesssim_{r,R} C_{\tilde\theta,\tilde r} \|f\|_\infty (t/\rho(E,F))^M e^{-a \rho(E,F)\tilde r/C_\cD C_{\tilde \theta,\tilde r}}.
\]
Altogether, this shows that for each $a\in(0,1)$, there exists $c>0$ such that
\[
\|\ca_E(f\psi_t)(\mathcal{D})\ca_F\| \leq c\, C_{\tilde\theta,\tilde r} \|f\|_\infty\langle{t}/{\rho(E,F)}\rangle^{M} e^{-a(\tilde r/C_\cD C_{\tilde \theta,\tilde r})\rho(E,F)}
\]
for all $\tilde\theta\in(\omega,\theta)$ and $\tilde r\in(R,r)$. The first result follows by noting that
\[
\sup\{{\tilde r}/{C_{\tilde\theta,\tilde r}} : \tilde\theta\in(\omega,\theta),\,\tilde r\in(R,r)\} = {r}/{C_{\theta,r}}.
\]
The proof of the second result is similar.
\end{proof}
\end{Lem}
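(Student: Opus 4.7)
My approach is to represent $(f\psi_t)(\mathcal{D})$ via the Cauchy integral formula from (H1),
\[
\ca_E(f\psi_t)(\mathcal{D})\ca_F = \frac{1}{2\pi i}\int_{+\partial S^o_{\tilde\theta,\tilde r}} f(z)\,\psi_t(z)\, \ca_E R_\cD(z)\ca_F\, \d z,
\]
with $\tilde\theta\in(\omega,\theta)$ and $\tilde r\in(R,r)$, and to split the contour into the disk-arc piece $\gamma_{\tilde r} := +\partial S^o_{\tilde\theta,\tilde r}\cap D_{\tilde r}$ and the ray piece $\gamma_{\tilde\theta} := +\partial S^o_{\tilde\theta,\tilde r}\cap S_{\tilde\theta}$. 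On each piece I will bound $\|\ca_E R_\cD(z)\ca_F\|$ using Lemma~\ref{ODexp0}, with the auxiliary parameter $b$ there taken equal to either $0$ (for pure exponential decay) or $M$ (to recover the polynomial factor $\langle t/\rho(E,F)\rangle^M$), and then take the best of the two bounds.

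On $\gamma_{\tilde r}$ the variable $|z|$ is comparable to $\tilde r$, so the estimate $|\psi_t(z)|\lesssim (t\tilde r)^{M+\delta}$ is essentially harmless; Lemma~\ref{ODexp0} with $b=M$ yields a contribution controlled by $\|f\|_\infty\langle t/\rho(E,F)\rangle^M\exp(-a\rho(E,F)\tilde r/(C_\cD C_{\tilde\theta,\tilde r}))$, the factor $\langle t/\rho\rangle^M$ appearing after absorbing $\tilde r$- and $t$-dependences into constants. On $\gamma_{\tilde\theta}$ I will split the radial integration at $|z|=\tilde r/t$: on $[\tilde r,\tilde r/t]$ use $|\psi_t(z)|\lesssim(t|z|)^{M+\delta}$, and on $[\tilde r/t,\infty)$ use $|\psi_t(z)|\lesssim(t|z|)^{-\delta}$. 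Applying Lemma~\ref{ODexp0} once with $b=0$ gives a pure exponential bound; applying it once with $b=M$ produces the additional factor $(t/\rho(E,F))^M$. The $\delta>0$ exponents at both ends of $\psi$ guarantee absolute convergence of the radial integrals.

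Letting $\tilde\theta\nearrow\theta$ and $\tilde r\nearrow r$ then exploits $\sup\{\tilde r/C_{\tilde\theta,\tilde r}:\tilde\theta\in(\omega,\theta),\,\tilde r\in(R,r)\}=r/C_{\theta,r}$ to bring the exponential rate up to $ar/(C_\cD C_{\theta,r})$ for any $a\in(0,1)$, exactly as required. The estimate in (2) is simpler: since $\phi\in\Theta^\delta(S^o_{\theta,r})$ already satisfies $|\phi(z)|\lesssim|z|^{-\delta}$, the radial integral along $\gamma_{\tilde\theta}$ converges without help from a $\psi_t$-factor, no polynomial factor is sought, and Lemma~\ref{ODexp0} with $b=0$ alone suffices throughout.

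The main obstacle I anticipate is bookkeeping: one must match the polynomial factor $\langle t/\rho(E,F)\rangle^M$ (involving the ambient parameter $t$) against the resolvent-side factor $\langle 1/(\rho(E,F)|z|)\rangle^M$ from Lemma~\ref{ODexp0} (involving the spectral variable $|z|$) without losing the exponential rate. The standard device is the elementary inequality $e^{-\delta x}\lesssim\langle 1/x\rangle^M e^{-ax}$ for $0<a<\delta<1$, which converts a small loss in the exponential rate into a polynomial factor — precisely the trade-off needed to consolidate the bounds from the two regions of $\gamma_{\tilde\theta}$ into the single form stated in the lemma.
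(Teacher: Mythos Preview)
Your proposal is correct and follows essentially the same route as the paper: represent $(f\psi_t)(\cD)$ by the Cauchy integral over $+\partial S^o_{\tilde\theta,\tilde r}$, split into the arc $\gamma_{\tilde r}$ and rays $\gamma_{\tilde\theta}$, on the rays split at $|z|=\tilde r/t$ using the $\Psi_{M+\delta}^\delta$ bounds for $\psi_t$, invoke Lemma~\ref{ODexp0} with $b=0$ and $b=M$ to obtain both forms of the bound, and finally optimise $\tilde r/C_{\tilde\theta,\tilde r}\nearrow r/C_{\theta,r}$. The exponential-to-polynomial trade-off you flag as an obstacle is already built into Lemma~\ref{ODexp0} via the free parameter $b$, so no additional device is needed beyond choosing $b\in\{0,M\}$ and taking the minimum.
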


We conclude this section with a useful application of this result.
\begin{Prop} \label{ODexp5}
Let $0\leq\omega<\theta<\pi/2$ and $0\leq R<r$ and suppose that $\cD$ is an operator satisfying the assumptions of Lemma~\ref{ODexp0}. Let $0<\sigma<\alpha$ and $0<\tau<\beta$. For each $\psi\in\Psi_{\alpha}^{\beta}(S_{\theta,r}^o)$, $\tilde{\psi}\in \Psi_\beta^{\alpha}(S_{\theta,r}^o)$, $\phi\in\Theta^\beta(S_{\theta,r}^o)$, $\tilde{\phi}\in \Theta^\alpha(S_{\theta,r}^o)$ and $a\in(0,1)$, there exists $c>0$ such that the following hold:
\begin{enumerate}
\setlength{\itemindent}{-0.7cm}
\setlength{\labelsep}{0cm}
\item $\|\ca_E(\psi_tf\tilde\psi_s)(\cD)\ca_F\| \leq c \|f\|_\infty
  \begin{cases}
    ({s}/{t})^{\tau}\langle{t}/{\rho(E,F)}\rangle^{\alpha+\tau}
    e^{-a (r/C_\cD C_{\theta,r})\rho(E,F)}& \!\!\!{\rm if} \,\ s\leq t; \\
    ({t}/{s})^{\sigma}\langle{s}/{\rho(E,F)}\rangle^{\beta+\sigma}
    e^{-a (r/C_\cD C_{\theta,r})\rho(E,F)}& \!\!\!{\rm if} \,\ t\leq s;
  \end{cases}$
\item $\|\ca_E(\phi_{\phantom{t}} f\tilde\psi_s)(\cD)\ca_F\| \leq c \|f\|_\infty \ s^{\tau} e^{-a (r/C_\cD C_{\theta,r})\rho(E,F)}$;
\item $\|\ca_E(\psi_t f\tilde\phi_{\phantom{s}})(\cD)\ca_F\| \leq c \|f\|_\infty \ t^{\sigma} e^{-a (r/C_\cD C_{\theta,r})\rho(E,F)}$;
\item $\|\ca_E(\phi_{\phantom{t}} f\tilde\phi_{\phantom{s}})(\cD)\ca_F\|\leq c \|f\|_\infty \ e^{-a (r/C_\cD C_{\theta,r})\rho(E,F)},$
\end{enumerate}
for all $s,t\in(0,1]$, $f\in H^\infty(S_{\theta,r}^o)$ and closed subsets $E$ and $F$ of $M$.
\end{Prop}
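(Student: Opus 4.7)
\medskip
\textbf{Proof plan.} The plan is to reduce each of the four estimates to Lemma~\ref{ODexpHvNP} by extracting the scalar prefactors $(s/t)^\tau$, $(t/s)^\sigma$, $s^\tau$ or $t^\sigma$ from the symbols and absorbing the remaining pieces either into a function of the correct sectorial class or into the bounded scalar factor~$f$.

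First I would introduce the auxiliary symbols
\[
\chi(w)=w^\tau\psi(w),\quad \tilde g(w)=\tilde\psi(w)/w^\tau,\quad g(w)=\psi(w)/w^\sigma,\quad \tilde\chi(w)=w^\sigma\tilde\psi(w),
\]
on $S_{\theta,r}^o$. Using the minimum bounds defining the $\Psi$-classes together with the assumptions $0<\sigma<\alpha$ and $0<\tau<\beta$, one checks that $\chi\in\Psi_{\alpha+\tau}^{\beta-\tau}$, $\tilde g\in\Psi_{\beta-\tau}^{\alpha+\tau}$, $g\in\Psi_{\alpha-\sigma}^{\beta+\sigma}$ and $\tilde\chi\in\Psi_{\beta+\sigma}^{\alpha-\sigma}$. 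In particular $g$ and $\tilde g$ are bounded on $S_{\theta,r}^o$ with $L^\infty$-norms independent of the dilation parameter, so $fg_t$ and $f\tilde g_s$ lie in $H^\infty(S_{\theta,r}^o)$ with norm $\lesssim\|f\|_\infty$ uniformly in $s,t\in(0,1]$. A direct computation yields the key algebraic identities
\[
\psi_t(z)\tilde\psi_s(z)=(s/t)^\tau\,\chi_t(z)\,\tilde g_s(z)=(t/s)^\sigma\,g_t(z)\,\tilde\chi_s(z).
\]

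The four cases are then handled as follows. For case~(1) with $s\leq t$, I would treat $f\tilde g_s$ as the bounded scalar in Lemma~\ref{ODexpHvNP}(1) and apply that lemma to $\chi_t$; since $\chi\in\Psi_{\alpha+\tau}^{\beta-\tau}$, this produces the $\langle t/\rho(E,F)\rangle^{\alpha+\tau}$ factor together with the required exponential decay, and multiplication by $(s/t)^\tau$ yields the stated bound. The case $t\leq s$ is symmetric: apply Lemma~\ref{ODexpHvNP}(1) to $\tilde\chi_s\in\Psi_{\beta+\sigma}^{\alpha-\sigma}$ and treat $fg_t$ as the bounded scalar. For case~(2), write $\phi f\tilde\psi_s=s^\tau\,\tilde\phi^\sharp(z)\,(f\tilde g_s)(z)$ where $\tilde\phi^\sharp(z):=z^\tau\phi(z)$ lies in $\Theta^{\beta-\tau}$, and apply Lemma~\ref{ODexpHvNP}(2). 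For case~(3), symmetrically write $\psi_tf\tilde\phi=t^\sigma\,g_t(z)\,f(z)\,\phi^\sharp(z)$ with $\phi^\sharp(z):=z^\sigma\tilde\phi(z)\in\Theta^{\alpha-\sigma}$ and apply Lemma~\ref{ODexpHvNP}(2). For case~(4), observe that $\phi\tilde\phi\in\Theta^{\alpha+\beta}$ and apply Lemma~\ref{ODexpHvNP}(2) directly to $\phi\tilde\phi$ and~$f$.

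The main obstacle is matching the exact exponent $\alpha+\tau$ (resp.\ $\beta+\sigma$) in the $\langle\cdot\rangle$-factor of case~(1): since $\chi\in\Psi_{\alpha+\tau}^{\beta-\tau}$ only fits the hypothesis $\psi\in\Psi_{M+\delta}^\delta$ of Lemma~\ref{ODexpHvNP} with $M+\delta\leq\alpha+\tau$ and $\delta\leq\beta-\tau$, a naive invocation loses an arbitrarily small $\delta>0$ in the exponent. I would resolve this by retracing the integration in the proof of Lemma~\ref{ODexpHvNP} with the factorization already in hand, choosing the free parameter $b$ in Lemma~\ref{ODexp0} equal to $\alpha+\tau$ (with constant depending on $\alpha,\tau$) and reading off the factor $\langle t/\rho(E,F)\rangle^{\alpha+\tau}$ directly from the two contour integrals $I_1$ on the circle and $I_2$ on the rays, splitting $I_2$ at $|z|=\tilde r/t$ and exploiting the sharper decay $|\chi(tz)|\lesssim\min\{|tz|^{\alpha+\tau},|tz|^{-(\beta-\tau)}\}$. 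The remaining bookkeeping, including the uniform boundedness of $fg_t$ and $f\tilde g_s$, is then routine.
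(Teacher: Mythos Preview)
Your approach is essentially the paper's: extract the scalar prefactors $(s/t)^\tau$, $(t/s)^\sigma$, $s^\tau$, $t^\sigma$ by shifting powers of $z$ between the two symbols, then invoke Lemma~\ref{ODexpHvNP}. Cases (2)--(4) match the paper almost verbatim.

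For case~(1) you correctly identify the exponent-loss obstacle and propose reopening the contour integration inside Lemma~\ref{ODexpHvNP}. That would work, but the paper avoids it with a one-line trick: instead of extracting exactly $(s/t)^\tau$, over-extract $(s/t)^{\tau+\delta}$ for a small $\delta\in(0,\beta-\tau)$. Concretely, in the case $s\leq t$ set
\[
g_{(s)}(z)=(sz)^{-(\tau+\delta)}\tilde\psi_s(z)f(z),\qquad \eta(z)=z^{\tau+\delta}\psi(z),
\]
so that $\psi_t f\tilde\psi_s=(s/t)^{\tau+\delta}\,g_{(s)}\,\eta_t$. Now $\eta\in\Psi^{\beta-\tau-\delta}_{\alpha+\tau+\delta}(S_{\theta,r}^o)$, which fits the hypothesis $\Psi^{\delta'}_{M+\delta'}$ of Lemma~\ref{ODexpHvNP}(1) with $M=\alpha+\tau$ and any $\delta'\leq\min(\delta,\beta-\tau-\delta)$, giving the sharp exponent $\langle t/\rho(E,F)\rangle^{\alpha+\tau}$ as a black-box consequence. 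The functions $g_{(s)}$ are uniformly bounded with $\sup_s\|g_{(s)}\|_\infty\lesssim\|f\|_\infty$, and the extra factor $(s/t)^\delta\leq 1$ is harmless since $s\leq t$. This spares you from reworking any contour estimates; the case $t\leq s$ is handled symmetrically by over-extracting $(t/s)^{\sigma+\delta}$.
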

\begin{proof}
To prove (1), first suppose that $0<s\leq t\leq 1$ and choose $\delta\in(0,\beta-\tau)$.  Let $g_{(s)}(z)=(s z)^{-(\tau+\delta)}\tilde\psi_s(z)f(z)$ and $\eta(z)= z^{\tau+\delta}{\psi}(z)$ so that \[\psi_tf\tilde\psi_s = (s/t)^{\tau+\delta} g_{(s)} \eta_t.\] The function $\eta$ is in $\Psi^{\beta-\tau-\delta}_{\alpha+\tau+\delta}(S_{\theta,r}^o)$ and the functions $g_{(s)}$ are in $\Psi(S_{\theta,r}^o)$ and satisfy $\sup_{s\in(0,1]}\|g_{(s)}\|_\infty \lesssim \|f\|_\infty$. Therefore, Lemma~\ref{ODexpHvNP}  provides the  off-diagonal estimate
\[
\|\ca_E(g_{(s)} \eta_t)(\cD)\ca_F\| \lesssim \|g_{(s)}\|_\infty\langle{t}/{\rho(E,F)}\rangle^{\alpha+\tau} e^{-a(r/C_\cD C_{\theta,r})\rho(E,F)}
\]
and the required estimate follows. The proof in the case $0<t\leq s\leq 1$ is analogous.

The results in (2) and (3) follow from Lemma~\ref{ODexpHvNP} by writing the following:
\begin{align*}
(\phi f\tilde\psi_s)(z)
&= s^{\tau}z^{\tau}\phi(z)f(z)(sz)^{-\tau}\tilde{\psi}(sz); \\
(\psi_t f\tilde\phi)(z)
&=t^{\sigma}z^{\sigma}\tilde\phi(z)f(z)(tz)^{-\sigma}\psi(tz).
\end{align*}
The result in (4) follows immediately from Lemma~\ref{ODexpHvNP}.
\end{proof}

\section{The Main Estimate}\label{SectionTheMainEstimate}
We consider a complete  Riemannian manifold $M$ that is exponentially locally doubling. The spaces $t^{p}(X\times(0,1])$ and $L^p_{\mathscr{Q}}(X)$ introduced in Sections \ref{SectionLocalTent} and \ref{SectionNewFunctionSpaces} consist of measurable functions. We begin by showing that it is a simple matter to formulate that theory for differential forms.

The local Lusin operator $\mathcal{A}_{\text{loc}}$ is defined for any measurable family of differential forms $U=(U_t)_{t\in(0,1]}$ on $M$, where each $U_t$ is a section of $\Lambda T^*M$, by
\[
\mathcal{A}_{\text{loc}}U(x) = \bigg(\iint_{\Gamma^1(x)} |U_t(y)|_{\wedge T^*_yM}^2\ \frac{\d\mu(y)}{V(x,t)} \frac{\d t}{t}\bigg)^{\frac{1}{2}}
\]
for all $x\in M$. The dual operator $\mathcal{C}_{\text{loc}}$ is defined in the same way. For each $p\in[1,\infty]$, the local tent space $t^p(\wedge T^*M\times(0,1])$ consists of all measurable families of differential forms $U$ on $M$ with
\[
\|U\|_{t^p}=
\begin{cases}
     \big(\int_M (\cA_{\text{loc}}U(x))^p\ \d\mu(x)\big)^{\frac{1}{p}} , &{\rm if}\quad p\in[1,\infty); \\
    \esss_{x\in M} \cC_{\text{loc}}U(x), &{\rm if}\quad p=\infty.
  \end{cases}
\]
Let $L^2_\bullet(\wedge T^*M\times(0,1])$ denote the space of all measurable families of differential forms $U$ on $M$ with $\|U\|_{L^2_\bullet}^2= \int_0^1 \|U_t\|_{L^2(\wedge T^*M)}^2\frac{\d t}{t}$. As before, this is an equivalent norm on $t^2(\wedge T^*M\times(0,1])$.

Next, fix a unit cube structure $\mathscr{Q}=(Q_j)_j$ on $M$. For each $p\in[1,\infty]$, the space $L^p_{\mathscr{Q}}(\wedge T^*M)$ consists of all measurable differential forms $u$ on $M$ with
\[
\|u\|_{L^p_{\mathscr{Q}}}=
\begin{cases}
    \big(\sum_{Q_j\in{Q}}(\mu(Q_j)^{\frac{1}{p}-\frac{1}{2}}\|\ca_{Q_j}u\|_{L^2(\wedge T^*M)})^p\big)^{\frac{1}{p}}, &{\rm if}\quad p\in[1,\infty); \\
    \sup_{Q_j\in \mathscr Q} \mu(Q_j)^{-\frac{1}{2}}\|\ca_{Q_j}f\|_{L^2(\wedge T^*M)}, &{\rm if}\quad p=\infty.
\end{cases}
\]
As before, we have $L^2_{\mathscr{Q}}(\wedge T^*M)=L^2(\wedge T^*M)$.

A $t^1(\wedge T^*M)$-atom is a measurable family of differential forms $A=(A_t)_{t\in(0,1]}$ on $M$ supported in the truncated tent $T^{1}(B)$ over a ball $B$ in $M$ of radius $r(B)\leq 2$ with $\|A\|_{L^2_\bullet} \leq \mu(B)^{-1/2}$. The atomic characterisation in Theorem~\ref{maintentatomic} is proved in this context by defining the local maximal operator $\mathcal{M}_{\text{loc}}$ for all measurable differential forms $u$ on $M$ by
\[\mathcal{M}_\text{loc}u(x)=\sup_{r\in(0,1]} \frac{1}{V(x,r)} \|\ca_{B(x,r)}u\|_{L^1(\wedge T^*M)}\]
for all $x\in M$

An $L^1_\mathscr{Q}(\wedge T^*M)$-atom is a measurable differential form $a$ on $M$ supported on a ball $B$ in $M$ of radius $r(B)\geq1$ with $\|a\|_{2}\leq\mu(B)^{-1/2}$. The proof of the atomic characterisation in Theorem~\ref{mainLQatomic} goes over directly.

The duality and interpolation results from Sections \ref{SectionLocalTent} and \ref{SectionNewFunctionSpaces} extend to this setting as well. In what follows, we only consider spaces of differential forms and usually omit writing $\wedge T^*M$ and $\wedge T^*M\times(0,1]$.

\begin{Def}\label{Def: QandS}
Let $M$ be a complete Riemannian manifold. Let ${\omega\in[0,\pi/2)}$ and $R\geq0$ and suppose that $\cD$ is a closed densely-defined operator on $L^2(\wedge T^*M)$ of $\text{type }S_{\omega,R}$ satisfying (H1-2). Given $\theta \in (\omega,\pi/2)$, $r>R$, $\psi\in\Psi(S_{\theta,r}^o)$ and $\phi\in\Theta(S_{\theta,r}^o)$, define the bounded operators $\cQ^{\cD}_{\psi,\phi}:L^2 \rightarrow L^2_\bullet\oplus L^2$ and $\cS^{\cD}_{\psi,\phi}:L^2_\bullet\oplus L^2\rightarrow L^2$ by
\[
\cQ^{\cD}_{\psi,\phi}u = (\psi_t({\cD})u, \phi({\cD})u)
\]
for all $u\in L^2$ and $t\in(0,1]$, and
\[
\cS^{\cD}_{\psi,\phi}(U,u) = \int^1_0\psi_s({\cD}) U_s\frac{\d s}{s} + \phi({\cD})u = \lim_{a\rightarrow 0} \int_a^1 \psi_s({\cD}) U_s\frac{\d s}{s} + \phi({\cD})u
\]
for all $(U,u)\in L^2_{\bullet}\oplus L^2$.
\end{Def}

The operator $\cQ^{\cD}_{\psi,\phi}$ is bounded because $\cD$ satisfies (H1) and (H2). This is a consequence of the equivalence of (H2) with the requirement that $\cD$ satisfies local quadratic estimates, which we will introduce after Proposition~\ref{sq=I}. Further details are in \cite{Morris2010(1)}. It is also well known that the adjoint operator $\cD^*$ satisfies (H1-2) if and only if $\cD$ satisfies (H1-2). Therefore, we have $\cS^{\cD}_{\psi,\phi}=(\cQ^{\cD^*}_{\psi^*,\phi^*})^*$, where $\psi^*$ and $\phi^*$ are given by Definition~\ref{Def: sectors}, and this is a bounded operator.

The remainder of this section is dedicated to the proof of the following theorem, which is fundamental to the definition of our local Hardy spaces. It is a local analogue of Theorem~4.9 in \cite{AMcR}. The proof below simplifies some aspects of the original proof.

\begin{Thm} \label{locMainEst}
Let $\kappa,\lambda\geq0$ and suppose that $M$ is a complete Riemannian manifold satisfying \eqref{ELD}. Let $\omega\in[0,\pi/2)$ and $R\geq0$ and suppose that $\cD$ is a closed densely-defined operator on $L^2(\wedge T^*M)$ of $\text{type }S_{\omega,R}$ satisfying (H1-3). Let $\theta\in(\omega,\tfrac{\pi}{2})$, $r>R$ and $\beta>{\kappa}/{2}$ such that $r/C_\cD C_{\theta,r}>{\lambda}/{2}$, where $C_{\theta,r}$ is from (H1) and $C_\cD$ is from (H3).

For each $\psi\in\Psi^{\beta}(S_{\theta,r}^o),$ $\tilde{\psi}\in\Psi_{\beta}(S_{\theta,r}^o),$ $\phi\in\Theta^{\beta}(S_{\theta,r}^o)$ and $\tilde{\phi}\in\Theta(S_{\theta,r}^o)$, there exists $c>0$ such that the following hold for all $f\in H^\infty(S_{\theta,r}^o)$:
\begin{enumerate}\setlength{\itemsep}{3pt}
\item The operator $\cQ^\cD_\svt{\psi,\phi}f({\mathcal{D}})\cS^\cD_{\tilde\psi,\tilde\phi}$ has a bounded extension $\Pf$ satisfying
\begin{center}$\displaystyle
\|\Pf(U,u)\|_{t^p\oplus L^p_{\mathscr{Q}}}
\leq c \|f\|_\infty \|(U,u)\|_{t^p\oplus L^p_{\mathscr{Q}}}
$\end{center}
for all $(U,u)\in t^p\oplus L^p_{\mathscr{Q}}$ and $p\in[1,2]$;

\item The operator $\cQ^\cD_{\tilde\psi,\tilde\phi}f({\mathcal{D}})\cS^\cD_\svt{\psi,\phi}$ has a bounded extension $\tilde\Pf$ satisfying
\begin{center}$\displaystyle
\|\tilde\Pf(U,u)\|_{t^p\oplus L^p_{\mathscr{Q}}}
\leq c \|f\|_\infty \|(U,u)\|_{t^p\oplus L^p_{\mathscr{Q}}}
$\end{center}
for all $(U,u)\in t^p\oplus L^p_{\mathscr{Q}}$ and $p\in[2,\infty]$.
\end{enumerate}
\end{Thm}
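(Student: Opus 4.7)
The plan is to prove part (1) at the two endpoints $p=2$ and $p=1$ and then interpolate, after which part (2) will follow by a duality argument. The $p=2$ estimate is essentially free: it says $\|\cQ^\cD_{\psi,\phi} f(\cD)\cS^\cD_{\tilde\psi,\tilde\phi}\|_{L^2_\bullet \oplus L^2 \to L^2_\bullet \oplus L^2} \lesssim \|f\|_\infty$, which follows because $\cQ^\cD_{\psi,\phi}$ and $\cS^\cD_{\tilde\psi,\tilde\phi}$ are bounded (by H1--H2 and the quadratic-estimate theory of \cite{Morris2010(1)}), and the middle factor $f(\cD)$ is bounded with norm $\lesssim\|f\|_\infty$ by the $H^\infty$ calculus assumption. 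Since $t^2 = L^2_\bullet$ and $L^2_\mathscr{Q} = L^2$ with equivalent norms, this gives the $p=2$ estimate.

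The core of the argument is the $p=1$ estimate, which I would obtain via the atomic characterisations from Theorem \ref{maintentatomic} and Theorem \ref{mainLQatomic}. By Proposition~\ref{Prop: tpt2denseintp} and Proposition~\ref{Prop: LpQL2denseinLp}, $t^1 \cap t^2$ and $L^1_\mathscr{Q} \cap L^2$ are dense in $t^1$ and $L^1_\mathscr{Q}$, so it suffices to produce a uniform bound of the form $\|\Pf(A,0)\|_{t^1 \oplus L^1_\mathscr{Q}} \lesssim \|f\|_\infty$ on each $t^1$-atom $A$ supported on $T^1(B)$ with $r(B)\le 2$, and an analogous uniform bound on each $L^1_\mathscr{Q}$-atom $a$ supported on a ball $B$ with $r(B)\ge 1$. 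Fix such an atom, and decompose $M$ into annuli $C_0(B) = 4B$ and $C_k(B) = 2^{k+1}B \setminus 2^k B$ for $k\ge 1$. On $C_0(B)$ I estimate both components of the output in $L^2$, using boundedness of $\cQ^\cD, f(\cD), \cS^\cD$ in $L^2$, and then convert to $t^1$ or $L^1_\mathscr{Q}$ using the Cauchy--Schwarz-type inequalities $\|\ca_{4B}\cdot\|_{t^1}\lesssim\mu(4B)^{1/2}\|\cdot\|_{L^2_\bullet}$ and $\|\ca_{4B}\cdot\|_{L^1_\mathscr{Q}}\lesssim\mu(4B)^{1/2}\|\cdot\|_{L^2}$, together with $\mu(4B)\lesssim\mu(B)$ from \eqref{LD} and the atomic normalisations. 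On each far annulus $C_k(B)$ I instead compute the relevant integrals by freezing variables and applying the four estimates of Proposition~\ref{ODexp5} to each of the four composed pieces $\psi_t(\cD) f(\cD)\tilde\psi_s(\cD)$, $\psi_t(\cD) f(\cD) \tilde\phi(\cD)$, $\phi(\cD) f(\cD)\tilde\psi_s(\cD)$ and $\phi(\cD) f(\cD)\tilde\phi(\cD)$, with $E = C_k(B)$ and $F = B$; the resulting bounds combine polynomial factors $\min\{(s/t)^\tau,(t/s)^\sigma\}$ (integrable in $s$ thanks to the smoothing exponents $\alpha,\beta$, which the hypotheses $\beta > \kappa/2$ and the chosen auxiliary $\sigma,\tau$ accommodate) with the exponential off-diagonal factor $e^{-a(r/C_\cD C_{\theta,r})\rho(C_k(B),B)}$.

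The main obstacle, and the point where the growth hypothesis \eqref{ELD} enters decisively, is controlling the sum over the annuli $C_k(B)$ once these off-diagonal bounds are multiplied against the annular volume growth $\mu(C_k(B)) \le \mu(2^{k+1}B) \lesssim 2^{(k+1)\kappa} e^{\lambda 2^k r(B)} \mu(B)$. Taking square roots (from Cauchy--Schwarz in passing from $L^2$ to $t^1$ or $L^1_\mathscr{Q}$ on each annulus) yields a factor $e^{(\lambda/2) 2^k r(B)}$; the assumption $r/(C_\cD C_{\theta,r}) > \lambda/2$ lets us choose $a\in(0,1)$ close enough to $1$ so that $a r/(C_\cD C_{\theta,r}) > \lambda/2$, whereupon the exponential off-diagonal decay dominates the volume growth and the geometric series in $k$ converges with a bound independent of the atom. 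This yields the uniform atomic estimates, and hence boundedness at $p=1$ via Theorems~\ref{maintentatomic} and \ref{mainLQatomic}. Complex interpolation between $p=1$ and $p=2$, using Theorem~\ref{Thm: tp2propertiesInterp} and Theorem~\ref{Thm: LpQpropertiesInterp} componentwise in the direct sum $t^p \oplus L^p_\mathscr{Q}$, delivers part (1) for all $p\in[1,2]$. Finally, part (2) follows by duality: the hypotheses are symmetric under swapping $(\psi,\phi)\leftrightarrow(\tilde\psi,\tilde\phi)$, and the adjoint operator $\cD^*$ inherits (H1--H3) with the same constants, so part (1) applied to $\cD^*$ together with the dualities in Theorem~\ref{Thm: tp2propertiesDuality} and Theorem~\ref{Thm: LpQpropertiesDuality} yields part (2) at $p=\infty$ (modulo passage to $\tilde t^\infty$ and $\tilde L^\infty_\mathscr{Q}$, handled by Corollary~\ref{Cor: tp2Interp} and Corollary~\ref{Cor: LpQInterp}), and another interpolation against $p=2$ finishes the range $p\in[2,\infty]$.
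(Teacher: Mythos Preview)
Your overall strategy matches the paper's exactly: establish the $p=2$ bound from (H2), prove uniform bounds on $t^1$- and $L^1_\mathscr{Q}$-atoms using the off-diagonal estimates of Proposition~\ref{ODexp5}, pass from atoms to general elements via Theorems~\ref{maintentatomic} and~\ref{mainLQatomic}, interpolate, and then obtain part~(2) by applying part~(1) to $\cD^*$ and dualising. The paper also splits $\cQ^\cD_{\psi,\phi}f(\cD)\cS^\cD_{\tilde\psi,\tilde\phi}$ into the four matrix entries $\cP^{i,j}_f$ and proves the atomic bounds in four separate lemmas (Lemmas~\ref{LemQ11}--\ref{LemQ22}).

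There is, however, a genuine gap in your treatment of the $t^1\to t^1$ entry $\cP^{1,1}_f$ when the $t^1$-atom $A$ is supported on $T^1(B)$ with $r(B)$ small. The inequality $\|\ca_{4B}\,\cdot\,\|_{t^1}\lesssim\mu(4B)^{1/2}\|\cdot\|_{L^2_\bullet}$ is \emph{false} in that regime: since $(\cP^{1,1}_fA)_t$ is defined for all $t\in(0,1]$, the restriction $\ca_{4B}\cP^{1,1}_fA$ is supported on $4B\times(0,1]$, and $\cA_{\text{loc}}$ of such a function is supported on the $1$-neighbourhood of $4B$, whose measure is comparable to $V(x_B,1)\gtrsim r(B)^{-\kappa}\mu(B)$ rather than to $\mu(4B)$. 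A purely spatial annular decomposition therefore cannot produce summable coefficients uniformly in $r(B)$. The paper remedies this by decomposing in space-\emph{time} using truncated tents: for $k\leq K$ (where $2^Kr(B)\sim 1$) the pieces $\tilde A_k=\ca_{T^1(2^{k+2}B)\setminus T^1(2^{k+1}B)}\cP^{1,1}_fA$ are genuine $t^1$-atoms over $2^{k+2}B$, and the polynomial off-diagonal decay together with $\beta>\kappa/2$ gives summability; for $k>K$ one switches to a ball $B^*$ of radius in $[4,8]$ and uses \emph{linear} annuli $T^1((k+1)B^*)\setminus T^1(kB^*)$, where the exponential off-diagonal decay and the hypothesis $r/C_\cD C_{\theta,r}>\lambda/2$ take over. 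This two-regime tent-based decomposition is the key missing ingredient; your treatment of the other three entries $\cP^{1,2}_f,\cP^{2,1}_f,\cP^{2,2}_f$ (where the relevant balls have radius $\geq 1$) is essentially correct as stated.
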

\begin{proof}
Hypothesis (H2) and the comments in the paragraph after Definition~\ref{Def: QandS} guarantee that both $\cQ^\cD_\svt{\psi,\phi}f({\mathcal{D}})\cS^\cD_{\tilde\psi,\tilde\phi}$ and $\cQ^\cD_{\tilde\psi,\tilde\phi}f({\mathcal{D}})\cS^\cD_\svt{\psi,\phi}$ satisfy the estimates in (1) and (2) on $t^2\oplus L^2_{\mathscr{Q}}$.

To prove (1), define the following operators:
\[
\begin{array}{ll}
\displaystyle\Pf^{1,1} U=\int^1_0\psi_t({\mathcal{D}})f({\mathcal{D}})\tilde{\psi}_s({\mathcal{D}})U_s\frac{\d s}{s};& \Pf^{1,2} u= \psi_t({\mathcal{D}})f({\mathcal{D}})\tilde{\phi}({\mathcal{D}})u;\\
\displaystyle\Pf^{2,1}U= \int^1_0\phi({\mathcal{D}})f({\mathcal{D}})\tilde{\psi}_s({\mathcal{D}})U_s\frac{\d s}{s};& \Pf^{2,2}u= \phi({\mathcal{D}})f({\mathcal{D}})\tilde{\phi}({\mathcal{D}})u,
\end{array}
\]
for all $U\in L^2_\bullet$, $u\in L^2$ and $t\in(0,1]$, so we have the system
\[
\cQ^\cD_\svt{\psi,\phi}f({\mathcal{D}})\cS^\cD_{\tilde\psi,\tilde\phi}(U,u)
=\bigg(\begin{matrix}
\Pf^{1,1}& \Pf^{1,2} \\
\Pf^{2,1}& \Pf^{2,2}
\end{matrix}\bigg)
\bigg(\begin{matrix}
U \\
u
\end{matrix}\bigg)
\]
for all $(U,u)\in L^2_\bullet\oplus L^2$.

We claim that there exists $c>0$ such that
\begin{equation}\label{eq: unifatom}
\|\cQ^\cD_\svt{\psi,\phi}f({\mathcal{D}})\cS^\cD_{\tilde\psi,\tilde\phi}(A,a)\|_{t^1\oplus L^1_{\mathscr{Q}}} \leq c\|f\|_\infty
\end{equation}
for all $A$ that are $t^1$-atoms and $a$ that are $L^1_\mathscr{Q}$-atoms. The proof of \eqref{eq: unifatom} is quite technical, so we postpone it to Lemmas~\ref{LemQ11}, \ref{LemQ12}, \ref{LemQ21} and \ref{LemQ22}.

The set $t^1 \cap t^2$ is dense in $t^1$ by Proposition~\ref{Prop: tpt2denseintp}. Therefore, to prove that there exist bounded extensions $\Pf^{1,1}:t^1 \rightarrow t^1$ and $\Pf^{2,1}:t^1 \rightarrow L^1_\mathscr{Q}$, it suffices to show that
\begin{equation}\label{eq: unifatom2}
    \|\Pf^{1,1}U\|_{t^1} \lesssim \|f\|_\infty\|U\|_{t^1}
    \quad\text{and}\quad
    \|\Pf^{2,1}U\|_{L^1_\mathscr{Q}} \lesssim \|f\|_\infty\|U\|_{t^1}
\end{equation}
for all $U\in t^1 \cap t^2$.

If $U\in t^1 \cap t^2$, then by Theorem~\ref{maintentatomic} there exist a sequence $(\lambda_j)_j$ in $\ell^1$ and a sequence $(A_j)_j$ of $t^1$-atoms such that $\sum_j\lambda_jA_j$ converges to $U$ in $t^2$ with $\|(\lambda_j)_j\|_{\ell^1}\lesssim\|U\|_{t^1}$. Then, since $\cQ^\cD_\svt{\psi,\phi}f({\mathcal{D}})\cS^\cD_{\tilde\psi,\tilde\phi}$ is bounded on $t^2\oplus L^2_\mathscr{Q}$, we have
\[
\textstyle \Pf^{2,1}U=\sum_j\lambda_j \Pf^{2,1}(A_j),
\]
where the sum converges in $L^2_\mathscr{Q}$. Also, the partial sums $\sum_{j=1}^n\Pf^{2,1}(\lambda_j A_j)$ form a Cauchy sequence in $L^1_\mathscr{Q}$ by \eqref{eq: unifatom}. Therefore, there exists $v\in L^1_\mathscr{Q}$ such that
\[
\textstyle v=\sum_j\lambda_j\Pf^{2,1}( A_j),
\]
where the sum converges in $L^1_\mathscr{Q}$, and
$\|v\|_{L^1_\mathscr{Q}} \lesssim \|f\|_\infty \|U\|_{t^1}.$
Given that both $L^1_\mathscr{Q}$ and $L^2_\mathscr{Q}$ are continuously embedded in $L^1_\mathscr{Q}+L^2_\mathscr{Q}$, as in the proof of Theorem~\ref{Thm: LpQpropertiesInterp}, we must have $v=\Pf^{2,1}U$. A similar argument holds for $\Pf^{1,1}U$ to give \eqref{eq: unifatom2}.

The set $L^1_\mathscr{Q}\cap L^2_\mathscr{Q}$ is dense in $L^1_\mathscr{Q}$ by Proposition~\ref{Prop: LpQL2denseinLp}. Therefore, to prove that there exist bounded extensions $\Pf^{1,2}:L^1_\mathscr{Q} \rightarrow t^1$ and $\Pf^{2,2}:L^1_\mathscr{Q} \rightarrow L^1_\mathscr{Q}$, it suffices to show that
\begin{equation}\label{eq: unifatom3}
    \|\Pf^{1,2}u\|_{t^1} \lesssim \|f\|_\infty\|u\|_{L^1_\mathscr{Q}}
    \quad\text{and}\quad
    \|\Pf^{2,2}u\|_{L^{1}_\mathscr{Q}} \lesssim \|f\|_\infty\|u\|_{L^1_\mathscr{Q}}
\end{equation}
for all $u\in L^1_\mathscr{Q}\cap L^2_\mathscr{Q}$.

If $u\in L^1_\mathscr{Q}\cap L^2_\mathscr{Q}$, then by Theorem~\ref{mainLQatomic} there exist a sequence $(\lambda_j)_j$ in $\ell^1$ and a sequence $(a_j)_j$ of $L^1_\mathscr{Q}$-atoms such that $\sum_j\lambda_ja_j$ converges to $u$ in $L^2_\mathscr{Q}$ with $\|(\lambda_j)_j\|_{\ell^1}\lesssim\|u\|_{L^1_\mathscr{Q}}$. Then, since $\cQ^\cD_\svt{\psi,\phi}f({\mathcal{D}})\cS^\cD_{\tilde\psi,\tilde\phi}$ is bounded on $t^2\oplus L^2_\mathscr{Q}$, we have
\[
\textstyle \Pf^{1,2}u=\sum_j\lambda_j\Pf^{1,2}(a_j),
\]
where the sum converges in $t^2$. Also, the partial sums $\sum_{j=1}^n\Pf^{1,2}(\lambda_j a_j)$ form a Cauchy sequence in $t^1$ by \eqref{eq: unifatom}. Therefore, there exists $V\in t^1$ such that
\[
\textstyle V=\sum_j\lambda_j\Pf^{1,2}( a_j),
\]
where the sum converges in $t^1$, and $\|V\|_{t^1} \lesssim \|f\|_\infty \|u\|_{L^1_\mathscr Q}.$ Given that both $t^1$ and $t^2$ are continuously embedded in $t^1+t^2$, as in the proof of Theorem \ref{Thm: tp2propertiesInterp}, we must have $V=\Pf^{1,2}u$. A similar argument holds for $\Pf^{2,2}U$ to give \eqref{eq: unifatom3}.

The bounds in \eqref{eq: unifatom2} and \eqref{eq: unifatom3} prove that $\cQ^\cD_\svt{\psi,\phi}f({\mathcal{D}})\cS^\cD_{\tilde\psi,\tilde\phi}$ has a bounded extension satisfying the estimate in (1) on $t^1\oplus L^1_\mathscr{Q}$. Therefore, result (1) follows by the interpolation in Theorems \ref{Thm: tp2propertiesInterp} and \ref{Thm: LpQpropertiesInterp}.

To prove (2), note that replacing $\cD$ with $\cD^*$ in the proof of (1) shows that $\cQ_\svtt{\psi^*,\phi^*}^{\cD^*}f^*({\mathcal{D^*}}) \cS_{\tilde\psi^*,\tilde\phi^*}^{\cD^*}$ has a bounded extension $\cP_{f^*}$ satisfying the estimate in (1) on $t^1\oplus L^1_\mathscr{Q}$. The duality in Theorems \ref{Thm: tp2propertiesDuality} and \ref{Thm: LpQpropertiesDuality} then allows us to define the dual operator $\cP_{f^*}'$ satisfying the estimate in (2) on $t^\infty\oplus L^\infty_\mathscr{Q}$. We also have ${\cP_{f^*}'=\cQ^\cD_{\tilde\psi,\tilde\phi}f({\mathcal{D}})\cS^\cD_\svt{\psi,\phi}}$ on $(t^\infty \cap t^2)\oplus (L^\infty_\mathscr{Q}\cap L^2_\mathscr{Q})$, as $\cS^\cD_{\psi,\phi}=(\cQ^{\cD^*}_{\psi^*,\phi^*})^*$
on $t^2\oplus L^2_\mathscr{Q}$. Therefore, result (2) follows by the interpolation in Theorems \ref{Thm: tp2propertiesInterp} and \ref{Thm: LpQpropertiesInterp}.
\end{proof}

The remainder of this section is devoted to proving (\ref{eq: unifatom}). The proof is divided into four lemmas. We adopt the notation
\[
\cQ^\cD_\svt{\psi,\phi}f({\mathcal{D}})\cS^\cD_{\tilde\psi,\tilde\phi}
=\bigg(\begin{matrix}
\Pf^{1,1}& \Pf^{1,2} \\
\Pf^{2,1}& \Pf^{2,2}
\end{matrix}\bigg)
\]
as in the proof of Theorem~\ref{locMainEst}.

\begin{Lem}\label{LemQ11}
Under the assumptions of Theorem~\ref{locMainEst}, there exists $c>0$ such that $\|\Pf^{1,1}A\|_{t^1}\leq c\|f\|_\infty$ for all $A$ that are $t^1$-atoms.
\end{Lem}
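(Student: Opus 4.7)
The goal is to establish a uniform bound $\|\Pf^{1,1}A\|_{t^1}\lesssim\|f\|_\infty$ for every $t^1$-atom $A$ supported in $T^1(B)$ with $r(B)\leq 2$ and $\|A\|_{L^2_\bullet}\leq\mu(B)^{-1/2}$. My plan is to decompose $M=4B\cup\bigcup_{j\geq 1}C_j$ into a local piece around $B$ and a sequence of annular shells $C_j$ at dyadic distances from $B$, and to bound each $\int_{C_j}\mathcal{A}_{\text{loc}}(\Pf^{1,1}A)(x)\,d\mu(x)$ separately.

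For the local piece $4B$, Cauchy--Schwarz gives $\int_{4B}\mathcal{A}_{\text{loc}}(\Pf^{1,1}A)\,d\mu\leq \mu(4B)^{1/2}\|\Pf^{1,1}A\|_{t^2}$. The operator $\Pf^{1,1}$ is bounded on $t^2=L^2_\bullet$ with norm $\lesssim \|f\|_\infty$ by hypothesis~(H2), so $\|\Pf^{1,1}A\|_{t^2}\lesssim \|f\|_\infty\mu(B)^{-1/2}$. Since $r(B)\leq 2$, the bound $\mu(4B)\lesssim \mu(B)$ follows from \eqref{ELD} applied with $\alpha=4$, and the local contribution is controlled by $\|f\|_\infty$.

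For each shell $C_j$ with $j\geq 1$, first apply Cauchy--Schwarz with $\mu(C_j)^{1/2}$; then, using Fubini and local doubling to write $V(x,t)\eqsim V(y,t)$ when $\rho(x,y)<t\leq 1$, reduce $\|\ca_{C_j}\mathcal{A}_{\text{loc}}(\Pf^{1,1}A)\|_2$ to the $L^2_\bullet$-norm of $\Pf^{1,1}A$ restricted to $C_j^*$, the $1$-neighbourhood of $C_j$. Since $A_s$ is supported in $B$ and vanishes for $s>r(B)$, write
\[
\Pf^{1,1}A(y,t) = \int_0^{1}(\psi_t f\tilde\psi_s)(\cD)(\ca_B A_s)(y)\,\frac{\d s}{s}
\]
and invoke Proposition~\ref{ODexp5}(1) with $E=C_j^*$ and $F=B$. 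Choosing $\alpha>0$ so that $\psi\in\Psi_\alpha^\beta$ and $\tilde\psi\in\Psi_\beta^\alpha$ (possible because $\psi\in\Psi^\beta$ and $\tilde\psi\in\Psi_\beta$), together with $\sigma\in(0,\alpha)$ and $\tau\in(0,\beta)$, a Schur argument in the $(s,t)$-integrals then yields
\[
\|\ca_{C_j^*}\Pf^{1,1}A\|_{L^2_\bullet} \lesssim \|f\|_\infty\,e^{-(ar/C_\cD C_{\theta,r})\rho_j}\,\mu(B)^{-1/2},
\]
where $\rho_j = \rho(C_j^*,B)$ and $a\in(0,1)$ is arbitrary.

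The $j$-th contribution is therefore controlled by $\|f\|_\infty\bigl(\mu(C_j)/\mu(B)\bigr)^{1/2}\,e^{-(ar/C_\cD C_{\theta,r})\rho_j}$. The hypothesis $r/(C_\cD C_{\theta,r})>\lambda/2$, combined with a choice of $a\in(0,1)$ close enough to $1$, should allow the exponential off-diagonal decay to dominate the exponential volume growth furnished by \eqref{ELD}, producing a convergent geometric series in $j$. The main obstacle will be the careful bookkeeping of polynomial prefactors: the volume estimate produces a factor of order $2^{j\kappa/2}$, and Proposition~\ref{ODexp5}(1) contributes additional $\langle t/\rho_j\rangle^{\alpha+\tau}$ and $(s/t)^\tau$-type factors that must be handled both for small $r(B)$ (where the separation $\rho_j$ is small for small $j$) and large $j$; the constraint $\beta>\kappa/2$ is invoked here to absorb these polynomial contributions uniformly in $r(B)\in(0,2]$.
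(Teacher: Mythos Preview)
Your outline is close in spirit to the paper's argument, but the reduction ``pass to the $1$-neighbourhood $C_j^*$ of $C_j$ and set $E=C_j^*$, $F=B$'' is where it breaks. When $r(B)$ is small and $j$ lies in the range $2^j r(B)\lesssim 1$, the $1$-neighbourhood of $C_j$ meets $B$, so $\rho(C_j^*,B)=0$ and Proposition~\ref{ODexp5} gives you nothing: the exponential factor is $1$ and the polynomial factor $\langle t/\rho\rangle$ is $1$. You are then left summing $\bigl(\mu(C_j)/\mu(B)\bigr)^{1/2}$ over these $j$, which diverges. You flag this as ``the main obstacle'' but do not resolve it; the $1$-neighbourhood shortcut cannot be repaired by bookkeeping alone.

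The paper's fix is to decompose in $M\times(0,1]$ rather than in $M$, using differences of truncated tents $T^1(2^{k+2}B)\setminus T^1(2^{k+1}B)$, and then to split the $t$-integral at $t\sim 2^k r(B)$. For $t\leq 2^k r(B)$ one has genuine spatial separation $\rho\gtrsim 2^k r(B)$ between the support of the piece and $B$, so the polynomial factor $\langle t/\rho\rangle^{\beta+\delta}$ from Proposition~\ref{ODexp5} supplies decay $2^{-k(\beta-\kappa/2)}$ (this is where $\beta>\kappa/2$ enters). For $t>2^k r(B)$ there is no separation, but now $s\leq r(B)\ll t$ on the support of $A$, and the factor $(s/t)^{\beta-\delta}$ alone gives the needed decay in $k$. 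Only once the annuli reach unit scale does the exponential off-diagonal factor take over, and there the paper switches to an \emph{arithmetic} annular decomposition $(k+1)B^*\setminus kB^*$ (with $r(B^*)\in[4,8]$) rather than a dyadic one, since exponential decay versus exponential volume growth is naturally summed over linear steps. Your sketch can be salvaged by inserting exactly this $t$-split and two-regime annular structure; without it the inner annuli are uncontrolled.
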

\begin{proof}
Let $A$ be a $t^1$-atom. There exists a ball $B$ in $M$ with radius ${r(B)}\leq 2$ such that $A$ is supported in $T^1(B)$ and $\|A\|_{L^2_{\bullet}}\leq \mu(B)^{-1/2}$. If $r(B)> 1/2$, let $K=0$. If $r(B)\leq 1/2$, let $K$ be the positive integer such that $2^K\leq 1/r(B) < 2^{K+1}$. Next, associate $B$ with the characteristic functions $\ca_k$ defined by
\[
\ca_k=
\begin{cases}
\ca_{T^1(4B)}&{\rm if}\quad k=0;\\
\ca_{T^1(2^{k+2}B)\setminus T^1(2^{k+1}B)}& {\rm if}\quad K\geq1 \quad\textrm{and} \quad k\in\{1,\dots, K\}.
\end{cases}
\]
Also, define the ball $B^*$ with radius $r(B^*)\in[4,8]$ by $B^*=2^{K+2} B$ and associate it with the characteristic functions $\ca_k^*$ defined by
\[\ca_k^*=\ca_{T^1((k+1)B^*)\setminus T^1(k B^*)}\]
for all $k\in\N=\{1,2,...\}$.

Let $\tilde A_k=\ca_k \Pf^{1,1}A$ and $\tilde A_k^*=\ca_k^* \Pf^{1,1}A$, so we have $\supp \tilde A_k \subseteq T^{1}(2^{k+2}B)$, $\supp \tilde A^*_k\subseteq T^{1}\left((k+1)B^*\right)$ and
\[
\Pf^{1,1}A=\sum_{k=0}^{K} \tilde A_k+\sum_{k=1}^\infty \tilde A_k^*.
\]
We prove below that there exist $c>0$ and two sequences $(\lambda_k)_{k\in\{0,...,K\}}$ and $(\lambda^*_k)_{k\in\N}$ in $\ell^1$, all of which do not depend on $A$, such that the following hold:
\begin{align}
&\label{eq: normbound1} \|\tilde A_k\|_{L^2_{\bullet}}\leq c\|f\|_\infty{\lambda_k}\, {\mu\left(2^{k+2}B\right)^{-\frac{1}{2}}} \hspace{1cm}\textrm{for all}\quad k\in\{0,...,K\} ;\\
&\label{eq: normbound2} \|\tilde A^*_k\|_{L^2_{\bullet}}\leq c\|f\|_\infty{\lambda^*_k}\, {\mu\left((k+1)B^*\right)^{-\frac{1}{2}}}\quad\textrm{for all}\quad k\in\N.
\end{align}
The result then follows by Remark~\ref{RemLargeTentAtoms}.

To prove \eqref{eq: normbound1} and \eqref{eq: normbound2}, choose $\delta$ in $(0,\frac{2\beta-\kappa}{3})$ so that $\psi\in\Psi^{\beta}_{2\delta}(S_{\theta,r}^o)$ and that $\tilde{\psi}\in\Psi^{2\delta}_{\beta}(S_{\theta,r}^o)$, which is possible because $\beta>\kappa/2$. Also, choose $a$ in $(\frac{\lambda}{2} \frac{C_\cD C_{\theta,r}}{r},1)$, which is possible because $r/C_\cD C_{\theta,r} >\lambda/2$. Proposition~\ref{ODexp5} applied with $\sigma=\delta$ and $\tau=\beta-\delta$ then shows that
\begin{equation}\label{eq: off1}
\|\ca_E(\psi_tf\tilde\psi_s)(\cD)\ca_F\|\lesssim \|f\|_\infty e^{-a (r/C_\cD C_{\theta,r})\rho(E,F)}
\begin{cases}
    (\frac{s}{t})^{\beta-\delta}\langle\frac{t}{\rho(E,F)}\rangle^{\beta+\delta}
    & \!\!{\rm if}\ \ s\leq t; \\
    (\frac{t}{s})^{\delta}\langle\frac{s}{\rho(E,F)}\rangle^{\beta+\delta}
    & \!\!{\rm if}\ \ t\leq s
\end{cases}
\end{equation}
for all $s,t\in(0,1]$ and closed subsets $E$ and $F$ of $M$. Applying the Cauchy--Schwarz inequality and considering the support of $A$, we also obtain
\begin{align}\begin{split}\label{eq: off2}
|(\cP^{1,1}_fA)_t|^2 &= \left|\int^{\langle {r(B)}\rangle}_0 \min\{\tfrac{t}{s},\tfrac{s}{t}\}^{\frac{\delta}{2}} \left(\min\{\tfrac{t}{s},\tfrac{s}{t}\}^{-\frac{\delta}{2}} (\psi_tf\tilde\psi_s)(\cD)A_s\right) \frac{\d s}{s}\right|^2\\
&\lesssim \int^{\langle {r(B)}\rangle}_0\min\{\tfrac{t}{s},\tfrac{s}{t}\}^{-\delta}|(\psi_tf\tilde\psi_s)(\cD)A_s|^2\frac{\d s}{s}
\end{split}\end{align}
for all $t\in(0,1]$. We now use \eqref{eq: off1} and \eqref{eq: off2} to prove \eqref{eq: normbound1} and \eqref{eq: normbound2}:
\\

\noindent\textit{Proof of \eqref{eq: normbound1}}. The operator $\cQ^\cD_\svt{\psi,\phi}f({\mathcal{D}})\cS^\cD_{\tilde\psi,\tilde\phi}$ is bounded on $L^2_{\bullet}\oplus L^2$, so we have
\begin{equation*}
    \|\tilde A_0\|_{L^2_{\bullet}}\leq \|\Pf(A,0)\|_{L^2_{\bullet}}\lesssim \|f\|_\infty\|A\|_{L^2_{\bullet}}\lesssim \|f\|_\infty\mu(4B)^{-\frac{1}{2}}.
\end{equation*}
Suppose that $K\geq 1$ and that $k\in\{1,\dots,K\}$, which implies that ${2^kr(B)\leq1}$. Note that the support of $\tilde A_k$ is contained in $T^1(2^{k+2}B)\setminus T^1(2^{k+1}B)$. Also, if $(x,t)$ belongs to $T^1(2^{k+2}B)\setminus T^1(2^{k+1}B)$ and $t\leq2^kr(B)$, then $x$ belongs to $2^{k+2}B\setminus 2^kB$. Using \eqref{eq: off2}, we then obtain
\begin{align*}
\|\tilde A_k\|_{L^2_{\bullet}}^2&\lesssim \int^{2^k r(B)}_0 \int^{r(B)}_0\min\{\tfrac{t}{s},\tfrac{s}{t}\}^{-\delta}\|\ca_{2^{k+2}B\setminus 2^kB}(\psi_tf\tilde\psi_s)(\cD)A_s\|_2^2\frac{\d s}{s}\frac{\d t}{t}\\
&\quad+\int^{\langle2^{k+2} {r(B)}\rangle}_{2^{k}{r(B)}}\int^{r(B)}_0\min\{\tfrac{t}{s},\tfrac{s}{t}\}^{-\delta}\|(\psi_tf\tilde\psi_s)(\cD)A_s\|_2^2\frac{\d s}{s}\frac{\d t}{t}\\
&= I_1+I_2.
\end{align*}

To estimate $I_1$, note that $\rho(2^{k+2}B\setminus 2^kB,B)=(2^k-1){r(B)}\leq 1$, since we are assuming that $2^kr(B)\leq1$. Using \eqref{eq: off1} and \eqref{ELD}, we then obtain
\begin{align*}
I_1 &\lesssim \|f\|_\infty^2\int_0^{r(B)}\int^s_0\Big(\frac{t}{s}\Big)^\delta \left(\frac{s}{2^kr(B)}\right)^{2\beta+2\delta} \frac{\d t}{t}\|A_s\|_2^2\frac{\d s}{s}\\
&\quad+\|f\|_\infty^2\int^{r(B)}_0 \int^{2^kr(B)}_s\Big(\frac{s}{t}\Big)^{2\beta-3\delta} \left(\frac{t}{2^kr(B)}\right)^{2\beta+2\delta}\frac{\d t}{t}\|A_s\|_2^2\frac{\d s}{s}\\
&\lesssim \|f\|_\infty^2 (2^{-(2\beta+2\delta)k}+2^{-(2\beta-3\delta)k})\|A\|_{L^2_{\bullet}}^2\\
&\lesssim \|f\|_\infty^2 2^{-(2\beta-3\delta)k}\mu(B)^{-1}\\
&\lesssim \|f\|_\infty^2 2^{-(2\beta-\kappa-3\delta)k} e^{\lambda 2^{k+2}r(B)}\mu(2^{k+2}B)^{-1}\\
&\lesssim \|f\|_\infty^2 2^{-(2\beta-\kappa-3\delta)k} \mu(2^{k+2}B)^{-1},
\end{align*}
where $2^kr(B)\leq 1$ was used in the final inequality. We also obtain
\begin{align*}
I_2 &\leq \|f\|_\infty^2 \int^{\infty}_{2^{k} {r(B)}}\int^{r(B)}_0\left(\frac{s}{t}\right)^{2\beta-3\delta}\|A_s\|_2^2\frac{\d s}{s}\frac{\d t}{t}\\
&\lesssim \|f\|_\infty^2 \int^{r(B)}_0\left(\frac{s}{2^kr(B)}\right)^{2\beta-3\delta}\|A_s\|_2^2\frac{\d s}{s}\\
&\lesssim \|f\|_\infty^2 2^{-(2\beta-3\delta)k}\|A\|_{L^2_{\bullet}}^2\\
&\lesssim \|f\|_\infty^2 2^{-(2\beta-\kappa-3\delta)k}\mu(2^{k+2}B)^{-1}.
\end{align*}
The bounds for $I_1$ and $I_2$ show that
\[\|\tilde A_k\|_{L^2_{\bullet}} \lesssim \|f\|_\infty 2^{-(2\beta-\kappa-3\delta)k/2} \mu(2^{k+2}B)^{-\frac{1}{2}},\]
which proves (\ref{eq: normbound1}) with $\lambda_k=2^{-(2\beta-\kappa-3\delta)k/2}$, since $2\beta-\kappa-3\delta >0$.
\\

\noindent\textit{Proof of \eqref{eq: normbound2}}. Suppose that $k\in\N$. If $(x,t)$ belongs to $T^1((k+1)B^*)\setminus T^1(kB^*)$, then $x$ belongs to $(k+1)B^*\setminus (k-1/4)B^*$, since the radius $r(B^*)\in[4,8]$. Also, since $r(B)\leq r(B^*)/4$, we have
\[
\rho((k+1)B^*\setminus (k-\tfrac{1}{4})B^*, B) \geq (k-\tfrac{1}{4})r(B^*)-{r(B)} \geq \max \{1,kr(B^*)\}.
\]
Using \eqref{eq: off1}, \eqref{eq: off2} and \eqref{ELD}, we then obtain
\begin{align*}
\|\tilde A^*_k\|_{L^2_{\bullet}} &\lesssim \|f\|_\infty^2 \int^{1}_0\int^{\langle {r(B)}\rangle}_0\min\{\tfrac{t}{s},\tfrac{s}{t}\}^{-\delta}\|\ca_{(k+1)B^*\setminus (k-1/4)B^*}(\psi_tf\tilde\psi_s)(\cD)A_s\|_2^2\frac{\d s}{s}\frac{\d t}{t}\\
&\lesssim \|f\|_\infty^2 e^{-2a(r/C_\cD C_{\theta,r}) kr(B^*)}\int_0^{\langle {r(B)} \rangle}\int^s_0\Big(\frac{t}{s}\Big)^\delta s^{2\beta+2\delta} \frac{\d t}{t}\|A_s\|_2^2\frac{\d s}{s}\\
&\quad+ \|f\|_\infty^2 e^{-2a(r/C_\cD C_{\theta,r}) kr(B^*)}\int^{\langle {r(B)}\rangle}_0\int^{1}_s\left(\frac{s}{t}\right)^{2\beta-3\delta} t^{2\beta+2\delta}\frac{\d t}{t}\|A_s\|_2^2\frac{\d s}{s}\\
&\lesssim \|f\|_\infty^2 e^{-2a(r/C_\cD C_{\theta,r}) kr(B^*)} \langle {r(B)}\rangle^{2\beta-3\delta} \|A\|_{L^2_{\bullet}}^2\\
&\lesssim \|f\|_\infty^2 e^{-2a(r/C_\cD C_{\theta,r}) kr(B^*)}{r(B)}^{\kappa}\mu(B)^{-1}\\
&\lesssim \|f\|_\infty^2 e^{-(2a(r/C_\cD C_{\theta,r})-\lambda)kr(B^*)}k^\kappa\mu((k+1)B^*)^{-1}.
\end{align*}
This shows that
\[
\|\tilde A_k^*\|_{L^2_{\bullet}} \lesssim \|f\|_\infty e^{-(2a(r/C_\cD C_{\theta,r})-\lambda)k} \mu((k+1)B^*)^{-\frac{1}{2}},
\]
which proves \eqref{eq: normbound2} with $\lambda^*_k=e^{-(2a(r/C_\cD C_{\theta,r})-\lambda)k}$, since $2a(r/C_\cD C_{\theta,r})-\lambda>0$.
\end{proof}

\begin{Lem}\label{LemQ21}
Under the assumptions of Theorem~\ref{locMainEst}, there exists $c>0$ such that $\|\Pf^{2,1}A\|_{L^1_\mathscr{Q}} \leq c\|f\|_\infty$ for all $A$ that are $t^1$-atoms.
\end{Lem}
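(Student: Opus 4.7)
The plan is to decompose $\Pf^{2,1}A$ into annular pieces supported in dilates of an enlarged ball $B^*$ of radius in $(2,8]$ and show that each piece is a bounded multiple of an $L^1_{\mathscr{Q}}$-atom with summable coefficients; the conclusion then follows from the atomic characterisation in Theorem~\ref{mainLQatomic}(1). In contrast with Lemma~\ref{LemQ11}, no subdivision inside $B^*$ is needed, since an $L^1_\mathscr{Q}$-atom may be supported on any ball of radius $\geq 1$ and $B^*$ already has radius $\geq 2$. Defining $B^*$ exactly as in the proof of Lemma~\ref{LemQ11}, I set $a_0 = \ca_{B^*}\Pf^{2,1}A$ and $a_k = \ca_{(k+1)B^* \setminus kB^*}\Pf^{2,1}A$ for $k \geq 1$. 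Since $M = \bigcup_k k B^*$ on a complete Riemannian manifold, this yields $\Pf^{2,1}A = \sum_{k\geq 0}a_k$ with convergence in $L^2$ (noting that $\Pf^{2,1}$ is bounded from $L^2_\bullet$ into $L^2$).

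Fix $\tau \in (\kappa/2, \beta)$ and $a \in (\lambda C_\cD C_{\theta,r}/(2r), 1)$, both possible under the hypotheses $\beta > \kappa/2$ and $r/C_\cD C_{\theta,r} > \lambda/2$. Proposition~\ref{ODexp5}(2) gives
\[
\|\ca_E(\phi f \tilde\psi_s)(\cD)\ca_F\| \lesssim \|f\|_\infty\, s^\tau\, e^{-a(r/C_\cD C_{\theta,r})\rho(E,F)}.
\]
A short calculation using $B \subseteq B^*$ together with $r(B^*)-r(B) \geq \tfrac{3}{4}r(B^*)$ shows that $\rho((k+1)B^* \setminus kB^*, B) \geq (k-\tfrac{1}{4})r(B^*)$ for $k\geq 1$. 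Since $A_s$ is supported in $B$ for $s \leq \langle r(B)\rangle$ and $\|A\|_{L^2_\bullet} \leq \mu(B)^{-1/2}$, the off-diagonal estimate combined with Cauchy--Schwarz in $s$ yields
\[
\|a_k\|_2 \lesssim \|f\|_\infty\, e^{-a(r/C_\cD C_{\theta,r})(k-\frac{1}{4})r(B^*)}\, \langle r(B)\rangle^\tau\, \mu(B)^{-1/2}, \qquad k \geq 1,
\]
with the analogous bound for $k = 0$ upon omitting the exponential factor (take $E = F = M$).

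Setting $\lambda_k = \|a_k\|_2\,\mu((k+1)B^*)^{1/2}$ makes $a_k/\lambda_k$ an $L^1_\mathscr{Q}$-atom, so the lemma reduces via Theorem~\ref{mainLQatomic}(1) to proving $\sum_k \lambda_k \lesssim \|f\|_\infty$ uniformly in $A$. Using \eqref{ELD} to bound $\mu((k+1)B^*) \leq A(k+1)^\kappa e^{\lambda k r(B^*)}\mu(B^*)$ and Proposition~\ref{PropkappaLD} (with $b=8$) to bound $\mu(B^*) \lesssim \langle r(B)\rangle^{-\kappa}\mu(B)$, I obtain
\[
\lambda_k \lesssim \|f\|_\infty\,(k+1)^{\kappa/2}\,e^{-[a(r/C_\cD C_{\theta,r}) - \lambda/2]\,k\,r(B^*)}\,\langle r(B)\rangle^{\tau - \kappa/2},
\]
which is summable in $k$ and bounded uniformly in $B$, as $r(B^*) \geq 2$ and $r(B)\leq 2$. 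The crux of the argument lies in this last display: the exponential volume growth $e^{+\lambda k r(B^*)/2}$ from \eqref{ELD} must be defeated by the off-diagonal decay $e^{-a(r/C_\cD C_{\theta,r})k r(B^*)}$, which is precisely what the hypothesis $r/C_\cD C_{\theta,r} > \lambda/2$ delivers; secondarily, the small-ball blow-up $\langle r(B)\rangle^{-\kappa/2}$ from local doubling is absorbed by the $s^\tau$-decay of the kernel, which requires $\tau > \kappa/2$ and is thus available exactly because $\beta > \kappa/2$.
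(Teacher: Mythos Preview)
Your proof is correct and follows essentially the same approach as the paper: annular decomposition of $\Pf^{2,1}A$ around an enlarged ball $B^*$ of radius bounded between fixed positive constants, Cauchy--Schwarz in $s$ combined with the off-diagonal estimate of Proposition~\ref{ODexp5}(2), and summation via Theorem~\ref{mainLQatomic}(1) using \eqref{ELD}. The only cosmetic differences are that the paper defines its own $B^*$ (with $r(B^*)\in[2,4]$) rather than reusing the one from Lemma~\ref{LemQ11}, and applies Proposition~\ref{ODexp5}(2) with $\tau=\kappa/2$ rather than your $\tau\in(\kappa/2,\beta)$; your choice has the minor advantage of working verbatim when $\kappa=0$.
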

\begin{proof}
Let $A$ be a $t^1$-atom. There exists a ball $B$ in $M$ with radius $r(B)\leq 2$ such that $A$ is supported in $T^1(B)$ and $\|A\|_{L^2_{\bullet}}\leq \mu(B)^{-1/2}$. Define the ball $B^*$ with radius $r(B^*)\in[2,4]$ by
\[
B^*=\begin{cases}
2B &{\rm if}\quad  1< r(B)\leq 2;\\
(2/r(B))B&{\rm if}\quad r(B)\leq 1
\end{cases}
\]
and associate $B^*$ with the characteristic functions $\ca_k^*$ defined by
\[
\ca_k^*=
\begin{cases}
\ca_{2B^*}&{\rm if}\quad k=0;\\
\ca_{(k+2)B^*\setminus (k+1) B^*}& {\rm if}\quad k=1,2,\dots\,.
\end{cases}
\]

Let $\tilde A_k^*=\ca_k^* \Pf^{2,1}A$, so we have $\supp \tilde A_k^* \subseteq (k+2)B^*$ and $\Pf^{2,1}A=\sum_{k=0}^\infty \tilde A_k^*$. We prove below that there exist $c>0$ and a sequence $(\lambda_k^*)_k$ in $\ell^1$, both of which do not depend on $A$, such that
\begin{align}
&\label{eq: normbound4} \|\tilde A_k^*\|_2\leq c\|f\|_\infty{\lambda_k^*}\, {\mu\left((k+2)B^*\right)^{-\frac{1}{2}}}.
\end{align}
The result then follows from Theorem~\ref{mainLQatomic}.

To prove \eqref{eq: normbound4}, choose $a$ as in the proof of Lemma~\ref{LemQ11}. Proposition~\ref{ODexp5} applied with ${\tau=\kappa/2}$ then shows that
\[
\|\ca_E(\phi f\tilde\psi_s)(\cD)\ca_F\| \lesssim \|f\|_\infty s^{\frac{\kappa}{2}} e^{-a(r/C_\cD C_{\theta,r})\rho(E,F)}
\]
for all $s\in(0,1]$ and closed subsets $E$ and $F$ of $M$. Now note that if $k\geq0$, then
\[
\rho((k+2)B^*\backslash (k+1)B^*,B) = (k+1)r(B^*)-r(B) \geq kr(B^*).
\]
Using \eqref{ELD}, we then obtain
\begin{align*}
\|\tilde A_k^*\|_2^2 &= \int_M \ca_k^* \bigg|\int_0^{\langle r(B)\rangle} s^{\frac{\kappa}{2}} s^{-\frac{\kappa}{2}}(\phi f\tilde\psi_s)(\cD) A_s \frac{\d s}{s}\bigg|^2 \d\mu \\
&\lesssim r(B)^{\kappa} \int_0^{r(B)}s^{-\kappa} \|\ca_k^*(\phi f\tilde\psi_s)(\cD) A_s\|_2^2 \frac{\d s}{s}\\
&\leq \|f\|_\infty^2 e^{-2a(r/C_\cD C_{\theta,r}) kr(B^*)}r(B)^{\kappa} \|A\|_{L^2_\bullet}^2\\
&\leq \|f\|_\infty^2 e^{-(2a(r/C_\cD C_{\theta,r})-\lambda) kr(B^*)}k^\kappa \mu((k+2)B^*)^{-1},
\end{align*}
which proves \eqref{eq: normbound4} with $\lambda^*_k=e^{-(2a(r/C_\cD C_{\theta,r})-\lambda)k}$.
\end{proof}

\begin{Lem}\label{LemQ12}
Under the assumptions of Theorem~\ref{locMainEst}, there exists $c>0$ such that $\|\Pf^{1,2}A\|_{t^1} \leq c\|f\|_\infty$ for all $A$ that are $L^1_\mathscr{Q}$-atoms.
\end{Lem}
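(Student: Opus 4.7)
The plan is to mirror the structure of Lemma~\ref{LemQ21}: decompose $\Pf^{1,2}a$ into annular pieces adapted to concentric dilates of a fixed ball containing $\supp a$, show each piece is a constant multiple of a $t^1$-Carleson atom with coefficients forming a summable sequence, and then apply Proposition~\ref{Prop: CarlesonBoxTentAtoms}.

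First, by Remark~\ref{RemSmallL1QAtoms}, I may assume that $a$ is supported on a ball $B$ with $r(B) = 1$ and $\|a\|_2 \leq \mu(B)^{-1/2}$. Let $B^*$ be the concentric ball with $r(B^*) = 2$ and decompose $\Pf^{1,2}a = \sum_{k\geq 0} \tilde A_k^*$, where $\tilde A_0^* = \ca_{2B^*}\Pf^{1,2}a$ and $\tilde A_k^* = \ca_{(k+2)B^*\setminus(k+1)B^*}\Pf^{1,2}a$ for $k \geq 1$. Since $\Pf^{1,2}a$ is supported in $M\times (0,1]$, each $\tilde A_k^*$ is supported in the truncated Carleson box $C^1((k+2)B^*)$ over a ball of radius $\geq 2 \geq 1$, so Proposition~\ref{Prop: CarlesonBoxTentAtoms} is applicable once the $L^2_\bullet$-norm estimates are in place.

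For $k=0$, I would invoke the $L^2$-boundedness of $\Pf$ granted by (H2) together with the comments after Definition~\ref{Def: QandS}, yielding $\|\tilde A_0^*\|_{L^2_\bullet} \lesssim \|f\|_\infty \|a\|_2 \lesssim \|f\|_\infty \mu(2B^*)^{-1/2}$ after absorbing the fixed volume ratio through \eqref{ELD}. For $k\geq 1$, I would write $(\Pf^{1,2}a)_t = (\psi_t f \tilde\phi)(\cD)a$ and apply the off-diagonal bound in Proposition~\ref{ODexp5}(3) with some $\sigma \in (0,\beta)$ and some $a_0 \in (\lambda C_\cD C_{\theta,r}/(2r),\, 1)$, where the latter is available precisely because the hypothesis $r/C_\cD C_{\theta,r} > \lambda/2$ holds. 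Using $\rho((k+2)B^*\setminus(k+1)B^*,\, B) \geq k r(B^*)$ and integrating the $t^\sigma$ prefactor over $(0,1]$, I expect to arrive at $\|\tilde A_k^*\|_{L^2_\bullet}^2 \lesssim \|f\|_\infty^2 \, e^{-2 a_0 (r/C_\cD C_{\theta,r})\, k\, r(B^*)} \, \mu(B)^{-1}$.

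The last step would convert $\mu(B)^{-1}$ into $\mu((k+2)B^*)^{-1}$ by applying \eqref{ELD} with $\alpha = k+2$, at a cost of a polynomial factor $(k+2)^\kappa$ and an exponential factor $e^{\lambda k r(B^*)}$; it is here that the condition $2 a_0 r/(C_\cD C_{\theta,r}) > \lambda$ is crucial, as it guarantees that the net exponent in $k$ remains strictly negative. This produces $\|\tilde A_k^*\|_{L^2_\bullet} \lesssim \|f\|_\infty \lambda_k^* \mu((k+2)B^*)^{-1/2}$ with $\lambda_k^* = (k+2)^{\kappa/2} e^{-\alpha k}$ for some $\alpha > 0$, hence $(\lambda_k^*)_k \in \ell^1$, and Proposition~\ref{Prop: CarlesonBoxTentAtoms} yields the desired bound. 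The main technical point I expect to require care is maintaining uniformity of constants in $a$: fixing $r(B) = 1$ via Remark~\ref{RemSmallL1QAtoms} is what prevents the $e^{\lambda r(B^*)}$ factor generated by \eqref{ELD} from introducing an atom-dependent growth, exactly analogous to the role played by the upper bound $r(B) \leq 2$ for $t^1$-atoms in Lemma~\ref{LemQ11}.
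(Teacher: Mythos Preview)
Your proposal is correct and follows essentially the same strategy as the paper's proof, which also reduces to $r(B)=1$ via Remark~\ref{RemSmallL1QAtoms}, decomposes $\Pf^{1,2}A$ into annular pieces, and applies Proposition~\ref{ODexp5}(3) for the off-diagonal bound with the same choice of $a_0$. The only difference is cosmetic: the paper cuts along truncated-tent annuli $T^1((k+1)B)\setminus T^1(kB)$ and invokes Remark~\ref{RemLargeTentAtoms}, whereas you cut along spatial annuli (hence Carleson boxes) and invoke Proposition~\ref{Prop: CarlesonBoxTentAtoms}; both routes yield the same $\ell^1$ coefficient sequence.
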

\begin{proof}
Let $A$ be an $L^1_\mathscr{Q}$-atom. There exists a ball $B$ in $M$ with radius $r(B)\geq1$ such that $A$ is supported in $B$ and $\|A\|_2\leq \mu(B)^{-1/2}$. In view of Remark~\ref{RemSmallL1QAtoms} and Theorem~\ref{mainLQatomic}, however, it suffices to assume that $r(B)=1$. In that case, associate $B$ with the characteristic functions $\ca_k$ defined by
\[
\ca_k=
\begin{cases}
\ca_{T^1(B)}&{\rm if}\quad k=0;\\
\ca_{T^1((k+1)B)\setminus T^1(k B)}& {\rm if}\quad k=1,2,\dots\,.
\end{cases}
\]

Let $\tilde A_k=\ca_k\Pf^{1,2}A$, so we have $\supp \tilde A_k\subseteq T^1((k+1)B)$ and
$\Pf^{1,2}A = \sum_{k=0}^\infty{\tilde A_k}$.
We prove below that there exist $c>0$ and a sequence $(\lambda_k)_k$ in $\ell^1$, both of which do not depend on $A$, such that
\begin{align}
&\label{eq: normbound3} \|\tilde A_k\|_{L^2_{\bullet}}\leq c\|f\|_\infty{\lambda_k}\, {\mu\left((k+1)B\right)^{-\frac{1}{2}}}.
\end{align}
The result then follows by Remark~\ref{RemLargeTentAtoms}.

To prove \eqref{eq: normbound3}, choose $\delta$ and $a$ as in the proof of Lemma~\ref{LemQ11}. Proposition~\ref{ODexp5} applied with $\sigma=\delta$ then shows that
\[
\|\ca_E(\psi_t f\tilde\phi)(\cD)\ca_F\| \lesssim \|f\|_\infty t^\delta e^{-a(r/C_\cD C_{\theta,r})\rho(E,F)}
\]
for all $t\in(0,1]$ and closed subsets $E$ and $F$ of $M$. Now note that if $k\geq1$ and $(x,t)$ belongs to $T^1((k+1)B)\setminus T^1(kB)$, then $x$ belongs to $(k+1)B\setminus (k-1)B$, since $t\leq1$ and $r(B)=1$. Using \eqref{ELD}, we then obtain
\begin{align*}
\|{\tilde A_k}\|_{L^2_{\bullet}}^2&=\int^1_0 \|\ca_k(\psi_t f\tilde\phi)(\cD)A\|_2^2\frac{\d t}{t}\\
&\lesssim \|f\|_{\infty}^2e^{-2a(r/C_\cD C_{\theta,r}) k}\int^1_0t^{2\delta}\frac{\d t}{t}\|A\|_2^2 \\
&\lesssim\|f\|_{\infty}^2e^{-2a(r/C_\cD C_{\theta,r}) k}\mu(B)^{-1}\\
&\lesssim\|f\|_{\infty}^2e^{-(2a(r/C_\cD C_{\theta,r})-\lambda)k}k^{\kappa}\mu((k+1)B)^{-1} \\
&\lesssim \|f\|_\infty^2 e^{-(2a(r/C_\cD C_{\theta,r})-\lambda)k/2} \mu((k+1)B)^{-1},
\end{align*}
which proves \eqref{eq: normbound3} with $\lambda_k=e^{-(2a(r/C_\cD C_{\theta,r})-\lambda)k/4}$.
\end{proof}

\begin{Lem}\label{LemQ22}
Under the assumptions of Theorem~\ref{locMainEst}, there exists $c>0$ such that $\|\Pf^{2,2}A\|_{L^1_\mathscr{Q}} \leq c\|f\|_\infty$ for all $A$ that are $L^1_\mathscr{Q}$-atoms.
\end{Lem}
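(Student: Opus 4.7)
The plan is to follow the same template as Lemma~\ref{LemQ12}, adapted to the fact that $\cP_f^{2,2}A = \phi(\cD)f(\cD)\tilde\phi(\cD)A$ is an ordinary differential form rather than a family, so the decomposition lives on concentric balls in $M$ and the goal is an atomic $L^1_{\mathscr Q}$ bound rather than a $t^1$ bound. Let $A$ be an $L^1_\mathscr{Q}$-atom. By Remark~\ref{RemSmallL1QAtoms} (combined with Theorem~\ref{mainLQatomic}), it suffices to take $A$ supported on a ball $B$ of radius $r(B)=1$ with $\|A\|_2\le\mu(B)^{-1/2}$. Associate to $B$ the annular characteristic functions $\ca_0=\ca_{2B}$ and, for $k\ge 1$, $\ca_k=\ca_{(k+2)B\setminus (k+1)B}$, and set $\tilde A_k=\ca_k\cP_f^{2,2}A$, so that $\cP_f^{2,2}A=\sum_{k\ge0}\tilde A_k$ with $\supp\tilde A_k\subseteq (k+2)B$.

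The main step will be to verify the atomic-type bound
\[
\|\tilde A_k\|_2\le c\|f\|_\infty\lambda_k\,\mu((k+2)B)^{-\frac{1}{2}}
\]
for some summable sequence $(\lambda_k)_k$ independent of $A$. For $k=0$ this follows directly from the boundedness of $\cP_f^{2,2}$ on $L^2$ (a consequence of (H2) and Definition~\ref{Def: QandS}) combined with local doubling \eqref{LD} to pass between $\mu(B)$ and $\mu(2B)$. For $k\ge 1$, the key input is the off-diagonal estimate of Proposition~\ref{ODexp5}(4): choosing $a\in(\tfrac{\lambda}{2}\tfrac{C_\cD C_{\theta,r}}{r},1)$ (possible because $r/C_\cD C_{\theta,r}>\lambda/2$), one has
\[
\|\ca_E(\phi f\tilde\phi)(\cD)\ca_F\|\lesssim\|f\|_\infty e^{-a(r/C_\cD C_{\theta,r})\rho(E,F)}.
\]
Applied with $E=(k+2)B\setminus(k+1)B$ and $F=B$, whose distance is at least $k$, this yields $\|\tilde A_k\|_2\lesssim\|f\|_\infty e^{-a(r/C_\cD C_{\theta,r})k}\mu(B)^{-1/2}$.

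The principal obstacle — and this is exactly where \eqref{ELD} and the hypothesis $r/C_\cD C_{\theta,r}>\lambda/2$ are jointly used — is to convert $\mu(B)^{-1/2}$ into $\mu((k+2)B)^{-1/2}$ without sacrificing the exponential decay. Applying \eqref{ELD} with $\alpha=k+2$ (and $r(B)=1$) produces $\mu(B)^{-1/2}\lesssim(k+2)^{\kappa/2}e^{\lambda(k+1)/2}\mu((k+2)B)^{-1/2}$, so combining the two estimates,
\[
\|\tilde A_k\|_2\lesssim\|f\|_\infty (k+2)^{\kappa/2}e^{-(a(r/C_\cD C_{\theta,r})-\lambda/2)k}\mu((k+2)B)^{-\frac{1}{2}}.
\]
By the choice of $a$ the exponent is strictly negative, so $\lambda_k:=(k+2)^{\kappa/2}e^{-(a(r/C_\cD C_{\theta,r})-\lambda/2)k}\in\ell^1$.

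To finish, observe that each $\tilde A_k/(c\|f\|_\infty\lambda_k)$ is an $L^1_\mathscr{Q}$-atom (supported on $(k+2)B$ of radius $\ge 1$ with $L^2$ norm at most $\mu((k+2)B)^{-1/2}$), so $\cP_f^{2,2}A=\sum_k\tilde A_k$ is, up to the factor $c\|f\|_\infty\sum_k\lambda_k\lesssim\|f\|_\infty$, an atomic combination. Theorem~\ref{mainLQatomic}(1) then delivers $\|\cP_f^{2,2}A\|_{L^1_\mathscr{Q}}\le c\|f\|_\infty$, as required. No new technical ingredient beyond Lemmas~\ref{LemQ11}--\ref{LemQ12} is needed; the argument is in fact the cleanest of the four because neither of the $\psi,\tilde\psi$ factors appears, so no Cauchy--Schwarz trick in $s$ or $t$ is necessary.
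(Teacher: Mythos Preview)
Your proof is correct and follows essentially the same approach as the paper: the same reduction to $r(B)=1$, the same annular decomposition $\ca_0=\ca_{2B}$, $\ca_k=\ca_{(k+2)B\setminus(k+1)B}$, the same off-diagonal estimate from Proposition~\ref{ODexp5}(4) with the same choice of $a$, and the same use of \eqref{ELD} to convert $\mu(B)^{-1/2}$ to $\mu((k+2)B)^{-1/2}$. The only cosmetic difference is that the paper absorbs the polynomial factor $k^{\kappa/2}$ into the exponential when writing the final $\ell^1$ sequence, but this is immaterial.
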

\begin{proof}
Let $A$ be an $L^1_\mathscr{Q}$-atom. As in the proof of Lemma~\ref{LemQ12}, it suffices to assume that there exists a ball $B$ in $M$ with radius $r(B)=1$ such that $A$ is supported in $B$ and $\|A\|_2\leq \mu(B)^{-1/2}$. Associate $B$ with the characteristic functions $\ca_k$ defined by
\[
\ca_k=
\begin{cases}
\ca_{2B}&{\rm if}\quad k=0;\\
\ca_{(k+2)B\setminus (k+1) B}& {\rm if}\quad k=1,2,\dots\,.
\end{cases}
\]

Let $\tilde A_k=\ca_k \Pf^{2,2}A$, so we have $\supp \tilde A_k \subseteq (k+2)B$ and
$\Pf^{2,2}A=\sum_{k=0}^\infty \tilde A_k$. As in the proof of Lemma~\ref{LemQ21}, it is enough to find $c>0$ and a sequence $(\lambda_k)_k$ in~$\ell^1$, both of which do not depend on $A$, such that
\begin{align}
&\label{eq: normbound5} \|\tilde A_k\|_2\leq c\|f\|_\infty{\lambda_k}\, {\mu\left((k+2)B\right)^{-\frac{1}{2}}}.
\end{align}

Choose $a$ as in the proof of Lemma~\ref{LemQ11}. Using Proposition~\ref{ODexp5} and \eqref{ELD}, we then obtain
\[
\|\tilde A_k\|_2 \lesssim \|f\|_\infty e^{-a(r/C_\cD C_{\theta,r}) k}\|A\|_2 \lesssim \|f\|_\infty e^{-(2a(r/C_\cD C_{\theta,r})-\lambda)k/2} k^{\kappa/2} \mu((k+2)B)^{-\frac{1}{2}},
\]
which proves \eqref{eq: normbound5} with $\lambda_k=e^{-(2a(r/C_\cD C_{\theta,r})-\lambda)k/4}$.
\end{proof}

\section{Local Hardy Spaces $h^p_\cD(\wedge T^*M)$}\label{Sectionh^1defintion}
Throughout this section, let $\kappa,\lambda\geq0$ and suppose that $M$ is a complete Riemannian manifold satisfying \eqref{ELD}. Also, let $\omega\in[0,\pi/2)$ and $R\geq0$ and suppose that $\cD$ is a closed densely-defined operator on $L^2(\wedge T^*M)$ of $\text{type }S_{\omega,R}$ satisfying hypotheses (H1-3) from Section~\ref{SectionODE} with constants $C_{\theta,r}>0$ and $C_\cD>0$, where $C_{\theta,r}$ is defined for each $\theta\in(\omega,\pi/2)$ and $r>R$.

The $\Phi$-class of holomorphic functions is introduced to prove a variant of the Calder\'{o}n reproducing formula. This allows us to characterise $L^2(\wedge T^*M)$ in terms of square functions involving the operators $\cQ^\cD_{\psi,\phi}$ and $\cS^\cD_{\psi,\phi}$ from the previous section, where $\phi$ is restricted to the $\Phi$-class. We combine this with Theorem~\ref{locMainEst} to define local Hardy spaces of differential forms $h^p_\cD(\wedge T^*M)$ for all $p\in[1,\infty]$ in terms of square functions and a retraction on the space $t^{p}(\wedge T^*M\times(0,1])\oplus L^p_{\mathscr{Q}}(\wedge T^*M)$. In what follows, we only consider spaces of differential forms and usually omit writing $\wedge T^*M$ and $\wedge T^*M\times(0,1]$.

\begin{Def}\label{DefPhiClass}
Given $\theta\in(0,\pi/2)$, $r>0$ and $\beta>0$, define $\Phi^\beta(S_{\theta,r}^o)$ to be the set of all $\phi\in\Theta^\beta(S_{\theta,r}^o)$ with the following properties:
\begin{enumerate}
\item\label{Phi1} For all $z$ in $S_{\theta,r}^o$, $\phi(z)\neq0$;
\item\label{Phi2} $\inf_{z\in D_r^o}|\phi(z)|\neq0$;
\item\label{Phi3} $\sup_{t\geq1}|\phi_t(z)| \lesssim |\phi(z)|$ for all $z$ in $S_{\theta,r}^o\setminus D_r$.
\end{enumerate}
Also, let $\Phi(S_{\theta,r}^o)=\bigcup_{\beta>0} \Phi^\beta(S_{\theta,r}^o)$.
\end{Def}

For example, if $\theta\in(0,\pi/2)$, $0<r<\sqrt{a}$ and $\beta>0$, then the functions $e^{-\sqrt{z^2+a}}$, $e^{-z^2}$ and $(z^2+a)^{-\beta}$ are in $\Phi^\beta(S_{\theta,r}^o)$. We now require the following version of the Calder\'{o}n reproducing formula.

\begin{Prop}[Calder\'{o}n reproducing formula] \label{sq=I}
Let $\theta\in(\omega,\pi/2)$ and $r>R$. Given $\alpha,\beta,\gamma,\sigma,\tau,\upsilon>0$ and nondegenerate $\psi\in\Psi_\alpha^\beta(S_{\theta,r}^o)$ and $\phi\in\Phi^\gamma(S_{\theta,r}^o)$, there exist $\tilde\psi\in\Psi_\sigma^\tau(S_{\theta,r}^o)$ and $\tilde\phi\in\Theta^\upsilon(S_{\theta,r}^o)$ such that
\begin{equation}\label{sectoridentity}
\int_0^1\tilde\psi_{t}(z)\psi_{t}(z)\frac{\d t}{t} + \tilde\phi(z)\phi(z)=1
\end{equation}
for all $z\in S_{\theta,r}^o$. Moreover, we have $\cS^\cD_\svt{{\psi,\phi}}\cQ^\cD_{\tilde\psi,\tilde\phi} =\cS^\cD_{\tilde\psi,\tilde\phi}\cQ^\cD_\svt{\psi,\phi}=I$ on $L^2$.
\end{Prop}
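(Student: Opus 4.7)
The proof proceeds in two stages. First, scalar holomorphic functions $\tilde\psi, \tilde\phi$ satisfying the pointwise identity~\eqref{sectoridentity} are constructed; second, this scalar identity is lifted to the operator identity on $L^2$ using the bounded $H^\infty(S_{\theta,r}^o)$ functional calculus of $\cD$ granted by~(H2).

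The scalar construction rests on a Calder\'on-type normalisation on the sector part of $S_{\theta,r}^o$. Using the nondegeneracy of $\psi$ on each of the two components of $S_\theta^o$, one produces $\tilde\psi \in \Psi_\sigma^\tau(S_{\theta,r}^o)$ whose homogeneous Mellin-type integral satisfies
\[
\int_0^\infty \tilde\psi(tz)\psi(tz)\,\frac{\d t}{t} = 1 \qquad \text{for all } z \in S_\theta^o.
\]
Concretely, $\tilde\psi$ is taken as a two-parameter linear combination of prescribed prototypes in $\Psi_\sigma^\tau(S_{\theta,r}^o)$; the resulting $2\times 2$ system of Mellin-type integrals over the two components of $S_\theta^o$ is nonsingular by the nondegeneracy hypothesis, and the normalising coefficients can be solved for. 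One then defines
\[
\tilde\phi(z) = \frac{1 - \int_0^1 \tilde\psi_t(z)\psi_t(z)\,\frac{\d t}{t}}{\phi(z)} \qquad (z \in S_{\theta,r}^o),
\]
which is holomorphic on $S_{\theta,r}^o$ by property~(\ref{Phi1}) and satisfies~\eqref{sectoridentity} by construction.

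Verifying $\tilde\phi \in \Theta^\upsilon(S_{\theta,r}^o)$ is the main technical obstacle, and it splits into two regimes. On $D_r^o$ the numerator of $\tilde\phi$ is uniformly bounded by $\Psi$-type size estimates for the integrand, and property~(\ref{Phi2}) provides a strict positive lower bound for $|\phi|$; hence $\tilde\phi$ is bounded on $D_r^o$ and thus trivially in $\Theta^\upsilon$ there. On $S_\theta^o \setminus D_r$ the Calder\'on normalisation rewrites the numerator as the tail $\int_1^\infty \tilde\psi_t(z)\psi_t(z)\,\d t/t$, which must then be balanced against $|\phi(z)|^{-1}$ by means of property~(\ref{Phi3}), which controls the growth of $1/|\phi|$ along rays. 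Because $\Psi_\sigma^\tau$ only upper-bounds the decay of $\tilde\psi$ at infinity, one is free to build in as much extra decay in $\tilde\psi$ as is needed to match $\phi$'s behaviour, and so in particular to handle the case where $\phi$ decays exponentially. This balancing of the tail against $1/|\phi|$ is where the analysis concentrates.

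For the operator identity, pass the scalar identity~\eqref{sectoridentity} through the functional calculus provided by~(H2). The integral $\int_0^1 \tilde\psi_t(\cD)\psi_t(\cD)u\,\d t/t$ is interpreted as a Bochner integral in $L^2$ that converges for all $u \in L^2$ by the operator-valued $\Psi$-type bounds implicit in~(H2). Consequently,
\[
u = \int_0^1 \tilde\psi_t(\cD)\psi_t(\cD)u\,\frac{\d t}{t} + \tilde\phi(\cD)\phi(\cD)u = \cS^\cD_{\tilde\psi,\tilde\phi}\cQ^\cD_{\psi,\phi}\,u
\]
for all $u \in L^2$, directly from Definition~\ref{Def: QandS}. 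The composition $\cS^\cD_{\psi,\phi}\cQ^\cD_{\tilde\psi,\tilde\phi} = I$ follows by the same argument with the roles of $(\psi,\phi)$ and $(\tilde\psi,\tilde\phi)$ interchanged, using that all the operators $\psi_t(\cD), \tilde\psi_t(\cD), \phi(\cD), \tilde\phi(\cD)$ commute within the $H^\infty$ calculus.
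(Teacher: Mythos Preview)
Your overall architecture matches the paper's: normalise $\tilde\psi$ so that $\int_0^\infty\tilde\psi_t\psi_t\,\d t/t=1$ on $S_\theta^o$, set $\tilde\phi=(1-\int_0^1\tilde\psi_t\psi_t\,\d t/t)/\phi$, and verify decay; the passage to the operator identity via (H2) is also fine. But your construction of $\tilde\psi$ misses the mechanism that makes the decay of $\tilde\phi$ work. The paper does not use generic prototypes; it takes $\tilde\psi = c^{-1}\psi^{M-1}(\psi^*\psi_{-}\psi_{-}^*)^{M}(\phi\,\phi^*\phi_{-}\phi_{-}^*)^{N}$ for suitable integers $M,N$, so that on $\R\setminus\{0\}$ one has $\tilde\psi\psi=c^{-1}|\psi\,\psi_{-}|^{2M}|\phi\,\phi_{-}|^{2N}\ge 0$. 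The Calder\'on normalisation then holds automatically on both real half-lines and extends to $S_\theta^o$ by holomorphy, bypassing your $2\times 2$ system (whose nonsingularity, incidentally, does not follow from nondegeneracy of $\psi$ alone --- it depends on the prototypes).

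The essential point is that the paper's $\tilde\psi$ contains $\phi$ as an explicit factor. Property~(\ref{Phi3}) asserts $|\phi(tz)|\lesssim|\phi(z)|$ for $t\ge 1$; it gives no bound on $1/|\phi(z)|$ and so cannot ``control the growth of $1/|\phi|$ along rays'' as you describe. Its actual role is that, since $\tilde\psi_t(z)\psi_t(z)$ carries the factor $\phi_t(z)=\phi(tz)$, one can peel it off in the tail and cancel the denominator:
\[
|\tilde\phi(z)|=\frac{1}{|\phi(z)|}\Bigl|\int_1^\infty\tilde\psi_t(z)\psi_t(z)\,\frac{\d t}{t}\Bigr|
\ \lesssim\ \sup_{t\ge 1}\frac{|\phi_t(z)|}{|\phi(z)|}\int_1^\infty(t|z|)^{-4M\beta-(4N-1)\gamma}\,\frac{\d t}{t}
\ \lesssim\ |z|^{-\upsilon},
\]
the supremum being finite precisely by~(\ref{Phi3}). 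With $\tilde\psi$ built from prototypes unrelated to $\phi$ this cancellation is unavailable: for instance when $\phi(z)=e^{-z^2}\in\Phi^\gamma$, the factor $1/|\phi(z)|$ grows like $e^{|z|^2}$ along the real axis, and no fixed polynomial or exponential decay profile in the prototypes can match it. The remedy is not to ``build in extra decay'' generically, but to include $\phi$ itself as a factor of $\tilde\psi$.
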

\begin{proof}
Given $f\in H^\infty(S_{\theta,r}^o)$, let $f_{-}(z)=f(-z)$ and $f^*(z)=\overline{f(\bar z)}$ for all $z\in S_{\theta,r}^o$.
Choose integers $M$ and $N$ so that $4M\geq\max(\frac{\sigma}{\alpha},\frac{\tau}{\beta})+1$ and $4M\beta+(4N-1)\gamma\geq \upsilon$. Let $c=\int^\infty_0|\psi(t)\psi(-t)|^{2M}|\phi(t)\phi(-t)|^{2N}\ \frac{\d t}{t}$ and define the functions
\[
\tilde{\psi}= c^{-1}\psi^{M-1}(\psi^*\psi_{-}\psi_{-}^*)^{M}(\phi\phi^*\phi_{-}\phi_{-}^*)^N
\quad\text{and}\quad
\tilde\phi= \frac{1}{\phi}\left(1-\int_0^1 \tilde\psi_{t}\psi_{t}\frac{\d t}{t}\right),
\]
in which case \eqref{sectoridentity} is immediate and $\tilde{\psi}\in\Psi_{\alpha(4M-1)}^{\beta(4M-1)}(S_{\theta,r}^o) \subseteq\Psi_\sigma^\tau(S_{\theta,r}^o)$. The function $\tilde\phi$ is holomorphic on $S_{\theta,r}^o$ by Morera's Theorem, since $\phi(z)\neq 0$ for all $z\in S_{\theta,r}^o$, and bounded on $D^o_r$, since $\inf_{z\in D_r^o}|\phi(z)|\neq 0$. A change of variable shows that
$
\int^\infty_0\tilde\psi_t(x){\psi}_t(x)\frac{\d t}{t}=1
$
for all $x\in\R\setminus\{0\}$, and since $z\mapsto \int^\infty_0\tilde\psi_t(z){\psi}_t(z)\frac{\d t}{t}$ is holomorphic on $S_\theta^o$, we must have
$
\int^\infty_0\tilde\psi_{t}(z)\psi_{t}(z)\frac{\d t}{t}=1
$
for all $z\in S_{\theta}^o$. It then follows from property \eqref{Phi3} in Definition~\ref{DefPhiClass} that
\begin{align*}
|\tilde\phi(z)| \lesssim \sup_{t\geq1}\frac{|\phi_t(z)| }{|\phi(z)|} \int_1^\infty (t|z|)^{-4M\beta-(4N-1)\gamma} \frac{\d t}{t}
\lesssim |z|^{-\upsilon}
\end{align*}
for all $z\in S^o_\theta$, so $\tilde\phi\in \Theta^{\upsilon}(S_{\theta,r}^o)$.

The last part of the proposition follows from holomorphic functional calculus, since $\cD$ satisfies (H1) and (H2). Further details are in Lemma~2.9 of \cite{Morris2010(1)}.
\end{proof}

Given $\psi\in\Psi(S_{\theta,r}^o)$ and $\phi\in\Phi(S_{\theta,r}^o)$, since $\cD$ satisfies (H1) and (H2), the main result of \cite{Morris2010(1)} shows that the local quadratic estimate
\begin{equation}\label{eq: h2equiv1}
\|u\|_2\eqsim \|\cQ^\cD_{\psi,\phi}u\|_{L^2_\bullet\oplus L^2}
\end{equation}
holds for all $u\in L^2$. There also exists $\tilde\psi\in\Psi(S_{\theta,r}^o)$ and $\tilde\phi\in\Theta(S_{\theta,r}^o)$ such that $\cS^\cD_\svt{{\psi,\phi}}\cQ^\cD_{\tilde\psi,\tilde\phi}=I$ on $L^2$ by Proposition~\ref{sq=I}. This shows that $L^2=\cS_{\psi,\phi}^\cD(L^2_\bullet\oplus L^2)$ with
\begin{equation}\label{eq: h2equiv2}
\|u\|_2\eqsim\inf\{\|U\|_{L^2_\bullet\oplus L^2} : U\in L^2_\bullet\oplus L^2 \text{ and } u=\cS^\cD_{\psi,\phi}U\}
\end{equation}
for all $u\in L^2$, since both $\cS^\cD_\svt{{\psi,\phi}}$ and $\cQ^\cD_{\tilde\psi,\tilde\phi}$ are bounded operators. These characterisations of $L^2$ help to motivate our definition of the local Hardy spaces. In particular, we define $h^p_\cD$ by replacing $L^2_\bullet\oplus L^2$ with $t^p\oplus L^p_\mathscr{Q}$ in \eqref{eq: h2equiv1} and \eqref{eq: h2equiv2}, and suitably extending the operators $\cQ^\cD_{\psi,\phi}$ and $\cS^\cD_{\psi,\phi}$.

There is a fundamental difference here from the Hardy spaces $H^p_D$ in \cite{AMcR}. The reproducing formula used to define $H^p_D$ is based on selecting $\psi$ and $\tilde\psi$ in $\Psi(S_\theta^o)$ such that $\int_0^\infty\tilde\psi_{t}(z)\psi_{t}(z)\frac{\d t}{t}=1$ for all $z\in S_\theta^o$. The decay of the $\Psi(S_\theta^o)\text{-class}$ functions near the origin implies that $\int_0^\infty\tilde\psi_{t}(D)\psi_{t}(D)\frac{\d t}{t}=\mathsf{P}_{\overline{\Ran(D)}}$, where $\mathsf{P}_{\overline{\Ran(D)}}$ denotes the projection onto the closure of $\Ran(D)$, as given by the Hodge decomposition ${L^2=\overline{\Ran(D)}\oplus\Nul(D)}$. This leads the authors of \cite{AMcR} to define $H^2_D$ to be $\overline{\Ran(D)}$. Identity \eqref{sectoridentity}, by contrast, holds on a neighbourhood $D_r^o$ of the origin as well as on the bisector $S_\theta^o$, and since the $\Phi$-class functions are nonzero at the origin, we get $\cS^\cD_{\tilde\psi,\tilde\phi}\cQ^\cD_\svt{\psi,\phi}=I$ on all of $L^2$. The local Hardy spaces are therefore not subject to the null space considerations that one encounters with the Hardy spaces. In fact, we show that $h^2_\cD$ can be identified with $L^2$.

We now define an ambient space $h^0_\cD$ in order to have $h^p_\cD\subseteq h^0_\cD$ for all $p\in[1,\infty]$. This requires that we recall the results concerning the spaces $t^1+\tilde t^\infty$ and $L^1_{\mathscr Q}+\tilde L^\infty_{\mathscr Q}$ in Corollaries~\ref{Cor: tp2Interp} and \ref{Cor: LpQInterp}.

\begin{Def}\label{Def: AmbientSpace}
Let $\theta\in(\omega,\pi/2)$, $r>R$ and $\beta>{\kappa}/{2}$ such that $r/C_\cD C_{\theta,r}>{\lambda}/{2}$. Fix $\eta\in\Psi_\beta^\beta(S^o_{\theta,r})$ and $\varphi\in\Phi^\beta(S^o_{\theta,r})$ satisfying
\[
\int_0^1\eta_{t}^2(z)\frac{\d t}{t} + \varphi^2(z)=1
\]
for all $z\in S_{\theta,r}^o$. The ambient space $h_\cD^0$ is defined to be the abstract completion of $L^2$ under the norm defined by
\[
\|u\|_{h^0_\cD} = \|\cQ^\cD_{\eta,\varphi} u\|_{(t^1+\tilde t^\infty) \oplus (L^1_{\mathscr Q}+\tilde L^\infty_{\mathscr Q})}
\]
for all $u\in L^2$. This provides an identification of $L^2$ with a dense subspace of $h_\cD^0$. The functions $\eta$ and $\varphi$ remain fixed for the remainder of this section.
\end{Def}

To check that $\|\cdot\|_{h^0_\cD}$ is a norm on $L^2$, suppose that $\|u\|_{h^0_\cD}=0$ for some $u\in L^2$. It follows that $\cQ^\cD_{\eta,\varphi} u=0$, and since $\cQ^\cD_{\eta,\varphi} u\in L^2_\bullet\oplus L^2$, the equivalence in \eqref{eq: h2equiv1} guarantees that $u=0$, as required.

The following result allows us to define the local Hardy spaces.
\begin{Prop}\label{Prop: QandSext}
The operators $\cQ^\cD_{\eta,\varphi}$ and $\cS^\cD_{\eta,\varphi}$ have bounded extensions
\[
\tilde\cQ^\cD_{\eta,\varphi}:h^0_\cD\rightarrow(t^1+\tilde t^\infty) \oplus (L^1_{\mathscr Q}+\tilde L^\infty_{\mathscr Q})
\]
and
\[
\tilde\cS^\cD_{\eta,\varphi}:(t^1+\tilde t^\infty) \oplus (L^1_{\mathscr Q}+\tilde L^\infty_{\mathscr Q})\rightarrow h^0_\cD
\]
such that $\tilde\cS^\cD_{\eta,\varphi}\tilde\cQ^\cD_{\eta,\varphi}=I$ on $h^0_\cD$, the restriction $\tilde\cQ^\cD_{\eta,\varphi}\tilde\cS^\cD_{\eta,\varphi}:t^p\oplus L^p_{\mathscr Q}\rightarrow t^p\oplus L^p_{\mathscr Q}$ is bounded for each $p\in[1,\infty)$, and the restriction $\tilde\cQ^\cD_{\eta,\varphi}\tilde\cS^\cD_{\eta,\varphi}: \tilde t^\infty\oplus \tilde L^\infty_{\mathscr Q}\rightarrow \tilde t^\infty\oplus \tilde L^\infty_{\mathscr Q}$ is bounded.
\end{Prop}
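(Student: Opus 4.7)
The plan is to reduce every claim to Theorem~\ref{locMainEst} applied with $f\equiv 1$, $\psi=\tilde\psi=\eta$, and $\phi=\tilde\phi=\varphi$, together with the density results in Corollaries~\ref{Cor: tp2Interp} and~\ref{Cor: LpQInterp}. Since $\eta\in\Psi^{\beta}(S^o_{\theta,r})\cap\Psi_{\beta}(S^o_{\theta,r})$ and $\varphi\in\Theta^{\beta}(S^o_{\theta,r})\cap\Theta(S^o_{\theta,r})$, both parts of Theorem~\ref{locMainEst} apply to $\cQ^\cD_{\eta,\varphi}\cS^\cD_{\eta,\varphi}$ and yield bounded extensions to $t^p\oplus L^p_\mathscr{Q}$ for every $p\in[1,\infty]$. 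These extensions are all consistent, since they coincide with the original operator on the common dense subspace $t^2\oplus L^2$, and they patch together into a bounded operator on the sum space $(t^1+\tilde t^\infty)\oplus(L^1_\mathscr{Q}+\tilde L^\infty_\mathscr{Q})$.

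The existence of $\tilde\cQ^\cD_{\eta,\varphi}$ is immediate from Definition~\ref{Def: AmbientSpace}: $\cQ^\cD_{\eta,\varphi}$ is, by construction, an isometric embedding of the dense subspace $(L^2,\|\cdot\|_{h^0_\cD})$ into $(t^1+\tilde t^\infty)\oplus(L^1_\mathscr{Q}+\tilde L^\infty_\mathscr{Q})$, so it extends by continuity to an isometry $\tilde\cQ^\cD_{\eta,\varphi}$ on $h^0_\cD$. For $\tilde\cS^\cD_{\eta,\varphi}$, note that $t^2\oplus L^2$ is dense in $(t^1+\tilde t^\infty)\oplus(L^1_\mathscr{Q}+\tilde L^\infty_\mathscr{Q})$ by Corollaries~\ref{Cor: tp2Interp} and~\ref{Cor: LpQInterp}, and that $\cS^\cD_{\eta,\varphi}$ maps $t^2\oplus L^2$ into $L^2\subseteq h^0_\cD$; for each $V\in t^2\oplus L^2$ the definition of the ambient norm gives
\[
\|\cS^\cD_{\eta,\varphi}V\|_{h^0_\cD}=\|\cQ^\cD_{\eta,\varphi}\cS^\cD_{\eta,\varphi}V\|_{(t^1+\tilde t^\infty)\oplus(L^1_\mathscr{Q}+\tilde L^\infty_\mathscr{Q})}\lesssim\|V\|_{(t^1+\tilde t^\infty)\oplus(L^1_\mathscr{Q}+\tilde L^\infty_\mathscr{Q})},
\]
where the inequality comes from the preceding paragraph. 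A density argument then produces the desired bounded extension $\tilde\cS^\cD_{\eta,\varphi}$.

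The identity $\tilde\cS^\cD_{\eta,\varphi}\tilde\cQ^\cD_{\eta,\varphi}=I$ on $h^0_\cD$ is verified by continuity from the identity on the dense subspace $L^2$, where it follows from Proposition~\ref{sq=I} applied to the defining relation $\int_0^1\eta^2_t(z)\tfrac{\d t}{t}+\varphi^2(z)=1$. For $p\in[1,\infty)$, the boundedness of $\tilde\cQ^\cD_{\eta,\varphi}\tilde\cS^\cD_{\eta,\varphi}$ on $t^p\oplus L^p_\mathscr{Q}$ is the very statement obtained from Theorem~\ref{locMainEst} (combined with the continuous inclusion $t^p\oplus L^p_\mathscr{Q}\hookrightarrow(t^1+\tilde t^\infty)\oplus(L^1_\mathscr{Q}+\tilde L^\infty_\mathscr{Q})$). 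For the restriction to $\tilde t^\infty\oplus\tilde L^\infty_\mathscr{Q}$, the bounds on both $t^1\oplus L^1_\mathscr{Q}$ and $t^\infty\oplus L^\infty_\mathscr{Q}$ imply that $\cQ^\cD_{\eta,\varphi}\cS^\cD_{\eta,\varphi}$ maps $(t^1\cap t^\infty)\oplus(L^1_\mathscr{Q}\cap L^\infty_\mathscr{Q})$ into itself and hence into $\tilde t^\infty\oplus\tilde L^\infty_\mathscr{Q}$; density (Corollaries~\ref{Cor: tp2Interp} and~\ref{Cor: LpQInterp}) then delivers the required bounded invariance. The main analytic work has already been carried out in Theorem~\ref{locMainEst}; the principal obstacle that remains here is verifying the consistency of the various extensions and correctly tracking the density subspaces, which is standard since every extension is obtained from the same dense subspace $t^2\oplus L^2$.
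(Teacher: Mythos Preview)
Your proposal is correct and follows essentially the same approach as the paper: both reduce everything to Theorem~\ref{locMainEst} with $f\equiv 1$ and $\psi=\tilde\psi=\eta$, $\phi=\tilde\phi=\varphi$, and both use density of $t^2\oplus L^2$ in the sum space (Corollaries~\ref{Cor: tp2Interp} and~\ref{Cor: LpQInterp}) to extend $\cS^\cD_{\eta,\varphi}$ and to verify $\tilde\cS\tilde\cQ=I$. The one place the paper is more explicit than you is the claim that the restriction of $\tilde\cQ^\cD_{\eta,\varphi}\tilde\cS^\cD_{\eta,\varphi}$ to $t^p\oplus L^p_{\mathscr Q}$ is bounded there: Theorem~\ref{locMainEst} gives a bounded extension $\cP$ of $\cQ\cS$ to $t^p\oplus L^p_{\mathscr Q}$, but $\tilde\cQ\tilde\cS$ is a different composition (through $h^0_\cD$), so one must check that $\tilde\cQ\tilde\cS=\cP$ on $t^p\oplus L^p_{\mathscr Q}$; the paper does this via a short limiting argument using the embedding $t^p\oplus L^p_{\mathscr Q}\subseteq (t^1+\tilde t^\infty)\oplus(L^1_{\mathscr Q}+\tilde L^\infty_{\mathscr Q})$, which is precisely the ``consistency'' you flag as standard in your final sentence.
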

\begin{proof}
We immediately have
\[
\|\cQ^\cD_{\eta,\varphi} u\|_{(t^1+\tilde t^\infty) \oplus (L^1_{\mathscr Q}+\tilde L^\infty_{\mathscr Q})}
= \|u\|_{h^0_\cD}
\]
for all $u\in L^2$, and since $L^2$ is identified with a dense subspace of $h^0_\cD$, the bounded extension $\tilde\cQ^\cD_{\eta,\varphi}$ exists.

It follows from Theorem~\ref{locMainEst} that  $\cQ^\cD_{\eta,\varphi}\cS^\cD_{\eta,\varphi}$ has a bounded extension from $t^p\oplus L_{\mathscr Q}^p$ to $t^p\oplus L_{\mathscr Q}^p$ for each $p\in[1,\infty]$, and hence from $\tilde t^\infty\oplus \tilde L_{\mathscr Q}^\infty$ to $\tilde t^\infty\oplus \tilde L_{\mathscr Q}^\infty$ as well. Moreover, the extensions coincide with a single bounded operator
\[
\cP:(t^1+\tilde t^\infty) \oplus (L^1_{\mathscr Q}+\tilde L^\infty_{\mathscr Q}) \rightarrow (t^1+\tilde t^\infty) \oplus (L^1_{\mathscr Q}+\tilde L^\infty_{\mathscr Q})
\]
such that the restriction of $\cP$ to $t^p\oplus L_{\mathscr Q}^p$ coincides with the extension of $\cQ^\cD_{\eta,\varphi}\cS^\cD_{\eta,\varphi}$ to  $t^p\oplus L_{\mathscr Q}^p$ for each $p\in[1,\infty)$, and the restriction of $\cP$ to $\tilde t^\infty\oplus \tilde L_{\mathscr Q}^\infty$ coincides with the extension of $\cQ^\cD_{\eta,\varphi}\cS^\cD_{\eta,\varphi}$ to $\tilde t^\infty\oplus \tilde L_{\mathscr Q}^\infty$. Therefore, we have
\[
\|\cS^\cD_{\eta,\varphi} U\|_{h^0_\cD} =
\|\cP U\|_{(t^1+\tilde t^\infty) \oplus (L^1_{\mathscr Q}+\tilde L^\infty_{\mathscr Q})}
\lesssim \|U\|_{(t^1+\tilde t^\infty) \oplus (L^1_{\mathscr Q}+\tilde L^\infty_{\mathscr Q})}
\]
for all $U\in t^2\oplus L^2_\mathscr{Q}$, and since $t^2\oplus L^2_\mathscr{Q}$ is dense in $(t^1+\tilde t^\infty) \oplus (L^1_{\mathscr Q}+\tilde L^\infty_{\mathscr Q})$ by Corollaries~\ref{Cor: tp2Interp} and \ref{Cor: LpQInterp}, the bounded extension $\tilde\cS^\cD_{\eta,\varphi}$ exists.

It follows that $\tilde\cS^\cD_{\eta,\varphi}\tilde\cQ^\cD_{\eta,\varphi}$ is bounded on $h^0_\cD$. The formula $\cS^\cD_{\eta,\varphi}\cQ^\cD_{\eta,\varphi}=I$ holds on~$L^2$ by Proposition~\ref{sq=I}, so by density $\tilde\cS^\cD_{\eta,\varphi}\tilde\cQ^\cD_{\eta,\varphi}=I$ on $h^0_\cD$.

It also follows that $\tilde\cQ^\cD_{\eta,\varphi}\tilde\cS^\cD_{\eta,\varphi}$ is bounded on $(t^1+\tilde t^\infty) \oplus (L^1_{\mathscr Q}+\tilde L^\infty_{\mathscr Q})$, and that $\tilde\cQ^\cD_{\eta,\varphi}\tilde\cS^\cD_{\eta,\varphi}=\cP$ on $(t^p\cap t^2)\oplus (L^p_{\mathscr Q} \cap L^2_{\mathscr Q})$ for  $p\in[1,\infty)$, and on $(\tilde t^\infty\cap t^2)\oplus(\tilde L^\infty_{\mathscr Q} \cap L^2_{\mathscr Q})$. Now suppose that $p\in[1,\infty)$ and that $u\in t^p\oplus L^p_{\mathscr Q}$. There exists a sequence $(u_n)_n$ in $(t^p\cap t^2)\oplus (L^p_{\mathscr Q} \cap L^2_{\mathscr Q})$ that converges to $u$ in $t^p\oplus L^p_{\mathscr Q}$ by Propositions~\ref{Prop: tpt2denseintp} and~\ref{Prop: LpQL2denseinLp}. The continuity of the embedding
$t^p\oplus L^p_{\mathscr Q}\subseteq (t^1+\tilde t^\infty) \oplus (L^1_{\mathscr Q}+\tilde L^\infty_{\mathscr Q})$,
which is a consequence of the interpolation in Corollaries~\ref{Cor: tp2Interp} and \ref{Cor: LpQInterp}, then implies that
\begin{align*}
\|\cP u-\tilde\cQ^\cD_{\eta,\varphi}\tilde\cS^\cD_{\eta,\varphi}u&\|_{(t^1+\tilde t^\infty) \oplus (L^1_{\mathscr Q}+\tilde L^\infty_{\mathscr Q})}\\
&\quad\leq \|\cP(u-u_n)\|_{t^p\oplus L^p_{\mathscr Q}} + \|\tilde\cQ^\cD_{\eta,\varphi}\tilde\cS^\cD_{\eta,\varphi}(u_n-u)\|_{(t^1+\tilde t^\infty) \oplus (L^1_{\mathscr Q}+\tilde L^\infty_{\mathscr Q})}
\end{align*}
for all $n\in\N$. Therefore, we have $\tilde\cQ^\cD_{\eta,\varphi}\tilde\cS^\cD_{\eta,\varphi}=\cP $ on $t^p\oplus L^p_{\mathscr Q}$ for all $p\in[1,\infty)$. We also have $\tilde\cQ^\cD_{\eta,\varphi}\tilde\cS^\cD_{\eta,\varphi}=\cP$ on $\tilde t^\infty\oplus\tilde L^\infty_{\mathscr Q}$ by the density properties in Corollaries~\ref{Cor: tp2Interp} and \ref{Cor: LpQInterp}, so the result follows.
\end{proof}

We now define the local Hardy spaces.
\begin{Def}\label{Def: local.Hardy}
For each $p\in[1,\infty)$, the \textit{local Hardy space} $h^p_\cD$ consists of all $u\in h^0_\cD$ with
\[
\|u\|_{h^p_\cD} = \|\tilde\cQ^\cD_{\eta,\varphi}u\|_{t^p\oplus L^p_{\mathscr Q}} < \infty.
\]
For $p=\infty$, the \textit{local Hardy space} space $h^\infty_\cD$ consists of all $u\in h^0_\cD$ such that $\tilde\cQ^\cD_{\eta,\varphi}u \in \tilde t^\infty \oplus \tilde L^\infty_{\mathscr Q}$ with
\[
\|u\|_{h^\infty_\cD} = \|\tilde\cQ^\cD_{\eta,\varphi}u\|_{\tilde t^\infty \oplus \tilde L^\infty_{\mathscr Q}} = \|\tilde\cQ^\cD_{\eta,\varphi}u\|_{t^\infty \oplus L^\infty_{\mathscr Q}}.
\]
\end{Def}

The dual of $h^1_\cD$ should be identified with a bmo type space, as in the classical case in \cite{Goldberg01}. To construct the ambient space $h^0_\cD$, however, we used the closed subspace $\tilde t^\infty \oplus \tilde L^\infty_\mathscr{Q}$ of $t^\infty \oplus L^\infty_\mathscr{Q}$. This suggests that $h^\infty_\cD$ can only be identified with a closed subspace of the dual of $h^1_\cD$. Therefore, we do not denote $h^\infty_\cD$ by $\text{bmo}_\cD$ and we postpone the construction of an appropriate $\text{bmo}_\cD$ space to the sequel. Note that we do identify the dual of $h^p_\cD$ for all $p\in(1,\infty)$ in Theorem \ref{Thm: hp.duality} below.

The local Hardy spaces are Banach spaces for all $p\in[1,\infty]$. To see this, suppose that $p\in[1,\infty)$ and that $(u_n)_n$ is a Cauchy sequence in $h^p_\cD$. Then there exists $v$ in $t^p\oplus L^p_{\mathscr Q}$ such that $\lim_n\|\tilde\cQ^\cD_{\eta,\varphi}u_n-v\|_{t^p\oplus L^p_{\mathscr Q}}=0$. Moreover, the embedding $t^p\oplus L^p_{\mathscr Q}\subseteq (t^1+\tilde t^\infty) \oplus (L^1_{\mathscr Q}+\tilde L^\infty_{\mathscr Q})$ implies that $\lim_n\|\tilde\cQ^\cD_{\eta,\varphi}u_n-v\|_{(t^1+\tilde t^\infty) \oplus (L^1_{\mathscr Q}+\tilde L^\infty_{\mathscr Q})}=0$, and hence that there exists $u$ in $h^0_\cD$ such that $\lim_n\|u_n-u\|_{h^0_\cD}=0$. Therefore, we have $\tilde\cQ^\cD_{\eta,\varphi}u=v\in t^p\oplus L^p_{\mathscr Q}$, which implies that $u\in h^p_\cD$ and that $\lim_n\|u_n-u\|_{h^p_\cD}=0$. The proof for $p=\infty$ is the same but with $\tilde t^\infty\oplus \tilde L^\infty_\mathscr{Q}$ instead of $t^p\oplus L^p_\mathscr{Q}$.

The definition of the ambient space allowed us to identify $L^2$ with a dense subspace of $h^0_\cD$. It now follows from \eqref{eq: h2equiv1} that $L^2\subseteq h^2_\cD$ under this identification. In fact, we have $L^2=h^2_\cD$ under this identification by \eqref{eq: h2equiv2} and the following proposition, which gives an equivalent definition for $h^p_{\cD}$.

\begin{Prop}\label{prop: normequiv}
If $p\in[1,\infty)$, then $h^p_\cD=\tilde\cS^\cD_{\eta,\varphi}(t^p\oplus L^p_\mathscr{Q})$ and
\[
\|u\|_{h^p_\cD} \eqsim
\inf \{\|U\|_{t^p\oplus L^p_\mathscr Q} : U\in t^p\oplus L^p_\mathscr Q \text{ and } u=\tilde\cS^\cD_{\eta,\varphi}U \}.
\]
If $p=\infty$, then the above holds with $\tilde t^\infty\oplus \tilde L^\infty_\mathscr{Q}$ instead of $t^p\oplus L^p_\mathscr{Q}$.
\end{Prop}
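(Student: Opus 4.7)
The plan is to deduce this proposition directly from Proposition~\ref{Prop: QandSext}, applying its two conclusions — the identity $\tilde\cS^\cD_{\eta,\varphi}\tilde\cQ^\cD_{\eta,\varphi}=I$ on $h^0_\cD$, and the boundedness of $\tilde\cQ^\cD_{\eta,\varphi}\tilde\cS^\cD_{\eta,\varphi}$ on $t^p\oplus L^p_\mathscr Q$ (resp.\ on $\tilde t^\infty \oplus \tilde L^\infty_\mathscr Q$) — in the two directions of the claimed equality.

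For the inclusion $h^p_\cD\subseteq \tilde\cS^\cD_{\eta,\varphi}(t^p\oplus L^p_\mathscr Q)$, I would take $u\in h^p_\cD$ and set $U:=\tilde\cQ^\cD_{\eta,\varphi}u$. By Definition~\ref{Def: local.Hardy}, $U\in t^p\oplus L^p_\mathscr Q$ with $\|U\|_{t^p\oplus L^p_\mathscr Q}=\|u\|_{h^p_\cD}$, and Proposition~\ref{Prop: QandSext} gives $u=\tilde\cS^\cD_{\eta,\varphi}\tilde\cQ^\cD_{\eta,\varphi}u=\tilde\cS^\cD_{\eta,\varphi}U$. Thus $u\in \tilde\cS^\cD_{\eta,\varphi}(t^p\oplus L^p_\mathscr Q)$ and the infimum is bounded above by $\|u\|_{h^p_\cD}$.

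For the reverse inclusion, I would take any $U\in t^p\oplus L^p_\mathscr Q$ and set $u:=\tilde\cS^\cD_{\eta,\varphi}U$. Since $\tilde\cS^\cD_{\eta,\varphi}$ maps into $h^0_\cD$, we have $u\in h^0_\cD$, and
\[
\tilde\cQ^\cD_{\eta,\varphi}u=\tilde\cQ^\cD_{\eta,\varphi}\tilde\cS^\cD_{\eta,\varphi}U.
\]
By the boundedness of $\tilde\cQ^\cD_{\eta,\varphi}\tilde\cS^\cD_{\eta,\varphi}$ on $t^p\oplus L^p_\mathscr Q$ asserted in Proposition~\ref{Prop: QandSext}, it follows that $\tilde\cQ^\cD_{\eta,\varphi}u\in t^p\oplus L^p_\mathscr Q$ with $\|\tilde\cQ^\cD_{\eta,\varphi}u\|_{t^p\oplus L^p_\mathscr Q}\lesssim \|U\|_{t^p\oplus L^p_\mathscr Q}$. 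Hence $u\in h^p_\cD$ with $\|u\|_{h^p_\cD}\lesssim \|U\|_{t^p\oplus L^p_\mathscr Q}$; taking the infimum over all $U$ with $u=\tilde\cS^\cD_{\eta,\varphi}U$ completes the norm equivalence.

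The case $p=\infty$ is formally identical, with $\tilde t^\infty\oplus\tilde L^\infty_\mathscr Q$ replacing $t^p\oplus L^p_\mathscr Q$ and Definition~\ref{Def: local.Hardy} supplying the matching norm expression; the required boundedness of $\tilde\cQ^\cD_{\eta,\varphi}\tilde\cS^\cD_{\eta,\varphi}$ on $\tilde t^\infty\oplus \tilde L^\infty_\mathscr Q$ is the second boundedness statement of Proposition~\ref{Prop: QandSext}. There is no serious obstacle: the substance — constructing $h^0_\cD$, extending $\cQ^\cD_{\eta,\varphi}$ and $\cS^\cD_{\eta,\varphi}$ to it, establishing $\tilde\cS^\cD_{\eta,\varphi}\tilde\cQ^\cD_{\eta,\varphi}=I$, and proving that $\tilde\cQ^\cD_{\eta,\varphi}\tilde\cS^\cD_{\eta,\varphi}$ restricts boundedly to each $t^p\oplus L^p_\mathscr Q$ — has already been carried out, so the present proposition is essentially a bookkeeping consequence.
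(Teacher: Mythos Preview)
Your proposal is correct and follows essentially the same approach as the paper: both arguments combine the identity $\tilde\cS^\cD_{\eta,\varphi}\tilde\cQ^\cD_{\eta,\varphi}=I$ on $h^0_\cD$ with the boundedness of $\tilde\cQ^\cD_{\eta,\varphi}\tilde\cS^\cD_{\eta,\varphi}$ on $t^p\oplus L^p_\mathscr{Q}$ (equivalently, the boundedness of the restriction $\tilde\cS^\cD_{\eta,\varphi}:t^p\oplus L^p_\mathscr{Q}\to h^p_\cD$) from Proposition~\ref{Prop: QandSext}. The paper compresses the two directions into a single chain of inequalities, but the content is identical to yours.
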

\begin{proof}
Suppose that $p\in[1,\infty)$. Proposition~\ref{Prop: QandSext} shows that $\tilde\cS^\cD_{\eta,\varphi}\tilde\cQ^\cD_{\eta,\varphi}=I$
on $h^0_\cD$, and that the restricted operators
\[
\tilde\cQ^\cD_{\eta,\varphi}:h^p_\cD\rightarrow t^p\oplus L^p_\mathscr{Q}
\quad\text{and}\quad
\tilde\cS^\cD_{\eta,\varphi}:t^p\oplus L^p_\mathscr{Q} \rightarrow h^p_\cD
\]
are bounded. Therefore, we have $h^p_\cD=\tilde\cS^\cD_{\eta,\varphi}(t^p\oplus L^p_\mathscr{Q})$ with
\[
\inf_{\substack{U\in t^p\oplus L^p_\mathscr Q;\\ u=\tilde\cS^\cD_{\eta,\varphi}U}} \|U\|_{t^p\oplus L^p_\mathscr Q} \leq \|\tilde\cQ^\cD_{\eta,\varphi}u\|_{t^p\oplus L^p_\mathscr Q} = \|u\|_{h^p_\cD} = \|\tilde\cS^\cD_{\eta,\varphi}V\|_{h^p_\cD} \lesssim \|V\|_{t^p\oplus L^p_\mathscr Q}
\]
for all $V\in t^p\oplus L^p_\mathscr Q$ satisfying $u=\tilde\cS^\cD_{\eta,\varphi}V.$

The proof for $p=\infty$ is the same but with $\tilde t^\infty\oplus \tilde L^\infty_\mathscr{Q}$ instead of $t^p\oplus L^p_\mathscr{Q}$.
\end{proof}

This leads us to the following density properties of the local Hardy spaces.
\begin{Cor}\label{Cor: hphqDensehp}
For all $p\in[1,\infty]$ and $q\in[1,\infty)$, the set $h^p_\cD\cap h^q_\cD$ is dense in $h^p_\cD$. Moreover, for all $p, q\in[1,\infty)$, we have
$
h^p_\cD\cap h^q_\cD = \tilde\cS^\cD_{\eta,\varphi}((t^p\cap t^q)\oplus(L^p_{\mathscr Q}\cap L^q_{\mathscr Q})).
$
This also holds for $p=\infty$ but with $\tilde t^\infty$ and $\tilde L^\infty_{\mathscr Q}$ instead of $t^p$ and $L^p_{\mathscr Q}$.
\end{Cor}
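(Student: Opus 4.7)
My plan is to deduce both assertions directly from Proposition~\ref{prop: normequiv}, Proposition~\ref{Prop: QandSext}, and the density properties established in Propositions \ref{Prop: tpt2denseintp} and \ref{Prop: LpQL2denseinLp} together with Corollaries \ref{Cor: tp2Interp} and \ref{Cor: LpQInterp}. The key observation is that $\tilde\cQ^\cD_{\eta,\varphi}$ and $\tilde\cS^\cD_{\eta,\varphi}$ form a retraction/coretraction pair on $h^0_\cD$, so membership in $h^p_\cD$ can be tested by examining $\tilde\cQ^\cD_{\eta,\varphi}u$ in $t^p\oplus L^p_\mathscr{Q}$ (or $\tilde t^\infty\oplus \tilde L^\infty_\mathscr{Q}$ when $p=\infty$).

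First I would prove the identity $h^p_\cD\cap h^q_\cD = \tilde\cS^\cD_{\eta,\varphi}((t^p\cap t^q)\oplus(L^p_{\mathscr Q}\cap L^q_{\mathscr Q}))$ for $p,q\in[1,\infty)$. The inclusion $\supseteq$ is immediate from Proposition~\ref{prop: normequiv}, since the continuous inclusion $(t^p\cap t^q)\oplus(L^p_\mathscr{Q}\cap L^q_\mathscr{Q})\subseteq t^r\oplus L^r_\mathscr{Q}$ for $r\in\{p,q\}$ shows that the image lies in both $h^p_\cD$ and $h^q_\cD$. For the reverse inclusion, given $u\in h^p_\cD\cap h^q_\cD$, I would set $U:=\tilde\cQ^\cD_{\eta,\varphi}u$; by the definitions of $h^p_\cD$ and $h^q_\cD$, this $U$ lies in $(t^p\cap t^q)\oplus(L^p_\mathscr{Q}\cap L^q_\mathscr{Q})$, and Proposition~\ref{Prop: QandSext} gives $u=\tilde\cS^\cD_{\eta,\varphi}\tilde\cQ^\cD_{\eta,\varphi}u=\tilde\cS^\cD_{\eta,\varphi}U$. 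The case $p=\infty$ is handled verbatim but with $\tilde t^\infty\oplus\tilde L^\infty_\mathscr{Q}$ replacing $t^p\oplus L^p_\mathscr{Q}$, using the bounds from Propositions \ref{Prop: QandSext} and \ref{prop: normequiv} on this closed subspace.

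Next I would establish the density of $h^p_\cD\cap h^q_\cD$ in $h^p_\cD$. Fix $u\in h^p_\cD$ and write $U=\tilde\cQ^\cD_{\eta,\varphi}u\in t^p\oplus L^p_\mathscr{Q}$ (or $\tilde t^\infty\oplus \tilde L^\infty_\mathscr{Q}$ if $p=\infty$). By Proposition~\ref{Prop: tpt2denseintp} and Proposition~\ref{Prop: LpQL2denseinLp}(1) when $p<\infty$, and by Corollaries \ref{Cor: tp2Interp} and \ref{Cor: LpQInterp} when $p=\infty$, there exists a sequence $(U_n)_n$ in $(t^p\cap t^q)\oplus(L^p_\mathscr{Q}\cap L^q_\mathscr{Q})$ (or in $(\tilde t^\infty\cap t^q)\oplus(\tilde L^\infty_\mathscr{Q}\cap L^q_\mathscr{Q})$) converging to $U$ in the appropriate norm. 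Setting $u_n:=\tilde\cS^\cD_{\eta,\varphi}U_n$, the identity from the preceding paragraph places $u_n$ in $h^p_\cD\cap h^q_\cD$, and the boundedness of $\tilde\cS^\cD_{\eta,\varphi}:t^p\oplus L^p_\mathscr{Q}\to h^p_\cD$ (respectively $\tilde t^\infty\oplus \tilde L^\infty_\mathscr{Q}\to h^\infty_\cD$), together with $u=\tilde\cS^\cD_{\eta,\varphi}U$, gives $\|u_n-u\|_{h^p_\cD}\to 0$.

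There is no serious obstacle: everything reduces to transporting density from the tent/cube side through the bounded coretraction $\tilde\cS^\cD_{\eta,\varphi}$ and exploiting $\tilde\cS^\cD_{\eta,\varphi}\tilde\cQ^\cD_{\eta,\varphi}=I$ on $h^0_\cD$. The only point requiring care is the $p=\infty$ case, where one must use the closed subspaces $\tilde t^\infty$ and $\tilde L^\infty_\mathscr{Q}$ throughout and invoke Corollaries~\ref{Cor: tp2Interp} and \ref{Cor: LpQInterp} for both the density of $\tilde t^\infty\cap t^q$ in $\tilde t^\infty$ and the analogous statement for the cube spaces; this is precisely why the statement is formulated with the tilded spaces in the $p=\infty$ case.
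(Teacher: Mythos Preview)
Your proposal is correct and follows essentially the same approach as the paper: both arguments use the retraction identity $\tilde\cS^\cD_{\eta,\varphi}\tilde\cQ^\cD_{\eta,\varphi}=I$ from Proposition~\ref{Prop: QandSext} together with Proposition~\ref{prop: normequiv} to reduce everything to the known density results for $t^p$ and $L^p_\mathscr{Q}$ (Propositions~\ref{Prop: tpt2denseintp}, \ref{Prop: LpQL2denseinLp} and Corollaries~\ref{Cor: tp2Interp}, \ref{Cor: LpQInterp}). The only cosmetic difference is that the paper treats density first and the set identity second, whereas you do the reverse, and you spell out the $\supseteq$ inclusion a bit more explicitly than the paper does.
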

\begin{proof}
If $p,q\in[1,\infty)$, then $h^p_\cD=\tilde\cS^\cD_{\eta,\varphi}(t^p\oplus L^p_\mathscr{Q})$ by Proposition~\ref{prop: normequiv}, so the density of $h^p_\cD\cap h^q_\cD$ in $h^p_\cD$ follows from the density properties in Propositions~\ref{Prop: tpt2denseintp} and \ref{Prop: LpQL2denseinLp}. If $p=\infty$, then the result follows from the density properties in Corollaries~\ref{Cor: tp2Interp} and~\ref{Cor: LpQInterp}.

If $p,q\in[1,\infty)$ and $u\in h^p_\cD\cap h^q_\cD$, then by the reproducing formula in Proposition~\ref{Prop: QandSext}, we have
\[
u=\tilde\cS^\cD_{\eta,\varphi}\tilde\cQ^\cD_{\eta,\varphi}u
\in \tilde\cS^\cD_{\eta,\varphi}((t^p\cap t^q)\oplus(L^p_{\mathscr Q}\cap L^q_{\mathscr Q})),
\]
since $\tilde\cQ^\cD_{\eta,\varphi}u\in (t^p\cap t^q)\oplus (L^p_{\mathscr Q}\cap L^q_{\mathscr Q})$. If $p=\infty$, then this holds with $\tilde t^\infty$ and $\tilde L^\infty_{\mathscr Q}$ instead of $t^p$ and $L^p_{\mathscr Q}$, which completes the proof.
\end{proof}

The interpolation results for the local tent spaces $t^p$ and the spaces $L^p_{\mathscr Q}$ allow us to interpolate the local Hardy spaces.

\begin{Thm}\label{Thm: Interpolation}
If $\theta\in(0,1)$ and $1\leq p_0 < p_1\leq \infty$, then
\[
[h^{p_0}_\cD,h^{p_1}_\cD]_{\theta}=h^{p_\theta}_\cD,
\]
where $1/p_\theta=(1-\theta)/p_0+\theta/p_1$ and $[\cdot,\cdot]_\theta$ denotes complex interpolation.
\end{Thm}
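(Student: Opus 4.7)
The plan is to realize each local Hardy space as a retract of the function space $t^{p}\oplus L^{p}_\mathscr{Q}$ (with $\tilde t^\infty\oplus \tilde L^\infty_\mathscr{Q}$ replacing the product at the endpoint $p=\infty$), and then transfer the interpolation identities already proved for these product spaces to the $h^p_\cD$ scale via Triebel's retraction theorem (Theorem~1.2.4 of \cite{T}), exactly as in the proof of Theorem~\ref{Thm: LpQpropertiesInterp}. The interpolation space $[h^{p_0}_\cD,h^{p_1}_\cD]_\theta$ is well-defined to begin with, since by Definition~\ref{Def: local.Hardy} both spaces are continuously embedded in the common ambient space $h^0_\cD$.

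Setting $R:=\tilde\cS^\cD_{\eta,\varphi}$ and $S:=\tilde\cQ^\cD_{\eta,\varphi}$, Proposition~\ref{Prop: QandSext} provides the identity $RS=I$ on $h^0_\cD$, together with the bounded operators
\[
S:h^{p}_\cD\longrightarrow t^{p}\oplus L^{p}_\mathscr{Q},\qquad R:t^{p}\oplus L^{p}_\mathscr{Q}\longrightarrow h^{p}_\cD
\]
for each $p\in[1,\infty)$, and analogous bounded operators between $h^\infty_\cD$ and $\tilde t^\infty\oplus\tilde L^\infty_\mathscr{Q}$ when $p=\infty$. (The first map is isometric by definition of the $h^p_\cD$ norm, and the second is bounded by Proposition~\ref{prop: normequiv}.) Thus $R$ is a retraction with coretraction $S$ with respect to the indicated couples.

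Next, by Theorems~\ref{Thm: tp2propertiesInterp} and~\ref{Thm: LpQpropertiesInterp}, together with Corollaries~\ref{Cor: tp2Interp} and~\ref{Cor: LpQInterp} to handle the endpoint $p_1=\infty$, we have
\[
[t^{p_0},t^{p_1}]_\theta=t^{p_\theta},\qquad [L^{p_0}_\mathscr{Q},L^{p_1}_\mathscr{Q}]_\theta=L^{p_\theta}_\mathscr{Q},
\]
where, when $p_1=\infty$, $t^{p_1}$ is read as $\tilde t^\infty$ and $L^{p_1}_\mathscr{Q}$ is read as $\tilde L^\infty_\mathscr{Q}$. Since complex interpolation commutes with finite direct sums, this yields
\[
[t^{p_0}\oplus L^{p_0}_\mathscr{Q},\ t^{p_1}\oplus L^{p_1}_\mathscr{Q}]_\theta = t^{p_\theta}\oplus L^{p_\theta}_\mathscr{Q}.
\]
Applying Theorem~1.2.4 of \cite{T} to the retraction $R$ and coretraction $S$ on the couples $\{h^{p_0}_\cD,h^{p_1}_\cD\}$ and $\{t^{p_0}\oplus L^{p_0}_\mathscr{Q},\ t^{p_1}\oplus L^{p_1}_\mathscr{Q}\}$ identifies $S$ as an isomorphism from $[h^{p_0}_\cD,h^{p_1}_\cD]_\theta$ onto
\[
SR\bigl([t^{p_0}\oplus L^{p_0}_\mathscr{Q},\ t^{p_1}\oplus L^{p_1}_\mathscr{Q}]_\theta\bigr) = SR(t^{p_\theta}\oplus L^{p_\theta}_\mathscr{Q}) = S(h^{p_\theta}_\cD),
\]
where the final equality uses $h^{p_\theta}_\cD=R(t^{p_\theta}\oplus L^{p_\theta}_\mathscr{Q})$ from Proposition~\ref{prop: normequiv}. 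Since $S:h^{p_\theta}_\cD\to t^{p_\theta}\oplus L^{p_\theta}_\mathscr{Q}$ is itself an isometric embedding, this gives $[h^{p_0}_\cD,h^{p_1}_\cD]_\theta=h^{p_\theta}_\cD$ with equivalent norms.

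The main obstacle is the careful bookkeeping at the endpoint $p_1=\infty$: one must systematically use the closed subspaces $\tilde t^\infty$ and $\tilde L^\infty_\mathscr{Q}$ in place of $t^\infty$ and $L^\infty_\mathscr{Q}$ to ensure both that the interpolation identities of Corollaries~\ref{Cor: tp2Interp} and~\ref{Cor: LpQInterp} apply, and that the density hypotheses underlying the reiteration theorem invoked within Triebel's retraction framework are available. Once this is set up correctly, the retraction argument is entirely parallel to that used in Theorem~\ref{Thm: LpQpropertiesInterp}.
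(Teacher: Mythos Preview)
Your proposal is correct and follows essentially the same approach as the paper: both realize $h^p_\cD$ as a retract of $t^p\oplus L^p_\mathscr{Q}$ (with $\tilde t^\infty\oplus\tilde L^\infty_\mathscr{Q}$ at $p=\infty$) via the pair $(\tilde\cS^\cD_{\eta,\varphi},\tilde\cQ^\cD_{\eta,\varphi})$ from Proposition~\ref{Prop: QandSext}, invoke the interpolation identities for the product spaces, and apply Triebel's retraction theorem together with Proposition~\ref{prop: normequiv}. One minor clarification: Triebel's Theorem~1.2.4 itself needs no density or reiteration hypotheses---the density issues arise only upstream, in establishing $[t^p,\tilde t^\infty]_\theta=t^{p_\theta}$ and $[L^p_\mathscr{Q},\tilde L^\infty_\mathscr{Q}]_\theta=L^{p_\theta}_\mathscr{Q}$ via Corollaries~\ref{Cor: tp2Interp} and~\ref{Cor: LpQInterp}.
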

\begin{proof}
The interpolation space $[h^{p_0}_\cD,h^{p_1}_\cD]_{\theta}$ is well-defined because it is an immediate consequence of Definition~\ref{Def: local.Hardy} that
$h^p_\cD \subseteq h^0_\cD$ for all $p\in[1,\infty]$.

Suppose that $p_1\in(1,\infty)$. Theorems~\ref{Thm: tp2propertiesInterp} and \ref{Thm: LpQpropertiesInterp} show that
\[
[t^{p_0}\oplus L^{p_0}_{\mathscr Q}, t^{p_1}\oplus L^{p_1}_{\mathscr Q}]_{\theta}
= t^{p_\theta}\oplus L^{p_\theta}_{\mathscr Q}.
\]
Proposition~\ref{Prop: QandSext} shows that the reproducing formula $\tilde\cS^\cD_{\eta,\varphi}\tilde\cQ^\cD_{\eta,\varphi}=I$ holds on $h^0_\cD$, and that the restricted operators
\[
\tilde\cQ^\cD_{\eta,\varphi}:h^p_\cD\rightarrow t^p\oplus L^p_\mathscr{Q}
\quad\text{and}\quad
\tilde\cS^\cD_{\eta,\varphi}:t^p\oplus L^p_\mathscr{Q} \rightarrow h^p_\cD
\]
are bounded for all $p\in[1,\infty)$. It follows by Theorem~1.2.4 of \cite{T}, which concerns the interpolation of spaces related by a retraction, that $\tilde\cQ^\cD_{\eta,\varphi}: [h^{p_0}_\cD,h^{p_1}_\cD]_\theta \rightarrow t^{p_\theta}\oplus L^{p_\theta}_{\mathscr Q}$ is an isomorphism onto the subspace
\[
\tilde\cQ^\cD_{\eta,\varphi}\tilde\cS^\cD_{\eta,\varphi}(t^{p_\theta}\oplus L^{p_\theta}_{\mathscr Q}) = \tilde\cQ^\cD_{\eta,\varphi}(h^{p_\theta}_\cD)
\]
for all $p_0\in[1,p_1)$, where the equality is given by Proposition~\ref{prop: normequiv}. The reproducing formula then implies that $[h^{p_0}_\cD,h^{p_1}_\cD]_\theta = h^{p_\theta}_\cD$.

The proof for $p_1=\infty$ is the same but with $\tilde t^\infty\oplus \tilde L^\infty_\mathscr{Q}$ instead of $t^{p_1}\oplus L^{p_1}_\mathscr{Q}$, and it relies on Corollaries~\ref{Cor: tp2Interp} and \ref{Cor: LpQInterp}.
\end{proof}

The next result is an application of the interpolation of the local Hardy spaces.
\begin{Lem}\label{Lem: ThetaBds}
Let $\theta\in(\omega,\tfrac{\pi}{2})$, $r>R$ and $\beta>\kappa/2$ such that $r/C_\cD C_{\theta,r}>~\lambda/2$. For each $\psi\in\Psi^{\beta}(S_{\theta,r}^o),$ $\tilde{\psi}\in\Psi_{\beta}(S_{\theta,r}^o),$ $\phi\in\Theta^{\beta}(S_{\theta,r}^o)$ and $\tilde{\phi}\in\Theta(S_{\theta,r}^o)$, the following hold:
\begin{enumerate}
\item The operators $\cQ^\cD_{\psi,\phi}$ and $\cS^\cD_{\tilde\psi,\tilde\phi}$ have bounded extensions $\tilde\cQ^\cD_{\psi,\phi}:h^p_\cD\rightarrow t^p\oplus L^p_{\mathscr Q}$ and $\tilde\cS^\cD_{\tilde\psi,\tilde\phi}:t^p\oplus L^p_{\mathscr Q}\rightarrow h^p_\cD$
    for all $p\in[1,2]$.
\item The operators $\cQ^\cD_{\tilde\psi,\tilde\phi}$ and $\cS^\cD_{\psi,\phi}$ have bounded extensions $\tilde\cQ^\cD_{\tilde\psi,\tilde\phi}:h^p_\cD\rightarrow t^p\oplus L^p_{\mathscr Q}$ and $\tilde\cS^\cD_{\psi,\phi}:t^p\oplus L^p_{\mathscr Q}\rightarrow h^p_\cD$ for all $p\in[2,\infty)$. This also holds for $p=\infty$ but with $\tilde t^\infty\oplus \tilde L^\infty_\mathscr{Q}$ instead of $t^\infty\oplus L^\infty_\mathscr{Q}$.
\end{enumerate}
\end{Lem}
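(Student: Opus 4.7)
The strategy is to reduce every claim to one application of Theorem~\ref{locMainEst} with $f\equiv 1$, sandwiched with the operators $\cQ^\cD_{\eta,\varphi}$ or $\cS^\cD_{\eta,\varphi}$ used to define the $h^p_\cD$-norms, and combined with the reproducing formula $\cS^\cD_{\eta,\varphi}\cQ^\cD_{\eta,\varphi}=I$ on $L^2$ from Proposition~\ref{sq=I}. The key observation is that the fixed pair $(\eta,\varphi)$ of Definition~\ref{Def: AmbientSpace} satisfies $\eta\in\Psi^\beta_\beta(S^o_{\theta,r})\subseteq \Psi^\beta(S^o_{\theta,r})\cap\Psi_\beta(S^o_{\theta,r})$ and $\varphi\in\Phi^\beta(S^o_{\theta,r})\subseteq \Theta^\beta(S^o_{\theta,r})\subseteq\Theta(S^o_{\theta,r})$, so $(\eta,\varphi)$ is admissible as either the \emph{outer} $(\Psi^\beta,\Theta^\beta)$ pair or the \emph{inner} $(\Psi_\beta,\Theta)$ pair in Theorem~\ref{locMainEst}(1), and likewise for (2).

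For part~(1), I would first handle $\cS^\cD_{\tilde\psi,\tilde\phi}$ on the dense subspace $(t^p\cap t^2)\oplus(L^p_{\mathscr Q}\cap L^2_{\mathscr Q})$ of $t^p\oplus L^p_{\mathscr Q}$ (Propositions~\ref{Prop: tpt2denseintp} and~\ref{Prop: LpQL2denseinLp}). On such $U$ the image lies in $L^2\subseteq h^0_\cD$, so $\tilde\cQ^\cD_{\eta,\varphi}$ agrees with $\cQ^\cD_{\eta,\varphi}$ and
\[
\|\cS^\cD_{\tilde\psi,\tilde\phi}U\|_{h^p_\cD}=\|\cQ^\cD_{\eta,\varphi}\cS^\cD_{\tilde\psi,\tilde\phi}U\|_{t^p\oplus L^p_{\mathscr Q}};
\]
Theorem~\ref{locMainEst}(1) with outer pair $(\eta,\varphi)$ and inner pair $(\tilde\psi,\tilde\phi)$ bounds the right-hand side by $\|U\|_{t^p\oplus L^p_{\mathscr Q}}$ for $p\in[1,2]$, and the extension follows by density. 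For $\cQ^\cD_{\psi,\phi}$ I would work on the dense subspace $h^p_\cD\cap L^2$ (dense by Corollary~\ref{Cor: hphqDensehp} with $q=2$, since $h^2_\cD=L^2$), use the reproducing formula to write $u=\cS^\cD_{\eta,\varphi}\cQ^\cD_{\eta,\varphi}u$, and hence factor $\cQ^\cD_{\psi,\phi}u=(\cQ^\cD_{\psi,\phi}\cS^\cD_{\eta,\varphi})\cQ^\cD_{\eta,\varphi}u$. Theorem~\ref{locMainEst}(1) applied to the operator in parentheses, now with outer pair $(\psi,\phi)$ and inner pair $(\eta,\varphi)$, yields $\|\cQ^\cD_{\psi,\phi}u\|_{t^p\oplus L^p_{\mathscr Q}}\lesssim \|\cQ^\cD_{\eta,\varphi}u\|_{t^p\oplus L^p_{\mathscr Q}}=\|u\|_{h^p_\cD}$ for $p\in[1,2]$.

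Part~(2) for $p\in[2,\infty)$ is identical in structure, but every invocation of Theorem~\ref{locMainEst}(1) is replaced by Theorem~\ref{locMainEst}(2) so that the roles of $(\psi,\phi)$ and $(\tilde\psi,\tilde\phi)$ are interchanged; the verification of the $\Psi,\Theta,\Phi$ class memberships of $(\eta,\varphi)$ shows this substitution is legitimate. The $p=\infty$ case with $\tilde t^\infty\oplus\tilde L^\infty_{\mathscr Q}$ in place of $t^\infty\oplus L^\infty_{\mathscr Q}$ is the one genuine subtlety, and I would handle it exactly as in the proof of Proposition~\ref{Prop: QandSext}: Theorems~\ref{locMainEst}(1) and~\ref{locMainEst}(2) jointly produce a single bounded operator on $(t^1+\tilde t^\infty)\oplus(L^1_{\mathscr Q}+\tilde L^\infty_{\mathscr Q})$ whose restriction to $\tilde t^\infty\oplus\tilde L^\infty_{\mathscr Q}$ maps into $\tilde t^\infty\oplus\tilde L^\infty_{\mathscr Q}$ by the density of $(t^1\cap t^\infty)\oplus(L^1_{\mathscr Q}\cap L^\infty_{\mathscr Q})$ in $\tilde t^\infty\oplus\tilde L^\infty_{\mathscr Q}$ and the continuity of the embedding $t^1\cap t^\infty\hookrightarrow \tilde t^\infty$ (and similarly for $L^\infty_{\mathscr Q}$). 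The density of $h^\infty_\cD\cap L^2$ in $h^\infty_\cD$ (Corollary~\ref{Cor: hphqDensehp} with $q=2$) then extends the argument for $\cQ^\cD_{\tilde\psi,\tilde\phi}$ to $p=\infty$ via the reproducing formula. The main obstacle is thus bookkeeping rather than new analysis, namely checking compatibility of the several density statements and uniqueness of the extensions on the various overlapping subspaces; no new off-diagonal or tent-space estimates are required beyond Theorem~\ref{locMainEst}.
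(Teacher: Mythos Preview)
Your proposal is correct and follows essentially the same route as the paper: reduce to Theorem~\ref{locMainEst} with $f\equiv1$ by sandwiching with the fixed pair $(\eta,\varphi)$, using the reproducing formula and density. The only cosmetic difference is that the paper establishes the $p=1$ endpoint and then invokes the interpolation of local Hardy spaces (Theorem~\ref{Thm: Interpolation}) to reach all $p\in[1,2]$, whereas you apply Theorem~\ref{locMainEst}(1) directly for each $p\in[1,2]$; both are valid, and your version is arguably more direct. For part~(2) the paper simply says ``the proof is similar'', so your more explicit treatment of the $p=\infty$ case via the $\tilde t^\infty\oplus\tilde L^\infty_{\mathscr Q}$ mechanism is consistent with what is implicit there.
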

\begin{proof}
If $u\in h^1_\cD \cap L^2$, then $\cQ^\cD_{\eta,\varphi} u\in (t^1\cap t^2) \oplus (L^1_{\mathscr Q}\cap L^2_{\mathscr Q})$ and ${u=\cS^\cD_{\eta,\varphi}\cQ^\cD_{\eta,\varphi} u}$, so by Theorem \ref{locMainEst} we have
\[
\|\cQ^\cD_{\psi,\phi}u\|_{t^1\oplus L^1_\mathscr Q}
= \|\cQ^\cD_{\psi,\phi}\cS^\cD_{\eta,\varphi}\cQ^\cD_{\eta,\varphi} u\|_{t^1\oplus L^1_\mathscr Q}
\lesssim  \|u\|_{h^1_\cD}.
\]
The set $h^1_\cD \cap L^2$ is dense in $h^1_\cD$ by Corollary \ref{Cor: hphqDensehp}, so the bounded extension $\tilde\cQ^\cD_{\psi,\phi}$ exists for $p=1$, and hence for all $p\in[1,2]$ by interpolation.

If $U\in(t^1\cap t^2)\oplus(L^1_{\mathscr Q}\cap L^2_{\mathscr Q})$, then by Theorem~\ref{locMainEst} we have
\[
\|\cS^\cD_{\tilde\psi,\tilde\phi}U\|_{h^1_\cD}
= \|\cQ^\cD_\svt{\eta,\varphi}\cS^\cD_{\tilde\psi,\tilde\phi} U\|_{t^1\oplus L^1_\mathscr Q}
\lesssim  \|U\|_{t^1\oplus L^1_\mathscr Q}.
\]
The density properties in Propositions~\ref{Prop: tpt2denseintp} and \ref{Prop: LpQL2denseinLp} then imply that the bounded extension $\tilde\cS^\cD_{\tilde\psi,\tilde\phi}$ exists for $p=1$, and hence for all $p\in[1,2]$ by interpolation, which proves (1). The proof of (2) is similar.
\end{proof}

This allows us to construct a family of equivalent norms on the local Hardy spaces.

\begin{Prop}\label{Prop: NormeEquivChangingQandS}
Let $\theta\in(\omega,\tfrac{\pi}{2})$, $r>R$ and $\beta>\kappa/2$ so that $r/C_\cD C_{\theta,r}>~\lambda/2$. For each $\psi\in\Psi^{\beta}(S_{\theta,r}^o),$ $\tilde{\psi}\in\Psi_{\beta}(S_{\theta,r}^o),$ $\phi\in\Phi^{\beta}(S_{\theta,r}^o)$ and $\tilde{\phi}\in\Phi(S_{\theta,r}^o)$, the following hold:
\begin{enumerate}
\item The extension operators from Lemma~\ref{Lem: ThetaBds} satisfy $h^p_\cD= \tilde\cS^\cD_{\tilde\psi,\tilde\phi}(t^p\oplus L^p_{\mathscr Q})$ and
\[
\|u\|_{h^p_\cD} \eqsim \|\tilde\cQ^\cD_{\psi,\phi}u\|_{t^p\oplus L^p_{\mathscr Q}}
\eqsim \inf_{u=\tilde\cS^\cD_{\tilde\psi,\tilde\phi}U} \|U\|_{t^p\oplus L^p_\mathscr Q}
\]
for all $u\in h^p_\cD$ and $p\in[1,2]$.
\item The extension operators from Lemma~\ref{Lem: ThetaBds} satisfy $h^p_\cD= \tilde\cS^\cD_{\psi,\phi}(t^p\oplus L^p_{\mathscr Q})$ and
\[
\|u\|_{h^p_\cD} \eqsim \|\tilde\cQ^\cD_{\tilde\psi,\tilde\phi}u\|_{t^p\oplus L^p_{\mathscr Q}}
\eqsim \inf_{u=\tilde\cS^\cD_{\psi,\phi}U} \|U\|_{t^p\oplus L^p_\mathscr Q}
\]
for all $u\in h^p_\cD$ and $p\in[2,\infty)$. This also holds for $p=\infty$ but with $\tilde t^\infty\oplus \tilde L^\infty_\mathscr{Q}$ instead of $t^\infty\oplus L^\infty_\mathscr{Q}$.
\end{enumerate}
\end{Prop}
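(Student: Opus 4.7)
The plan is to combine three ingredients: the Calderón reproducing formula (Proposition~\ref{sq=I}) to insert resolutions of the identity $\cS^\cD\cQ^\cD=I$ on $L^2$; the main estimate (Theorem~\ref{locMainEst}) to bound the resulting compositions $\cQ^\cD\cS^\cD$ on $t^p\oplus L^p_\mathscr{Q}$; and the extension-boundedness and density results from Lemma~\ref{Lem: ThetaBds} and Corollary~\ref{Cor: hphqDensehp}. The argument for Part~(2) is entirely symmetric to that for Part~(1), so I focus on Part~(1).

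For the first equivalence in Part~(1), the inequality $\|\tilde\cQ^\cD_{\psi,\phi}u\|_{t^p\oplus L^p_\mathscr{Q}}\lesssim\|u\|_{h^p_\cD}$ is immediate from Lemma~\ref{Lem: ThetaBds}(1), since $\psi\in\Psi^\beta$ and $\phi\in\Phi^\beta\subseteq\Theta^\beta$. For the reverse inequality I would apply Proposition~\ref{sq=I} to the nondegenerate pair $\psi\in\Psi^\beta$ and $\phi\in\Phi^\beta$, with the free parameters chosen to produce Calderón companions $\tilde\psi'\in\Psi_\beta$ and $\tilde\phi'\in\Theta$ satisfying $\cS^\cD_{\tilde\psi',\tilde\phi'}\cQ^\cD_{\psi,\phi}=I$ on $L^2$. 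For $u\in L^2\cap h^p_\cD$ the factorisation
\[
\cQ^\cD_{\eta,\varphi}u \;=\; \bigl(\cQ^\cD_{\eta,\varphi}\,\cS^\cD_{\tilde\psi',\tilde\phi'}\bigr)\,\cQ^\cD_{\psi,\phi}u
\]
reduces matters to the boundedness of $\cQ^\cD_{\eta,\varphi}\cS^\cD_{\tilde\psi',\tilde\phi'}$ on $t^p\oplus L^p_\mathscr{Q}$, which is precisely Theorem~\ref{locMainEst}(1) with $f\equiv 1$ (the required class memberships $\eta\in\Psi^\beta$, $\varphi\in\Theta^\beta$, $\tilde\psi'\in\Psi_\beta$, $\tilde\phi'\in\Theta$ all hold). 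Corollary~\ref{Cor: hphqDensehp}, together with the continuity of $\tilde\cQ^\cD_{\eta,\varphi}$ and $\tilde\cQ^\cD_{\psi,\phi}$ on $h^p_\cD$, extends the bound from $L^2\cap h^p_\cD$ to all of $h^p_\cD$.

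For the second equivalence in Part~(1), Lemma~\ref{Lem: ThetaBds}(1) yields the bounded extension $\tilde\cS^\cD_{\tilde\psi,\tilde\phi}:t^p\oplus L^p_\mathscr{Q}\to h^p_\cD$ and hence $\|u\|_{h^p_\cD}\lesssim\inf\{\|U\|_{t^p\oplus L^p_\mathscr{Q}}:u=\tilde\cS^\cD_{\tilde\psi,\tilde\phi}U\}$ whenever such a representation exists. For the existence and converse bound, I apply Proposition~\ref{sq=I} to the nondegenerate pair $\tilde\psi\in\Psi_\beta$ and $\tilde\phi\in\Phi$ with parameters prescribed so that the companions $\psi''\in\Psi^\beta$ and $\phi''\in\Theta^\beta$ satisfy $\cS^\cD_{\tilde\psi,\tilde\phi}\cQ^\cD_{\psi'',\phi''}=I$ on $L^2$. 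For $u\in h^p_\cD$, I set $U:=\tilde\cQ^\cD_{\psi'',\phi''}u\in t^p\oplus L^p_\mathscr{Q}$, whose norm is controlled by Lemma~\ref{Lem: ThetaBds}(1). The identity $u=\tilde\cS^\cD_{\tilde\psi,\tilde\phi}U$ holds for $u\in L^2\cap h^p_\cD$ by the reproducing formula, and extends to all of $h^p_\cD$ by the continuity of the composition $\tilde\cS^\cD_{\tilde\psi,\tilde\phi}\tilde\cQ^\cD_{\psi'',\phi''}$ together with the density from Corollary~\ref{Cor: hphqDensehp}.

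Part~(2) follows by the same argument, with Lemma~\ref{Lem: ThetaBds}(2) and Theorem~\ref{locMainEst}(2) replacing their Part~(1) counterparts; for $p=\infty$ the spaces $\tilde t^\infty\oplus\tilde L^\infty_\mathscr{Q}$ replace $t^\infty\oplus L^\infty_\mathscr{Q}$ throughout, and the density required for the extension-by-continuity steps is provided by Corollaries~\ref{Cor: tp2Interp} and \ref{Cor: LpQInterp}. The only substantive obstacle is bookkeeping: each invocation of Proposition~\ref{sq=I} must yield Calderón companions whose exponents match the class hypotheses of Theorem~\ref{locMainEst} and Lemma~\ref{Lem: ThetaBds}. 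Since the parameters $\sigma,\tau,\upsilon>0$ in Proposition~\ref{sq=I} can be prescribed freely, taking each at least $\beta$ resolves this systematically.
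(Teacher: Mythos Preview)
Your proposal is correct and follows essentially the same approach as the paper: both arguments insert a Calder\'on reproducing identity from Proposition~\ref{sq=I} (choosing the free parameters $\sigma,\tau,\upsilon$ to land the companions in $\Psi_\beta$, $\Psi^\beta$, $\Theta^\beta$ as needed), then invoke the boundedness of the resulting $\cQ^\cD\cS^\cD$ compositions and the density of $h^p_\cD\cap L^2$ in $h^p_\cD$. The only cosmetic difference is that where you cite Theorem~\ref{locMainEst} directly for the bound on $\cQ^\cD_{\eta,\varphi}\cS^\cD_{\tilde\psi',\tilde\phi'}$, the paper cites Lemma~\ref{Lem: ThetaBds}, whose proof is precisely that application of Theorem~\ref{locMainEst}.
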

\begin{proof}
Suppose that $p\in[1,2]$. Proposition~\ref{sq=I} shows that there exists $\psi'\in\Psi_\beta (S_{\theta,r}^o)$ and $\phi'\in\Theta(S_{\theta,r}^o)$ such that $\cS^\cD_{\psi',\phi'}\cQ^\cD_{\psi,\phi}=I$ on $L^2$. Lemma~\ref{Lem: ThetaBds} then shows that
\[
\|u\|_{h^p_\cD}
= \|\cQ^\cD_{\eta,\varphi}\cS^\cD_{\psi',\phi'}\cQ^\cD_{\psi,\phi} u\|_{t^p\oplus L^p_\mathscr Q}
\lesssim  \|\cQ^\cD_{\psi,\phi}u\|_{t^p\oplus L^p_\mathscr Q}
\lesssim  \|u\|_{h^p_\cD}
\]
for all $u\in h^p_\cD \cap L^2$, so by density we have $\|u\|_{h^p_\cD} \eqsim \|\tilde\cQ^\cD_{\psi,\phi}u\|_{t^p\oplus L^p_\mathscr Q}$ for all $u\in h^p_\cD$.

There also exists $\tilde\psi'\in\Psi^\beta(S_{\theta,r}^o)$ and $\tilde\phi'\in\Theta^{\beta}(S_{\theta,r}^o)$ such that $\cS^\cD_\svt{{\tilde\psi},{\tilde\phi}}\cQ^\cD_\svt{{\tilde\psi}',{\tilde\phi}'}=I$ on $h^p_\cD \cap L^2$, so by density we have $\tilde\cS^\cD_\svt{{\tilde\psi},{\tilde\phi}}\tilde\cQ^\cD_\svt{{\tilde\psi}',{\tilde\phi}'}=I$ on $h^p_\cD$. It then follows from Lemma~\ref{Lem: ThetaBds} that $h^p_\cD= \tilde\cS^\cD_{\tilde\psi,\tilde\phi}(t^p\oplus L^p_{\mathscr Q})$.

Now suppose that $u\in h^p_\cD$, in which case $u=\tilde\cS^\cD_\svt{\tilde\psi,\tilde\phi}\tilde\cQ^\cD_\svt{{\tilde\psi}',{\tilde\phi}'}u$ and there exists $V$ in $t^p \oplus L^p_{Q}$ such that $u=\tilde\cS^\cD_{\tilde\psi,\tilde\phi}V$ and $\|V\|_{t^p\oplus L^p_{Q}} \leq 2\inf_{u=\tilde\cS^\cD_{\psi,\phi}U} \|U\|_{t^p\oplus L^p_\mathscr Q}$. Lemma~\ref{Lem: ThetaBds} then shows that
\[
\inf_{u=\tilde\cS^\cD_{\psi,\phi}U} \|U\|_{t^p\oplus L^p_\mathscr Q} \leq \|\tilde\cQ^\cD_\svt{{\tilde\psi}',{\tilde\phi}'} u\|_{t^p\oplus L^p_{\mathscr Q}}
\lesssim \|u\|_{h^p_\cD}
= \|\tilde\cS^\cD_{\tilde\psi,\tilde\phi}V\|_{h^p_\cD}
\leq 2\inf_{u=\tilde\cS^\cD_{\psi,\phi}U} \|U\|_{t^p\oplus L^p_\mathscr Q},
\]
which completes the proof of (1). The proof of (2) is similar.
\end{proof}

All of the equivalent norms on $h^p_\cD$ are denoted by $\|\cdot\|_{h^p_\cD}$. As an example, recall the Hodge--Dirac operator $D$ and the Hodge--Laplacian $\Delta=D^2$ from Example~\ref{Eg: HodgeDirac}. If $\beta>\kappa/2$ and $a>\lambda^2/4$, then by recalling the $\Phi$-class functions listed after Definition~\ref{DefPhiClass}, we have
\begin{align*}
    \|u\|_{h^p_D}&\eqsim \|{tDe^{-t\sqrt{\Delta+aI}}u}\|_{t^p}+\|{e^{-\sqrt{\Delta+aI}}u}\|_{L^p_{\cQ}}\\
    &\eqsim \|t^2\Delta e^{-t^2\Delta}u\|_{t^p}+\|e^{-\Delta}u\|_{L^p_{\cQ}}\\
    &\eqsim \|tD(t^2\Delta+aI)^{-\beta}u\|_{t^p}+\|{(\Delta+aI)^{-\beta}u}\|_{L^p_{\cQ}}
\end{align*}
for all $u\in h^p_\cD$ and $p\in[1,\infty]$, where the operators are initially defined on $L^2$ and extended to $h^p_\cD$.

Finally, the duality results for the local tent spaces $t^p$ and the spaces $L^p_{\mathscr Q}$ allow us to derive a duality result for the local Hardy spaces.

\begin{Thm}\label{Thm: hp.duality}
If $p\in(1,\infty)$ and $1/p+1/p'=1$, then the mapping
\[
v\mapsto \langle u,v\rangle_{h^2_\cD} = \langle\tilde\cQ^\cD_{\eta,\varphi} u,\tilde\cQ^{\cD^*}_{\eta^*,\varphi^*} v\rangle_{L^2_\bullet\oplus L^2}
\]
for all $u\in h^p_\cD$ and $v\in h^{p'}_{\cD^*}$, is an isomorphism from $h^{p'}_{\cD^*}$ onto the dual $(h^p_\cD)^*$.
\end{Thm}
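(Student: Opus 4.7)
The plan is to leverage the duality between $t^p$ and $t^{p'}$ (Theorem \ref{Thm: tp2propertiesDuality}) and between $L^p_\mathscr{Q}$ and $L^{p'}_\mathscr{Q}$ (Theorem \ref{Thm: LpQpropertiesDuality}), combined with the retraction--coretraction structure $\tilde\cS^\cD_{\eta,\varphi}\tilde\cQ^\cD_{\eta,\varphi}=I$ from Proposition \ref{Prop: QandSext}. The adjoint operator $\cD^*$ satisfies (H1-3) with the same parameters, so the corresponding operators $\tilde\cQ^{\cD^*}_{\eta^*,\varphi^*}$, $\tilde\cS^{\cD^*}_{\eta^*,\varphi^*}$ and the space $h^{p'}_{\cD^*}$ are all available for use.

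First I would establish boundedness of the pairing. Since $\tilde\cQ^\cD_{\eta,\varphi}\colon h^p_\cD\to t^p\oplus L^p_\mathscr{Q}$ and $\tilde\cQ^{\cD^*}_{\eta^*,\varphi^*}\colon h^{p'}_{\cD^*}\to t^{p'}\oplus L^{p'}_\mathscr{Q}$ are bounded, Theorems \ref{Thm: tp2propertiesDuality} and \ref{Thm: LpQpropertiesDuality} yield
\[
|\langle u,v\rangle_{h^2_\cD}|\,\lesssim\,\|\tilde\cQ^\cD_{\eta,\varphi}u\|_{t^p\oplus L^p_\mathscr{Q}}\,\|\tilde\cQ^{\cD^*}_{\eta^*,\varphi^*}v\|_{t^{p'}\oplus L^{p'}_\mathscr{Q}}\,\lesssim\,\|u\|_{h^p_\cD}\|v\|_{h^{p'}_{\cD^*}},
\]
so that $v\mapsto\langle\cdot,v\rangle_{h^2_\cD}$ is a bounded linear map from $h^{p'}_{\cD^*}$ into $(h^p_\cD)^*$.

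The key technical step, and the main obstacle, is the adjoint identity
\[
\langle\tilde\cQ^\cD_{\eta,\varphi}\tilde\cS^\cD_{\eta,\varphi}W,G\rangle_{L^2_\bullet\oplus L^2}=\langle W,\tilde\cQ^{\cD^*}_{\eta^*,\varphi^*}\tilde\cS^{\cD^*}_{\eta^*,\varphi^*}G\rangle_{L^2_\bullet\oplus L^2}
\]
for $W\in t^p\oplus L^p_\mathscr{Q}$ and $G\in t^{p'}\oplus L^{p'}_\mathscr{Q}$. At the $L^2$ level this is immediate from the relations $\cS^\cD_{\eta,\varphi}=(\cQ^{\cD^*}_{\eta^*,\varphi^*})^*$ and $\cQ^\cD_{\eta,\varphi}=(\cS^{\cD^*}_{\eta^*,\varphi^*})^*$ noted after Definition \ref{Def: QandS}. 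To propagate it to the spaces at hand one observes that both sides are continuous bilinear forms on $(t^p\oplus L^p_\mathscr{Q})\times(t^{p'}\oplus L^{p'}_\mathscr{Q})$ thanks to the boundedness of the four extensions, and that $L^2$-integrable elements are dense on each factor for $p,p'\in(1,\infty)$ by Propositions \ref{Prop: tpt2denseintp} and \ref{Prop: LpQL2denseinLp}(1); the identity thus extends by continuity from the $L^2$ case, where the extensions agree with the original operators.

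Surjectivity then follows by composition. Given $F\in(h^p_\cD)^*$, the functional $\tilde F=F\circ\tilde\cS^\cD_{\eta,\varphi}$ lies in $(t^p\oplus L^p_\mathscr{Q})^*$ with $\|\tilde F\|\lesssim\|F\|$, so Theorems \ref{Thm: tp2propertiesDuality} and \ref{Thm: LpQpropertiesDuality} produce $G\in t^{p'}\oplus L^{p'}_\mathscr{Q}$ with $\|G\|\eqsim\|\tilde F\|$ such that $\tilde F(W)=\langle W,G\rangle_{L^2_\bullet\oplus L^2}$. Setting $v=\tilde\cS^{\cD^*}_{\eta^*,\varphi^*}G\in h^{p'}_{\cD^*}$ and applying first the reproducing formula on $h^0_\cD$ and then the adjoint identity,
\[
F(u)=\tilde F(\tilde\cQ^\cD_{\eta,\varphi}u)=\langle\tilde\cQ^\cD_{\eta,\varphi}\tilde\cS^\cD_{\eta,\varphi}\tilde\cQ^\cD_{\eta,\varphi}u,G\rangle_{L^2_\bullet\oplus L^2}=\langle\tilde\cQ^\cD_{\eta,\varphi}u,\tilde\cQ^{\cD^*}_{\eta^*,\varphi^*}v\rangle_{L^2_\bullet\oplus L^2}=\langle u,v\rangle_{h^2_\cD},
\]
with $\|v\|_{h^{p'}_{\cD^*}}\lesssim\|G\|\lesssim\|F\|$. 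For injectivity, if $\langle u,v\rangle_{h^2_\cD}=0$ for all $u\in h^p_\cD$, then choosing $u=\tilde\cS^\cD_{\eta,\varphi}W$ for arbitrary $W\in t^p\oplus L^p_\mathscr{Q}$ and applying the adjoint identity together with the reproducing formula on $h^0_{\cD^*}$ yields $\langle W,\tilde\cQ^{\cD^*}_{\eta^*,\varphi^*}v\rangle_{L^2_\bullet\oplus L^2}=0$; bijectivity of the $t$- and $L_\mathscr{Q}$-dualities in the reflexive range $p\in(1,\infty)$ forces $\tilde\cQ^{\cD^*}_{\eta^*,\varphi^*}v=0$, and then $v=\tilde\cS^{\cD^*}_{\eta^*,\varphi^*}\tilde\cQ^{\cD^*}_{\eta^*,\varphi^*}v=0$ in $h^{p'}_{\cD^*}$.
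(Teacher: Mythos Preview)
Your proof is correct and follows essentially the same approach as the paper. The only organisational difference is that you isolate the adjoint identity $\langle\tilde\cQ^\cD_{\eta,\varphi}\tilde\cS^\cD_{\eta,\varphi}W,G\rangle=\langle W,\tilde\cQ^{\cD^*}_{\eta^*,\varphi^*}\tilde\cS^{\cD^*}_{\eta^*,\varphi^*}G\rangle$ as a standalone lemma proved by density, whereas the paper performs this density passage inline during the surjectivity argument (first taking $U_T\in(t^{p'}\cap t^2)\oplus(L^{p'}_{\mathscr Q}\cap L^2)$ and $u\in h^p_\cD\cap L^2$, then extending); the injectivity argument is likewise identical in substance.
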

\begin{proof}
Using Theorems~\ref{Thm: tp2propertiesDuality} and \ref{Thm: LpQpropertiesDuality}, we obtain
\[
|\langle\tilde\cQ^\cD_{\eta,\varphi}u,\tilde\cQ^{\cD^*}_{\eta^*,\varphi^*}v\rangle_{L^2_\bullet\oplus L^2}|
\leq \|u\|_{h^p_\cD} \|v\|_{h^{p'}_{\cD^*}}
\]
for all $u\in h^p_\cD$ and $v\in h^{p'}_{\cD^*}$, since $\tilde\cQ^\cD_{\eta,\varphi}u\in t^p\oplus L^p_{\mathscr Q}$ by Definition~\ref{Def: local.Hardy}, and $\tilde\cQ^{\cD^*}_{\eta^*,\varphi^*}v$ is in $t^{p'}\oplus L^{p'}_{\mathscr Q}$ by Proposition~\ref{Prop: NormeEquivChangingQandS}.

Now suppose that $T\in (h^p_\cD)^*$ and define $\tilde T\in(t^p\oplus L^p_{\mathscr Q})^*$ by
\[
\tilde T(V) = T(\tilde\cS^\cD_{\eta,\varphi}V)
\]
for all $V\in t^p\oplus L^p_{\mathscr Q}$. It follows from Theorems~\ref{Thm: tp2propertiesDuality} and \ref{Thm: LpQpropertiesDuality} that there exists $U_T$ in $t^{p'}\oplus L^{p'}_{\mathscr Q}$ such that $\|U_T\|_{t^{p'}\oplus L^{p'}_{\mathscr{Q}}} \eqsim \|\tilde{T}\|$ and $\tilde T(V) = \langle V,U_T \rangle_{L^2_\bullet\oplus L^2}$ for all $V\in t^p\oplus L^p_{\mathscr Q}$. The reproducing formula $\tilde\cS^\cD_{\eta,\varphi}\tilde\cQ^\cD_{\eta,\varphi}=I$, which is valid on $h^p_\cD$  by Proposition~\ref{Prop: QandSext}, then implies that
\[
Tu= T(\tilde\cS^\cD_{\eta,\varphi}\tilde\cQ^\cD_{\eta,\varphi}u)
=\tilde T (\tilde\cQ^\cD_{\eta,\varphi}\tilde\cS^\cD_{\eta,\varphi}\tilde\cQ^\cD_{\eta,\varphi}u) = \langle \tilde\cQ^\cD_{\eta,\varphi}\tilde\cS^\cD_{\eta,\varphi}\tilde\cQ^\cD_{\eta,\varphi}u,U_T \rangle_{L^2_\bullet\oplus L^2}
\]
for all $u\in h^p_\cD$. If $U_T\in (t^{p'}\cap t^2) \oplus (L^{p'}_{\mathscr Q}\cap L^2_{\mathscr Q})$, then since $(\cQ^\cD_{\eta,\varphi})^*=\cS^{\cD^*}_{\eta^*,\varphi^*}$ on $t^2\oplus L^2_{\mathscr Q}$ and $(\cS^\cD_{\eta,\varphi})^*=\cQ^{\cD^*}_{\eta^*,\varphi^*}$ on $L^2$, we obtain
\[
Tu=\langle \cQ^\cD_{\eta,\varphi} u,\cQ^{\cD^*}_{\eta^*,\varphi^*}(\cS^{\cD^*}_{\eta^*,\varphi^*} U_T) \rangle_{L^2_\bullet\oplus L^2}
\]
for all $u \in h^p_\cD\cap L^2$. If $U_T\in t^{p'}\oplus L^{p'}_{\mathscr Q}$, then the density properties in Propositions~\ref{Prop: tpt2denseintp} and \ref{Prop: LpQL2denseinLp} imply that the above result extends to
\[
Tu=\langle \tilde\cQ^\cD_{\eta,\varphi} u, \tilde\cQ^{\cD^*}_{\eta^*,\varphi^*}(\tilde\cS^{\cD^*}_{\eta^*,\varphi^*} U_T) \rangle_{L^2_\bullet\oplus L^2}
\]
for all $u\in h^p_\cD$, and by Proposition~\ref{Prop: NormeEquivChangingQandS}, we have $\tilde\cS^{\cD^*}_{\eta^*,\varphi^*} U_T \in h^{p'}_{\cD^*}$ with 
\[
\|\tilde\cS^{\cD^*}_{\eta^*,\varphi^*} U_T\|_{h^{p'}_{\cD^*}} \lesssim \|U_T\|_{t^{p'}\oplus L^{p'}_{\mathscr{Q}}} \lesssim \|\tilde{T}\| \lesssim \|T\|,
\]
where the last inequality follows from Proposition~\ref{prop: normequiv}.

Finally, to prove injectivity, let $v\in h^{p'}_{\cD^*}$ and suppose that $\langle u,v\rangle_{h^2_{\cD}}=0$ for all $u\in h^p_{\cD}$. It suffices to show that $v=0$. Define $\ell(V)=\langle V, \tilde\cQ^{\cD^*}_{\eta^*,\varphi^*} v\rangle_{L^2_\bullet \oplus L^2}$ for all $V\in t^p\oplus L^p_{\mathscr{Q}}$, in which case $\ell \in (t^p\oplus L^p_{\mathscr{Q}})^*$ with $\|\ell\| \eqsim \|\tilde{\cQ}^{\cD^*}_{\eta^*,\varphi^*} v\|_{t^{p'}\oplus L^{p'}_{\mathscr{Q}}} \eqsim \|v\|_{h^{p'}_{\cD^*}}$, since $\tilde{\cQ}^{\cD^*}_{\eta^*,\varphi^*} v \in t^{p'}\oplus L^{p'}_{\mathscr{Q}}$. Using the reproducing formula and duality, we obtain
\[
\ell(V) =\langle V, \tilde{\cQ}^{\cD^*}_{\eta^*,\varphi^*}\tilde{\cS}^{\cD^*}_{\eta^*,\varphi^*}\tilde{\cQ}^{\cD^*}_{\eta^*,\varphi^*}v\rangle_{L^2_\bullet \oplus L^2}
=\langle \tilde{\cS}^{\cD}_{\eta,\varphi} V, v\rangle_{h^2_\cD}
=0
\]
for all $V\in t^{p}\oplus L^p_{\mathscr{Q}}$, since Proposition~\ref{prop: normequiv} implies that $\tilde{\cS}^{\cD}_{\eta,\varphi} V \in h^p_{\cD}$. Altogether, we have $\|v\|_{h^{p'}_{\cD^*}} \eqsim \|\ell\| = 0$, hence $v=0$ as required.
\end{proof}

\subsection{Molecular Characterisation} \label{SubSection2ndmainresult}
We prove a molecular characterisation of $h^1_\cD$. The Hardy space $H^1_D$ from \cite{AMcR} is characterised in terms of $H^1_D$-molecules, which are differential forms $a$ that satisfy $a=D^Nb$ for some differential form $b$ and $N\in\N$. In contrast to atoms, molecules are not assumed to be compactly-supported. Instead, the $L^2$-norms of $a$ and $b$ are concentrated on some ball. The condition $a=D^Nb$ is the substitute for the moment condition required of classical atoms. The molecular characterisation of $h^1_\cD$ proved here involves two different types of molecules, reflecting the atomic characterisation of $h^1(\R^n)$ mentioned in the introduction. The first kind are concentrated on balls of radius less than 1 and are of the type used to characterise $H^1_D$, whilst the second kind are concentrated on balls of radius larger than 1 and are not required to satisfy a moment condition.

We use the following notation to specify the $L^2$-norm distribution of molecules.

\begin{Not1}
Given a ball $B$ in $M$ of radius $r(B)>0$, let $\ca_k(B)$ denote the characteristic function defined by
\[
\ca_k(B)=
\begin{cases}
\ca_{B}&{\rm if}\quad k=0;\\
\ca_{2^{k}B\setminus 2^{k-1}B}& {\rm if}\quad k=1,2,\dots.\\
\end{cases}
\]
\end{Not1}

\begin{Def}\label{localmoleculedef}
Given $N\in\N$ and $q\geq0$, an $h_\cD^1$-$\textit{molecule of type }(N,q)$ is a measurable differential form $a$ associated with a ball $B$ in $M$ of radius $r(B)>0$ such that the following hold:
\begin{enumerate}\setlength\itemsep{3pt}
\item The bound $\|\ca_k(B)a\|_2 \leq \exp({-q 2^{k-1}r(B)}) 2^{-k} \mu(2^kB)^{-{1}/{2}}$ for all $k\geq0$;
\item If $r(B)<1$, then there exists a differential form $b$ with ${a=\cD^N b}$ and the bound $\|\ca_k(B)b\|_2 \leq r(B)^N \exp({-q 2^{k-1}r(B)}) 2^{-k}\mu(2^kB)^{-{1}/{2}}$ for all $k\geq~0$.
\end{enumerate}
\end{Def}

If $a$ and $b$ are as in Definition~\ref{localmoleculedef}, then $a$ and $b$ are in $L^2=h^2_\cD\subseteq h^0_\cD$ with
\begin{equation}\label{N-molFullL2BddRemLocal(a)}
\|a\|_2 \leq \sum_{k=0}^\infty \|\ca_k(B) a\|_2
\leq 2 e^{-qr(B)/2}\mu(B)^{-\frac{1}{2}}
\end{equation}
and
\begin{equation}\label{N-molFullL2BddRemLocal(b)}
\|b\|_2 \leq 2r(B)^N e^{-qr(B)/2} \mu(B)^{-\frac{1}{2}}.
\end{equation}
Condition (2) is obviated in Definition~\ref{localmoleculedef} when $r(B)\geq1$, so we set $N=0$ in that case. We will see that $q$ is related to the exponential growth parameter $\lambda$ in \eqref{ELD}, and that we can set $q=0$ when $M$ is doubling, since then $\lambda=0$. Given~$\delta>1$, note that the results in this section also hold for $h_\cD^1$-molecules defined by replacing $2^k$ and $2^{-k}$ with $\delta^k$ and $\delta^{-k}$ in Definition~\ref{localmoleculedef}.

\begin{Def}
Given $N\in\N$ and $q\geq0$, define $h^1_{\cD,\text{mol}(N,q)}$ to be the space of all $u$ in $h^0_\cD$ for which there exist a sequence $(\lambda_j)_j$ in $\ell^1$ and a sequence $(a_j)_j$ of $h^1_\cD$-molecules of $\text{type }(N,q)$ such that $\sum_j \lambda_j a_j$ converges to $u$ in $h^0_\cD$. Moreover, define
\[
\|u\|_{h^1_{\cD,\text{mol}(N,q)}} = \inf \{\|(\lambda_j)_j\|_{\ell^1} : f=\textstyle{\sum_j} \lambda_j a_j\}
\]
for all $u\in h^1_{\cD,\text{mol}(N,q)}$.
\end{Def}

The following is the molecular characterisation of $h^1_{\cD}$. Theorem~\ref{Thm: Intro.Molecules} follows from this result in the case of the Hodge--Dirac operator by Example~\ref{Eg: HodgeDirac}.

\begin{Thm}\label{mainmoleculeresult}
Let $\kappa,\lambda\geq0$ and suppose that $M$ is a complete Riemannian manifold satisfying \eqref{ELD}. Suppose that $\cD$ is a closed densely-defined operator on $L^2(\wedge T^*M)$ satisfying (H1-3) from Section~\ref{SectionODE}. If $N\in\N$, $N>\kappa/2$ and $q\geq\lambda$, then
$h^1_\cD=h^1_{\cD,\text{mol}(N,q)}$.
\end{Thm}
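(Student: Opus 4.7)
The plan is to prove the two inclusions $h^1_{\cD,\mathrm{mol}(N,q)} \subseteq h^1_\cD$ and $h^1_\cD \subseteq h^1_{\cD,\mathrm{mol}(N,q)}$ separately. Throughout I fix $\theta\in(\omega,\pi/2)$ and $r>R$ large enough that $r/(C_\cD C_{\theta,r})>q/2$; this parallels the Main Estimate's hypothesis $r/(C_\cD C_{\theta,r})>\lambda/2$ and is compatible with $q\geq\lambda$. Fix also $\beta>\kappa/2$ with $\beta\geq N$, which is permissible precisely because $N>\kappa/2$. By Proposition~\ref{Prop: NormeEquivChangingQandS}, $\|\cdot\|_{h^1_\cD}$ may be computed with any pair $\psi\in\Psi^\beta(S^o_{\theta,r})$, $\phi\in\Phi^\beta(S^o_{\theta,r})$, and elements of $h^1_\cD$ may be represented via $\tilde\cS^\cD_{\tilde\psi,\tilde\phi}$ for any $\tilde\psi\in\Psi_\beta(S^o_{\theta,r})$, $\tilde\phi\in\Phi(S^o_{\theta,r})$.

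For the inclusion $h^1_{\cD,\mathrm{mol}(N,q)}\subseteq h^1_\cD$, it suffices by the definition of $h^1_{\cD,\mathrm{mol}(N,q)}$ to show a uniform bound $\|a\|_{h^1_\cD}\lesssim 1$ for every molecule $a$ of type $(N,q)$ associated to a ball $B$. I choose $\psi\in\Psi^\beta(S^o_{\theta,r})$ of the form $\psi(z)=z^N\sigma(z)$ and $\phi\in\Phi^\beta(S^o_{\theta,r})$, and estimate $\|\psi_t(\cD)a\|_{t^1}$ and $\|\phi(\cD)a\|_{L^1_\mathscr{Q}}$ by decomposing $a=\sum_{k\geq 0}\ca_k(B)a$ and applying the exponential off-diagonal estimates of Proposition~\ref{ODexp5}. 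When $r(B)\geq 1$, condition~(2) of Definition~\ref{localmoleculedef} is vacuous and the argument parallels Lemmas~\ref{LemQ12} and~\ref{LemQ22}, with the molecule's exponential annular decay substituting for the atom's compact support. When $r(B)<1$, I exploit condition~(2) via the identity $\psi_s(\cD)a=\psi_s(\cD)\cD^Nb=s^{-N}\zeta_s(\cD)b$ for $\zeta(z)=z^{2N}\sigma(z)$; after integrating $\frac{ds}{s}$ over $(0,r(B)]$, the factor $r(B)^N$ in the molecular bound on $\|\ca_k(B)b\|_2$ compensates the $s^{-N}$.

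For the reverse inclusion $h^1_\cD\subseteq h^1_{\cD,\mathrm{mol}(N,q)}$, I choose $\tilde\psi\in\Psi_\beta(S^o_{\theta,r})$ of the form $\tilde\psi(z)=z^N\tilde\sigma(z)$ and $\tilde\phi\in\Phi(S^o_{\theta,r})$, and use Proposition~\ref{Prop: NormeEquivChangingQandS}(1) to represent any $u\in h^1_\cD$ as $u=\tilde\cS^\cD_{\tilde\psi,\tilde\phi}(U,v)$ with $(U,v)\in t^1\oplus L^1_\mathscr{Q}$ and $\|(U,v)\|_{t^1\oplus L^1_\mathscr{Q}}\lesssim\|u\|_{h^1_\cD}$. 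Proposition~\ref{Prop: CarlesonBoxTentAtoms} decomposes $U=\sum_j\lambda_jA_j$ into $t^1$-Carleson atoms $A_j$ supported on $C^1(B_j)$, and Theorem~\ref{mainLQatomic} with Remark~\ref{RemSmallL1QAtoms} decomposes $v=\sum_j\mu_ja_j$ into $L^1_\mathscr{Q}$-atoms on balls of radius equal to $1$. It then remains to show that each $\int_0^{\langle r(B)\rangle}\tilde\psi_s(\cD)A_s\,\frac{ds}{s}$ is a uniform constant multiple of an $h^1_\cD$-molecule of type $(N,q)$ associated to $B$, and similarly that each $\tilde\phi(\cD)a$ is such a molecule. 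For $r(B)<1$ the factorisation $\tilde\psi_s(\cD)=s^N\cD^N\tilde\sigma(s\cD)$ realises the required form $\cD^Nb$ with $b=\int_0^{r(B)}s^N\tilde\sigma(s\cD)A_s\,\frac{ds}{s}$, and the annular $L^2$ bounds on $a=\cD^Nb$ and on $b$ follow by off-diagonal arguments together with Cauchy--Schwarz, much as in the proof of Lemma~\ref{LemQ11}.

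The principal obstacle, in both directions, is producing the \emph{exponential} annular decay $\exp(-q2^{k-1}r(B))$ appearing in Definition~\ref{localmoleculedef}, rather than merely polynomial decay. The volume growth factor $e^{\lambda 2^k r(B)}$ arising from the comparison $\mu(B)^{-1}\lesssim e^{\lambda 2^k r(B)}\mu(2^k B)^{-1}$ via~\eqref{ELD} must be absorbed into the off-diagonal exponential. This is where the choice $r/(C_\cD C_{\theta,r})>q/2$ is used crucially: Proposition~\ref{ODexp5} with constant $a$ close to $1$ delivers decay at rate arbitrarily close to $r/(C_\cD C_{\theta,r})$, so the surplus $2a\,r/(C_\cD C_{\theta,r})-\lambda>q-\lambda\geq 0$ precisely absorbs the volume growth. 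The polynomial factor $2^{-k}$ in Definition~\ref{localmoleculedef} is extracted from any residual polynomial gain in the off-diagonal estimates, obtained from additional orders of decay of $\sigma$ (respectively $\tilde\sigma$) at infinity beyond those strictly required.
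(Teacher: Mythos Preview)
Your overall strategy matches the paper's: each inclusion is handled by choosing suitable auxiliary functions, decomposing into $t^1$- and $L^1_\mathscr{Q}$-atoms (respectively into annular pieces of a molecule), and invoking the off-diagonal estimates. The gap is in the off-diagonal rate you secure.

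The condition $r/(C_\cD C_{\theta,r})>q/2$ is too weak for either direction. For $h^1_\cD\subseteq h^1_{\cD,\mathrm{mol}(N,q)}$ you must \emph{produce} the bound $e^{-q2^{k-1}r(B)}\mu(2^kB)^{-1/2}$ starting from a compactly supported atom. Comparing $\mu(B)^{-1/2}$ with $\mu(2^kB)^{-1/2}$ via \eqref{ELD} costs roughly $e^{\lambda 2^{k-1}r(B)}$, so the off-diagonal rate (for the norm, not its square) must exceed $\lambda+q$; rate $q/2$ leaves a deficit. For $h^1_{\cD,\mathrm{mol}(N,q)}\subseteq h^1_\cD$ the molecule lives on all annuli $2^lB$, and for $l\ll k$ one must absorb $\mu(2^lB)^{-1}\lesssim e^{\lambda(2^k-2^l)r(B)}\mu(2^kB)^{-1}$; here the off-diagonal rate must be at least $\lambda$, after which $q\geq\lambda$ closes the estimate. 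With $q=\lambda$ your rate exceeds only $\lambda/2$, and the exponent of the squared norm in the critical case $l\ll k$ is $(\lambda-R_0)2^k-(\lambda+q)2^l$ with $R_0\approx\lambda/2$, which blows up. Your surplus computation $2a\,r/(C_\cD C_{\theta,r})-\lambda>q-\lambda\geq0$ conflates the two directions and in any case yields only a nonnegative exponent, not the rate $q$ required by Definition~\ref{localmoleculedef}.

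The paper resolves this by exploiting the freedom to choose the auxiliary functions holomorphic on a \emph{larger} domain $S^o_{\theta,\tilde r}$ with $\tilde r\geq r$: in Lemmas~\ref{localmolecularlemma1}--\ref{localmolecularlemma1.5} one takes $\tilde r/(C_\cD C_{\theta,\tilde r})>\lambda+q$, and in Lemmas~\ref{localmolecularlemma2}--\ref{localmolecularlemma2.5} one takes $\tilde r/(C_\cD C_{\theta,\tilde r})>\lambda$. Such functions still restrict to the required classes over $S^o_{\theta,r}$, so Proposition~\ref{Prop: NormeEquivChangingQandS} applies unchanged. A minor point: Proposition~\ref{Prop: CarlesonBoxTentAtoms} does not decompose $U\in t^1$ into Carleson atoms---that is Theorem~\ref{maintentatomic}. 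Carleson atoms appear instead in Lemma~\ref{localmolecularlemma2}, in the reverse role of reassembling $\psi_t(\cD)a$ from pieces supported in Carleson boxes.
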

\begin{proof}
Fix $N\in\N$ and $q\geq0$. Let $\tilde\psi$ and $\tilde\phi$ be the functions from Lemmas~\ref{localmolecularlemma1} and \ref{localmolecularlemma1.5} below. Suppose that $u\in h^1_\cD \subseteq h^0_\cD$. Proposition \ref{Prop: NormeEquivChangingQandS} then implies that there exists $(V,v)\in t^1\oplus L^1_{\mathscr Q}$ such that $u=\tilde\cS^\cD_{\tilde\psi,\tilde\phi}(V,v)$ and $\|(V,v)\|_{t^1\oplus L^1_{\mathscr Q}} \lesssim \|u\|_{h^1_\cD}$. Also, by Theorems~\ref{maintentatomic} and \ref{mainLQatomic}, there exist a sequence $(A_j)_j$ of $t^1$-atoms, a sequence $(a_j)_j$ of $L^1_{\mathscr Q}$-atoms and two sequences $(\lambda_j)_j$ and $(\tilde\lambda_j)_j$ in $\ell^1$ such that
\[
V=\sum_j\lambda_j A_j \quad\text{and}\quad v=\sum_j \tilde\lambda_j a_j,
\]
where these sums converge in $t^1$ and $L^1_{\mathscr Q}$, respectively. Moreover, we can assume that $\|(\lambda_j)_j\|_{\ell^1}\lesssim \|V\|_{t^1}$, $\|(\tilde\lambda_j)_j\|_{\ell^1} \lesssim \|v\|_{L^1_{\mathscr Q}}$ and, by Remark~ \ref{RemSmallL1QAtoms}, that each ${L^1_{\mathscr Q}\textrm{-atom}}$ $a_j$ is associated with a ball of radius equal to 1. Therefore, we have
\[
u=\sum_j \left(\lambda_j\int_0^1 \tilde\psi_t(\cD) A_j\frac{\d t}{t} + \tilde\lambda_j \tilde\phi(\cD)a_j\right),
\]
where the sum converges in $h^1_\cD$, and hence also in $h^0_\cD$, because Proposition~\ref{Prop: NormeEquivChangingQandS} implies that
\[
\textstyle\|u-\sum_{j=1}^n\cS^\cD_{\tilde\psi,\tilde\phi}(\lambda_j A_j,\tilde\lambda_j a_j) \|_{h^1_\cD}
\lesssim \|V-\sum_{j=1}^n\lambda_j A_j\|_{t^1} + \|v-\sum_{j=1}^n\tilde\lambda_j a_j\|_{L^1_{\mathscr Q}}
\]
for all $n\in\N$. It follows from Lemmas~\ref{localmolecularlemma1} and \ref{localmolecularlemma1.5} that $u\in h^1_{\cD,\text{mol}(N,q)}$, and since $\|(\lambda_j)_j\|_{\ell^1}+\|(\tilde\lambda_j)_j\|_{\ell^1} \lesssim \|u\|_{h^1_\cD}$, we have shown that $h^1_\cD\subseteq h^1_{\cD,\text{mol}(N,q)}$.

We prove the converse in the case $N\in\N$, $N>\kappa/2$ and $q\geq\lambda$. Let $\psi$ and $\phi$ be the functions from Lemmas~\ref{localmolecularlemma2} and \ref{localmolecularlemma2.5} below. Suppose that $u\in h^1_{\cD,\text{mol}(N,q)}\subseteq h^0_\cD$. There exist a sequence $(a_j)_j$ of $h^1_{\cD}$-molecules of $\text{type }(N,q)$ and a sequence $(\lambda_j)_j$ in $\ell^1$ such that
$\sum_j\lambda_ja_j$ converges to $u$ in $h^0_\cD$. It follows from Proposition~\ref{Prop: NormeEquivChangingQandS} and Lemmas~\ref{localmolecularlemma2} and \ref{localmolecularlemma2.5} that $\sum_{j=1}^n \lambda_j a_j$ is in $h^1_\cD$ with $\|\sum_{j=1}^n \lambda_j a_j\|_{h^1_\cD}\lesssim \sum_{j=1}^n |\lambda_j|$ for all $n\in\N$. Therefore, there exists $v$ in $h^1_\cD$ such that $\sum_j\lambda_ja_j$ converges to $v$ in $h^1_\cD$, and hence also in $h^0_\cD$. This implies that $u=v\in h^1_\cD$, so by Proposition~\ref{Prop: NormeEquivChangingQandS} we have
\[
\textstyle \|\tilde\cQ^\cD_{\psi,\phi} u -\sum_{j=1}^n \lambda_j \cQ^\cD_{\psi,\phi} a_j \|_{t^1\oplus L^1_{\mathscr Q}}
\lesssim \|u-\sum_{j=1}^n\lambda_j a_j\|_{h^1_\cD}
\]
for all $n\in\N$. It follows from Lemmas \ref{localmolecularlemma2} and \ref{localmolecularlemma2.5} that
\[
\textstyle
\|u\|_{h^1_\cD} \eqsim \|\tilde\cQ^\cD_{\psi,\phi}u\|_{t^1\oplus L^1_{\mathscr Q}}
\leq \sum_j\lambda_j \|(\psi_t(\cD)a_j,\phi(\cD)a_j)\|_{t^1\oplus L^1_{\mathscr Q}}
\lesssim \|(\lambda_j)_j\|_{\ell^1},
\]
which shows that $h^1_{\cD,\text{mol}(N,q)} \subseteq h^1_\cD$.
\end{proof}

We now prove four lemmas to construct the functions $\tilde\psi,\tilde\phi,\psi$ and $\phi$ that were used to prove Theorem~\ref{mainmoleculeresult}.

\begin{Lem} \label{localmolecularlemma1}
Let $\theta\in(\omega,\tfrac{\pi}{2})$, $r>R$ and $\beta>\kappa/2$ such that $r/C_\cD C_{\theta,r}>\lambda/2$. For each $N\in\N$ and $q\geq0$, there exist $c>0$ and $\tilde\psi\in\Psi_\beta(S_{\theta,r}^o)$ such that
\[c\int_0^1\tilde\psi_t(\cD)A_t\frac{\d t}{t}\]
is an $h^1_\cD$-molecule of $\text{type }(N,q)$ for all $A$ that are $t^1$-atoms.
\end{Lem}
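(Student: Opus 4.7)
The idea is to take $\tilde\psi(z)=z^N\eta(z)$ for a suitable auxiliary function $\eta\in\Psi(S_{\theta,r}^o)$, chosen so that $\tilde\psi\in\Psi_\beta(S_{\theta,r}^o)$ has enough vanishing at $0$ and decay at $\infty$ (concretely, $\tilde\psi\in\Psi_{M+\delta}^\delta(S_{\theta,r}^o)$ for a large integer $M$ and some $\delta>0$). Since $\tilde\psi_t(\cD)=t^N\cD^N\eta_t(\cD)$, the definitions $m := c\int_0^1\tilde\psi_t(\cD)A_t\,\frac{\d t}{t}$ and $b := c\int_0^1 t^N\eta_t(\cD)A_t\,\frac{\d t}{t}$ automatically give $m=\cD^N b$, securing the factorization required by condition~(2) of Definition~\ref{localmoleculedef} (which is only relevant when $r(B)<1$). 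It then remains to verify the $L^2$-size estimates in~(1) on $m$, and their analogue on $b$, uniformly over $t^1$-atoms $A$, for one fixed $c>0$.

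For the central piece $k=0$, the bound $\|m\|_2\lesssim c\mu(B)^{-1/2}$ follows from $\|A\|_{L^2_\bullet}\leq\mu(B)^{-1/2}$ together with the $L^2$-boundedness of $U\mapsto\int_0^1\tilde\psi_s(\cD)U_s\,\frac{\d s}{s}$, a consequence of (H2) and the quadratic estimate for $\cD^*$. For the annuli $k\geq 1$, the spatial support of $A_t$ lies inside $B$ by the tent-atom hypothesis, and duality with Cauchy--Schwarz in $t$ gives
\[
\|\ca_k(B) m\|_2 \leq c\|A\|_{L^2_\bullet}\bigg(\int_0^{r(B)}\|\ca_B\tilde\psi_t(\cD)^*\ca_k(B)\|^2\,\frac{\d t}{t}\bigg)^{1/2}.
\]
Since $\rho(B,2^kB\setminus 2^{k-1}B)\gtrsim 2^{k-1}r(B)$ for $k\geq 2$, Lemma~\ref{ODexpHvNP}(1) yields $\|\ca_B\tilde\psi_t(\cD)^*\ca_k(B)\|\lesssim\langle t/\rho\rangle^M\exp(-a(r/C_\cD C_{\theta,r})\rho)$, and evaluating the resulting integral produces
\[
\|\ca_k(B)m\|_2\lesssim c\mu(B)^{-1/2}\,2^{-Mk}\exp\!\big({-}\alpha\cdot 2^{k-1}r(B)\big)
\]
for any $\alpha<r/C_\cD C_{\theta,r}$. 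The analogous bound on $\|\ca_k(B)b\|_2$ carries an extra factor $r(B)^N$ coming from the $t^N$ in its integrand.

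The last step converts $\mu(B)^{-1/2}$ into $\mu(2^kB)^{-1/2}$ via \eqref{ELD}, inserting compensating factors $A^{1/2}\,2^{k\kappa/2}\exp(\lambda(2^k-1)r(B)/2)$. The polynomial part $2^{k(1+\kappa/2)}$ is absorbed into $2^{-Mk}$ by choosing $M$ large in the construction of $\eta$, exploiting the freedom $\tilde\psi\in\Psi_\beta$. The \emph{main obstacle} is that the exponential factor from the volume conversion erodes the decay coming from the off-diagonal estimate: one needs the off-diagonal rate $\alpha$ to dominate the combined contribution $q+\lambda$ per unit distance so that the molecule's required $\exp(-q\cdot 2^{k-1}r(B))$ survives. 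This is arranged by letting $a\to 1$ in Lemma~\ref{ODexpHvNP}, by invoking the hypothesis $r/C_\cD C_{\theta,r}>\lambda/2$, and by exploiting the squaring/square-root improvement familiar from the proof of Lemma~\ref{LemQ11} (where working in the squared norm effectively halves the volume-growth contribution per unit distance in the final bound). Finally, choosing $c$ sufficiently small absorbs any remaining multiplicative constants, so that $c\int_0^1\tilde\psi_t(\cD)A_t\,\frac{\d t}{t}$ meets Definition~\ref{localmoleculedef} as an $h^1_\cD$-molecule of type $(N,q)$.
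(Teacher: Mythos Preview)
Your overall architecture matches the paper's: write $\tilde\psi(z)=z^N\eta(z)$ to get the factorisation $m=\cD^Nb$, handle $k=0,1$ by the $L^2$ functional calculus, and for $k\geq2$ combine off-diagonal estimates from Lemma~\ref{ODexpHvNP} with the atom's $L^2_\bullet$ bound via Cauchy--Schwarz in $t$. That part is fine.

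The genuine gap is in the exponential bookkeeping. You correctly observe that after converting $\mu(B)^{-1/2}$ to $\mu(2^kB)^{-1/2}$ via \eqref{ELD} the off-diagonal rate must beat $\lambda+q$ per unit distance in order to leave the required factor $\exp(-q\,2^{k-1}r(B))$. But then you claim this is achieved by letting $a\to1$ in Lemma~\ref{ODexpHvNP} together with the hypothesis $r/C_\cD C_{\theta,r}>\lambda/2$ and a ``squaring/square-root improvement''. This does not work: the best rate Lemma~\ref{ODexpHvNP} can give for a function in $\Psi(S_{\theta,r}^o)$ is $r/C_\cD C_{\theta,r}$, which by hypothesis may be only slightly larger than $\lambda/2$. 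The halving you allude to from Lemma~\ref{LemQ11} is already present in your bound (it is precisely why the volume conversion contributes $\lambda/2$ and not $\lambda$ to the square root), so there is no further gain to extract. For large $q$ your exponent $-\alpha+\lambda$ is not even negative, let alone $\leq -q$, and the argument collapses.

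The paper's fix is a one-line but essential trick you are missing: since you get to \emph{choose} $\tilde\psi$, choose it to be holomorphic on a strictly larger set. Pick $\tilde r\geq r$ with $\tilde r/C_\cD C_{\theta,\tilde r}>\lambda+q$ (possible because $C_{\theta,\tilde r}$ is non-increasing in $\tilde r$, so $\tilde r/C_{\theta,\tilde r}\to\infty$), and take $\tilde\psi\in\Psi_{\beta+N+1}(S_{\theta,\tilde r}^o)\subset\Psi_\beta(S_{\theta,r}^o)$. Lemma~\ref{ODexpHvNP} applied on $S_{\theta,\tilde r}^o$ then gives off-diagonal decay at rate $a\tilde r/C_\cD C_{\theta,\tilde r}>\lambda+q$ for $a$ close enough to $1$, and the rest of your computation goes through verbatim. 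Note that the same device is used in Lemmas~\ref{localmolecularlemma1.5}, \ref{localmolecularlemma2} and \ref{localmolecularlemma2.5}.
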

\begin{proof} Let $A$ be a $t^1$-atom. There exists a ball $B$ in $M$ with radius $r(B)\leq 2$ such that $A$ is supported in $T^1(B)$ and $\|A\|_{L^2_{\bullet}}\leq \mu(B)^{-1/2}$. Choose $\tilde r$ so that $\tilde r\geq r$ and $\tilde r/C_\cD C_{\theta,\tilde r}>\lambda+q$. Also, choose $\tilde\psi$ in $\Psi_{\beta+N+1}(S_{\theta,\tilde r}^o)$, in which case $\psi\in\Psi_\beta(S_{\theta,r}^o)$. Next, define $\tilde{\tilde\psi}(z)= z^{-N}\tilde\psi(z)$, in which case $\tilde{\tilde\psi}\in\Psi_{\beta+1}(S_{\theta,r}^o)$ and
\[\int_0^1\tilde\psi_t(\cD)A_t\frac{\d t}{t} = \cD^N\left(\int_0^1t^N\tilde{\tilde\psi}_t(\cD)A_t\frac{\d t}{t}\right).\]
It remains to prove that there exists $c>0$, which does not depend on $A$, such~that
\begin{equation}\label{eq: a1}
\|\ca_k(B)\left(\int_0^1\tilde\psi_t(\cD)A_t\frac{\d t}{t}\right)\|_2
\leq ce^{-q2^{k-1}r(B)}2^{-k}\mu(2^kB)^{-\frac{1}{2}}
\end{equation}
for all $k\geq0$, and that if $r(B)<1$, then
\begin{equation}\label{eq: a2}
\|\ca_k(B)\left(\int_0^1t^N\tilde{\tilde\psi}_t(\cD)A_t\frac{\d t}{t}\right)\|_2
\leq cr(B)^N e^{-q2^{k-1}r(B)} 2^{-k}\mu(2^kB)^{-\frac{1}{2}}
\end{equation}
for all $k\geq0$.

Now, since $\beta>\kappa/2$ and $\tilde r/C_\cD C_{\theta,\tilde r}>\lambda+q$, Lemma~\ref{ODexpHvNP} implies the following estimates:
\begin{align}
\label{psiest} \|\ca_E\tilde\psi_t(\cD)\ca_F\|
&\lesssim \langle t/\rho(E,F)\rangle^{\frac{\kappa}{2}+1} e^{-(\lambda+q)\rho(E,F)};\\
\label{tpsiest} \|\ca_E\tilde{\tilde\psi}_t(\cD)\ca_F\|
&\lesssim \langle t/\rho(E,F)\rangle^{\frac{\kappa}{2}+1} e^{-(\lambda+q)\rho(E,F)}
\end{align}
for all $t\in(0,1]$ and closed subsets $E$ and $F$ of $M$.

We now prove \eqref{eq: a1}. If $k=0$ or $k=1$, then by \eqref{eq: h2equiv2} and  $r(B)\leq2$, we have
\[\|\ca_k(B)\left(\int_0^1\tilde\psi_t(\cD)A_t\frac{\d t}{t}\right)\|_2 \lesssim \|A\|_{L^2_\bullet} \lesssim
\begin{cases}
e^{-q2^{-1}r(B)}\mu(B)^{-\frac{1}{2}}  &{\rm if}\quad k=0; \\
e^{-qr(B)}2^{-1}\mu(2B)^{-\frac{1}{2}} &{\rm if}\quad k=1.
\end{cases}
\]
If $k\geq 2$, then \[\rho(2^kB\backslash2^{k-1}B,B)=(2^{k-1}-1)r(B)\gtrsim 2^kr(B)\]
and $\mu(2^kB) \leq 2^{k\kappa}e^{\lambda(2^k-1)r(B)}\mu(B)$, so by \eqref{psiest}, and since $r(B)\leq2$, we have
\begin{align*}
\|\ca_k(B)\bigg(\int_0^1\tilde\psi_t(\cD)&A_t\frac{\d t}{t}\bigg)\|_2
\leq \int_0^{r(B)} \|\ca_k(B)\tilde\psi_t(\cD)\ca_B\| \|A_t\|_2 \frac{\d t}{t} \\
&\lesssim \left(\int_0^{r(B)} \left(\frac{t}{2^k {r(B)}}\right)^{2(\frac{\kappa}{2}+1)} \frac{\d t}{t}\right)^{\frac{1}{2}} e^{-(\lambda+q)(2^{k-1}-1){r(B)}} \|A\|_{L^2_\bullet} \\
&\leq 2^{-k(\frac{\kappa}{2}+1-\frac{\kappa}{2})} e^{-q(2^{k-1}-1){r(B)}} e^{\lambda(-2^{k-1}+1+2^{k-1}-\frac{1}{2}){r(B)}} \mu(2^kB)^{-\frac{1}{2}} \\
&\lesssim e^{-q2^{k-1}{r(B)}} 2^{-k} \mu(2^kB)^{-\frac{1}{2}}.
\end{align*}

We prove \eqref{eq: a2} similarly. If $k=0$ or $k=1$, then we have
\begin{align*}
\|\ca_k(B)\left(\int_0^1t^N\tilde{\tilde\psi}_t(\cD)A_t\frac{\d t}{t}\right)\|_2
&\lesssim {r(B)}^N \|A\|_{L^2_\bullet} \\
&\lesssim {r(B)}^N
\begin{cases}
e^{-q2^{-1}{r(B)}}\mu(B)^{-\frac{1}{2}} &{\rm if}\quad k=0; \\
e^{-qr(B)}2^{-1}\mu(2B)^{-\frac{1}{2}}  &{\rm if}\quad k=1.
\end{cases}
\end{align*}

If $k\geq 2$, then by \eqref{tpsiest} we have
\begin{align*}
\|\ca_k(B)\left(\int_0^1t^N\tilde{\tilde\psi}_t(\cD)A_t\frac{\d t}{t}\right)\|_2
&\leq {r(B)}^N \int_0^{r(B)} \|\ca_k(B)\tilde{\tilde\psi}_t(\cD)\ca_B\| \|A_t\|_2 \frac{\d t}{t} \\
&\lesssim {r(B)}^N e^{-q2^{k-1}{r(B)}} 2^{-k} \mu(2^kB)^{-\frac{1}{2}},
\end{align*}
which completes the proof.
\end{proof}

\begin{Lem} \label{localmolecularlemma1.5}
Let $\theta\in(\omega,\tfrac{\pi}{2})$, $r>R$ and $\beta>\kappa/2$ such that $r/C_\cD C_{\theta,r}>\lambda/2$. For each $N\in\N$ and $q\geq0$, there exist $c>0$ and $\tilde\phi\in\Phi(S_{\theta,r}^o)$ such that $c\tilde\phi(\cD)a$ is an $h^1_\cD$-molecule of $\text{type }(N,q)$ for all $a$ that are $L^1_{\mathscr{Q}}$-atoms supported on balls $B$ of radius ${r(B)}=1$ with $\|a\|_2\leq \mu(B)^{-1/2}$.
\end{Lem}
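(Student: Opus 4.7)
Fix an $L^1_{\mathscr Q}$-atom $a$ associated to a ball $B$ of radius $r(B)=1$ with $\|a\|_2\leq\mu(B)^{-1/2}$. Since $r(B)=1$, condition (2) of Definition~\ref{localmoleculedef} is vacuous, so it suffices to produce $\tilde\phi\in\Phi(S^o_{\theta,r})$ and $c>0$, independent of $a$ and $B$, such that
\[
\|\ca_k(B)\,c\tilde\phi(\cD)a\|_2 \leq e^{-q 2^{k-1}}\,2^{-k}\,\mu(2^kB)^{-1/2}\qquad\text{for every }k\geq0.
\]
The plan is to mirror the proof of Lemma~\ref{localmolecularlemma1}: enlarge the disc radius to create slack in Lemma~\ref{ODexpHvNP}, then evaluate on a concrete $\Phi$-class function.

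Following that scheme, I would pick $\tilde r\geq r$ with $\tilde r/(C_\cD C_{\theta,\tilde r})>\lambda+q$ and take $\tilde\phi(z)=(z^2+b)^{-1}$ for some $b>\tilde r^2$. The list of examples following Definition~\ref{DefPhiClass} confirms $\tilde\phi\in\Phi^1(S^o_{\theta,\tilde r})\subseteq\Phi(S^o_{\theta,r})$, and in particular $\tilde\phi\in\Theta^1(S^o_{\theta,\tilde r})$. Applying Lemma~\ref{ODexpHvNP}(2) with $f\equiv 1$ and a parameter $\alpha\in(0,1)$ chosen close enough to $1$ that $\alpha\tilde r/(C_\cD C_{\theta,\tilde r})>\lambda+q$ still holds then yields the off-diagonal bound
\[
\|\ca_E\tilde\phi(\cD)\ca_F\|\lesssim \exp\bigl(-\alpha(\tilde r/C_\cD C_{\theta,\tilde r})\rho(E,F)\bigr)
\]
for all closed $E,F\subseteq M$.

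The molecular condition is then checked case by case. For $k\in\{0,1\}$, the $L^2$-boundedness of $\tilde\phi(\cD)$ from (H2) together with \eqref{LD} gives $\|\ca_k(B)\tilde\phi(\cD)a\|_2 \lesssim \mu(B)^{-1/2} \lesssim \mu(2^kB)^{-1/2}$, and the bounded factors $e^{-q/2}$, $e^{-q}/2$ are absorbed into $c$. For $k\geq 2$, using $\rho(2^kB\setminus 2^{k-1}B,B)=2^{k-1}-1$ and \eqref{ELD} to trade $\mu(B)^{-1/2}$ for $\mu(2^kB)^{-1/2}$ at a cost of at most $2^{k\kappa/2}e^{\lambda(2^k-1)/2}$, the off-diagonal estimate yields
\[
\|\ca_k(B)\tilde\phi(\cD)a\|_2 \lesssim 2^{k\kappa/2}\exp\bigl((\lambda-\alpha\tilde r/(C_\cD C_{\theta,\tilde r}))2^{k-1}\bigr)\mu(2^kB)^{-1/2}.
\]
The strict inequality $\alpha\tilde r/(C_\cD C_{\theta,\tilde r})-\lambda>q$ makes the exponent beat $-q2^{k-1}$ with positive margin, which easily absorbs $2^{k\kappa/2}$ and the required factor $2^{-k}$; the universal leftover is absorbed into $c$.

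The only mildly delicate point is the availability of $\tilde r$ with $\tilde r/(C_\cD C_{\theta,\tilde r})>\lambda+q$. This is precisely the implicit device invoked in Lemma~\ref{localmolecularlemma1}; in the main applications, such as the Hodge--Dirac operator where $C_{\theta,\tilde r}=1/\sin\theta$ is independent of $\tilde r$, the ratio $\tilde r/C_\cD C_{\theta,\tilde r}$ grows without bound as $\tilde r\to\infty$, so the selection is routine, and thereafter the estimate is a bookkeeping combination of the off-diagonal bound and \eqref{ELD}.
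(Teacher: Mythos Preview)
Your proof is correct and follows essentially the same approach as the paper: enlarge the disc radius to $\tilde r$ with $\tilde r/(C_\cD C_{\theta,\tilde r})>\lambda+q$, apply the exponential off-diagonal estimate from Lemma~\ref{ODexpHvNP}(2), handle $k\in\{0,1\}$ by $L^2$-boundedness, and handle $k\geq2$ by combining the off-diagonal decay with \eqref{ELD}. The only differences are cosmetic: you fix a specific $\tilde\phi(z)=(z^2+b)^{-1}$ whereas the paper takes any $\tilde\phi\in\Phi(S^o_{\theta,\tilde r})$, and you work with a parameter $\alpha$ close to $1$ whereas the paper introduces a margin $\delta\in(0,\tilde r/(C_\cD C_{\theta,\tilde r})-(\lambda+q))$ and rewrites the off-diagonal bound as $e^{-(\lambda+q+\delta)\rho(E,F)}\lesssim\langle1/\rho(E,F)\rangle^{\kappa/2+1}e^{-(\lambda+q)\rho(E,F)}$; these are equivalent ways of tracking the same slack.
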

\begin{proof} Let $a$ and $B$ be as stated in the lemma. Choose $\tilde r$ so that $\tilde r\geq r$ and $\tilde r/C_\cD C_{\theta,\tilde r}>\lambda+q$. Also, choose $\tilde\phi$ in $\Phi(S_{\theta,\tilde r}^o)$, in which case $\tilde \phi \in \Phi(S_{\theta,r}^o)$. Now, since ${r(B)}=1$, it only remains to prove that there exists $c>0$, which does not depend on $a$, such that
\[\|\ca_k(B)\tilde\phi(\cD)a\|_2 \leq c e^{-q2^{k-1}{r(B)}} 2^{-k}\mu(2^kB)^{-\frac{1}{2}}\]
for all $k\geq0$. To do this, choose $\delta$ in $(0,\tilde r/C_\cD C_{\theta,\tilde r}-(\lambda+q))$. Lemma~\ref{ODexpHvNP} then implies that
\begin{equation}\label{phiest}
\|\ca_E\tilde\phi(\cD)\ca_F\| \lesssim e^{-(\lambda+q+\delta)\rho(E,F)} \lesssim \langle 1/\rho(E,F)\rangle^{\frac{\kappa}{2}+1} e^{-(\lambda+q)\rho(E,F)}
\end{equation}
for all closed subsets $E$ and $F$ of $M$.

If $k=0$ or $k=1$, then by \eqref{eq: h2equiv2}, and since $r(B)=1$, we have
\[\|\ca_k(B)\tilde\phi(\cD)a\|_2
\lesssim \|a\|_2
\leq \mu(B)^{-\frac{1}{2}} \lesssim
\begin{cases}
e^{-q2^{-1}{r(B)}}\mu(B)^{-\frac{1}{2}} &{\rm if}\quad k=0; \\
e^{-qr(B)}2^{-1}\mu(2B)^{-\frac{1}{2}}  &{\rm if}\quad k=1.
\end{cases}
\]
If $k\geq 2$, then using \eqref{phiest} and proceeding as in Lemma~\ref{localmolecularlemma1}, we obtain
\begin{align*}
\|\ca_k(B)\tilde\phi(\cD)a\|_2 &\leq \|\ca_k(B)\tilde\phi(\cD)\ca_B\|\|a\|_2
\lesssim e^{-q2^{k-1}{r(B)}} 2^{-k} \mu(2^kB)^{-\frac{1}{2}},
\end{align*}
which completes the proof.
\end{proof}

\begin{Lem} \label{localmolecularlemma2}
Let $\theta\in(\omega,\tfrac{\pi}{2})$, $r>R$ and $\beta>\kappa/2$ such that $r/C_\cD C_{\theta,r}>\lambda/2$. For each $N\in\N$, $N>\kappa/2$ and $q\geq\lambda$, there exist $c>0$ and $\psi\in\Psi^\beta(S_{\theta,r}^o)$ such that $\|\psi_t(\cD)a\|_{t^1} \leq c$
for all $a$ that are $h^1_\cD$-molecules of $\text{type }(N,q)$.
\end{Lem}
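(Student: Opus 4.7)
The plan is to write $\psi_t(\mathcal D)a$ as an $\ell^1$-absolutely convergent sum of $t^1$-Carleson atoms indexed by a double dyadic family, and then invoke Proposition~\ref{Prop: CarlesonBoxTentAtoms} to bound $\|\psi_t(\mathcal D)a\|_{t^1}$ by the $\ell^1$-norm of the coefficients, which should be uniformly controlled over all molecules. This mimics the strategy used in Lemmas~\ref{LemQ11}--\ref{LemQ22}, but here the ``off-diagonal'' decay of $\psi_t(\mathcal D)$ must be matched against two sources of exponential growth: the volume growth $e^{\lambda 2^k r(B)}$ coming from \eqref{ELD}, and the rate $e^{-q 2^{k-1}r(B)}$ built into the molecule.

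The function $\psi$ will be chosen in $\Psi_{\alpha}^{\beta+N+1}(S_{\theta,r}^o)\subseteq\Psi^\beta(S_{\theta,r}^o)$ for $\alpha>N+\kappa/2$ large enough; the point of the large decay at infinity is that the function $\tilde\psi(z)=z^N\psi(z)$ then lies in $\Psi_{\alpha+N}^{\beta+1}(S_{\theta,r}^o)$, so that both $\psi_t(\mathcal D)$ and $\tilde\psi_t(\mathcal D)$ are subject to Lemma~\ref{ODexpHvNP} with polynomial decay factor of order at least $N+\kappa/2$ at the scale $t/\rho(E,F)$ and exponential factor $\exp\!\big(-a\, (r/C_\mathcal D C_{\theta,r})\rho(E,F)\big)$. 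I will fix $a\in\big(\tfrac{\lambda}{2}\tfrac{C_\mathcal D C_{\theta,r}}{r},1\big)$, which is possible since $r/C_\mathcal D C_{\theta,r}>\lambda/2$, so that $2a(r/C_\mathcal D C_{\theta,r})-\lambda>0$. The factorisation $\psi_t(\mathcal D)\mathcal D^N=t^{-N}\tilde\psi_t(\mathcal D)$ will be used whenever $r(B)<1$ and $t>r(B)$, so that the loss $t^{-N}$ is compensated by the additional factor $r(B)^N$ present in Definition~\ref{localmoleculedef}(2).

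Given a molecule $a$ associated with the ball $B$, decompose $a=\sum_{j\geq 0}\ca_j(B)a$ and, for each $j$, decompose $\psi_t(\mathcal D)(\ca_j(B)a)$ over dyadic annuli around the support of $\ca_j(B)a$. Concretely, for suitable indices $j,k$, form the piece $A_{j,k}$ obtained by multiplying $\psi_t(\mathcal D)(\ca_j(B)a)$ by the characteristic function of $C^1(2^{j+k+3}B)\setminus C^1(2^{j+k+2}B)$ (with minor modifications when either $B$ or $2^{j+k+3}B$ has radius crossing $1$, in which case one passes through $t^1$-atoms supported in truncated tents, or normalises against a slightly larger Carleson box). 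When $k\geq 2$, the distance between the annulus and the support of $\ca_j(B)a$ is at least $c\,2^{j+k}r(B)$, so Lemma~\ref{ODexpHvNP} (or Proposition~\ref{ODexp5} for the regime $t>r(B)$ via $\tilde\psi$) furnishes the bound
\[
\|A_{j,k}\|_{L^2_\bullet}\lesssim e^{-a(r/C_\mathcal D C_{\theta,r})2^{j+k}r(B)}\|\ca_j(B)a\|_2,
\]
and combining with Definition~\ref{localmoleculedef}(1) and the volume growth of $2^{j+k+3}B$ shows that $A_{j,k}=\lambda_{j,k}\alpha_{j,k}$ with $\alpha_{j,k}$ a $t^1$-Carleson atom and $|\lambda_{j,k}|\lesssim 2^{-j}k^{\kappa/2}\exp\!\big(-\tfrac{1}{2}(2a(r/C_\mathcal D C_{\theta,r})-\lambda)2^{j+k}r(B)\big)$. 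The pieces with $k\in\{0,1\}$ are treated using the $L^2$-boundedness of $\psi_t(\mathcal D)$ together with \eqref{ELD}, and when $r(B)<1$ and $t>r(B)$ one invokes the factorisation and the bound on $b$ so that the large-$t$ contribution picks up $(r(B)/t)^N$ which, after integrating in $t$, is absorbed into a summable $\ell^1$ tail.

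The main obstacle will be the bookkeeping of truncated Carleson boxes when the radius $r(B)$ is small and $2^{j+k}r(B)$ crosses the threshold $1$: this forces different normalisations in the sub-unit and super-unit regimes, and requires using the factorisation through $\mathcal D^N$ in precisely those $(j,k,t)$ regions where $\psi_t$ alone does not generate enough polynomial decay. Once the decomposition is in place, the summability of $\sum_{j,k}|\lambda_{j,k}|$ is a direct consequence of $q\geq\lambda$, the condition $2a(r/C_\mathcal D C_{\theta,r})>\lambda$, and $N>\kappa/2$, after which Proposition~\ref{Prop: CarlesonBoxTentAtoms} concludes the proof.
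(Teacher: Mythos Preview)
Your overall strategy is right --- decompose $\psi_t(\cD)a$ into $t^1$-Carleson atoms and invoke Proposition~\ref{Prop: CarlesonBoxTentAtoms} --- and it matches the paper's approach. But there is a genuine gap in the exponential bookkeeping that would make your claimed coefficient bound false.

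You fix $a\in(\tfrac{\lambda}{2}\tfrac{C_\cD C_{\theta,r}}{r},1)$, so the off-diagonal exponent $\mu:=a\,r/C_\cD C_{\theta,r}$ is only guaranteed to exceed $\lambda/2$. That is not enough. Consider the piece $A_{j,k}$ for $k\geq 2$: the distance from $2^jB$ to the annulus in the output box is $\rho\approx 2^{j+k}r(B)$, but the radius of the output ball is $\approx 2\rho$, so \eqref{ELD} gives
\[
\mu(2^{j+k+3}B)^{1/2}\mu(2^jB)^{-1/2}\lesssim 2^{k\kappa/2}\exp\bigl(\tfrac{\lambda}{2}(2^{j+k+3}-2^j)r(B)\bigr)\approx 2^{k\kappa/2}e^{\lambda\rho},
\]
not $e^{\lambda\rho/2}$. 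Combined with the off-diagonal factor $e^{-\mu\rho}$ and the molecular decay $e^{-q2^{j-1}r(B)}$, the exponent of $\lambda_{j,k}$ carries a term $(\lambda-\mu)\rho$, which is positive whenever $\lambda/2<\mu<\lambda$, and the sum over $k$ diverges. Your stated bound $\exp\bigl(-\tfrac12(2\mu-\lambda)2^{j+k}r(B)\bigr)$ has the wrong sign structure.

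The paper repairs this by enlarging the domain of $\psi$: one first chooses $\tilde r\geq r$ with $\tilde r/C_\cD C_{\theta,\tilde r}>\lambda$ (possible because $r\mapsto r/C_{\theta,r}$ is unbounded, as $C_{\theta,r}$ is non-increasing in $r$), then takes $\psi\in\Psi_\beta^{\beta+N}(S_{\theta,\tilde r}^o)\subseteq\Psi^\beta(S_{\theta,r}^o)$. Applying Lemma~\ref{ODexpHvNP} with $\tilde r$ in place of $r$ then yields off-diagonal decay $e^{-\lambda\rho}$, which exactly cancels the exponential volume growth, leaving a residual $e^{(\lambda-q)2^lr(B)}\leq 1$ absorbed by $q\geq\lambda$.

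There is also an organizational issue worth noting. You decompose the input $a=\sum_j\ca_j(B)a$ first, but for the large-$t$ regime ($t>r(B)$ when $r(B)<1$) you need the global factorisation $\psi_t(\cD)a=t^{-N}\tilde\psi_t(\cD)b$, and $\ca_j(B)a\neq\cD^N(\ca_j(B)b)$. The paper avoids this by decomposing the \emph{output} region $M\times(0,1]$ into dyadic Carleson shells $C^1_k(B)$ first, then splitting each shell by $t$-height into pieces $\eta_k,\eta_k',\eta_k''$; on $\eta_k$ one decomposes $a=\sum_l\ca_l(B)a$, while on $\eta_k',\eta_k''$ one decomposes $b=\sum_l\ca_l(B)b$ after applying the factorisation. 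This cleanly separates the two regimes and avoids the mismatch in your outline.
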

\begin{proof}
Let $a$ be an $h^1_\cD$-molecule of $\text{type }(N,q)$. There exists a ball $B$ in $M$ of radius ${r(B)}>0$ such that the requirements of Definition~\ref{localmoleculedef} are satisfied. Let $C^1_0(B)=C^1(B)$ be the truncated Carleson box over $B$ introduced in Section~\ref{SectionLocalTent}, and let $C^1_k(B)=C^1(2^kB)\setminus C^1(2^{k-1}B)$ for each $k\geq1$. As depicted in Figure~\ref{fig1}, divide each $C^1_k(B)$ with the following characteristic functions:
\begin{align*}
&\eta_k  = \ca_{C^1_k(B)} \ca_{M\times(0,{r(B)}]};\\
&\eta_k' = \ca_{C^1_k(B)} \ca_{M\times({r(B)},2^{k-1}{r(B)}]};\\
&\eta_k''= \ca_{C^1_k(B)} \ca_{M\times(2^{k-1}{r(B)},2^kr(B)]},
\end{align*}
so we have $\ca_{C_k^1(B)}=\eta_k + \eta'_k + \eta''_k$ and
$\sum_{k} \ca_{C_k^1(B)} =~ \ca_{M\times(0,1]}$.

\begin{figure}[h]
\centering

\begin{pspicture}(-6.5,-0.5)(6,5)
\psset{unit=1cm}

\newgray{gray2}{0.8}
\pspolygon[linestyle=none,fillstyle=solid,fillcolor=gray2](-4,0)(-4,4)(4,4)(4,0)(2,0)(2,2)(-2,2)(-2,0)

\psline(-5.5,0)(5.25,0)
\psline(-5.25,-0.25)(-5.25,4.6)
\psline(-5.25,4.7)(-5.25,5)
\psline(-5.35,1)(-5.15,1)
\psline(-5.35,2)(-5.15,2)
\psline(-5.35,4)(-5.15,4)
\psline(-5.35,5)(-5.15,5)
\psline(-5.35,4.7)(-5.15,4.7)
\psline(-5.35,4.6)(-5.15,4.6)
\psline(0,-0.1)(0,0.1)

\psline(-1,0)(-1,1)\psline(-1,1)(1,1)\psline(1,0)(1,1)
\psline(-2,0)(-2,2)\psline(-2,2)(2,2)\psline(2,0)(2,2)
\psline(-4,0)(-4,4)\psline(-4,4)(4,4)\psline(4,0)(4,4)

\psline(-4.75,1)(-1,1)
\psline(-4,2)(-2,2)
\psline(-4.75,4)(-4,4)
\psline(4.75,1)(1,1)
\psline(4,2)(2,2)
\psline(4,4)(4.75,4)

\uput[u](0,0.1){$\eta_0$}
\uput[u](-1.5,0.1){$\eta_1$}
\uput[u](1.5,0.1){$\eta_1$}
\uput[u](3,0.1){$\eta_2$}
\uput[u](-3,0.1){$\eta_2$}
\uput[u](1.5,0.1){$\eta_1$}
\uput[u](4.5,0.1){$\eta_3$}
\uput[u](-4.5,0.1){$\eta_3$}
\uput[u](3,1.1){$\eta_2'$}
\uput[u](-3,1.1){$\eta_2'$}
\uput[u](-4.5,2.1){$\eta_3'$}
\uput[u](4.5,2.1){$\eta_3'$}
\uput[u](0,1.1){$\eta_1''$}
\uput[u](0,2.5){$\eta_2''$}
\uput[u](0,4){$\eta_3''$}
\uput[d](0,0){0}
\uput[d](1,0){$r(B)$}
\uput[d](2,0){$2r(B)$}
\uput[d](4,0){$4r(B)$}
\uput[r](5.25,0){$M$}
\uput[l](-5.25,1){$r(B)$}
\uput[l](-5.25,2){$2r(B)$}
\uput[l](-5.25,4){$4r(B)$}
\uput[l](-5.25,5){1}
\end{pspicture}
\caption{The division of $C_2^1(B)$ used in Lemma~\ref{localmolecularlemma2} for a ball $B$ in $M$ of radius $r(B)<1/4$.}
\label{fig1}
\end{figure}
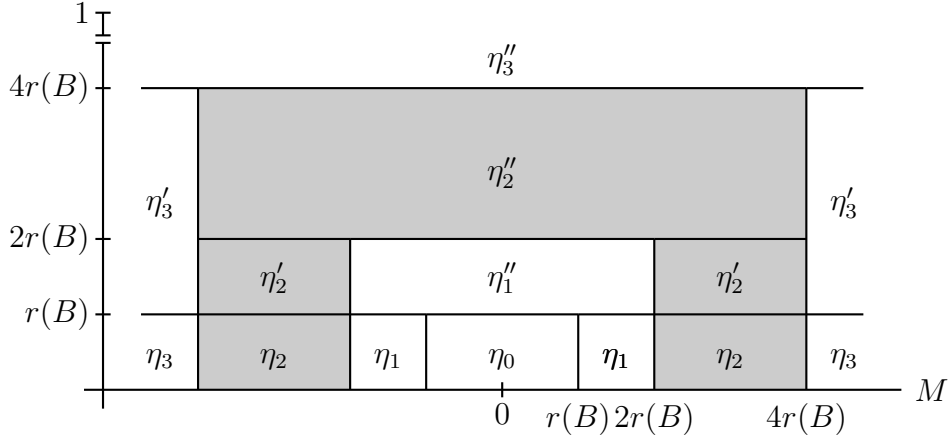

Suppose that there exist $\psi\in\Psi^\beta (S_{\theta,r}^o)$ and $c,\delta>0$, all of which do not depend on $a$, such that the following hold for all $k\geq0$:
\begin{subequations}\label{A-C}
\begin{align}
\label{A}
\|\eta_k\psi_t(\cD)a\|_{L^2_\bullet}  &\leq c2^{-\delta k} \mu(2^kB)^{-\frac{1}{2}}; \\
\label{B}
\|\eta_k'\psi_t(\cD)a\|_{L^2_\bullet} &\leq c2^{-\delta k} \mu(2^{k} B)^{-\frac{1}{2}}; \\
\label{C}
\|\eta_k''\psi_t(\cD)a\|_{L^2_\bullet}&\leq c2^{-\delta k} \mu(2^kB)^{-\frac{1}{2}}.
\end{align}
\end{subequations}
In that case, each $(2^{\delta k}/c)\ca_{C^1_k(B)}\psi_t(\cD)a$ is a $t^1$-Carleson atom, and since
\[\psi_t(\cD)a=\sum_{k=0}^\infty \ca_{C^1_k(B)}\psi_t(\cD)a\]
almost everywhere in $M\times (0,1]$, Proposition~\ref{Prop: CarlesonBoxTentAtoms} implies that $\psi_t(\cD)a$ is in $t^1$ with $\|\psi_t(\cD)a\|_{t^1} \lesssim c\sum_{k=0}^\infty 2^{-\delta k} \lesssim 1$. Therefore, it suffices to prove \eqref{A-C}.

To prove \eqref{A-C}, choose $\tilde r$ so that $\tilde r\geq r$ and $\tilde r/C_\cD C_{\theta,\tilde r}>\lambda$. Also, choose $\delta$ in $(0,\beta-\kappa/2)$ and choose $\psi$ in $\Psi_{\beta}^{\beta+N}(S_{\theta,\tilde r}^o)$, in which case $\psi\in \Psi^{\beta}(S_{\theta,r}^o)$. Then, since $\beta>\kappa/2$, Lemma~\ref{ODexpHvNP} implies that
\begin{equation}\label{psiODexp}
\|\ca_E\psi_t(\cD)\ca_F\| \lesssim \langle{t}/{\rho(E,F)}\rangle^{\frac{\kappa}{2}+\delta}e^{-\lambda\rho(E,F)} \leq \langle{t}/{\rho(E,F)}\rangle^\delta
\end{equation}
for all closed subsets $E$ and $F$ of $M$.

We now prove \eqref{A}. If $k=0$, then by \eqref{eq: h2equiv1} and \eqref{N-molFullL2BddRemLocal(a)} we have
\[\|\eta_0\psi_t(\cD)a\|_{L^2_\bullet}\leq \|\psi_t(\cD)a\|_{L_\bullet^2} \lesssim \|a\|_2 \lesssim \mu(B)^{-\frac{1}{2}}.\]
Now consider $k\geq 1$. For each $l\in\N$, define $I_l$ by
\[\|\eta_k\psi_t(\cD)a\|_{L^2_\bullet}^2 \leq\sum_{l=0}^\infty \int_{0}^{\langle {r(B)}\rangle} \|\ca_k(B)\psi_t(\cD)\ca_l(B)a\|_2^2\ \frac{\d t}{t} = \sum_{l=0}^\infty I_l.\]
If $0\leq l\leq k-2$, then
\[\rho(2^kB\backslash 2^{k-1}B,2^lB\backslash 2^{l-1}B) =(2^{k-1}-2^l){r(B)}\gtrsim 2^kr(B)\]
and
$\mu(2^kB) \leq 2^{(k-l)\kappa}e^{\lambda(2^{k-l}-1)2^lr(B)}\mu(2^lB),$
so by \eqref{psiODexp} we have
\begin{align*}
I_l &\lesssim \int_0^{r(B)} \left(\frac{t}{2^kr(B)}\right)^{2(\frac{\kappa}{2}+\delta)} \frac{\d t}{t} e^{-2\lambda(2^{k-1}-2^l){r(B)}} e^{-q 2^lr(B)}2^{-2l}\mu(2^lB)^{-1}\\
&\lesssim 2^{-2l(\frac{\kappa}{2}+1)}2^{-2k(\frac{\kappa}{2}+\delta-\frac{\kappa}{2})} e^{\lambda(-2^{k}+2^{l+1}+2^k-2^l-2^l){r(B)}} \mu(2^kB)^{-1}\\
&\lesssim 2^{-2 l}2^{-2\delta k}\mu(2^kB)^{-1}.
\end{align*}
If $k-1\leq l\leq k+1$, then $\mu(2^kB)\lesssim e^{\lambda 2^lr}\mu(2^lB)$, so we have
\[
I_l
\leq \|\psi_t(\cD)\ca_l(B)a\|_{L^2_\bullet}^2
\lesssim e^{-q 2^lr(B)}2^{-2l}\mu(2^lB)^{-1}
\lesssim 2^{-2l}\mu(2^kB)^{-1}.
\]
If $l\geq k+2$, then
\[\rho(2^kB\backslash 2^{k-1}B,2^lB\backslash 2^{l-1}B) =(2^{l-1}-2^k){r(B)}\gtrsim 2^lr(B)\]
and $\mu(2^kB) \leq \mu(2^lB)$, so by \eqref{psiODexp} we have
\begin{align*}
I_l \lesssim \int_0^{{r(B)}} \left(\frac{t}{2^lr(B)}\right)^{2\delta} \frac{\d t}{t} 2^{-2l}\mu(2^lB)^{-1}
\lesssim 2^{-2l}2^{-2\delta k} \mu(2^kB)^{-1}.
\end{align*}
Note that we needed $q\geq\lambda$ when $0\leq l\leq k+1$. This proves \eqref{A}, since now
\[
\|\eta_k\psi_t(\cD)a\|_{L^2_\bullet}^2
\leq \sum_{l=0}^{\infty} I_l
\lesssim \sum_{l=0}^\infty 2^{-2l}2^{-2\delta k} \mu(2^kB)^{-1}
\lesssim 2^{-2\delta k} \mu(2^kB)^{-1}.
\]

To prove \eqref{B} and \eqref{C} we only need to consider when ${r(B)}<1$, otherwise $\eta_k'=\eta_k''=0$. In that case, there exists a differential form $b$ such that $a=\cD^N b$, as in Definition~\ref{localmoleculedef}. Define $\tilde\psi(z)=z^N\psi(z)$, in which case $\tilde\psi\in\Psi_N(S_{\theta,\tilde r}^o)$, where $\tilde r\geq r$ was fixed previously so that $\tilde r/C_\cD C_{\theta,\tilde r}>\lambda$. Now choose $\ep$ in $(0,N-\kappa/2)$. Then, since $N>\kappa/2$, Lemma~\ref{ODexpHvNP} implies that
\begin{equation}\label{tpsiODexp}
\|\ca_E\tilde\psi_t(\cD)\ca_F\|
\lesssim \langle{t}/{\rho(E,F)}\rangle^{\frac{\kappa}{2}+\ep} e^{-\lambda\rho(E,F)}
\lesssim \langle{t}/{\rho(E,F)}\rangle^{\ep}
\end{equation}
for all closed subsets $E$ and $F$ of $M$.

To prove \eqref{B}, we only consider $k\geq 2$, since otherwise $\eta_k'=0$. For each $l\in\N$, define $J_l$ by
\[\|\eta_k'\psi_t(\cD)a\|_{L^2_\bullet}^2 \leq\sum_{l=0}^\infty \int_{{r(B)}}^{\langle2^{k-1}{r(B)}\rangle} \|\ca_k(B)\tilde\psi_t(\cD)\ca_l(B)b\|_2^2\ \frac{\d t}{t^{2N+1}} = \sum_{l=0}^\infty J_l.\]
The proof proceeds as for $I_l$ by using \eqref{tpsiODexp} instead of \eqref{psiODexp}. If $0\leq l\leq k-2$, then since $N-\kappa/2-\ep>0$ and ${r(B)}<1$, we have
\begin{align*}
J_l &\lesssim \int_{r(B)}^{1} \left(\frac{t}{2^kr(B)}\right)^{2(\frac{\kappa}{2}+\ep)} \frac{\d t}{t^{2N+1}} e^{-2\lambda(2^{k-1}-2^l){r(B)}} r(B)^{2N}e^{-q 2^lr(B)}2^{-2l}\mu(2^lB)^{-1}\\
&\lesssim {r(B)}^{2(N-\frac{\kappa}{2}-\ep)}\int_{r(B)}^{1} t^{-2(N-\frac{\kappa}{2}-\ep)} \frac{\d t}{t} 2^{-2l(\frac{\kappa}{2}+1)}2^{-2k(\frac{\kappa}{2}+\ep-\frac{\kappa}{2})}\mu(2^kB)^{-1}\\
&\lesssim 2^{-2l}2^{-2\ep k}\mu(2^kB)^{-1}.
\end{align*}
If $k-1\leq l\leq k+1$, then since ${r(B)}<1$, we have
\begin{align*}
J_l \leq {r(B)}^{-2N}\|\tilde\psi_t(\cD)\ca_l(B)b\|_{L^2_\bullet}^2
\lesssim e^{-q2^lr(B)}2^{-2l}\mu(2^lB)^{-1}
\lesssim 2^{-2l}\mu(2^kB)^{-1}.
\end{align*}
If $l\geq k+2$, then since $N-\ep>0$ and ${r(B)}<1$, we have
\begin{align*}
J_l &\lesssim \int_{r(B)}^{1} \left(\frac{t}{2^lr(B)}\right)^{2\ep} \frac{\d t}{t^{2N+1}} {r(B)}^{2N}2^{-2l}\mu(2^lB)^{-1}\\
&\leq {r(B)}^{2(N-\ep)}\int_{r(B)}^{1} t^{-2(N-\ep)} \frac{\d t}{t} 2^{-2l}2^{-2\ep k} \mu(2^kB)^{-1}\\
&\lesssim 2^{-2l}2^{-2\ep k} \mu(2^kB)^{-1}.
\end{align*}
Note that we needed $q\geq\lambda$ when $0\leq l\leq k+1$. This proves \eqref{B}, since now $\|\eta_k'\psi_t(\cD)a\|_{L^2_\bullet}^2 \leq \sum_{l=0}^\infty J_l \lesssim 2^{-2\ep k} \mu(2^kB)^{-1}$.

To prove \eqref{C}, we only consider $k\geq 1$ for which $2^{k-1}{r(B)}<1$, since otherwise $\eta_k''=0$. For each $l\in\N$, define $K_l$ by
\[\|\eta_k''\psi_t(\cD)a\|_{L^2_\bullet}^2 \leq\sum_{l=0}^\infty \int_{2^{k-1}{r(B)}}^{\langle2^kr(B)\rangle} \|\ca_{2^kB}\tilde\psi_t(\cD)\ca_l(B)b\|_2^2\ \frac{\d t}{t^{2N+1}} = \sum_{l=0}^\infty K_l.\]
The proof proceeds as for $J_l$. In fact, we only require the weaker estimate obtained by setting $\ep=0$ in \eqref{tpsiODexp}. If $0\leq l\leq k+2$, then $\mu(2^kB) \lesssim 2^{(k-l)\kappa} \mu(2^lB)$, since $2^{k-1}{r(B)}<1$, so we have
\[
K_l \lesssim (2^kr(B))^{-2N} \|\tilde\psi_t(\cD)\ca_l(B)b\|_{L^2_\bullet}^2
\leq 2^{-2l(\frac{\kappa}{2}+1)}2^{-2k(N-\frac{\kappa}{2})}\mu(2^kB)^{-1}.
\]
If $l\geq k+2$, then we have
\begin{align*}
K_l \lesssim 2^{-2l(\frac{\kappa}{2}+1)} \int_{2^{k-1}{r(B)}}^{2^kr(B)} \left(\frac{{r(B)}}{t}\right)^{2(N-\frac{\kappa}{2})} \frac{\d t}{t} \mu(2^lB)^{-1}
\leq 2^{-2l}2^{-2k(N-\frac{\kappa}{2})}\mu(2^kB)^{-1}.
\end{align*}
Note that we did not require $q\geq\lambda$ here. This proves \eqref{C}, since $N>\kappa/2$ and now $\|\eta_k''\psi_t(\cD)a\|_{L^2_\bullet}^2 \leq \sum_{l=0}^\infty K_l \lesssim 2^{-2(N-\frac{\kappa}{2})k} \mu(2^kB)^{-1}$.
\end{proof}

\begin{Lem} \label{localmolecularlemma2.5}
Let $\theta\in(\omega,\tfrac{\pi}{2})$, $r>R$ and $\beta>\kappa/2$ such that $r/C_\cD C_{\theta,r}>\lambda/2$. For each $N\in\N$, $N>\kappa/2$ and $q\geq\lambda$, there exist $c>0$ and $\phi\in\Phi^\beta(S_{\theta,r}^o)$ such that $\|\phi(\cD)a\|_{L^1_\mathscr{Q}} \leq c$ for all $a$ that are $h^1_\cD$-molecules of $\text{type }(N,q)$.
\end{Lem}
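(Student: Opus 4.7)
The plan is to parallel the strategy of Lemma~\ref{localmolecularlemma2}, but decompose $\phi(\cD)a$ into pieces that can be recognized as $L^1_\mathscr{Q}$-atoms (supported on balls of radius $\geq 1$) rather than $t^1$-Carleson atoms. First I would fix $\tilde r\geq r$ so that $\tilde r/(C_\cD C_{\theta,\tilde r})>\lambda+q$ and select $\phi\in\Phi^{\beta+N}(S^o_{\theta,\tilde r})$, which one can check still lies in $\Phi^\beta(S^o_{\theta,r})$. Set $\tilde\phi(z)=z^N\phi(z)\in\Theta(S^o_{\theta,\tilde r})$ so that whenever $r(B)<1$ we may rewrite $\phi(\cD)a=\tilde\phi(\cD)b$ using the moment condition $a=\cD^N b$. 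Lemma~\ref{ODexpHvNP} then supplies, for some $\delta>0$ and all closed $E,F\subseteq M$,
\[
\|\ca_E\phi(\cD)\ca_F\|+\|\ca_E\tilde\phi(\cD)\ca_F\| \lesssim e^{-(\lambda+q+\delta)\rho(E,F)}.
\]

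Next I would introduce an enlarged ball $B^*$ concentric with $B$ of radius $r(B^*)=\max\{r(B),1\}\geq 1$, and set $\tilde a_k=\ca_k(B^*)\phi(\cD)a$ for $k\geq 0$. Then $\phi(\cD)a=\sum_{k\geq 0}\tilde a_k$ with $\supp\tilde a_k\subseteq 2^k B^*$, a ball of radius $\geq 1$, so by Theorem~\ref{mainLQatomic} it suffices to produce $c>0$ and a sequence $(\lambda_k)_k\in\ell^1$, independent of $a$, such that $\|\tilde a_k\|_2\leq c\lambda_k\mu(2^k B^*)^{-1/2}$. To bound $\|\tilde a_k\|_2$ for $k\geq 1$, I split $a=\sum_{l\geq 0}\ca_l(B)a$ (or $b=\sum_l\ca_l(B)b$ in the case $r(B)<1$), estimate each $\|\ca_k(B^*)\phi(\cD)\ca_l(B)a\|_2$ by combining the off-diagonal decay above with the molecular bounds from Definition~\ref{localmoleculedef}, and absorb the resulting volume ratios via $\mu(2^k B^*)\lesssim 2^{|k-l|\kappa}e^{\lambda(2^{\max(k,l)}r(B^*)+2^{\max(k,l)}r(B))}\mu(2^{\min(k,l)}B)$ from \eqref{ELD}. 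The distances $\rho(2^k B^*\setminus 2^{k-1}B^*,\,2^l B\setminus 2^{l-1}B)$ are bounded below by $\gtrsim 2^{\max(k,l)-1}r(B^*)$ or $\gtrsim (2^{l-1}-2^k)r(B)$ depending on whether we are in the regime where $B^*$ contains the relevant $B$-annulus; the choice $\delta>0$ in the off-diagonal constant then guarantees geometric decay in both $k$ and $l$, so $\sum_l$ converges and the remaining $k$-dependence gives the summable coefficients $\lambda_k$.

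The main obstacle is the single term $k=0$ when $r(B)<1$: here $\tilde a_0=\ca_{B^*}\phi(\cD)a$, supported in the large ball $B^*$ of radius $1$, and I want $\|\tilde a_0\|_2\lesssim\mu(B^*)^{-1/2}$. The naive estimate $\|\phi(\cD)a\|_2\lesssim\|a\|_2\lesssim\mu(B)^{-1/2}$ is inadequate, since \eqref{ELD} only gives $\mu(B)^{-1/2}\lesssim r(B)^{-\kappa/2}\mu(B^*)^{-1/2}$, which blows up as $r(B)\to 0$. This is precisely where the moment condition and the hypothesis $N>\kappa/2$ enter: using $\phi(\cD)a=\tilde\phi(\cD)b$ and \eqref{N-molFullL2BddRemLocal(b)} gives the improved bound $\|\tilde a_0\|_2\lesssim r(B)^N\mu(B)^{-1/2}\lesssim r(B)^{N-\kappa/2}\mu(B^*)^{-1/2}\lesssim\mu(B^*)^{-1/2}$, since $r(B)<1$. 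The same $a=\cD^N b$ trick, together with $N>\kappa/2$, must also be used to dominate the contribution of low-index annuli $\ca_l(B)$ to $\tilde a_k$ whenever $2^l r(B)\leq r(B^*)=1$; on the remaining large-scale terms one uses the estimate on $\phi(\cD)$ directly. Collecting these bounds produces $\|\tilde a_k\|_2\lesssim\lambda_k\mu(2^k B^*)^{-1/2}$ with $\lambda_k\lesssim e^{-\delta\cdot 2^{k-1}r(B^*)/2}2^{-k}$, which is summable, completing the proof.
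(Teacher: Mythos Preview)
Your proposal is correct and follows essentially the same route as the paper: introduce an enlarged ball $B^*$ of radius $\max\{r(B),1\}$, decompose $\phi(\cD)a$ into $L^1_{\mathscr Q}$-atoms via the annuli $\ca_k(B^*)$, and in the case $r(B)<1$ rewrite $\phi(\cD)a=\tilde\phi(\cD)b$ with $\tilde\phi(z)=z^N\phi(z)$, using $N>\kappa/2$ to absorb the factor $r(B)^{-\kappa/2}$ coming from the volume comparison $\mu(B^*)/\mu(B)$. The only notable difference is the off-diagonal input: you take $\tilde r$ with $\tilde r/C_\cD C_{\theta,\tilde r}>\lambda+q$ and use the purely exponential bound $e^{-(\lambda+q+\delta)\rho(E,F)}$ to kill both the exponential volume growth and the residual polynomial factor $2^{k\kappa}$, whereas the paper takes only $\tilde r/C_\cD C_{\theta,\tilde r}>\lambda$ and instead exploits the polynomial off-diagonal factor $\langle 1/\rho(E,F)\rangle^{\kappa/2+\epsilon}$ (available because $\tilde\phi\in\Psi_N$ with $N>\kappa/2$) to cancel $2^{k\kappa}$ directly; both choices are valid under the standing hypotheses and both still require $q\geq\lambda$ to handle the ``medium-$l$'' terms where the annuli overlap and no off-diagonal gain is available.
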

\begin{proof}
Let $a$ be an $h^1_\cD$-molecule of $\text{type }(N,q)$. There exists a ball $B$ in $M$ of radius $r(B)>0$ such that the requirements of Definition~\ref{localmoleculedef} are satisfied. Let $B^*=(1/\langle {r(B)}\rangle)B$, so the radius $r(B^*)\geq1$.

Suppose that there exist $\phi\in\Phi^\beta(S_{\theta,r}^o)$ and $c,\delta>0$, all of which do not depend on $a$, such that
\begin{equation}\label{D-E}
\|\ca_k(B^*)\phi(\cD)a\|_2 \leq c2^{-\delta k} \mu(2^kB^*)^{-\frac{1}{2}}
\end{equation}
for all $k\geq0$. In that case, each $(2^{\delta k}/c)\ca_k(B^*)\phi(\cD)a$ is an $L^1_{\mathscr{Q}}$-atom, and since
\[
\phi(\cD)a=\sum_{k=0}^\infty \ca_k(B^*)\phi(\cD)a
\]
almost everywhere on $M$, Theorem~\ref{mainLQatomic} implies that $\|\phi(\cD)a\|_{L^1_{\mathscr{Q}}} \lesssim c\sum_{k=0}^\infty 2^{-\delta k}$. Therefore, it suffices to prove \eqref{D-E}.

To prove \eqref{D-E}, choose $\tilde r$ so that  $\tilde r\geq r$ and $\tilde r/C_\cD C_{\theta,\tilde r}>\lambda$. Also, choose $\delta$ in $(0,\tilde r/C_\cD C_{\theta,\tilde r}-\lambda)$ and choose $\phi$ in $\Phi^{\beta+N}(S_{\theta,\tilde r}^o)$, in which case $\phi\in\Phi^{\beta}(S_{\theta,r}^o)$. Lemma~\ref{ODexpHvNP} then implies that
\begin{equation}\label{phiODexp}
\|\ca_E\phi(\cD)\ca_F\| \lesssim e^{-(\lambda+\delta)\rho(E,F)} \lesssim \langle 1/\rho(E,F)\rangle^{\frac{\kappa}{2}+1} e^{-\lambda\rho(E,F)}
\leq \langle1/\rho(E,F)\rangle
\end{equation}
for all closed subsets $E$ and $F$ of $M$.

We now prove \eqref{D-E} when ${r(B)}\geq1$, in which case ${B^*=B}$. If $k=0$, then by \eqref{eq: h2equiv1} and \eqref{N-molFullL2BddRemLocal(a)} we have
\[\|\ca_0(B)\phi(\cD)a\|_2 \leq \|\phi(\cD)a\|_2 \lesssim \|a\|_2 \lesssim \mu(B)^{-\frac{1}{2}}.\]
Now consider $k\geq 1$ and for each $l\in\N$, define $I_l'$ by
\[\|\ca_k(B)\phi(\cD)a\|_2^2 \leq \sum_{l=0}^\infty \|\ca_k(B)\phi(\cD)\ca_l(B)a\|_2^2 = \sum_{l=0}^\infty I_l'.\]
The proof proceeds as for $I_l$ in Lemma~\ref{localmolecularlemma2} by using \eqref{phiODexp} instead of \eqref{psiODexp}. If $0\leq l\leq k-2$, then since ${r(B)}\geq1$, we have
\begin{align*}
I_l' \lesssim \left(\frac{1}{2^kr(B)}\right)^{2(\frac{\kappa}{2}+1)} e^{-2\lambda(2^{k-1}-2^l){r(B)}} e^{-q 2^lr}2^{-2l}\mu(2^lB)^{-1}
\lesssim 2^{-2l}2^{-2k}\mu(2^kB)^{-1}.
\end{align*}
If $k-1\leq l\leq k+1$, then we have
\[
I_l' \leq \|\phi(\cD)\ca_l(B)a\|_2^2
\lesssim e^{-q2^l {r(B)}}2^{-2l}\mu(2^lB)^{-1}
\lesssim 2^{-2l}\mu(2^kB)^{-1}.
\]
If $l\geq k+2$, then since ${r(B)}\geq1$, we have
\begin{align*}
I_l' \lesssim \left(\frac{1}{2^lr(B)}\right)^{2} 2^{-2l}\mu(2^lB)^{-1}
\lesssim 2^{-2l}2^{-2k} \mu(2^kB)^{-1}.
\end{align*}
Note that we needed $q\geq\lambda$ when $0\leq l\leq k+1$. This proves \eqref{D-E} when $r(B)\geq1$, since now $\|\ca_k(B)\phi(\cD)a\|_2^2 \leq \sum_{l=0}^\infty I_l' \lesssim 2^{-2k} \mu(2^kB)^{-1}$.

If ${r(B)}<1$, then $r(B^*)=1$ and there exists a differential form $b$ such that $a=\cD^N b$, as in Definition~\ref{localmoleculedef}. Define $\psi(z)=z^N\phi(z)$, in which case $\psi \in \Psi_N(S_{\theta,\tilde r}^o)$, where $\tilde r\geq r$ was fixed previously so that $\tilde r/C_\cD C_{\theta,\tilde r}>\lambda$. Now choose $\ep$ in ${(0,N-\kappa/2)}$. Then, since $N>\kappa/2$, Lemma~\ref{ODexpHvNP} implies that
\begin{equation}\label{tphiODexp}
\|\ca_E\psi(\cD)\ca_F\| \lesssim \langle1/\rho(E,F)\rangle^{\frac{\kappa}{2}+\ep}e^{-\lambda\rho(E,F)} \leq \langle1/\rho(E,F)\rangle^{\ep}
\end{equation}
for all closed subsets $E$ and $F$ of $M$.

We now prove \eqref{D-E} when ${r(B)}<1$. If $k=0$, then by \eqref{eq: h2equiv1} and \eqref{N-molFullL2BddRemLocal(b)}, and since ${r(B)}<1$ and $N>\kappa/2$, we have
\[
\|\ca_0(B^*)\phi(\cD)a\|_2 \leq \|\psi(\cD)b\|_2 \lesssim {r(B)}^N\mu(B)^{-\frac{1}{2}} \lesssim {r(B)}^{N-\frac{\kappa}{2}}\mu(B^*)^{-\frac{1}{2}}\leq\mu(B^*)^{-\frac{1}{2}}.
\]
Now consider $k\geq 1$. For each $l\in\N$, define $I_l''$ by
\[
\|\ca_k(B^*)\phi(\cD)a\|_2^2 \leq\sum_{l=0}^\infty \|\ca_k(B^*)\psi(\cD)\ca_l(B)b\|_2^2 = \sum_{l=0}^\infty I_l''.
\]
If $1\leq 2^l< 2^{k-1}/r(B)$, then
\[
\rho(2^kB^*\backslash 2^{k-1}B^*,2^lB\backslash 2^{l-1}B) =2^{k-1}-2^lr(B)\gtrsim 2^k
\]
and $\mu(2^kB^*) \leq (2^{k-l}/{r(B)})^{\kappa}e^{\lambda(2^{k-l}/{r(B)}-1)2^lr(B)}\mu(2^lB)$,
so that by \eqref{tphiODexp}, and since $r(B)<1$ and $N>\kappa/2$, we have
\begin{align*}
I_l'' &\lesssim \left(\frac{1}{2^k}\right)^{2(\frac{\kappa}{2}+\ep)} e^{-2\lambda(2^{k-1}-2^lr(B))} {r(B)}^{2N} e^{-q2^lr(B)}2^{-2l}\mu(2^lB)^{-1}\\
&\lesssim 2^{-2l(\frac{\kappa}{2}+1)}2^{-2k(\frac{\kappa}{2}+\ep-\frac{\kappa}{2})} {r(B)}^{2(N-\frac{\kappa}{2})}e^{\lambda(-2^{k}+2^{l+1}{r(B)}+2^k-2^lr(B)-2^lr(B))} \mu(2^kB^*)^{-1}\\
&\lesssim 2^{-2l}2^{-2\ep k} \mu(2^kB^*)^{-1}.
\end{align*}
If $2^{k-1}/r(B) \leq 2^l \leq 2^{k+1}/r(B)$, then $\mu(2^kB^*)\lesssim e^{\lambda2^lr}\mu(2^lB)$, and since ${r(B)}<1$, we have
\[
I_l'' \leq \|\psi(\cD)\ca_l(B)b\|_2^2
\lesssim {r(B)}^{2N}e^{-q2^lr(B)}2^{-2l}\mu(2^lB)^{-1}
\lesssim 2^{-2l}\mu(2^kB^*)^{-1}.
\]
If $2^l>2^{k+1}/r(B)$, then
\[
\rho(2^kB^*\backslash 2^{k-1}B^*,2^lB\backslash 2^{l-1}B) =2^{l-1}{r(B)}-2^k\gtrsim 2^l
\]
and $\mu(2^kB^*) \leq \mu(2^lB)$, so that by \eqref{tphiODexp}, and since ${r(B)}<1$, we have
\begin{align*}
I_l'' \lesssim \left(\frac{1}{2^l}\right)^{2\ep} {r(B)}^{2N} 2^{-2l}\mu(2^lB)^{-1}
\lesssim 2^{-2l}2^{-2\ep k} \mu(2^kB^*)^{-1}.
\end{align*}
Note that we needed $q\geq\lambda$ when $1\leq 2^l<2^{k+1}/r(B)$. This proves \eqref{D-E} when ${r(B)}<1$, since now $\|\ca_k(B^*)\phi(\cD)a\|_2^2 \leq \sum_{l=0}^\infty I_l'' \lesssim 2^{-2\ep k} \mu(2^kB^*)^{-1}$.
\end{proof}

\subsection{Local Riesz Transforms and Holomorphic Functional Calculi}\label{SubSection1stmainresult}
We now prove the principal result of the paper, which is the local analogue of Theorem~5.11 in \cite{AMcR}.

\begin{Thm}\label{mainresult}
Let $\kappa,\lambda\geq0$ and suppose that $M$ is a complete Riemannian manifold satisfying \eqref{ELD}. Let $\omega\in[0,\pi/2)$ and $R\geq0$ and suppose that $\cD$ is a closed densely-defined operator on $L^2(\wedge T^*M)$ of $\text{type }S_{\omega,R}$ satisfying hypotheses (H1-3) from Section~\ref{SectionODE}. Let $\theta\in(\omega,\pi/2)$ and $r>R$ such that $r/C_\cD C_{\theta,r}>\lambda/2$, where $C_{\theta,r}$ is from~(H1) and $C_\cD$ is from (H3). Then for all $f\in H^\infty(S_{\theta,r}^o)$, the operator $f(\cD)$ on $L^2(\wedge T^*M)$ has a bounded extension to $h^p_{\cD}(\wedge T^*M)$ such that
\[
\|f(\cD)u\|_{h^p_\cD}\lesssim \|f\|_\infty \|u\|_{h^p_\cD}
\]
for all $u\in h^p_\cD$ and $p\in[1,\infty]$.
\end{Thm}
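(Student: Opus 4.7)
The plan is to reduce everything to the Main Estimate (Theorem~\ref{locMainEst}) by exploiting the key observation that the fixed functions $\eta\in\Psi_\beta^\beta(S^o_{\theta,r})$ and $\varphi\in\Phi^\beta(S^o_{\theta,r})\subseteq\Theta^\beta(S^o_{\theta,r})$ used to define the ambient space $h^0_\cD$ \emph{simultaneously} satisfy the $\psi,\phi$-hypotheses and the $\tilde\psi,\tilde\phi$-hypotheses of that theorem. The hypothesis $r\sin\theta>\lambda/2$ of the present theorem, in the Hodge--Dirac case where $C_{\theta,r}=1/\sin\theta$ and $C_\cD=1$, is precisely the assumption $r/C_\cD C_{\theta,r}>\lambda/2$ of Theorem~\ref{locMainEst}, so the Main Estimate applies. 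The case $p=2$ is immediate because $h^2_\cD=L^2$ with equivalent norms and $f(\cD)$ is bounded on $L^2$ by (H2) with $\|f(\cD)\|\lesssim\|f\|_\infty$.

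For $p=1$, fix $u\in h^1_\cD\cap L^2$, which is dense in $h^1_\cD$ by Corollary~\ref{Cor: hphqDensehp}. The reproducing formula $\tilde\cS^\cD_{\eta,\varphi}\tilde\cQ^\cD_{\eta,\varphi}=I$ from Proposition~\ref{Prop: QandSext} gives
\[
\cQ^\cD_{\eta,\varphi}f(\cD)u
=\bigl(\cQ^\cD_{\eta,\varphi}f(\cD)\cS^\cD_{\eta,\varphi}\bigr)\cQ^\cD_{\eta,\varphi}u.
\]
Taking $\psi=\tilde\psi=\eta$ and $\phi=\tilde\phi=\varphi$ in Theorem~\ref{locMainEst}(1) shows that the composition in parentheses extends to a bounded operator on $t^1\oplus L^1_{\mathscr{Q}}$ of norm $\lesssim\|f\|_\infty$, whence
\[
\|f(\cD)u\|_{h^1_\cD}
=\|\cQ^\cD_{\eta,\varphi}f(\cD)\cS^\cD_{\eta,\varphi}(\cQ^\cD_{\eta,\varphi}u)\|_{t^1\oplus L^1_{\mathscr{Q}}}
\lesssim\|f\|_\infty\|u\|_{h^1_\cD}.
\]
Extending by density and using completeness of $h^1_\cD$ produces the desired bounded operator.

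For $p=\infty$, denote $T_f:=\cQ^\cD_{\eta,\varphi}f(\cD)\cS^\cD_{\eta,\varphi}$. Theorem~\ref{locMainEst}(2) gives boundedness of $T_f$ on $t^\infty\oplus L^\infty_{\mathscr{Q}}$, while Theorem~\ref{locMainEst}(1) gives boundedness on $t^1\oplus L^1_{\mathscr{Q}}$; hence $T_f$ is bounded on $(t^1\cap t^\infty)\oplus(L^1_{\mathscr{Q}}\cap L^\infty_{\mathscr{Q}})$ and maps this subspace into itself. Passing to the closure in the $t^\infty\oplus L^\infty_{\mathscr{Q}}$ norm shows that $T_f$ restricts to a bounded operator on $\tilde t^\infty\oplus\tilde L^\infty_{\mathscr{Q}}$ of norm $\lesssim\|f\|_\infty$. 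The same reproducing-formula identity as above, together with the fact that $\cQ^\cD_{\eta,\varphi}u\in\tilde t^\infty\oplus\tilde L^\infty_{\mathscr{Q}}$ by the very definition of $h^\infty_\cD$, yields $\|f(\cD)u\|_{h^\infty_\cD}\lesssim\|f\|_\infty\|u\|_{h^\infty_\cD}$ first for $u\in h^\infty_\cD\cap L^2$ and then by density (Corollary~\ref{Cor: hphqDensehp} with $q=2$) on all of $h^\infty_\cD$.

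The remaining cases $p\in(1,2)\cup(2,\infty)$ follow by complex interpolation via Theorem~\ref{Thm: Interpolation}, applied to the family $f(\cD)$ between the endpoint pairs $(1,2)$ and $(2,\infty)$. The main obstacle in the argument is the $p=\infty$ case: one must check that the bounded extension $T_f$ on $t^\infty\oplus L^\infty_{\mathscr{Q}}$ actually preserves the closed subspace $\tilde t^\infty\oplus\tilde L^\infty_{\mathscr{Q}}$, since the ambient-space construction only controls this closure and not the whole $L^\infty$-type space. It is precisely the interplay of \emph{both} parts of the Main Estimate — controlling $T_f$ on the intersection $(t^1\cap t^\infty)\oplus(L^1_{\mathscr{Q}}\cap L^\infty_{\mathscr{Q}})$ and then taking closures — that resolves this issue, mirroring the technique used already in Proposition~\ref{Prop: QandSext} and Lemma~\ref{Lem: ThetaBds}.
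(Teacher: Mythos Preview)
Your proof is correct and follows essentially the same route as the paper: apply Theorem~\ref{locMainEst} with $\psi=\tilde\psi=\eta$ and $\phi=\tilde\phi=\varphi$ at the endpoints, extend by density via Corollary~\ref{Cor: hphqDensehp}, and interpolate via Theorem~\ref{Thm: Interpolation}. Two minor remarks: (i) your opening sentence misstates the hypothesis of the present theorem as $r\sin\theta>\lambda/2$ --- that is the Hodge--Dirac specialisation in Theorem~\ref{Thm: Intro.FuncCalc}; here the hypothesis is already $r/C_\cD C_{\theta,r}>\lambda/2$, which is verbatim the hypothesis of Theorem~\ref{locMainEst}, so no translation is needed; (ii) the paper interpolates directly between $p=1$ and $p=\infty$ after noting that the two extensions agree on $h^1_\cD\cap h^\infty_\cD\subseteq h^2_\cD=L^2$, whereas you interpolate through $p=2$ --- both are fine. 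Your explicit discussion of why the extension preserves $\tilde t^\infty\oplus\tilde L^\infty_{\mathscr Q}$ is a welcome clarification of what the paper handles by the phrase ``the same proof''.
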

\begin{proof}
If $u\in h^1_\cD\cap L^2$, then Proposition~\ref{prop: normequiv} gives $U \in t^1 \oplus L^1_\mathscr{Q}$ with $\cS^\cD_{\eta,\varphi}U=u$ and $\|U\|_{t^1\oplus L^1_\mathscr{Q}} \leq 2\|u\|_{h^1_\cD}$. Therefore, by Theorem~\ref{locMainEst} we have
\begin{align*}
    \|f(\cD)u\|_{h^1_{\cD}}
    = \|\cQ^\cD_{\eta,\varphi}f(\cD)\cS^\cD_{\eta,\varphi}U\|_{t^1\oplus L^1_\mathscr{Q}}
    \lesssim \|f\|_\infty\|U\|_{t^1\oplus L^1_{\mathscr Q}}
    \lesssim \|f\|_\infty\|u\|_{h^1_{\cD}}
\end{align*}
for all $u\in h^1_{\cD}\cap L^2$, and since $h^1_\cD\cap L^2$ is dense in $h^1_\cD$ by Corollary \ref{Cor: hphqDensehp}, $f(\cD)$ has a bounded extension to $h^1_\cD$. The same proof with $\tilde t^\infty \oplus \tilde L^\infty_\mathscr{Q}$ instead of $t^1\oplus L^1_\mathscr{Q}$ shows that $f(\cD)$ has a bounded extension to $h^\infty_\cD$. These extensions coincide on $h^1_\cD\cap h^\infty_\cD$, since $h^1_\cD\cap h^\infty_\cD\subseteq h^2_\cD=L^2$ is a consequence of the interpolation of the local Hardy spaces in Theorem~\ref{Thm: Interpolation}. Therefore, the required extension exists by interpolation.
\end{proof}

Theorem~\ref{Thm: Intro.FuncCalc} follows from this result in the case of the Hodge--Dirac operator by Example~\ref{Eg: HodgeDirac}, which allows us to prove Corollary~\ref{Cor: Intro.Riesz}.

\begin{proof}[Proof of Corollary~\ref{Cor: Intro.Riesz}]
It was shown in Example~\ref{Eg: HodgeDirac} that $D$ satisfies (H1-3) with $\omega=0$, $R=0$, $C_\cD =1$ and $C_{\theta,r}=1/\sin \theta$ for all $\theta\in(0,\pi/2)$ and $r>0$. Therefore, Corollary~\ref{Cor: Intro.Riesz} follows from Theorem~\ref{mainresult} by choosing $\theta$ in $(0,\pi/2)$ such that $\lambda/2\sin\theta<\sqrt a$, choosing $r$ in $(\lambda/2\sin \theta,\sqrt a)$ and defining the holomorphic function $f(z)= z(z^2+a)^{-1/2}$ for all $z\in S^o_{\theta,r}$.
\end{proof}

\appendix
\section{The Atomic Characterisation of $t^1(X\times(0,1])$}\label{AppendixA}
The proof of Theorem~\ref{maintentatomic} is an adaptation of \cite{Russ}, which in turn is based on the original proof in \cite{CMS}. For this, we introduce the notion of local $\gamma$-density.

\begin{Def} \label{*setsdefloc}
Let $X$ be a locally doubling metric measure space. Let $F$ be a closed subset of $X$ with $O=\comp{F}$ and $\mu(O)<~\infty$. For each $\gamma\in(0,1)$, the points of \textit{local $\gamma$-density} with respect to $F$ are the elements of the set
\[F^{\gamma}_{\text{loc}}=\left\{x\in X\ \left|\ \inf_{0<r\leq1}\frac{\mu(F\cap B(x,r))}{V(x,r)}\geq\gamma \right.\right\}.\]
The complement of this set is denoted by $O^{\gamma}_\text{loc}=\comp{(F^{\gamma}_\text{loc})}$.
\end{Def}

Local $\gamma$-density can be understood in terms of the local maximal operator $\mathcal M_{\text{loc}}$ from Section \ref{SectionLocalisation}. For each $\gamma\in(0,1)$, the following hold:
\begin{enumerate}
\item $F^{\gamma}_\text{loc}$ is closed;
\item $F^{\gamma}_\text{loc}\subseteq F$;
\item $O^{\gamma}_\text{loc}=\{x\in X\ |\ {\mathcal M}_{\text{loc}}\ca_O(x)>1-\gamma\}$;
\item $\mu(O^{\gamma}_\text{loc})\lesssim \mu(O)$.
\end{enumerate}
The proof of these properties relies on Proposition~\ref{CW2.1loc} and is left to the reader.

The proof of Theorem~\ref{maintentatomic} also requires the following lemma, which is adapted from Lemma~2.1 in \cite{Russ}.

\begin{Lem}\label{CMSlemma2+Russlemma2.1loc}
Let $X$ be a locally doubling metric measure space. Let $F$ be a closed subset of $X$ and let $\Phi$ be a nonnegative measurable function on $X\times(0,1]$. For each $\eta\in(0,1)$, there exists $\gamma\in(0,1)$ such that
\[\iint_{R_{1-\eta}^1(F^\gamma_{\text{loc}})} \Phi(y,t)V(y,t)\ \d\mu(y)\d t \lesssim \int_F\iint_{\Gamma^1(x)}\Phi(y,t)\ \d\mu(y)\d t \d\mu(x),\]
where $R_{1-\eta}^1$ and $\Gamma^1$ are defined in Section~\ref{SectionLocalTent}.
\begin{proof}
Fix $\eta\in(0,1)$ and let $\gamma\in(0,1)$ to be chosen later. For each $(y,t)$ in $R_{1-\eta}^1(F^\gamma_{\text{loc}})$, choose $\xi\in F^\gamma_{\text{loc}}$ such that $(y,t)\in\Gamma_{1-\eta}^1(\xi)$. We then have
\[
\mu(F\cap B(\xi,t))\geq \gamma V(\xi,t).
\]
Also, the condition $\rho(\xi,y)<(1-\eta)t$ implies that $B(\xi,\eta t)\subseteq B(y,t)$. Therefore, we have $B(\xi,\eta t)\subseteq B(\xi,t) \cap B(y,t)$ and by Proposition~\ref{PropkappaLD} there exists $c_\eta\in(0,1)$, depending on $\eta$, such that
\[
c_\eta V(\xi,t) \leq V(\xi,\eta t) \leq \mu(B(\xi,t) \cap B(y,t)).
\]
Now choose $\gamma\in(1-c_\eta,1)$. The above inequalities show that there exists $C_{\eta,\gamma}>0$, depending on $\eta$ and the choice of $\gamma$, such that \begin{align*}
\mu(F\cap B(y,t)) &\geq \mu(F\cap B(\xi, t))-\mu(B(\xi, t)\cap\comp{B}(y,t)) \\
&\geq (\gamma-(1-c_\eta)) V(\xi,t) \\
&\geq C_{\eta,\gamma} V(y,t),
\end{align*}
where the final inequality follows from \eqref{LD} and $B(y,t)\subseteq B(\xi,2t)$.

Using the above inequality and Fubini's theorem we obtain
\begin{align*}
\iint_{R_{1-\eta}^1(F^\gamma_{\text{loc}})}& \Phi(y,t)V(y,t)\ \d\mu(y)\d t \\
&\lesssim \iint_{R_{1-\eta}^1(F^\gamma_{\text{loc}})} \Phi(y,t) \mu(F\cap B(y,t))\ \d\mu(y)\d t \\
&\leq \iint_{R_1^1(F)}\int_{F\cap B(y,t)}\Phi(y,t)\ \d\mu(x)\d\mu(y)\d t \\
&\leq \int_F\iint_{\Gamma^1(x)}\Phi(y,t)\ \d\mu(y)\d t\d\mu(x).
\end{align*}
\end{proof}
\end{Lem}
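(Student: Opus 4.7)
The plan is to reduce the integral inequality to a pointwise volume comparison $V(y,t) \lesssim \mu(F\cap B(y,t))$ valid on $R_{1-\eta}^1(F^\gamma_{\text{loc}})$, and then conclude by Fubini. First I would rewrite the right-hand side: since $(y,t)\in\Gamma^1(x)$ if and only if $x\in B(y,t)$ with $t\in(0,1]$, Fubini gives
\[\int_F\iint_{\Gamma^1(x)}\Phi(y,t)\,\d\mu(y)\d t\,\d\mu(x) = \iint\Phi(y,t)\,\mu(F\cap B(y,t))\,\d\mu(y)\d t,\]
so it suffices to establish $V(y,t)\lesssim\mu(F\cap B(y,t))$ pointwise on $R_{1-\eta}^1(F^\gamma_{\text{loc}})$ for a suitable $\gamma$.

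To set up the pointwise estimate, fix $(y,t)\in R_{1-\eta}^1(F^\gamma_{\text{loc}})$ and choose a witness $\xi\in F^\gamma_{\text{loc}}$ with $\rho(\xi,y)<(1-\eta)t$ and $t\in(0,1]$. Two elementary inclusions from the triangle inequality will drive the argument: $B(\xi,\eta t)\subseteq B(y,t)\cap B(\xi,t)$ and $B(y,t)\subseteq B(\xi,2t)$. By Proposition~\ref{PropkappaLD}, local doubling produces a constant $c_\eta\in(0,1)$ depending only on $\eta$ (and the doubling data) such that $V(\xi,\eta t)\geq c_\eta V(\xi,t)$, together with a bound $V(y,t)\lesssim V(\xi,t)$ coming from the second inclusion. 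Hence $\mu(B(\xi,t)\cap B(y,t))\geq c_\eta V(\xi,t)$.

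The key calibration step is the choice of $\gamma$: pick $\gamma\in(1-c_\eta,1)$. Since $\xi\in F^\gamma_{\text{loc}}$ and $t\leq 1$, the local density condition yields $\mu(F\cap B(\xi,t))\geq\gamma V(\xi,t)$, and subtracting the part of $B(\xi,t)$ that escapes $B(y,t)$ gives
\[\mu(F\cap B(y,t)) \geq \gamma V(\xi,t) - \bigl(V(\xi,t) - c_\eta V(\xi,t)\bigr) = (\gamma-1+c_\eta)\,V(\xi,t),\]
with positive coefficient by the choice of $\gamma$. Combined with $V(y,t)\lesssim V(\xi,t)$, this yields the required pointwise comparison, and the lemma follows by integrating against $\Phi$ and applying the Fubini identity from the first step. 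The main (admittedly mild) obstacle is threading the constants correctly: $c_\eta$ must be extracted from local doubling first, and only then may $\gamma$ be calibrated against it, since otherwise the coefficient $\gamma-1+c_\eta$ could fail to be positive.
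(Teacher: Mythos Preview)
Your proposal is correct and follows essentially the same approach as the paper: the same witness $\xi\in F^\gamma_{\text{loc}}$, the same inclusions $B(\xi,\eta t)\subseteq B(y,t)\cap B(\xi,t)$ and $B(y,t)\subseteq B(\xi,2t)$, the same extraction of $c_\eta$ from local doubling via Proposition~\ref{PropkappaLD}, the same calibration $\gamma\in(1-c_\eta,1)$, the same subtraction to get $\mu(F\cap B(y,t))\geq(\gamma-1+c_\eta)V(\xi,t)$, and the same Fubini conclusion. The only cosmetic difference is that you state the Fubini identity up front and reduce to the pointwise volume comparison, whereas the paper establishes the pointwise comparison first and applies Fubini at the end.
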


We now complete the proof of the atomic characterisation of $t^1$.

\begin{proof}[Proof of Theorem~\ref{maintentatomic}]
Let $f\in t^{1}$ and for each $k\in\mathbb{Z}$, define
\[O_k = \{x\in X\ |\ \mathcal{A}_{\text{loc}}f(x)>2^k\}\]
and $F_k=\comp{O}_k$. The lower semicontinuity of $\mathcal{A}_{\text{loc}}f$ ensures that  $O_k$ is open. We also have $\mu(O_k) \leq 2^{-k}\|f\|_{t^{1}}<\infty$.

Let $\eta\in(0,1)$ to be chosen later and let $\gamma\in(0,1)$ be the constant, which depends on $\eta$, from Lemma~\ref{CMSlemma2+Russlemma2.1loc}. Let $F_k^*$ denote the set $(F_k)^\gamma_{\text{loc}}$ from Definition~\ref{*setsdefloc} and let $O_k^*=\comp{(F_k^*)}$. We then have $O_k\subseteq O_k^*$ and $\mu(O_k^*)\lesssim \mu(O_k)$.

First, we establish that $f$ is supported in $\bigcup_{k\in\mathbb{Z}} T_{1-\eta}^1(O_k^*)$. For each $k\in\mathbb{Z}$, we apply Lemma~\ref{CMSlemma2+Russlemma2.1loc} with  $\Phi(y,t)={|f(y,t)|^2}({V(y,t)t})^{-1}$ and $F=F_{k}$ to obtain
\begin{align*}
\iint_{\comp{\left(\bigcup_{j\in\mathbb{Z}} T_{1-\eta}^1(O_j^*)\right)}}& |f(y,t)|^2\ \d\mu(y)\frac{\d t}{t} =\iint_{\bigcap_{j\in\mathbb{Z}} R^1_{1-\eta}(F_j^*)} |f(y,t)|^2\ \d\mu(y)\frac{\d t}{t} \\
&\leq \iint_{R_{1-\eta}^1(F_{k}^*)} |f(y,t)|^2\ \d\mu(y)\frac{\d t}{t} \\
&\lesssim \int_{F_{k}}\iint_{\Gamma^1(x)} |f(y,t)|^2\ \frac{\d\mu(y)}{V(y,t)}\frac{\d t}{t}\d\mu(x) \\
&\lesssim \int \ca_{F_{k}}(x) (\mathcal{A}_{\text{loc}}f(x))^2\ \d\mu(x),
\end{align*}
where the final inequality follows from \eqref{LD}, since  if $(y,t)\in\Gamma^1(x)$, then ${t\leq 1}$ and $B(x,t)\subseteq B(y,2t)$. If $k$ is a negative integer, then pointwise on $X$ we have $\ca_{F_{k}} (\mathcal{A}_{\text{loc}}f)^2 \leq \mathcal{A}_{\text{loc}}f$ and $\lim_{k\rightarrow-\infty} \ca_{F_{k}}(\mathcal{A}_{\text{loc}}f)^2=0$, where $\mathcal{A}_{\text{loc}}f\in L^1(X)$. Therefore, by dominated convergence we have
\[\lim_{k\rightarrow-\infty} \int \ca_{F_{k}}(x) (\mathcal{A}_{\text{loc}}f(x))^2\ \d\mu(x) = 0,\]
which implies that $f=0$ almost everywhere on $\comp{\left(\bigcup_{j\in\mathbb{Z}} T_{1-\eta}^1(O_j^*)\right)}$, as required.

Now we decompose $f$ into $t^1$-atoms. For each $k\in\mathbb{Z}$, apply Proposition~\ref{Aimar2.13 + MS2.16} with $O=O_k^*$ and $h>0$ to be chosen later. This gives a sequence of pairwise disjoint balls $(B_j^k)_{j\in I_k}$, where each ball $B_j^k=B(x_j^k,r_j^k)$ has radius $r_j^k=\frac{1}{8}\min(\rho(x_j^k,\comp{O}_k^*),h)$ and $I_k$ is some indexing set. It also gives a sequence of nonnegative functions $(\phi_j^k)_{j\in I_k}$ supported in $\tilde B_j^k=4B_j^k$ such that $\sum_{j\in I_k}\phi_j^k=\ca_{O_k^*}$. For each $(y,t)$ in $X\times (0,1]$, we have
\[\ca_{T_{1-\eta}^1(O_k^*)\setminus T_{1-\eta}^1(O_{k+1}^*)}(y,t) = \sum_{j\in I_k} \phi_j^k(y) \ca_{T_{1-\eta}^1(O_k^*)\setminus T_{1-\eta}^1(O_{k+1}^*)}(y,t),\]
since either $(y,t)\in T_{1-\eta}^1(O_k^*)\setminus T_{1-\eta}^1(O_{k+1}^*)$, in which case $y\in O_k^*$ and we have $\sum_{j\in I_k} \phi_j^k(y)=1$, or both sides of the equation are zero.
Given that $f$ is supported in $\bigcup_{k\in\mathbb{Z}} T_{1-\eta}^1(O_k^*)$, the following holds for almost every $(y,t)\in X\times (0,1]$:
\begin{align}\label{eq: t1atomdecomp}\begin{split}
f(y,t) &= f(y,t) \sum_{k\in\mathbb{Z}} \ca_{T_{1-\eta}^1(O_k^*)\setminus T_{1-\eta}^1(O_{k+1}^*)}(y,t)\\
&= \sum_{k\in\mathbb{Z}}\sum_{j\in I_k} f(y,t) \phi_j^k(y) \ca_{T_{1-\eta}^1(O_k^*)\setminus T_{1-\eta}^1(O_{k+1}^*)}(y,t) \\
&= \sum_{k\in\mathbb{Z}}\sum_{j\in I_k} \lambda_j^k a_j^k(y,t),
\end{split}\end{align}
where
\begin{align*}
&a_j^k(y,t) = \frac{1}{\lambda_j^k} f(y,t)\phi_j^k(y) \ca_{T_{1-\eta}^1(O_k^*)\setminus T_{1-\eta}^1(O_{k+1}^*)}(y,t),\\
&\lambda_j^k =\left(\mu(\alpha B_j^k)\iint |f(y,t)|^2 \phi_j^k(y)^2 \ca_{T_{1-\eta}^1(O_{k}^*)\setminus T_{1-\eta}^1(O_{k+1}^*)}(y,t) \ \d\mu(y)\frac{\d t}{t}\right)^{\frac{1}{2}}
\end{align*}
and $\alpha>0$ will be chosen later.

Given that $f\in t^1$, the series in \eqref{eq: t1atomdecomp} also converges to $f$ in $t^1$ by dominated convergence. The same reasoning shows that if $f\in t^1\cap t^p$ for some $p\in(1,\infty)$, then the series also converges to $f$ in $t^p$.
It remains to choose the constants $\eta\in(0,1)$, $h>0$ and $\al>0$ so that \eqref{eq: t1atomdecomp} is the required atomic decomposition.

First, consider the support of $a_j^k$. If $(y,t)\in\supp a_j^k$, then $y\in\supp \phi_j^k\subseteq4B_j^k$ and we have
\begin{equation}\label{01}
\rho(y,z) \geq \rho(x_j^k,z) -\rho(x_j^k,y) \geq (\alpha-4)r_j^k
\end{equation}
for all $z\in\comp{(\alpha B_j^k)}$. We also have $\rho(y,\comp{O}_k^*)\geq (1-\eta)t$, since $(y,t)\in T^1_{1-\eta}(O_k^*)$. Now consider two cases: (1) If $8r_j^k=\min(\rho(x_j^k,\comp{O_k^*}),h)=\rho(x_j^k,\comp{O_k^*})$, then
\[
(1-\eta)t \leq \rho(y,\comp{O_k^*}) \leq \rho(y,x_j^k) + \rho(x_j^k,\comp{O_k^*}) \leq 12r_j^k,
\]
so by \eqref{01} we have
\begin{equation}\label{02}
\rho(y,z) \geq (\al-4) {(1-\eta)t}/{12}
\end{equation}
for all $z\in\comp{(\alpha B_j^k)}$; (2) If $8r_j^k=\min(\rho(x_j^k,\comp{O_k^*}),h)=h$, then
\begin{equation}\label{03}
\rho(y,z) \geq (\al-4){h}/{8}
\end{equation}
for all $z\in\comp{(\alpha B_j^k)}$.

Now choose $\eta\in(0,1)$, $h>0$ and $\al>0$ such that
\[(\al-4)(1-\eta)/12\geq1,\quad (\al-4)h/8\geq1\quad\text{ and }\quad\al h/8\leq2.\]
For example, set $\eta=1/4$, $h=1/2$ and $\al=20$. It then follows from \eqref{02} and \eqref{03} that $\rho(y,\comp{(\al B_j^k)}) \geq t$ and so $\supp a_j^k \subseteq T^1(\al B_j^k)$, where the radius of $\al B_j^k$ is $\al r_j^k \leq \al h/8\leq 2$. Also, it is immediate that $\|a_j^k\|_{L^2_\bullet} = \mu(\al B_j^k)^{-1/2}$ and thus $a_j^k$ is a $t^{1}$-atom.

It remains to prove the norm equivalence. Using the support condition just proved and applying Lemma~\ref{CMSlemma2+Russlemma2.1loc} with $F=F_k$ and \[\Phi(y,t)=\ca_{T^1(\al B_j^k)}(y,t){|f(y,t)|^2}{(V(y,t)t)^{-1}}\]
gives
\begin{align*}
(\lambda_j^{k})^2\mu(\al B_j^k)^{-1} &\leq \iint_{T^1(\al B_j^k) \cap \comp{[T_{1-\eta}^1(O_{k+1}^*)]}} |f(y,t)|^2\ \d\mu(y)\frac{\d t}{t} \\
&= \iint_{R^1_{1-\eta}(F_{k+1}^*)} \ca_{T^1(\al B_j^k)}(y,t) |f(y,t)|^2\ \d\mu(y)\frac{\d t}{t} \\
&\lesssim \int_{F_{k+1}}\iint_{\Gamma^1(x)}\ca_{T^1(\al B_j^k)}(y,t)|f(y,t)|^2\ \frac{\d\mu(y)}{V(y,t)}\frac{\d t}{t}\d\mu(x) \\
&\lesssim \int_{\comp{O_{k+1}}\cap \al B_j^k}(\mathcal{A}_{\text{loc}}f(x))^2\ \d\mu(x) \\
&\lesssim 2^{2k}\mu(\al B_j^k).
\end{align*}
Furthermore, by \eqref{LD} we have $\lambda_j^k \lesssim 2^k\mu(B_j^k)$, and since for each $k\in\Z$ the balls $(B_j^k)_j$ are pairwise disjoint and contained in $O_k^*$, we obtain
\begin{align*}
\sum_{k\in\mathbb{Z}} \sum_{j\in I_k} |\lambda_j^k|
&\leq \sum_{k\in\mathbb{Z}} 2^k \mu(O_k^*) \\
&\lesssim \sum_{k\in\mathbb{Z}} 2^k \mu(O_k) \\
&= \sum_{k\in\mathbb{Z}} 2\int_{2^{k-1}}^{2^k} \mu(\{x\in X\ |\ \mathcal{A}_{\text{loc}}f(x)>2^k\})\ \d t\\
&\lesssim \sum_{k\in\mathbb{Z}} \int_{2^{k-1}}^{2^k} \mu(\{x\in X\ |\ \mathcal{A}_{\text{loc}}f(x)>t\})\ \d t \\
&= \|f\|_{t^{1}},
\end{align*}
which completes the proof.
\end{proof}

\begin{Rem}\label{RemFinalTent}
If $b>1$, then a judicious choice of $\eta\in(0,1)$, $h>0$ and $\al>0$ in the proof of Theorem \ref{maintentatomic} allows us to characterise $f\in t^{1}$ in terms $t^{1}$-atoms supported on truncated tents $T^{1}(B)$ over balls $B$ with radius $r(B)\leq b$. The constants in the norm equivalence $\eqsim$ then depend on $b$ and, as we may expect, become unbounded as $b$ approaches 1.
\end{Rem}

\section*{Acknowledgements}
This work was conducted at the Centre for Mathematics and its Applications at the Australian National University, and at the Dipartimento di Matematica dell'Universit\`{a} degli Studi di Genova. The authors acknowledge support from these institutions. Carbonaro was also supported by PRIN 2007 ``Analisi Armonica'', McIntosh by the Australian Government through the Australian Research Council, and Morris by the Australian Government through an Australian Postgraduate Award.

A part of this work was conducted during the first author's visits to the Centre for Mathematics and its Applications at the Australian National University, and during the third author's visit to the Dipartimento di Matematica dell'Universit\`{a} degli Studi di Genova. The authors thank the faculty and staff of those institutions for their warm hospitality. The authors would also like to thank Pascal Auscher, Andreas Axelsson, Andrew Hassell, Brian Jefferies, Pierre Portal, Adam Rennie and Robert Taggart for helpful conversations and suggestions.

This work was first presented by the third author at the 8th International Conference on Harmonic Analysis and Partial Differential Equations, 16-20 June 2008, El Escorial, Spain.

\bibliographystyle{amsplain}
\providecommand{\bysame}{\leavevmode\hbox to3em{\hrulefill}\thinspace}

\end{document}